\documentclass[a4paper,12pt, reqno]{amsart} 
\usepackage{amssymb,amsthm,amsmath}
\usepackage{multirow}
\usepackage{braket}

\usepackage{enumerate}
\usepackage{ifthen}
\usepackage{graphicx} 
\usepackage{tikz-cd} 
\usepackage[colorlinks,citecolor=blue,pagebackref,hypertexnames=false]{hyperref}

 \numberwithin{equation}{section}

\usepackage{amsfonts}
\usepackage{amscd}
\usepackage{amssymb}
\usepackage{enumerate}
\usepackage{graphicx}
\allowdisplaybreaks
\usepackage{mathabx}
\usepackage{color}
\usepackage{amsbsy}
\usepackage{graphicx}
\usepackage{amsthm}
\usepackage{amsmath}
\usepackage{amsxtra}
\usepackage{mathrsfs}
\usepackage{bbm}
\usepackage{dsfont}

\usepackage{ifthen}
\usepackage{xcolor,colortbl}
\newtheorem{theorem}{Theorem}[section]
\newtheorem{lemma}[theorem]{Lemma}
\newtheorem{problem}[theorem]{Problem}

\newtheorem{corollary}[theorem]{Corollary}
\newtheorem{proposition}[theorem]{Proposition}

\theoremstyle{definition}

\newtheorem{remark} [theorem]{Remark}

\theoremstyle{remark}

\numberwithin{equation}{section}
\definecolor{red}{rgb}{1.0, 0.0, 0.0}
\newcommand{\Bea}{\begin{eqnarray*}}
	\newcommand{\Eea}{\end{eqnarray*}}
\newcommand{\Be} {\begin{equation*}}
	\newcommand{\Ee} {\end{equation*}}
\newcommand{\be} {\begin{equation}}
	\newcommand{\ee} {\end{equation}}
\newcommand{\bea} {\begin{eqnarray}}
	\newcommand{\eea} {\end{eqnarray}}

	{\qed\bigskip}

	\newcounter{alphabet}

	\ifx\undefined\bysame
	\newcommand{\bysame}{\leavevmode\hbox to3em{\hrulefill}\,}
	\fi
	\usetikzlibrary{patterns}
\title[Orthonormal Strichartz estimates  on Wiener amalgam spaces]
{Orthonormal Strichartz estimates for the Schr\"odinger equations with Hamiltonian on Wiener amalgam spaces}

\author{Ramesh Manna, Shyam Swarup Mondal, Sarthak Tiwari}

\address{School of Mathematical Sciences, National Institute of Science Education and
	Research, Bhubaneswar, India.}
\address{Homi Bhabha National Institute, Training School Complex, Anushakti Nagar, Mumbai 400094, India}
\email{rameshmanna@niser.ac.in, sarthak.tiwari@niser.ac.in }

\address{ 
	\endgraf Stat-Math unit,\endgraf
	Indian Statistical Institute Kolkata, \endgraf
	BT Road,  Baranagar, Kolkata  700108, India}\endgraf
\email{mondalshyam055@gmail.com}

\subjclass[2020]{Primary 35Q41; 42B37; 47B10; Secondary 35B65, 47D08}

\date{\today}

\keywords{Strichartz inequality for orthonormal functions; Metaplectic representation; Wiener amalgam spaces; Schr\"{o}dinger equation with quadratic Hamiltonians}

\begin{document}
	
	\maketitle
	
	\begin{abstract}
		The main objective of this paper is to investigate orthonormal Strichartz estimates for Schr\"odinger equation on the Wiener amalgam space $\mathcal{W}(\mathcal{F} L^p, L^q)$.  More precisely, we first examine improvements in the time-integrability exponent of the existing Strichartz estimates established by Cordero and Nicola in \cite{NFC} for   Schr\"odinger equations associated with $\mathcal{H}^{+}=-\frac{1}{4\pi}\Delta+|x|^2$. We then extend these improved Strichartz estimates from a single initial datum to systems of orthonormal families, providing, to the best of our knowledge, the first results in this direction in the setting of Wiener amalgam spaces.
		%We then extend these improved Strichartz estimates from a single to a system of orthonormal families of initial data. 
		We further extend the classical Strichartz estimates in Wiener amalgam spaces from a single initial datum to systems of orthonormal families of initial data for the Schrödinger equation associated with the operator $\mathcal{H}^{-}=-\frac{1}{4\pi}\Delta-|x|^2$ and 
		Hamiltonian operator of the form $\mathcal{H}_{\mathcal{A}} = -\frac{1}{4\pi} B \nabla \cdot \nabla$, where $\mathcal{A} = \begin{pmatrix} 0 & B \\ 0 & 0 \end{pmatrix} \in \mathrm{Sp}(d,\mathbb{R})$ with $B = B^*$ and $\det B \neq0$.  %A key ingredient of our approach is the Stein complex interpolation theory for Wiener amalgam spaces $\mathcal{W}(\mathcal{F}L^p, L^q)$. 
		A key ingredient of our approach is Stein's complex interpolation theory for Wiener amalgam spaces, combined with a duality argument inspired by the work of Frank and Sabin.
		As an application of these orthonormal estimates associated with $\mathcal{H}^{+}, \mathcal{H}^{-}$, and $H_{\mathcal{A}},$ we establish local and small-data global well-posedness for the Hartree equation with infinitely many particles, for non-trace-class initial data.
	\end{abstract}
	\tableofcontents
	\allowdisplaybreaks
	\section{Introduction}
	
	The Schr\"odinger equation stands as one of the cornerstones of quantum mechanics and mathematical physics, governing the evolution of quantum states through the fundamental relation
	\[
	i \frac{\partial u}{\partial t} = H u, \qquad u(0, x) = u_0(x),
	\]
	where \(H\) denotes the Hamiltonian operator. For quadratic Hamiltonians, the time evolution operator \(e^{itH}\) admits an elegant description through the metaplectic representation, providing a rich interplay between symplectic geometry, harmonic analysis, and partial differential equations. This connection has attracted substantial attention in recent years, particularly through the pioneering work of Cordero and Nicola \cite{NFC}, who established Strichartz estimates for the Schr\"odinger equation with quadratic Hamiltonians within the framework of Wiener amalgam spaces.

	Wiener amalgam spaces, introduced by Feichtinger \cite{HGF}, offer a natural setting for time-frequency analysis by separately controlling the local regularity and global decay properties of functions,  providing a framework for studying functions beyond classical \(L^p\)-spaces. They give refined versions of classical Fourier inequalities and have applications in Sobolev embeddings and multiplier theory. Their extensions to modulation spaces have become fundamental in time-frequency analysis and have proved useful in several areas of harmonic analysis; see \cite{F}. Thus, Wiener amalgam spaces provide a framework for studying the localization, decay, and regularity of functions and operators; see \cite{F2026} and the references therein. Their flexibility has proven invaluable in studying dispersive equations, where solutions often exhibit different regularity and decay behaviors. 
    
    The metaplectic representation  $ 
	\mu : \operatorname{Sp}(d, \mathbb{R}) \to U(L^2(\mathbb{R}^d)),$ 
	originally constructed by Segal and Shale \cite{segal,SS}, provides the mathematical machinery to express the evolution operator for quadratic Hamiltonians as
	\[
	e^{it\mathcal{H}_{\mathcal{A}}} = \mu(e^{t\mathcal{A}}),
	\]
	where \(\mathcal{A}\) belongs to the symplectic Lie algebra \(\mathfrak{sp}(d, \mathbb{R})\).

	In this paper, we investigate the following free Schr\"odinger equation   associated with $\mathcal{H}_{\mathcal{A}}$, namely
	\[
	\begin{cases}
		i \partial_t u + \mathcal{H}_{\mathcal{A}} u = 0, \\
		u(0, x) = f(x),
	\end{cases}
	\]
	where \(\mathcal{H}_{\mathcal{A}}\) denotes the Weyl quantization of a quadratic form on \(\mathbb{R}^d \times \mathbb{R}^d\). Of particular interest are the harmonic oscillator
	\[
	\mathcal{H}^+ = -\frac{1}{4\pi} \Delta + \pi |x|^2,
	\]
	and the inverted harmonic oscillator
	\[
	\mathcal{H}^- = -\frac{1}{4\pi} \Delta - \pi |x|^2,
	\]
	and the quadratic Hamiltonian \(\mathcal{H}_{\mathcal{A}}\) corresponding to the symplectic matrix
	\[
	\mathcal{A} = \begin{pmatrix} 0 & B \\ 0 & 0 \end{pmatrix} \in \mathfrak{sp}(d, \mathbb{R}),
	\]
	where $B=B^*$ with $\det B \neq 0$. The general quadratic Hamiltonian $\mathcal{H}_{\mathcal{A}}$ corresponding to the symplectic matrix
	
	\[
	\mathcal{A} = \begin{pmatrix} A & B \\ C & D \end{pmatrix} \in \mathfrak{sp}(d, \mathbb{R}),
	\]
	
	explicitly takes the form
	\[
	\mathcal{H}_{\mathcal{A}} = -\frac{1}{4\pi} \sum_{j,k=1}^d B_{j,k} \frac{\partial^2}{\partial x_j \partial x_k}
	+ i \sum_{j,k=1}^d A_{j,k} x_k \frac{\partial}{\partial x_j}
	+ \frac{i}{2} \operatorname{Tr}(A)
	- \pi \sum_{j,k=1}^d C_{j,k} x_j x_k.
	\]
	
	The fixed-time estimates established by Cordero and Nicola \cite{NFC} demonstrate that solutions propagate regularity and decay across Wiener amalgam spaces in a controlled manner. For the harmonic oscillator $\mathcal{H}^+$,  the dispersive estimate
	\[
	\|{ e^{it\mathcal{H}^+} f}\|_{\mathcal{W}(\mathcal{F} L^1, L^\infty)}
	\lesssim |\sin t|^{-d} \|{ f }\|_{\mathcal{W}(\mathcal{F} L^\infty, L^1)},
	\]
	together with the \(TT^*\) argument and interpolation arguments yields the following Strichartz estimates on Wiener amalgam spaces (see Theorem 5.2, \cite{NFC}) 
	\begin{align}\label{CorNicostrichartz}
		\| e^{it\mathcal{H}^{+}} f \|_{L^{q/2}([0,T]) \mathcal{W}(\mathcal{F} L^{p'}, L^p)_x}
		\lesssim \|{ f }\|_{L^2_x},
	\end{align}
	for \(4 < q\le \infty\), \(2 \le p \le \infty\) and \(\frac{2}{q} + \frac{d}{p} = \frac{d}{2}\).
	In contrast, for the inverted harmonic oscillator $\mathcal{H}^-$, the corresponding dispersive estimate 
	\[
	\|{ e^{it\mathcal{H}^-} f }\|_{\mathcal{W}(\mathcal{F} L^1, L^\infty)}
	\lesssim \left( \frac{1 + |\sinh t|}{\sinh^2 t} \right)^{d/2}
	\|{ f }\|_{\mathcal{W}(\mathcal{F} L^\infty, L^1)},
	\]
	on Wiener amalgam spaces leads to the Strichartz estimate
	\begin{align}\label{cornicostrichartzH-}
		\|e^{it\mathcal{H}^-}f\|_{\mathcal{W}(L^{q/2},L^2)\mathcal{W}(\mathcal{F}L^{p'},L^p)}\lesssim\|f\|_{L^2},
	\end{align}
	under the same conditions \(4 < q\le \infty\), \(2 \le p \le \infty\) and \(\frac{2}{q} + \frac{d}{p} = \frac{d}{2}\).

	Let \(B\) is a symmetric nonsingular matrix with eigenvalues \(\lambda_1, \dots, \lambda_d\).  For the anisotropic free Hamiltonian $ 
	\mathcal{H}_{\mathcal{A}}= -\frac{1}{4\pi} \nabla \cdot B \nabla,$  the dispersive estimate on Wiener amalgam spaces is given by
	
	\begin{align}\label{dispersiveestimateHAcase}
		\|{ e^{it\mathcal{H}_{\mathcal{A}}} f }\|_{\mathcal{W}(\mathcal{F} L^1, L^\infty)}
		\lesssim \prod_{j=1}^d \left( \frac{1 + t^2 \lambda_j^2}{t^4 \lambda_j^4} \right)^{1/4}
		\|{ f }\|_{\mathcal{W}(\mathcal{F} L^\infty, L^1)}.  
	\end{align}

	Note that, in the isotropic case \(B = I_d\), the Hamiltonian reduces to the standard Laplacian $ 
	\mathcal{H}_{\mathcal{A}} = -\frac{1}{4\pi} \Delta$  on $\mathbb{R}^d.$ In the isotropic setting, Cordero and Nicola established Strichartz estimates on Wiener amalgam spaces in \cite{coni}.  Following the same arguments, we have the following Strichartz estimate for \(\mathcal{H}_{\mathcal{A}}\).
	\begin{theorem}[Strichartz estimate for $\mathcal{H}_{\mathcal{A}}$]\label{Single function estimates}
		Let $4 < q \leq \infty$,  $2 \leq p\leq \infty$ such that
		$ \frac{2}{q} + \frac{d}{p} = \frac{d}{2}.$ 
		Then  
		\begin{equation}\label{HA}
			\|e^{it\mathcal{H}_{\mathcal{A}}} f\|_{\mathcal{W}_t{(L^{q/2},L^q)} \mathcal{W}^{r'r}_x} \lesssim \|f\|_{L^2_x}.
		\end{equation}
	\end{theorem}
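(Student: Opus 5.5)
We take $\mathcal{W}^{r'r}_x$ to mean $\mathcal{W}(\mathcal{F}L^{r'},L^r)_x$ with $r=p$ in the admissibility relation, matching \eqref{CorNicostrichartz}. The proof follows the $TT^*$ scheme of Cordero--Nicola \cite{coni,NFC}, with the dispersive estimate \eqref{dispersiveestimateHAcase} as input.

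\textbf{Step 1 (fixed-time decay).} Interpolate \eqref{dispersiveestimateHAcase} with unitarity on $L^2=\mathcal{W}(\mathcal{F}L^2,L^2)$. Complex interpolation of Wiener amalgam spaces gives $[\mathcal{W}(\mathcal{F}L^1,L^\infty),L^2]_\theta=\mathcal{W}(\mathcal{F}L^{p'},L^p)$ with $\theta=1-2/p$, and similarly for the dual pair. This yields
\[
\|e^{it\mathcal{H}_{\mathcal{A}}}f\|_{\mathcal{W}(\mathcal{F}L^{p'},L^p)}\lesssim \Phi(t)^{1-\frac2p}\|f\|_{\mathcal{W}(\mathcal{F}L^{p},L^{p'})},\qquad \Phi(t)=\prod_{j=1}^d\Big(\tfrac{1+t^2\lambda_j^2}{t^4\lambda_j^4}\Big)^{1/4}.
\]
The decay factor has two regimes:
\begin{itemize}
\item for $|t|\le1$, $\Phi(t)\lesssim |t|^{-d}$, so the factor is $|t|^{-d(1-2/p)}=|t|^{-4/q}$ by admissibility;
\item for $|t|\ge1$, $\Phi(t)\lesssim |t|^{-d/2}$, so the factor is $|t|^{-2/q}$.
\end{itemize}
Here the constants depend on the eigenvalues $\lambda_j$, which are all nonzero because $\det B\neq0$.

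\textbf{Step 2 ($TT^*$ reduction).} Let $Tf=e^{it\mathcal{H}_{\mathcal{A}}}f$. Estimate \eqref{HA} is equivalent to boundedness of
\[
TT^*F(t)=\int e^{i(t-s)\mathcal{H}_{\mathcal{A}}}F(s)\,ds
\]
from $\mathcal{W}(L^{(q/2)'},L^{q'})_t\mathcal{W}(\mathcal{F}L^{p},L^{p'})_x$ into $\mathcal{W}(L^{q/2},L^q)_t\mathcal{W}(\mathcal{F}L^{p'},L^p)_x$. By Minkowski's inequality and Step 1,
\[
\|TT^*F(t)\|_{\mathcal{W}(\mathcal{F}L^{p'},L^p)}\le \int K(t-s)\,\|F(s)\|\,ds,
\]
where $K(\tau)=|\tau|^{-4/q}\chi_{|\tau|\le1}+|\tau|^{-2/q}\chi_{|\tau|>1}$. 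The task is then to show that convolution with $K$ maps $\mathcal{W}(L^{(q/2)'},L^{q'})$ into $\mathcal{W}(L^{q/2},L^q)$ on $\mathbb{R}$.

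\textbf{Step 3 (the convolution, the main difficulty).} Use the Young-type inequality for amalgams: if $k\in\mathcal{W}(L^{a},L^{b})$, then convolution maps $\mathcal{W}(L^{p_1},L^{q_1})\to\mathcal{W}(L^{p_2},L^{q_2})$ whenever the Hölder/Young relations hold separately in the local and global indices.
\begin{itemize}
\item \emph{Local part.} $|\tau|^{-4/q}\chi_{|\tau|\le1}$ lies in $L^{q/4,\infty}$. Since $q>4$ the singularity is integrable, and the Hardy--Littlewood--Sobolev relation $1+\frac{2}{q}=\frac{4}{q}+\big(1-\frac2q\big)$ is exactly the balance needed to map $L^{(q/2)'}$ locally into $L^{q/2}$.
\item \emph{Global part.} $|\tau|^{-2/q}\chi_{|\tau|>1}$ belongs to the amalgam $\mathcal{W}(L^\infty,L^{q/2,\infty})$, and $1+\frac1q=\frac{2}{q}+\frac{1}{q'}$ is the Young/HLS relation taking global $\ell^{q'}$ to global $\ell^{q}$.
\end{itemize}
The hard part is justifying weak-type (Lorentz) amalgam versions of Young's inequality at these endpoint exponents. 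I would handle it by discretizing time into unit intervals. The local part is then a Hardy--Littlewood--Sobolev estimate on adjacent intervals. The global part is a discrete HLS inequality on $\mathbb{Z}$ with kernel $(1+|k|)^{-2/q}$ acting on $\ell^{q'}\to\ell^q$.

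Combining Steps 2 and 3 gives the $TT^*$ bound. Hence $T$ is bounded from $L^2$ into $\mathcal{W}(L^{q/2},L^q)_t\mathcal{W}(\mathcal{F}L^{p'},L^p)_x$, which is \eqref{HA}. The endpoint $q=\infty$, $p=2$ is trivial by unitarity.
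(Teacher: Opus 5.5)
Your proposal is correct and follows the paper's proof of Theorem \ref{thm:strichartzz}: interpolating \eqref{disestgenlap} with unitarity gives the decay $\psi_{2/q}(t)$, and the $TT^*$ bound then follows from the amalgam Hardy--Littlewood--Sobolev inequality of Lemma \ref{youngtypeinequality} with $\alpha=2/q$ (so $\mathcal{W}(L^{1/\alpha},L^{2/\alpha})=\mathcal{W}(L^{q/2},L^{q})$, and the condition $q>4$ is exactly $\alpha<1/2$). The differences are minor: the paper cites that lemma from \cite{coni}, whereas your unit-interval discretization (HLS on neighbouring intervals for the local singularity, discrete HLS $\ell^{q'}\to\ell^{q}$ for the tail) re-proves it, and you also handle the case $q=\infty$, which the lemma does not cover; one cosmetic slip is that with the standard convention $[X_0,X_1]_\theta$ the interpolation parameter is $\theta=2/p$, so your (correct) exponent $1-2/p$ on $\Phi$ is $1-\theta$.
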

	
	It is worth noting that Cordero and Nicola \cite{NFC} established global-in-time dispersive estimates in Wiener amalgam spaces for the inverted harmonic oscillator via the metaplectic representation. However, in a recent work, Takizawa \cite{takizawa2025} generalized the Strichartz estimates obtained by Cordero and Nicola in \cite{NFC} to Schr\"odinger equations with general potentials of at most quadratic growth, employing a Hamiltonian flow approach rather than the metaplectic representation, and again obtained local-in-time estimates. We also refer to \cite{Wave} for Strichartz estimates for the wave equation in the setting of Wiener amalgam spaces.

	\subsection{Orthonormal Strichartz estimate}
	While the classical Strichartz estimates provide powerful control over single solutions, many physical phenomena, particularly in quantum statistical mechanics, Bose-Einstein condensation, and nonlinear optics, require understanding the collective behavior of systems of orthonormal functions. Such systems naturally arise when considering density matrices, coherent states, or fermionic and bosonic many-body quantum systems. In these contexts, the relevant quantities are not individual functions but rather orthonormal systems \(\{u_j\}_{j=1}^N\) with \(N\) potentially infinite, and the appropriate norms measure the combined behavior of the entire ensemble. 
	The idea of extending classical functional inequalities to orthonormal systems of input functions goes back to the famous work of Lieb-Thirring \cite{LT}, where a generalization of a certain Gagliardo-Nirenberg-Sobolev estimate to orthonormal systems of \(L^2\) functions was established. The Lieb-Thirring inequalities are a key component in the proof of stability of matter; see, for example, \cite{LT} or the comprehensive survey by Lieb \cite{EL} for further details.

	For $f\in L^2(\mathbb{R}^d)$, the classical Strichartz estimates for the free Schr\"odinger propagator \(e^{it\Delta}\) can be stated as
	\begin{equation}\label{Singlestrichartz}
		\|{ |e^{it\Delta} f|^2 }\|_{L^q_t L^p_x} \lesssim \|f\|_{L^2(\mathbb{R}^d)} 
	\end{equation}
	for all spatial dimensions \(d \ge 1\) and where \(p, q \ge 1\) satisfy $
	\frac{2}{q} + \frac{d}{p} = d$ and $ (q, p, d) \neq (1, \infty, 2).
	$ 
	
	The endpoint case is \((q, p) = \left(1, \frac{d}{d-2}\right)\) for \(d \ge 3\) has been proved by Keel and Tao in \cite{KT}, and all other allowable estimates follow by interpolation with the trivial estimate at \((q, p) = (\infty, 1)\). When \(d = 2\), the estimate fails at the endpoint \((q, p) = (1, \infty)\) (see, for example, \cite{montgomery-smith}) and for \(d = 1\), the estimate at \((q, p) = (2, \infty)\) is valid. Recently, the above classical Strichartz  estimate \eqref{Singlestrichartz} has been substantially generalized to the context of orthonormal systems \((f_j)_j\) in \(L^2(\mathbb{R}^d)\) in the work of Frank-Lewin-Lieb-Seiringer \cite{frank} and Frank-Sabin \cite{FS}, resulting in the following:
	\begin{theorem}\cite{frank, FS}
		Let  \(d \ge 1\) and \(p, q \ge 1\) satisfy
		\[
		\frac{2}{q} + \frac{d}{p} = d \quad \text{and}\quad \alpha = \frac{2p}{p+1}.\]
		Then for any orthonormal systems \((f_j)_j\) in \(L^2(\mathbb{R}^d)\) and all sequences \(\lambda = (\lambda_j)_j \in \ell^\alpha(\mathbb{C})\)
		\begin{align}\label{ONSLebegue}
			\left\| \sum_{j} \lambda_j |e^{it\Delta} f_j|^2 \right\|_{L^q_t L^p_x}
			\lesssim \left( \sum_{j} |\lambda_j|^{\alpha} \right)^{1/\alpha}.
		\end{align}
		The Schatten exponent is sharp in the sense that, for such \(p, q\), the estimate fails for all \(\alpha > \frac{2p}{p+1}\). Furthermore, when \(q = \frac{d+1}{d-1}\), the estimate (1.2) holds for all \(\alpha < \frac{2p}{p+1}\) and fails when \(\alpha = \frac{2p}{p+1}\).
	\end{theorem}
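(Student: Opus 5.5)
The plan is to follow the duality-plus-complex-interpolation scheme of Frank and Sabin. First I would reformulate the estimate by duality. Write $\gamma=\sum_j\lambda_j|f_j\rangle\langle f_j|$, so that $\|\lambda\|_{\ell^\alpha}=\|\gamma\|_{\mathfrak S^\alpha}$ and $\sum_j\lambda_j|e^{it\Delta}f_j|^2=\rho\bigl(e^{it\Delta}\gamma e^{-it\Delta}\bigr)$. Set $T:L^2_x\to L^2_{t,x}$, $Tf=e^{it\Delta}f$. Testing against $V=|W|^2\in L^{q'}_tL^{p'}_x$ and using Hölder in Schatten classes, \eqref{ONSLebegue} is equivalent to
\[
\|W\,TT^*\,\overline W\|_{\mathfrak S^{\alpha'}(L^2_{t,x})}\lesssim\|W\|^2_{L^{2q'}_tL^{2p'}_x},
\]
where $TT^*$ has kernel $e^{i(t-s)\Delta}(x-y)$ in $(t,x),(s,y)$. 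Thus it suffices to bound the localized space-time propagator in the dual Schatten class $\mathfrak S^{\alpha'}=\mathfrak S^{2p'}$, noting $\alpha'=2p'$ when $\alpha=2p/(p+1)$.

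Next I would embed $TT^*$ in an analytic family. Set
\[
T_z=\frac{1}{\Gamma(z+1)}\,\mathbf 1_{t>s}\,(t-s)^{z}\,e^{i(t-s)\Delta},
\]
suitably normalized, which reproduces the retarded part of $TT^*$ at $z=0$; the advanced part is handled symmetrically. Equivalently, $T_z$ is a space-time Fourier multiplier by $(\tau+|\xi|^2-i0)^{-1-z}$.

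The two endpoints are as follows.
\begin{itemize}
\item On $\operatorname{Re}z=-1$, the multiplier is bounded with at most exponential growth in $\operatorname{Im}z$. Hence $\|W_1T_zW_2\|_{\mathfrak S^\infty}\lesssim\|W_1\|_{L^\infty}\|W_2\|_{L^\infty}$.
\item On $\operatorname{Re}z=\lambda$ with $0<\lambda<(d-1)/2$, the kernel satisfies $|T_z|\lesssim|t-s|^{\lambda-d/2}$, uniformly in $x,y$. The Hilbert--Schmidt norm of $W_1T_zW_2$ is then controlled by
\[
\iint\frac{\|W_1(t)\|_{L^2_x}^2\|W_2(s)\|_{L^2_x}^2}{|t-s|^{d-2\lambda}}\,dt\,ds,
\]
which the Hardy--Littlewood--Sobolev inequality in $t$ bounds by $\|W_1\|^2_{L^{2u}_tL^2_x}\|W_2\|^2_{L^{2u}_tL^2_x}$ with $1/u=(d-2\lambda+1)/2\cdot$ the appropriate exponent.
\end{itemize}
Stein interpolation at $z=0$ gives $\mathfrak S^{2(\lambda+1)}$ bounds with $W\in L^{2q'}_tL^{2p'}_x$, where $p'=\lambda+1$ and $2/q+d/p=d$. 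This covers $1\le p<(d+1)/(d-1)$, i.e.\ $q>(d+1)/(d-1)$ in the stated normalization.

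The remaining range is $q\le (d+1)/(d-1)$, up to the Keel--Tao endpoint. There I would interpolate the Schatten bound just obtained, used at exponents arbitrarily close to the threshold, with the trivial case $\alpha=1$. That trivial case follows from the single-function estimate \eqref{Singlestrichartz} and the triangle inequality, since $\sum_j|\lambda_j|\,\||e^{it\Delta}f_j|^2\|\le\|\lambda\|_{\ell^1}$. This yields all $\alpha<2p/(p+1)$ at $q=(d+1)/(d-1)$.

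Sharpness I would test as follows. For $\alpha>2p/(p+1)$, take $\gamma$ a semiclassical Fermi-sea projection $\mathbf 1(-h^2\Delta\le1)$ localized in a box, and compare the $h$-growth of both sides. Failure exactly at the threshold $q=(d+1)/(d-1)$ comes from a Knapp/Stein--Tomas type counterexample: a superposition of projections onto frequency-localized caps, which produces a logarithmic divergence.

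The main obstacle is the Hilbert--Schmidt endpoint. The admissible $\lambda$ must keep $d-2\lambda\in(0,1)$ so that HLS applies, which is exactly what limits the interpolation range to $q>(d+1)/(d-1)$. One also needs uniform-in-$\operatorname{Im}z$ control with admissible growth of $1/\Gamma(z+1)$. The threshold counterexample is the delicate part of the sharpness claim.
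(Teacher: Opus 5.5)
The paper does not prove this statement; it is quoted from \cite{frank, FS}. Your scheme is the same Frank--Sabin template the paper imitates in Theorem~\ref{Wttheorem1} and in the proof of Theorem~\ref{ONSH+improved}. Your retarded, $\Gamma$-normalized family is a legitimate variant of the truncated family $\mathbf 1_{\varepsilon<|t|}\,t^zK_{it}$ used there. It turns the $\operatorname{Re}z=-1$ bound into boundedness of the multiplier $(\tau+|\xi|^2-i0)^{-ia}$, uniformly near $a=0$, at the cost of treating the retarded and advanced halves separately.

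\textbf{Bookkeeping errors in the positive estimate.} The estimate goes through, but two steps are wrong as written.
\begin{enumerate}
\item The window $0<\lambda<(d-1)/2$ in your Hilbert--Schmidt bullet is wrong. It gives $d-2\lambda>1$, so $|t-s|^{-(d-2\lambda)}$ is not locally integrable and HLS is unavailable. The correct window is $(d-1)/2<\lambda\le d/2$, as you say yourself at the end, with $\frac1u=1-\frac{d-2\lambda}{2}$.
\item With that window, $p'=\lambda+1\in\bigl(\frac{d+1}{2},\frac{d+2}{2}\bigr]$, so Stein interpolation only yields $\frac{d+2}{d}\le p<\frac{d+1}{d-1}$. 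Taking $\lambda>d/2$ is impossible, since the kernel $|t-s|^{\lambda-d/2}$ then grows. The range $1\le p<\frac{d+2}{d}$ needs a second interpolation with the trivial bound $(q,p,\alpha)=(\infty,1,1)$, which comes from $\|\rho_{\gamma(t)}\|_{L^1_x}\le\|\gamma\|_{\mathfrak S^1}$. This interpolation preserves both $\frac2q+\frac dp=d$ and $\frac1\alpha=\frac12+\frac1{2p}$, because both relations are affine in $(\frac1q,\frac1p,\frac1\alpha)$. This is exactly how the paper concludes Theorem~\ref{ONSH+improved}.
\end{enumerate}
Also, $p<\frac{d+1}{d-1}$ corresponds to $q>\frac{d+1}{d}$, not $q>\frac{d+1}{d-1}$. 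The ``$q=\frac{d+1}{d-1}$'' in the statement is a typo for $p=\frac{d+1}{d-1}$, and the first assertion is only true for $1\le p<\frac{d+1}{d-1}$, as you implicitly assume.

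\textbf{What is fine.} Your interpolation with the rank-one bound, giving every $\alpha<\frac{2p}{p+1}$ at $p=\frac{d+1}{d-1}$, works for $d\ge2$, where some $p_2>\frac{d+1}{d-1}$ lies in the single-function range. The Fermi-sea test for $\alpha>\frac{2p}{p+1}$ is an acceptable substitute for the coherent-state family of Theorem~\ref{optimalityschattenexponent}. You must, however, justify $\rho_{\gamma(t)}\gtrsim h^{-d}$ on a set of unit measure for $|t|\lesssim h$ (mass conservation, plus the fact that frequency-$h^{-1}$ waves move only $O(1)$ in time $O(h)$). The Gaussian family gives this explicitly.

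\textbf{The genuine gap: failure at $p=\frac{d+1}{d-1}$, $\alpha=\frac{2p}{p+1}$.} A Knapp-type example cannot detect this failure, because the rank-one estimate is true there ($\frac{d+1}{d-1}<\frac{d}{d-2}$). ``A superposition of projections onto caps producing a logarithmic divergence'' is not an argument.

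The missing idea, due to Frank--Lewin--Lieb--Seiringer and reproduced in Proposition~\ref{endpointproposition}, is to dualize. At this $p$ one has $\alpha'=2p'=d+1$, an integer. So it suffices to exhibit $V\in L^\infty_c$ with $\operatorname{Tr}\bigl(\int e^{-it\Delta}V(t)e^{it\Delta}\,dt\bigr)^{d+1}=+\infty$.

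In momentum space this operator has kernel $\hat V(|p|^2-|q|^2,p-q)$. Take $V=\phi*\phi$ with $\phi\ge0$ even and compactly supported. Then the operator is nonnegative, $\hat V\ge0$, and $\hat V>0$ near $0$, so the trace equals
\[
\int\prod_{j=1}^{d+1}\hat V\bigl(|p_j|^2-|p_{j+1}|^2,\,p_j-p_{j+1}\bigr)\,dp_1\cdots dp_{d+1},\qquad p_{d+2}=p_1,
\]
with nonnegative integrand.

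Set $k_j=p_j-p_{j+1}$. Then $|p_j|^2-|p_{j+1}|^2=2k_j\cdot p_1+O(|k|^2)$. Hence for $|k_j|\le\delta$, the $p_1$-integral is $\gtrsim\delta^d|\det(k_1,\dots,k_d)|^{-1}$, and this is not locally integrable in $(k_1,\dots,k_d)$. That is the logarithmic divergence you allude to. Without this computation, the sharpness claim at the endpoint is unproved.
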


	The study of Fourier restriction problems in Schatten spaces has seen substantial progress through the work of Frank and Sabin. They established restriction estimates for smooth, compact surfaces in \(\mathbb{R}^n\) with non-vanishing Gaussian curvature, building on the classical Stein-Tomas theorem \cite{stein-thomas}. Using a duality argument, they then extended the Stein-Tomas estimate to orthonormal systems; see also \cite{frank-sabin2}. Their work also considered quadratic surfaces in \(\mathbb{R}^n\) within the framework of Schatten spaces. These results subsequently led to extensions of the classical Strichartz estimates \cite{RS97} from single functions to orthonormal systems for the Schr\"odinger, wave, and Klein-Gordon equations. Further developments in this direction can be found in \cite{BHLNS, BLN}.
	
	In recent years, Strichartz estimates for orthonormal systems have attracted considerable attention and have been studied in a variety of settings. In particular, much of the work has focused on orthonormal Strichartz estimates for Schr\"odinger propagators associated with different self-adjoint operators. Notable contributions include those of Mondal-Swain \cite{ssmjs} for the Hermite operator, Ghosh-Mondal-Swain \cite{gms} for the special Hermite operator, and Mondal-Song \cite{ms} for \((k,a)\)-generalized Laguerre operators and Jitendra-Pradeep-Mondal-Mejjaoli \cite{JMMP} for Fourier-Dunkl transform. Additionally, Nakamura \cite{nakamura} and Wang-Zhang-Zhang \cite{wzz} have treated the Laplace-Beltrami operator on compact manifolds. More general frameworks that apply to a broad class of dispersive equations have also been studied by Hoshiya \cite{Hos,hoshiya} and Feng-Song \cite{fs}. For recent developments in this direction, we refer to \cite{Partial} and \cite{Waveguide}, which extend orthonormal Strichartz estimates to the settings of partial regularity on the torus and waveguide manifolds, respectively. These developments rest on the foundational theory of convex functions of operators and the classical limit of quantum spin systems due to Berezin \cite{berezin} and Lieb \cite{lieb}, as well as the interpolation framework of Bergh and L\"ofstr\"om \cite{BER} and the trace ideal theory of Simon \cite{SIM}. 
	
	The orthonormal Strichartz inequality of Frank-Lewin-Lieb-Seiringer \cite{frank} and the weighted oscillatory integral estimates of Bez-Lee-Nakamura \cite{BLN} and Bez-Hong-Lee-Nakamura-Sawano \cite{BHLNS} further provide essential tools. In parallel, Wiener amalgam and time-frequency methods for the Schr\"odinger equation have been developed by Cordero-Nicola \cite{coni, NFC} and Cordero-Gr\"ochenig \cite{CGT}, building on Feichtinger's theory of modulation and Wiener amalgam spaces \cite{F, HGF, HFGB, HGFG, F2026}, while Triebel's interpolation theory \cite{TINT} supplies additional functional-analytic background.

	Motivated by the recent developments in the directions of orthonormal Strichartz estimates, here we aim to investigate Strichartz estimates for Schr\"odinger equations associated with several important operators (such as  $\mathcal{H}^{+}, \mathcal{H}^{-}$, and Hamiltonian operator $H_{\mathcal{A}}$), with the goal of extending the existing single-function estimates to orthonormal systems in the framework of Wiener amalgam spaces. To the best of our knowledge, orthonormal Strichartz estimates in Wiener amalgam spaces for the operators considered here have not yet been studied. We hope that our results will contribute to this developing area and provide a starting point for further investigations of orthonormal Strichartz estimates in more general function spaces and for a wider class of dispersive operators.

	%The goal of the present work is to extend the Strichartz theory for quadratic Schr\"odinger equations from the single-function setting to orthonormal systems in Wiener amalgam spaces.
	
	\subsection{Notational Conventions}
	Before proceeding, we fix the notation that will be used throughout this paper. 
	\begin{itemize} 
		\item  $\mathcal{H}_\mathcal{A}$:   the anisotropic Hamiltonian arising from  
		$ 
		\mathcal{A} = \begin{pmatrix} 0 & B \\ 0 & 0 \end{pmatrix},
		$ 
		with \(B^*=B\) and \(\det B \neq 0\).
		\item \( \mathcal{I} = (-\pi/4,\pi/4) \), unless otherwise specified.
		\item $\mathcal{H}^+  = -\frac{1}{4\pi} \Delta + \pi |x|^2: $ The  harmonic oscillator on $\mathbb{R}^d$.
		\item  $\mathcal{H}^- = -\frac{1}{4\pi} \Delta - \pi |x|^2:$ The inverted harmonic oscillator on $\mathbb{R}^d$.
		\item $\mathcal{W}^{p',p} := \mathcal{W}(\mathcal{F} L^{p'}, L^p):$ the Wiener amalgam space.
		\item \( L^q \mathcal{W}^{p',p} \):   the mixed space, where  the \( L^q \)-norm is taken with respect to the time variable \( t \), while the \( \mathcal{W}^{p',p} \)-norm is taken wrt the spatial variable \( x \).
		
		\item \( \mathcal{W}(L^r, L^s) \mathcal{W}^{p',p} \):   the mixed space,   the \( \mathcal{W}(L^r, L^s) \)-norm is with respect to the time variable \( t \), while the \( \mathcal{W}^{p',p} \)-norm is taken wrt the spatial variable \( x \).
		\item  \( A \lesssim B \): \( A \le C B \) for some constant \( C > 0 \) independent of the relevant parameters.
	\end{itemize}

	\section{Main results}
	This section presents the main results of the paper. We first note that the Strichartz estimate in \eqref{CorNicostrichartz} is somewhat weak, since the exponent in the time variable is only $\frac{q}{2}$. One of our main observations is that this exponent can be improved from $\frac{q}{2}$ to $q$. The following result gives the corresponding improved version of \eqref{CorNicostrichartz}.
	%Although the Strichartz estimate eqref{CorNicostrichartz} is rather weak in the sense that the time-variable exponent is merely $q/2$, we have nevertheless succeeded in sharpening this exponent to $q$. The following result is the improved version of the \eqref{CorNicostrichartz}.
	\begin{theorem}[Improved Strichartz estimate for $\mathcal{H}^{+}$]\label{strichartzH+improved}
		Let $2 < q \leq \infty$ and  $2 \leq p < \frac{2(d+1)}{d-1}$ such that
		\begin{equation}
			\frac{2}{q} + \frac{d}{p} = \frac{d}{2}.
		\end{equation}
		Then for $f\in L^2$, we have the following Strichartz estimates
		\begin{equation} 
			\|e^{it\mathcal{H}^{+}} f\|_{L^{q}_t((-\pi/4,\pi/4),\mathcal{W}^{p',p}_x(\mathbb{R}^d))} \lesssim \|f\|_{L^2_x}. \label{strichartzforH^+}
		\end{equation}
	\end{theorem}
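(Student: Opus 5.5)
The plan is to run the standard $TT^*$ argument with the dispersive estimate for $\mathcal{H}^+$, but to use the full $|\sin t|^{-d}$ decay on $\mathcal{I}=(-\pi/4,\pi/4)$ together with the Hardy--Littlewood--Sobolev inequality in time. Cordero--Nicola instead lose half the time exponent through a cruder Young/Hölder step. Set $Tf(t)=e^{it\mathcal{H}^+}f$, $t\in\mathcal{I}$. Then
\[
TT^*F(t)=\int_{\mathcal{I}}e^{i(t-s)\mathcal{H}^+}F(s)\,ds .
\]
Since $|t-s|<\pi/2$, we have $|\sin(t-s)|\gtrsim |t-s|$, and the dispersive estimate gives
\[
\|e^{i(t-s)\mathcal{H}^+}g\|_{\mathcal{W}(\mathcal{F}L^1,L^\infty)}\lesssim |t-s|^{-d}\,\|g\|_{\mathcal{W}(\mathcal{F}L^\infty,L^1)} .
\]
Unitarity on $L^2=\mathcal{W}(\mathcal{F}L^2,L^2)$ gives the trivial endpoint.

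Next, interpolate the two fixed-time bounds by complex interpolation of Wiener amalgam spaces, using $[\mathcal{W}(\mathcal{F}L^{p_0'},L^{p_0}),\mathcal{W}(\mathcal{F}L^{p_1'},L^{p_1})]_\theta=\mathcal{W}(\mathcal{F}L^{p'},L^p)$. With $\theta=1-2/p$ this yields
\[
\|e^{i(t-s)\mathcal{H}^+}g\|_{\mathcal{W}^{p',p}}\lesssim|t-s|^{-d(1-2/p)}\|g\|_{\mathcal{W}^{p,p'}} .
\]
Insert this into $TT^*$ via Minkowski's inequality and apply one-dimensional HLS in $t$: the kernel $|t-s|^{-\gamma}$ with $\gamma=d(\tfrac12-\tfrac1p)\cdot 2=4/q$ maps $L^{q'}(\mathcal{I})\to L^q(\mathcal{I})$ exactly when $1/q'-1/q=1-\gamma$, i.e. $\gamma=2/q$. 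Here the weight is written as $d(1-2/p)$, so a factor-of-two discrepancy must be reconciled. The bound $\|e^{it\mathcal H^+}\|_{\mathcal W(\mathcal F L^1,L^\infty)}\lesssim|\sin t|^{-d}$ corresponds to decay $t^{-d}$, whereas $L^1\to L^\infty$ gives $t^{-d/2}$. This discrepancy is exactly why \eqref{CorNicostrichartz} carries $q/2$: with decay rate $d(1-2/p)=4/q$, HLS gives $L^{(q/2)'}\to L^{q/2}$.

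So pure $TT^*$ only reproduces the $q/2$ result, and improving to $q$ requires a genuinely different input. I would proceed as follows. First, write $e^{it\mathcal{H}^+}=\mu(\chi_t)$ and use the Mehler formula to factor the propagator on $\mathcal{I}$ as a product of a chirp multiplication, a dilation by $1/\cos t$ (harmless since $\cos t\sim1$ on $\mathcal I$), and the free propagator $e^{i\tan t\,\Delta/(4\pi)}$, with $\tan t$ a diffeomorphism of $\mathcal{I}$ onto $(-1,1)$. Second, check that chirp multiplication $e^{\pi i a|x|^2}$ with bounded $a$ and bounded dilations act boundedly on $\mathcal{W}^{p',p}$ for $p\ge2$. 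For the chirp this holds because its Fourier transform is again a chirp, which lies in $\mathcal{F}L^\infty$ locally with bounded local norms. Third, apply the change of variables $\tau=\tan t$ to reduce the statement to the free estimate $\|e^{i\tau\Delta/4\pi}f\|_{L^q_\tau((-1,1))\mathcal{W}^{p',p}}\lesssim\|f\|_2$.

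For the free case, use the embedding $L^p\hookrightarrow\mathcal{W}(\mathcal{F}L^{p'},L^p)$ for $p\ge2$: locally, Hausdorff--Young gives $\|\mathcal{F}(fg)\|_{L^{p'}}\le\|fg\|_{L^p}$. Combined with the classical Strichartz estimate, this gives $L^q_\tau L^p$ with $\tfrac2q+\tfrac dp=\tfrac d2$, valid for $q\ge2$ with $(q,p)\neq(2,\infty)$. The stated restriction $p<\tfrac{2(d+1)}{d-1}$ suggests that the authors instead go through Stein--Tomas/Frank--Sabin-type interpolation. The embedding route seems cleaner, and I would use it.

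The main obstacle is the second step: verifying uniform boundedness of the chirp and dilation factors on $\mathcal{W}^{p',p}$ for $t\in\mathcal{I}$, and making sure that the resulting constants are uniform as $t\to\pm\pi/4$.
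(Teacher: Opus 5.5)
\textbf{The final step fails: the embedding you use goes the other way.} Your opening diagnosis is correct. For $p>2$, Hausdorff--Young maps $L^{p'}\to L^{p}$, not $L^{p}\to L^{p'}$. Hence locally $\mathcal{F}L^{p'}\hookrightarrow L^{p}$, and
\[
\mathcal{W}(\mathcal{F}L^{p'},L^p)\hookrightarrow\mathcal{W}(L^p,L^p)=L^p .
\]
Your inequality $\|\mathcal{F}(fg)\|_{L^{p'}}\le\|fg\|_{L^p}$ is false for $p>2$; for $p=\infty$ it would force every bounded compactly supported function to be continuous. So the amalgam norm is \emph{stronger} than the $L^p$ norm. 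That is exactly why its dispersive rate is $|t|^{-d}$ rather than $|t|^{-d/2}$, and the classical $L^q_\tau L^p_x$ Strichartz estimate implies nothing here.

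Your metaplectic reduction itself is sound, and the uniformity you flagged is harmless. Chirps $e^{i\pi a|x|^2}$ with $|a|\le1$, and their inverses, lie uniformly in $\mathcal{W}(\mathcal{F}L^1,L^\infty)$. Dilations by $\cos t\in(2^{-1/2},1]$ are isomorphisms. But the reduction only shows that the claim is equivalent to $\|e^{i\tau\Delta}f\|_{L^q((-1,1);\mathcal{W}^{p',p})}\lesssim\|f\|_{L^2}$. That free estimate is the entire content, and it is left unproved.

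\textbf{The missing free estimate fails for every $p>2$, so the gap cannot be closed.} Let $f_\delta(x)=\delta^{-d/2}e^{-\pi|x|^2/\delta^2}$, so $\|f_\delta\|_{L^2}$ is independent of $\delta$. Normalize $\widehat{e^{it\Delta}f}=e^{-4\pi^2it|\xi|^2}\hat f$ and set $\beta=d(\frac12-\frac1p)=\frac2q$. An explicit computation with a Gaussian window (which gives an equivalent norm) yields, uniformly for $|t|\le1$ and $0<\delta\le1$,
\[
\|e^{it\Delta}f_\delta\|_{\mathcal{W}(\mathcal{F}L^{p'},L^p)}\asymp\delta^{\beta}(\delta^2+t^2)^{-\beta},
\qquad
\|e^{it\Delta}f_\delta\|_{L^p}\asymp\delta^{\beta}(\delta^2+|t|)^{-\beta}.
\]
The local $\mathcal{F}L^{p'}$ norm essentially sees only $|\widehat{f_\delta}|$, which does not disperse. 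So the amalgam norm stays of size $\delta^{-\beta}$ until $|t|\sim\delta$, not merely until $|t|\sim\delta^2$. Since $\beta q=2$, the $L^q_t((-1,1))$ norm of the first quantity is $\asymp\delta^{-1/q}\to\infty$, while its $L^{q/2}_t$ norm stays bounded.

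Through your own reduction the same blow-up occurs for $e^{it\mathcal{H}^+}$ on $(-\pi/4,\pi/4)$. So the time exponent $q/2$ in the Cordero--Nicola estimate cannot be raised, and the displayed bound can hold only in the trivial case $p=2$.

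The paper's Frank--Sabin-type route does not get around this. It bounds the density $|Af|^2$ and then passes to $\|Af\|_{L^q\mathcal{W}^{p',p}}$. Locally, Young's inequality only gives $\||u|^2\|_{\mathcal{F}L^{(p/2)'}}\le\|u\|_{\mathcal{F}L^{p'}}^2$, which is the wrong direction for that step. Moreover, $|e^{it\Delta}f_\delta|^2$ is an unchirped Gaussian, so the density does not see the obstruction.
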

	The following result extends the above improved estimate from the setting of a single function to a system of orthonormal functions.
	\begin{theorem}[ONS for $\mathcal{H}^{+}$]\label{ONSH+improved}
		Suppose $q \geq 1$ and $p\geq 1$ satisfies
		\[
		1 \leq p < \frac{d+1}{d-1} \quad \text{and}\quad \frac{2}{q} + \frac{d}{p} = d.
		\]
		Then for any orthonormal systems \((f_j)_j\) in \(L^2(\mathbb{R}^d)\) and  any sequence \(\{\alpha_j\}_{j\in J}\) in \(\mathbb{C}\), we have 
		\begin{align}
			\Bigg\|\sum_{j\in J}\alpha_j |e^{it\mathcal{H}^{+}}f_j|^2\Bigg\|_{L_t^{{q}}((-\pi/4,\pi/4), \mathcal{W}_x^{p',p}(\mathbb{R}^d))}\leq C_{d,p}\|\gamma\|_{\mathfrak{S}^{\frac{2p}{p+1}}(L^2(\mathbb{R}^d))}.\label{ONShermite1}
		\end{align} 
	\end{theorem}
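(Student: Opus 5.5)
We follow the Frank--Sabin duality strategy. Write $\gamma=\sum_j\alpha_j|f_j\rangle\langle f_j|$, so that $\rho_{\gamma}(t,x)=\sum_j\alpha_j|e^{it\mathcal{H}^+}f_j(x)|^2$ is the density of $\gamma(t)=e^{it\mathcal{H}^+}\gamma e^{-it\mathcal{H}^+}$. We need a duality pairing for the mixed space $L^q_t\mathcal{W}(\mathcal{F}L^{p'},L^p)_x$, whose dual (for $1\le p<\infty$, $q<\infty$) is $L^{q'}_t\mathcal{W}(\mathcal{F}L^{p},L^{p'})_x$. The standard Frank--Sabin lemma (\cite{FS}, Lemma 3) then reduces \eqref{ONShermite1} to the Schatten bound
\[
\|W\,T\,T^*\,\overline{W}\|_{\mathfrak{S}^{2p'}(L^2_{t,x})}\lesssim \|W\|^2_{L^{2q'}_t\mathcal{W}(\mathcal{F}L^{2p},L^{2p'})_x}.
\]
Here $T:L^2_x\to L^2_{t,x}$, $Tf=\mathbf{1}_{\mathcal{I}}(t)e^{it\mathcal{H}^+}f$. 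In the Wiener amalgam setting, $|W|^2$ should be read as the relevant product-type element. More precisely, we would phrase the duality so that only a bilinear pairing $\langle \rho_\gamma, V\rangle$ with $V\in L^{q'}\mathcal{W}^{p,p'}$ is needed. We would factor $V=W_1\overline{W_2}$ using the Hölder-type product rule $\mathcal{W}(\mathcal{F}L^{2p},L^{2p'})\cdot\mathcal{W}(\mathcal{F}L^{2p},L^{2p'})\subset \mathcal{W}(\mathcal{F}L^{p},L^{p'})$. This rule follows from $\mathcal{F}L^{a}*$-convolution (Young) locally and Hölder globally, and would be checked in the restricted range.

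The Schatten bound is proved by Stein complex interpolation on an analytic family
\[
T_z=W_1^{z}\,\Big(\int_{\mathcal{I}} e^{i(t-s)\mathcal{H}^+}\,\frac{(\cdot)}{\Gamma(z+1)}\,\cdots\Big)W_2^{z},
\]
in the spirit of \cite{FS}. Concretely, we consider the kernel $K_z(t,s)=\mathbf{1}_{\mathcal{I}^2}|t-s|^{-(1+z)}\,\mathbf{1}_{\{\cdot\}}\,e^{i(t-s)\mathcal{H}^+}$ with the usual $\Gamma$-normalization. The two endpoints are as follows.

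On $\mathrm{Re}\,z=0$ we use the $L^2_{t,x}\to L^2_{t,x}$ boundedness. This comes from unitarity of $e^{it\mathcal{H}^+}$ together with Plancherel in $t$ (multiplication by $|t-s|^{-1-i\tau}$ is a bounded singular integral). The multiplier factors $W^{i\tau}$ lie in $L^\infty_t\mathcal{W}(\mathcal{F}L^{1},L^\infty)$, which acts boundedly on $L^2$ by multiplication, since $\mathcal{W}(\mathcal{F}L^1,L^\infty)\subset L^\infty$.

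On $\mathrm{Re}\,z=\lambda$ we use the Hilbert--Schmidt ($\mathfrak{S}^2$) bound. Its kernel is $W_1(t,x)\,|t-s|^{-\lambda}\,k_{t-s}(x,y)\,W_2(s,y)$, where $k_t$ is the kernel of $e^{it\mathcal{H}^+}$. By Mehler's formula, $|k_t(x,y)|\lesssim|\sin t|^{-d/2}\sim|t|^{-d/2}$ on $\mathcal{I}-\mathcal{I}$. The $\mathfrak{S}^2$ norm is then controlled by the Hardy--Littlewood--Sobolev inequality in $t$, with total singularity $|t-s|^{-(2\lambda+d)}$. The spatial part is controlled by $\|W_j(t)\|_{L^2}$, i.e.\ a $\mathcal{W}(\mathcal{F}L^{2},L^{2})=L^2$ norm. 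The value of $\lambda$ is fixed so that interpolation between $\mathrm{Re}\,z=0$ (with $\mathfrak{S}^\infty$, weight in $\mathcal{W}^{1,\infty}$) and $\mathrm{Re}\,z=\lambda$ (with $\mathfrak{S}^2$, weight in $L^2$) lands exactly at $\mathfrak{S}^{2p'}$ with weights in $L^{2q'}_t\mathcal{W}(\mathcal{F}L^{2p},L^{2p'})$. This requires complex interpolation of Wiener amalgam spaces, $[\mathcal{W}(\mathcal{F}L^1,L^\infty),\mathcal{W}(\mathcal{F}L^2,L^2)]_\theta=\mathcal{W}(\mathcal{F}L^{2p},L^{2p'})$, which is Feichtinger's interpolation theorem. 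The admissibility relation $\frac2q+\frac dp=d$ then corresponds exactly to the HLS condition in time. The restriction $p<\frac{d+1}{d-1}$ is precisely the requirement $0<\mathrm{Re}(\text{HLS exponent})<1$, i.e.\ the non-endpoint regime of \cite{FS}, Theorem 8. The local-in-time interval $\mathcal{I}$ is used because $|\sin(t-s)|\sim|t-s|$ there, avoiding the periodic singularities of Mehler's kernel.

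The main obstacle is making the analytic family compatible with Wiener amalgam norms. The powers $W^{z}$ of a function in $\mathcal{W}(\mathcal{F}L^{2p},L^{2p'})$ are not obviously controlled in $\mathcal{W}(\mathcal{F}L^1,L^\infty)$ or $L^2$, since the local component $\mathcal{F}L^r$ is not a lattice. To handle this we would not raise $W$ to complex powers pointwise. Instead we apply Stein interpolation abstractly to the bilinear map $(W_1,W_2)\mapsto W_1T_zT^*\overline{W_2}$ between the interpolation couples, using density of simple/Schwartz functions and the abstract complex method. If that fails, a fallback is to prove the estimate with $L^{2q'}_tL^{2p'}_x$-type weights via the Frank--Sabin argument directly. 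We would then pass to the amalgam norm by duality using the embedding $\mathcal{W}(\mathcal{F}L^{p},L^{p'})\hookrightarrow L^{p'}$ for $p\le 2$, i.e.\ $L^p\hookrightarrow\mathcal{W}(\mathcal{F}L^{p'},L^p)$. This already gives \eqref{ONShermite1} from the known $L^q_tL^p_x$ orthonormal bound for $e^{it\mathcal{H}^+}$ on $\mathcal{I}$ (the Mondal--Swain Hermite result \cite{ssmjs}, or via the lens transform from \eqref{ONSLebegue}). This fallback route is likely the cleanest.
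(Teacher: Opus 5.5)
Your fallback is a genuinely different route from the paper's, and where it applies it is the better one. For $1\le p\le 2$, Hausdorff--Young on each localized piece $fT_xg$, together with $\mathcal{W}(L^p,L^p)=L^p$, gives $\|f\|_{\mathcal{W}(\mathcal{F}L^{p'},L^p)}\lesssim\|f\|_{L^p}$. So \eqref{ONShermite1} follows immediately from the Lebesgue orthonormal bound on $\mathcal{I}$. That bound is available from Mondal--Swain, or from Frank--Sabin's free estimate transported by the lens transform, where $\frac2q+\frac dp=d$ is exactly the condition that makes the time weight disappear. Since $\frac{d+1}{d-1}\le 2$ for $d\ge 3$, this settles the theorem in those dimensions. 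It needs neither amalgam-valued Stein interpolation (Lemma \ref{interpolationtheorem2}, which the paper states without proof) nor an amalgam duality principle. It also shows that, in those dimensions, the amalgam statement is weaker than the known Lebesgue one.

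The gap is your claim that this already gives \eqref{ONShermite1} in general. For $d=1$ (all $p>2$) and $d=2$ ($2<p<3$), the admissible range contains $p>2$. There the embedding reverses, $\mathcal{W}(\mathcal{F}L^{p'},L^p)\hookrightarrow L^p$, so the Lebesgue estimate yields nothing. That range genuinely needs your main route, which is the paper's argument: Theorem \ref{Wttheorem1} with $\lambda=d/2$, followed by Theorem \ref{Duality}.

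As written, the main route has two defects.

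\textbf{Mis-indexed weight space.} The local exponent of $[\mathcal{W}(\mathcal{F}L^1,L^\infty),\mathcal{W}(\mathcal{F}L^2,L^2)]_\theta$ lies in $[1,2]$. Landing at global exponent $2p'$ gives $\mathcal{W}(\mathcal{F}L^{\frac{2p}{p+1}},L^{2p'})$, not $\mathcal{W}(\mathcal{F}L^{2p},L^{2p'})$. Consistently, Young's inequality gives $\mathcal{F}L^a\cdot\mathcal{F}L^a\subset\mathcal{F}L^p$ only for $a=\frac{2p}{p+1}$.

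\textbf{Incomplete range.} A single interpolation between the Hilbert-transform line ($\mathfrak{S}^\infty$) and the HLS line ($\mathfrak{S}^2$, time singularity $\sigma\in[0,1)$) reaches only $2p'=d+2-\sigma\in(d+1,d+2]$, i.e.\ $1+\frac2d\le p<\frac{d+1}{d-1}$. The constraint $\sigma\ge0$ cuts off the lower range, so the HLS condition does not by itself encode the whole theorem. The remaining range $1\le p<1+\frac2d$ requires interpolating the linear map $\gamma\mapsto\rho_{\gamma(t)}$ with the trivial bound $\mathfrak{S}^1\to L^\infty_t\mathcal{W}^{\infty,1}_x$. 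This is exactly the final step of the paper's proof. For $d=1$ it is also what covers $2<p<3$, the gap between your fallback and the main route.

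Finally, the product rule you invoke gives only the inclusion $W_1\overline{W_2}\in L^{q'}_t\mathcal{W}(\mathcal{F}L^p,L^{p'})$. Duality needs the converse: every $V$ in the dual ball must factor as $W_1\overline{W_2}$ with controlled norms. That is not available for Fourier--Lebesgue local components. The paper passes over the same point by simply setting $V=|W|^2$ in Theorem \ref{Duality}. So wherever the main route is used, this step still needs a real argument.
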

	Here $\gamma$ is an operator on $L^2(\mathbb{R}^d)$ defined in \eqref{defgamma} and  $\|\gamma\|_{\mathfrak{S}^{\frac{2p}{p+1}}(L^2(\mathbb{R}^d))}=\|(\alpha_j)_{j\in J}\|_{l^{\frac{2p}{p+1}}}$. In addition to the above, we shall establish in the Theorem \ref{optimalityschattenexponent}, that the   Schatten exponent $\frac{2p}{p+1}$ on the right-hand side of the estimate \eqref{ONShermite1} is optimal. Furthermore,  the endpoint $\frac{d+1}{d-1}$ is sharp in the sense that the inequality \eqref{ONShermite1}  does not hold true for $p= \frac{d+1}{d-1}$ (See proposition \ref{endpointproposition}). Corresponding to the Strichartz estimates \eqref{CorNicostrichartz} on Wiener amalgam spaces, we obtain the following analogue for systems of orthonormal functions.	
	\begin{theorem}[ONS for the harmonic oscillator corresponds to Theorem 5.2 of \cite{NFC}]\label{ONSq/2}
		Suppose $q \geq 1$ and $p\geq 1$ satisfies
		\[
		1 \leq p < \frac{2d+1}{2d-1} \quad \text{and}\quad \frac{2}{q} + \frac{d}{p} = d.
		\]
		Then for any orthonormal systems \((f_j)_j\) in \(L^2(\mathbb{R}^d)\) and  any sequence \(\{\alpha_j\}_{j\in J}\) in \(\mathbb{C}\), we have 
		\begin{align}
			\Bigg\|\sum_{j\in J}\alpha_j |e^{it\mathcal{H}^{+}}f_j|^2\Bigg\|_{L_t^{\frac{q}{2}}((-\pi/4,\pi/4), \mathcal{W}_x^{p',p}(\mathbb{R}^d))}\leq C_{d,p}\|\gamma\|_{\mathfrak{S}^{\frac{2p}{p+1}}(L^2(\mathbb{R}^d))}.\label{ONShermite2}
		\end{align} 
	\end{theorem}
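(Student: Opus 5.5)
We follow the Frank--Sabin duality scheme, with the Wiener amalgam structure handled through Stein complex interpolation. By the duality principle of Frank and Sabin, estimate \eqref{ONShermite2} is equivalent to a Schatten bound on the weighted operator $W T T^* \overline{W}$, where $T f = e^{it\mathcal{H}^+}f$ on $\mathcal{I}$. Precisely, \eqref{ONShermite2} holds if and only if
\[
\|W_1 T T^* W_2\|_{\mathfrak{S}^{2p'}(L^2(\mathcal{I}\times\mathbb{R}^d))} \lesssim \|W_1\|_{L^{q'}_t(\mathcal{I},\mathcal{W}^{p,p'}_x)}\,\|W_2\|_{L^{q'}_t(\mathcal{I},\mathcal{W}^{p,p'}_x)}
\]
(up to the appropriate pairing of exponents, here $\tfrac{q}{2}$ in time). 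Before using this we must check two structural facts. First, the dual of $L^{q/2}_t\mathcal{W}^{p',p}_x$ is the mixed space with exponents $(q/2)'$ in time and $\mathcal{W}(\mathcal{F}L^{p},L^{p'})$ in space. Second, multiplication by $|W|^2$-type functions is controlled by the amalgam product inequality $\mathcal{W}(\mathcal{F}L^{a},L^{b})\cdot\mathcal{W}(\mathcal{F}L^{c},L^{e})\hookrightarrow\mathcal{W}(\mathcal{F}L^{f},L^{g})$ via Young and H\"older. The kernel of $TT^*$ is $e^{i(t-s)\mathcal{H}^+}(x,y)$, given by Mehler's formula, which behaves like $|\sin(t-s)|^{-d/2}$ times a chirp.

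Next we embed $TT^*$ in an analytic family $T_z$, $z$ complex. The kernel of $T_z$ is $|\sin(t-s)|^{z}$ (with a Gamma factor, as in Frank--Sabin) times the Mehler kernel, so that $T_{-1}$ corresponds to $TT^*$ after a rescaling of the exponent. On the line $\operatorname{Re} z=0$ we prove the $\mathfrak{S}^\infty$ bound $\|W_1T_zW_2\|_{\mathfrak{S}^\infty}\lesssim\|W_1\|_{L^\infty_t\mathcal{W}(\mathcal{F}L^{1},L^{\infty})}\|W_2\|_{L^\infty_t\mathcal{W}(\mathcal{F}L^{1},L^{\infty})}$. This uses $L^2$-boundedness of $T_z$, which comes from unitarity and the fact that the time multiplier $|\sin(t-s)|^{iy}$ is a bounded convolution-type kernel on $\mathcal{I}$ (there $|\sin|\sim|t-s|$), together with $\mathcal{W}(\mathcal{F}L^1,L^\infty)\subset L^\infty$.

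On the line $\operatorname{Re} z=\lambda$ with $\lambda>0$ we prove a Hilbert--Schmidt bound. We write $\|W_1T_zW_2\|_{\mathfrak{S}^2}^2$ as a double integral of $|W_1(t,x)|^2|W_2(s,y)|^2$ against the squared kernel. The key input is the dispersive estimate, in the form
\[
|e^{i(t-s)\mathcal{H}^+}(x,y)|\lesssim|\sin(t-s)|^{-d/2},
\]
upgraded to the amalgam setting through $\mathcal{W}(\mathcal{F}L^\infty,L^1)\to\mathcal{W}(\mathcal{F}L^1,L^\infty)$. This gives a time kernel $|t-s|^{2\operatorname{Re} z-d}$ on $\mathcal{I}$. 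We then apply Hardy--Littlewood--Sobolev in $t$, or plain Young's inequality since we target only $q/2$ and the interval is bounded, while the spatial part is absorbed in $\mathcal{W}(\mathcal{F}L^{\infty},L^1)$-type norms of $|W|^2$. Stein interpolation between the two lines then yields $\mathfrak{S}^{2p'}$ at the point $z=-1$ (after normalization), with $p'$ determined by $\lambda$. Finally, the admissibility relation $\tfrac2q+\tfrac dp=d$ fixes the time exponent.

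The restriction $p<\tfrac{2d+1}{2d-1}$ should come from the HLS/Young step. Here the time exponent is halved: we work in $L^{q/2}$, so the dual time exponent is larger. This forces the singularity exponent $d-2\lambda$ to be strictly less than the integrability threshold, and that translates into $\lambda<d+\tfrac12$ rather than the $\tfrac{d+1}{2}$ of Theorem \ref{ONSH+improved}. Converting this back to $p$ gives $p<\tfrac{2d+1}{2d-1}$.

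The main obstacle is the Hilbert--Schmidt step. There we must control a kernel estimate pointwise in $(x,y)$, but we only have weights measured in Wiener amalgam norms rather than Lebesgue norms. I would first try to bound the spatial integral $\iint|W_1(x)|^2|K(x,y)|^2|W_2(y)|^2$ by duality using the amalgam dispersive estimate applied to $|W_2|^2$. This needs care, since $|K|^2$ is no longer an oscillatory kernel. If that fails, I would fall back to the embeddings $\mathcal{W}(\mathcal{F}L^{p},L^{p'})\hookrightarrow L^{p'}$ for $p\le2$ (as $\mathcal{F}L^p\subset L^{p'}$ locally). That reduces the problem to the Lebesgue-space Frank--Sabin argument, with the bounded interval and the weaker $q/2$ time exponent absorbing the losses.
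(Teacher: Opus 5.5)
Your overall scheme (Frank--Sabin duality, then Stein interpolation between an $\mathfrak{S}^\infty$ line and an $\mathfrak{S}^2$ line, with pointwise kernel decay and HLS in time) is the one the paper uses. However, two steps do not work as you set them up.

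\textbf{The interpolation does not close.} Your computations use the physical-time weight $|\sin(t-s)|^{z}$; this is what produces the kernel $|t-s|^{2\operatorname{Re}z-d}$ on your Hilbert--Schmidt line. With that weight, $TT^*$ is $z=0$, which is exactly your $\mathfrak{S}^\infty$ line, and the point $z=-1$ where you interpolate lies outside the strip $0\le\operatorname{Re}z\le\lambda$. You have merged the Fourier-side labelling of Frank--Sabin's restriction argument (there $z=-1$ is $TT^*$ and $\operatorname{Re}z=0$ is the bounded-multiplier line) with a physical-time weight. The $\mathfrak{S}^\infty$ bound you actually prove, with the bounded factor $|\sin(t-s)|^{iy}$ on a bounded interval, sits at the same singularity level as $TT^*$, so it gives no Schatten gain.

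The missing ingredient is the $\mathfrak{S}^\infty$ bound on $\operatorname{Re}z=-1$, where the time weight $|t-s|^{-1+ia}$ is not locally integrable. The paper obtains it by conjugating with the propagator. Setting $G(t)=e^{-it\mathcal{H}^+}F(t)$ gives $e^{-it\mathcal{H}^+}T_{z,\varepsilon}F(t)=\int_{\varepsilon<|s|<\pi/4}s^{-1+ia}G(t-s)\,ds$. This is a truncated Hilbert-type transform in $t$, bounded on $L^2$ uniformly in $\varepsilon$, and multiplication by $W\in L^\infty_t\mathcal{W}(\mathcal{F}L^1,L^\infty)$ is bounded on $L^2$. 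Interpolating at $z=0$ between $\operatorname{Re}z=-1$ and a Hilbert--Schmidt line $\operatorname{Re}z=\lambda'\in(\lambda-\frac12,\lambda]$ gives $\mathfrak{S}^{2(1+\lambda')}$.

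The step you call the main obstacle is not one. The Hilbert--Schmidt line sits at $\mathcal{W}^{2,2}=L^2$ in space, so only $\sup_{x,y}|K_{i(t-s)}(x,y)|$ and $\|W_j(t,\cdot)\|_{L^2}$ enter. The amalgam structure appears only when interpolating $L^u_t\mathcal{W}^{2,2}_x$ with $L^\infty_t\mathcal{W}^{1,\infty}_x$. Also, the weights in the dual formulation belong to $L^{2(q/2)'}_t\mathcal{W}^{(2p')',2p'}_x$, not to $L^{q'}_t\mathcal{W}^{p,p'}_x$.

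\textbf{Nothing in your argument produces the exponent $q/2$ with the range $p<\frac{2d+1}{2d-1}$.} Your heuristic ``$\lambda<d+\frac12$ rather than $\frac{d+1}{2}$'' is reverse-engineered. Those numbers are the lower bounds for $p'$ in the two theorems, not constraints on a real part. With the true decay $|t|^{-d/2}$ and HLS, the corrected scheme yields the $L^q_t$ estimate of Theorem~\ref{ONSH+improved} for $p<\frac{d+1}{d-1}$.

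The paper's proof instead uses the cruder bound $|K_{it}(x,y)|\lesssim|t|^{-d/2}\le|t|^{-d}$ on $\mathcal{I}$ and runs Theorem~\ref{Wttheorem1} with $\lambda=d$. HLS then forces the Schatten exponent $2p'\in(2d+1,2d+2]$, with weights in $L^u_t$ where $\frac1u+\frac{d}{2p'}=\frac12$. That is $u=2(q/2)'$ with $\frac2q+\frac dp=d$. Duality then gives the $L^{q/2}_t$ bound for $\frac{d+1}{d}\le p<\frac{2d+1}{2d-1}$. Interpolation with the trivial $(q,p)=(\infty,1)$ estimate, which you never invoke, covers the rest of $1\le p$.

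Alternatively, your true-decay route would suffice if you closed it with H\"older on the bounded interval: $L^q(\mathcal{I})\hookrightarrow L^{q/2}(\mathcal{I})$ and $\frac{2d+1}{2d-1}<\frac{d+1}{d-1}$. You would have to say this explicitly rather than attribute the range to HLS.

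Your fallback via $\mathcal{W}(\mathcal{F}L^{p},L^{p'})\hookrightarrow L^{p'}$ is sound only for $p\le2$. Given the Lebesgue-space result for $e^{it\mathcal{H}^+}$, it handles $d\ge2$. It fails for $d=1$, $2<p<3$, where $\mathcal{W}(\mathcal{F}L^{p'},L^p)$ controls $L^p$ rather than the reverse.
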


	Theorem \ref{Single function estimates} establishes the Strichartz estimates for $\mathcal{H}_{\mathcal{A}}$ in the setting of a single function. In the following result, we extend these estimates to systems of orthonormal functions in the one-dimensional case $d=1$.
	\begin{theorem}[ONS for $\mathcal{H}_{\mathcal{A}}$]\label{theoremONSHA}
		Suppose $p, q \geq 1$ satisfies
		\[
		1 \leq p < 5 \quad \text{and}\quad \frac{2}{q} + \frac{1}{p} = 1.
		\]
		Then for any orthonormal systems \((f_j)_j\) in \(L^2(\mathbb{R})\) and  any sequence \(\{\alpha_j\}_{j\in J}\) in \(\mathbb{C}\), we have 
		\begin{align}\label{ON1}
			\Bigg\|\sum_{j\in J}\alpha_j |e^{it\mathcal{H}_{\mathcal{A}}}f_j|^2\Bigg\|_{\mathcal{W}_t(L^{\frac{q}{2}},L^q)(\mathbb{R},  \mathcal{W}_x^{p',p}(\mathbb{R}))} \leq C_{d,p}\|\gamma\|_{\mathfrak{S}^{\frac{2p}{p+1}}(L^2(\mathbb{R}))}.%\label{ONSharmiteineq}   
		\end{align}
	\end{theorem}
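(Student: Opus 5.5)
We follow the Frank--Sabin duality scheme, in the form already used for $\mathcal{H}^{+}$, adapted to the amalgam norm in time. Write $\gamma=\sum_j\alpha_j|f_j\rangle\langle f_j|$, so that $\rho_\gamma(t,x)=\sum_j\alpha_j|e^{it\mathcal{H}_{\mathcal{A}}}f_j(x)|^2$ is the density of $\gamma(t)=e^{it\mathcal{H}_{\mathcal{A}}}\gamma e^{-it\mathcal{H}_{\mathcal{A}}}$. By duality of Wiener amalgam spaces, $\mathcal{W}(L^{q/2},L^q)'=\mathcal{W}(L^{(q/2)'},L^{q'})$ in $t$ and $(\mathcal{W}^{p',p})'=\mathcal{W}(\mathcal{F}L^{p},L^{p'})$ in $x$. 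Hence \eqref{ON1} is equivalent to
\begin{equation*}
\bigl\|W\,T\,T^*\,\overline{W}\bigr\|_{\mathfrak{S}^{2p'}}\lesssim \|W\|^2_{\mathcal{W}_t(L^{2(q/2)'},L^{2q'})\,\mathcal{W}(\mathcal{F}L^{2p},L^{2p'})_x},
\end{equation*}
where $T f=e^{it\mathcal{H}_{\mathcal{A}}}f$ and $W$ acts by multiplication. Here $(2p/(p+1))'=2p'$, and the square-root correspondence $W\mapsto |W|^2$ has to be checked for amalgam norms. That correspondence is not literally exact for $\mathcal{F}L$-local components, so we will instead phrase the duality with a product $W_1\cdot W_2$ and prove a bilinear Schatten bound. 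The kernel of $TT^*$ is $e^{i(t-s)\mathcal{H}_{\mathcal{A}}}(x,y)$, which is explicit for $\mathcal{H}_{\mathcal{A}}=-\frac{1}{4\pi}B\partial_x^2$ in $d=1$. It is a constant times $(\lambda(t-s))^{-1/2}e^{\pi i|x-y|^2/(\lambda(t-s))}$, where $\lambda=B\neq0$.

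\textbf{Complex interpolation.} Following Frank--Sabin, we embed $TT^*$ in an analytic family
\begin{equation*}
T_z=\int e^{i(t-s)\mathcal{H}_{\mathcal{A}}}\,(t-s)_+^{z}\,\cdots\,ds ,\qquad \text{suitably normalized by } \Gamma\text{-factors},
\end{equation*}
and consider $W_1T_zW_2$.

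On the line $\operatorname{Re}z=0$, $T_z$ is bounded on $L^2_{t,x}$ by Plancherel in $t$, using the spectral representation of $\mathcal{H}_{\mathcal{A}}$, which is a Fourier multiplier. This gives $\|W_1T_zW_2\|_{\mathfrak{S}^\infty}\lesssim\|W_1\|_{L^\infty_{t,x}}\|W_2\|_{L^\infty_{t,x}}$, with growth in $\operatorname{Im}z$ at most exponential.

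On the line $\operatorname{Re}z=-\lambda_0$, we use the $\mathfrak{S}^2$ (Hilbert--Schmidt) norm, whose square is
\begin{equation*}
\iint |W_1(t,x)|^2\,|K_z(t-s,x-y)|^2\,|W_2(s,y)|^2 .
\end{equation*}
Here the kernel satisfies $|K_z|\lesssim |t-s|^{-\lambda_0}\,|t-s|^{-1/2}$. We then apply Hardy--Littlewood--Sobolev in $t$ and Hölder in $x$ to get $\mathfrak{S}^2$ bounds in $L^{2\tilde q}_tL^{2\tilde p}_x$. Stein interpolation between these two endpoints yields the $\mathfrak{S}^{2p'}$ bound for the Lebesgue-type norm. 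In $d=1$ the admissible range $\lambda_0<1$ together with HLS is what produces $p<5$ in place of $\frac{d+1}{d-1}=\infty$. We will check that the exponent bookkeeping reproduces exactly $2/q+1/p=1$ with $p<5$.

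\textbf{Upgrading to amalgam norms.} In $x$ we use the embedding $L^{p}\hookrightarrow\mathcal{W}(\mathcal{F}L^{p'},L^p)$ for $p\ge2$, or directly the dispersive bound \eqref{dispersiveestimateHAcase}. For $|t|\ge1$ the dispersive bound decays like $|t|^{-1/2}$, and for $|t|\le1$ it behaves like $|t|^{-1}$ (the factor $(1+t^2\lambda^2)^{1/4}/|t\lambda|$). This suggests splitting the kernel as $K=K\chi_{|t-s|\le1}+K\chi_{|t-s|>1}$. The local part gives the $L^{q/2}$ local component of the amalgam, and the tail gives the $\ell^q$ global component. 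Concretely, we decompose time into unit intervals $I_k$. The local piece handles each $I_k$ via the above interpolation with the $|t|^{-1}$ local singularity, which costs a factor of 2 in the time exponent exactly as in Cordero--Nicola, hence $q/2$. The off-diagonal blocks $I_k\times I_l$ with $|k-l|\ge2$ have kernel bounded by $|k-l|^{-1/2}$ in the $\mathcal{W}(\mathcal{F}L^1,L^\infty)$ sense, and a discrete HLS in $\ell^q$ sums them.

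\textbf{Main obstacle.} The main obstacle is performing Stein interpolation in the spatial Wiener amalgam scale $\mathcal{W}(\mathcal{F}L^{2p},L^{2p'})$ for Schatten norms. The $\mathfrak{S}^2$ endpoint requires controlling the Hilbert--Schmidt norm with $\mathcal{F}L$-local norms, rather than $L^2$ pointwise bounds, on the multipliers. We would first try to reduce to the known dispersive estimate by writing the Hilbert--Schmidt norm through a short-time Fourier transform with a fixed window. The aim there is to bound $\|W_1 e^{i\tau\mathcal{H}_{\mathcal{A}}}W_2\|_{\mathfrak{S}^2}$ by $\|e^{i\tau\mathcal{H}_{\mathcal{A}}}\|_{\mathcal{W}(\mathcal{F}L^\infty,L^1)\to\mathcal{W}(\mathcal{F}L^1,L^\infty)}$ times amalgam norms of $W_1,W_2$. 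We would then interpolate the resulting mixed family, using the complex interpolation identity $[\mathcal{W}(\mathcal{F}L^{p_0},L^{q_0}),\mathcal{W}(\mathcal{F}L^{p_1},L^{q_1})]_\theta=\mathcal{W}(\mathcal{F}L^{p_\theta},L^{q_\theta})$.
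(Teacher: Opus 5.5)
\textbf{The gap is the spatial amalgam step.} You leave it open as the ``main obstacle'', and it rests on two miscomputations.

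First, the embedding you use to upgrade in $x$, $L^p\hookrightarrow\mathcal{W}(\mathcal{F}L^{p'},L^p)$ for $p\ge2$, goes the wrong way. It holds for $p\le2$ by Hausdorff--Young, whereas for $p\ge2$ one has $\mathcal{W}(\mathcal{F}L^{p'},L^p)\hookrightarrow L^p$. So the classical one-dimensional ONS inequality, plus $L^q\hookrightarrow\mathcal{W}(L^{q/2},L^q)$ in time, gives the theorem only for $1\le p\le2$. The new range $2<p<5$ is exactly what your plan does not reach.

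Second, your dual target has the wrong local exponent. Since $\widehat{W_1W_2}=\widehat{W_1}*\widehat{W_2}$, Young's inequality shows that landing in $\mathcal{F}L^{p}$ (the local part of $\mathcal{W}(\mathcal{F}L^p,L^{p'})$) needs $W_i\in\mathcal{F}L^{2p/(p+1)}$, not $\mathcal{F}L^{2p}$. The right multiplier space is $\mathcal{W}(\mathcal{F}L^{r'},L^r)$ with $r=2p'$, as in Theorem \ref{DualityWAspace}.

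For that space no short-time Fourier analysis is needed. The paper takes $\mathcal{W}(\mathcal{F}L^1,L^\infty)$ at the $\operatorname{Re}z=-1$ endpoint, since it controls the multiplier norm on $L^2$ (Proposition \ref{algebraproperty}). It takes $\mathcal{W}(\mathcal{F}L^2,L^2)=L^2$ at the Hilbert--Schmidt endpoint, where only $\|W_i(t)\|_{L^2_x}$ and the uniform kernel bound enter. Complex interpolation of amalgams (Lemma \ref{waspaceproperties}(iii), Lemma \ref{interpolationtheorem2}) then returns exactly $\mathfrak{S}^r$ bounds with $W_i\in\mathcal{W}(\mathcal{F}L^{r'},L^r)$. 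In fact, since $\mathcal{W}(\mathcal{F}L^{r'},L^r)\hookrightarrow L^r$ for $r\ge2$, a Lebesgue-norm Schatten bound already implies this.

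Your worry about square roots in $\mathcal{F}L$-local spaces is legitimate; the paper's proof of the duality principle simply sets $V=|W|^2$. But the bilinear rephrasing does not remove the problem. You would still have to factor an arbitrary $V$ in the dual space as $\overline{W_1}W_2$ with $\|W_1\|\,\|W_2\|\lesssim\|V\|$. Pointwise square roots do not give this, and you do not supply it.

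\textbf{The time side is also not right.} Your analytic family is inconsistent. The bound $|t-s|^{-\lambda_0-1/2}$ on $\operatorname{Re}z=-\lambda_0$ is the one for the factor $(t-s)^{z}$. With that normalization, however, the $L^2_{t,x}$-bounded (Hilbert-transform) line is $\operatorname{Re}z=-1$, not $\operatorname{Re}z=0$, which is where the target operator sits. Moreover, the Hilbert--Schmidt integral carries the weight $|t-s|^{2\operatorname{Re}z-1}$ and is HLS-admissible only for $0<\operatorname{Re}z<\frac12$. The paper interpolates at $z=0$ between $\operatorname{Re}z=-1$ and $\operatorname{Re}z=\lambda\in(\frac14,\frac12)$.

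Nor does $p<5$ come from ``HLS with $\lambda_0<1$''; plain HLS with the true $|t|^{-1/2}$ decay would give every $p<\infty$ in $d=1$. The restriction comes from two choices in the paper:
\begin{itemize}
\item it deliberately weakens the kernel bound to $\psi_{1/2}$ as in \eqref{kernelweakdecay}, with local singularity $|t|^{-1}$;
\item it applies the amalgam Young inequality, Lemma \ref{youngtypeinequality}, at the $\mathfrak{S}^2$ endpoint, which requires $0<2\sigma<\frac12$.
\end{itemize}
This produces the time space in \eqref{Interpolation12} and Schatten exponents in $(\frac52,3)$. Hence one gets densities in $\mathcal{W}_t(L^{q/2},L^q)\mathcal{W}^{p',p}_x$ for $3<p<5$. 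The trivial $p=1$ bound, absent from your plan, then fills in $1\le p\le3$.

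Finally, your unit-block decomposition with a discrete HLS is unjustified as stated. $\mathfrak{S}^{2p'}$ norms do not split over time blocks the way the $\mathfrak{S}^2$ norm does; the paper splits the kernel only at the $\mathfrak{S}^2$ endpoint, where the norm is an explicit integral. The decomposition is also unnecessary, since a bound with $L^q$ in time would suffice by $L^q\hookrightarrow\mathcal{W}(L^{q/2},L^q)$.
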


	\begin{remark}
		Observe that, in the isotropic case \(B = I_d\), the Hamiltonian $\mathcal{H}_{\mathcal{A}}$ reduces to the standard Laplacian $ \mathcal{H}_{\mathcal{A}} = -\frac{1}{4\pi} \Delta$  on $\mathbb{R}^d.$ In this case, the above orthonormal inequality \eqref{ON1} extends the classical Strichartz estimates for $\Delta$ on the  Wiener amalgam spaces, established by Cordero and Nicola in  \cite{coni}.
	\end{remark}
	\begin{remark}
		The restriction \(d = 1\) comes from the fact that, in the time variable, we work with a Wiener amalgam space $\mathcal{W}(L^{q/2},L^q)$ rather than a Lebesgue space $L^q$ as in the harmonic oscillator case. To match the single-function estimate, we were compelled to use the weak decay of the kernel (see \eqref{kernelweakdecay}), which coincides with the decay of the dispersive estimate  \eqref{dispersiveestimateHAcase}. This constraint is what forces the argument to work only in dimension one.
	\end{remark}
	
	\begin{remark}
		Owing to the weak decay of the kernel (see \eqref{kernelweakdecay}), we believe that the range of \(p\) obtained above is not sharp; that is, the estimate is expected to hold for larger values of \(p\) as well. 
	\end{remark}

	In the following result, we extend the Strichartz estimate \eqref{cornicostrichartzH-} for $\mathcal{H}^-$ to the setting of systems of orthonormal functions.
	\begin{theorem}[ONS estimate for  $\mathcal{H}^{-}$]\label{theoremONSH-}
		Suppose $p, q \geq 1$ satisfy
		\[
		1 \leq p < 5 \quad \text{and}\quad \frac{2}{q} + \frac{1}{p} = 1.
		\]
		Then for any orthonormal systems \((f_j)_j\) in \(L^2(\mathbb{R})\) and  any sequence \(\{\alpha_j\}_{j\in J}\) in \(\mathbb{C}\), we have 
		\begin{align*}
			\Bigg\|\sum_{j\in J}\alpha_j |e^{it\mathcal{H}^{-}}f_j|^2\Bigg\|_{\mathcal{W}_t(L^{\frac{q}{2}},L^2)(\mathbb{R},  \mathcal{W}_x^{p',p}(\mathbb{R}))} \leq C_{d,p}\|\gamma\|_{\mathfrak{S}^{\frac{2p}{p+1}}(L^2(\mathbb{R}))}.%\label{ONSharmiteineq}   
		\end{align*}
	\end{theorem}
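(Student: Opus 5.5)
We follow the Frank--Sabin duality scheme, with Stein's complex interpolation carried out on Wiener amalgam spaces. Write $U(t)=e^{it\mathcal{H}^-}$ and $T f(t,x)=U(t)f(x)$. The estimate is equivalent, by duality, to the Schatten bound
\[
\|W_1 T T^* \overline{W_2}\|_{\mathfrak{S}^{2p'}(L^2(\mathbb{R}\times\mathbb{R}))}\lesssim \|W_1\|_{X}\|W_2\|_{X},
\]
where $X$ is the dual of $\mathcal{W}_t(L^{q/2},L^2)\mathcal{W}^{p',p}_x$ at the level of square roots. Concretely, $W\in \mathcal{W}_t(L^{(q/2)^{\prime}\cdot 2},L^{4})\,\mathcal{W}^{\cdot}_x$ is arranged so that $|W|^2$ lies in the dual space $\mathcal{W}_t(L^{(q/2)'},L^2)\mathcal{W}(\mathcal{F}L^{p},L^{p'})_x$. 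The passage from the Schatten bound to the density bound is the standard Frank--Sabin lemma. It pairs $\rho_\gamma=\sum_j\alpha_j|Tf_j|^2$ with $V=|W|^2$ and uses $\mathfrak{S}^{\alpha}$--$\mathfrak{S}^{\alpha'}$ Hölder duality, where $\alpha=2p/(p+1)$ has dual exponent $2p'$. The Wiener amalgam norms dualize correctly since $\mathcal{W}(\mathcal{F}L^{p'},L^p)'=\mathcal{W}(\mathcal{F}L^{p},L^{p'})$ for $p<\infty$.

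To prove the Schatten bound, we embed $TT^*$, whose kernel is $U(t-s)$, into an analytic family
\[
T_z=\frac{(z+1)e^{z^2}}{\Gamma(z+1)}\,|t-s|_+^{z}\,U(t-s)\quad(\text{or a smooth variant}),
\]
with $T_{-1}$ corresponding to $TT^*$ up to the usual normalization. We then compare two endpoint cases.

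On $\mathrm{Re}\,z=0$, the operator $W_1T_z\overline{W_2}$ is bounded $\mathfrak{S}^\infty$ with norm $\lesssim\|W_1\|_{L^\infty_{t}\mathcal{W}(\mathcal{F}L^1,L^\infty)_x}\|W_2\|_{\cdots}$. This follows because $T_{is}$ is bounded on $L^2_{t,x}$: the multiplier $|\tau|^{-is}$ is bounded, and $U$ is unitary. We also need multiplication by $W\in\mathcal{W}(\mathcal{F}L^1,L^\infty)$ to be bounded on $L^2$, which holds since $\mathcal{W}(\mathcal{F}L^1,L^\infty)\subset L^\infty$.

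On $\mathrm{Re}\,z=-\lambda$ with $\lambda>1$, we obtain a Hilbert--Schmidt ($\mathfrak{S}^2$) bound from the kernel. Its size is $|t-s|^{-\lambda}\|U(t-s)\|_{\mathcal{W}^{\infty,1}\to\mathcal{W}^{1,\infty}}$, controlled by $|t-s|^{-\lambda}\big(\tfrac{1+|\sinh(t-s)|}{\sinh^2(t-s)}\big)^{1/2}$ in $d=1$. Thus the kernel behaves like $|t-s|^{-\lambda-1}$ near $0$ and like $e^{-|t-s|/2}$ at infinity. The $\mathfrak{S}^2$ norm reduces to a weighted double integral $\iint |W_1(t,x)|^2K(t-s,x,y)|W_2(s,y)|^2$. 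Here we use the pointwise kernel bound implied by the amalgam dispersive estimate, the weak-decay bound referred to as \eqref{kernelweakdecay} in the remark after Theorem~\ref{theoremONSHA}. The time integral is then handled by Hardy--Littlewood--Sobolev locally, $|t|^{-\lambda-1}$ restricted to $|t|\le1$, and by Young's inequality on the integer lattice for the exponentially decaying tail. This is precisely why the time space is $\mathcal{W}(L^{\cdot},L^2)$: local HLS gives the local $L^{q/2}$ component, while the decaying tail only yields $\ell^2$-type global control.

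Stein interpolation at $z=-1$ then gives the $\mathfrak{S}^{2\lambda}$ bound, with Wiener-amalgam norms interpolated by complex interpolation of $\mathcal{W}(\mathcal{F}L^{p},L^{q})$, i.e.\ $[\mathcal{W}(\mathcal{F}L^{p_0},L^{q_0}),\mathcal{W}(\mathcal{F}L^{p_1},L^{q_1})]_\theta$, in both $t$ and $x$. Matching $2\lambda=2p'$ and the HLS scaling reproduces $2/q+1/p=1$.

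The main obstacle is the $\mathfrak{S}^2$ endpoint. First, HLS in time demands $0<\lambda+1<1$ relative to the dimension-one time variable, with $\lambda=p'$ and kernel singularity $|t|^{-\lambda/1-1/2\cdot\ldots}$. Here the weak decay $|t|^{-1}$ of the amalgam kernel, instead of the Lebesgue $|t|^{-1/2}$, cuts the admissible range down to $p<5$. Second, we need a genuine pointwise kernel estimate in the spatial Wiener amalgam sense rather than merely an operator norm. We would first try writing the metaplectic kernel of $U(t)$ explicitly as a chirp, $c(t)e^{i\pi(\coth t\,x^2-2xy/\sinh t+\coth t\,y^2)}$, and checking that its localized Fourier transform gives the stated $\big(\tfrac{1+|\sinh t|}{\sinh^2 t}\big)^{1/2}$ decay uniformly. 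We would then interpolate carefully so that the exponents land exactly at $\mathcal{W}_t(L^{q/2},L^2)$.
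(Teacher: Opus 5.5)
Your architecture is the paper's. You use duality as in Theorem~\ref{DualityWAspace}, then Stein interpolation between an $\mathfrak{S}^\infty$ bound (unitarity plus a Hilbert-transform-type convolution in $t$) and an $\mathfrak{S}^2$ bound from a pointwise kernel majorant. Your ``local HLS plus lattice Young'' is exactly what Lemma~\ref{youngtypeinequality} packages. However, the analytic family you write down does not have the endpoints you assign to it.

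\textbf{The analytic family.} Take a weight $|t-s|^{z}$ (or $(t-s)_+^{z}$) in front of $U(t-s)$. Then the following hold.
\begin{enumerate}
\item[(i)] The operator $TT^*$ is the member $z=0$, not $z=-1$. At $z=-1$ the normalized weight $t_+^{z}/\Gamma(z+1)$ degenerates to $\delta_0$, and to $0$ with your extra factor $z+1$.
\item[(ii)] On $\operatorname{Re} z=0$, convolution in $t$ with $t_+^{is}$ has a multiplier homogeneous of degree $-1-is$, so it is unbounded on $L^2_t$. There is no $\mathfrak{S}^\infty$ bound on that line.
\item[(iii)] On $\operatorname{Re} z=-\lambda$ with $\lambda>1$, the kernel has modulus $\sim|t-s|^{-\lambda}|\sinh(t-s)|^{-1/2}$. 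Its square is not locally integrable, so no Hilbert--Schmidt bound exists. Your own condition $0<\lambda+1<1$ is incompatible with $\lambda>1$.
\end{enumerate}
Your labels are correct only for a weight $c(z)|t-s|^{-z-1}$, and then the kernel on $\operatorname{Re} z=-\lambda$ is $|t-s|^{\lambda-1}U(t-s)$.

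Redo the bookkeeping with the weak majorant $|t|^{-1}$ near $0$ that you invoke. Lemma~\ref{youngtypeinequality} then forces $\frac32<\lambda<2$, i.e.\ $3<2p'<4$. This reproduces $\frac2q+\frac1p=1$, but only for $2<p<3$, so nothing in your argument yields $p<5$.

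The paper proceeds differently:
\begin{enumerate}
\item[(a)] It uses $(\sinh(t-s))^{z}K_{t-s}$, where $z=0$ is $TT^*$ and $\operatorname{Re} z=-1$ is the bounded line.
\item[(b)] It starts from $|K_t(x,y)|\lesssim|\sinh t|^{-1/2}$, read off directly from the chirp. No amalgam-type pointwise estimate is needed, and the spatial norm at this endpoint is just $L^2_x$.
\item[(c)] It weakens this to $|K_z|^2\lesssim\varphi_{1-2\lambda}$ for $\frac14<\lambda<\frac12$, then applies Lemma~\ref{youngtypeinequality} and the embedding into $\mathcal{W}_t(L^{2/\lambda},L^4)$.
\item[(d)] It interpolates at $z=0$ to get $\mathfrak{S}^{2(1+\lambda)}$ with $\frac52<2(1+\lambda)<3$, i.e.\ $3<p<5$ after duality.
\end{enumerate}

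\textbf{The low range of $p$.} Even after this repair, the method only covers a window of $p$, and you say nothing about the rest of $1\le p<5$. The paper fills $1\le p\le 3$ by interpolating with the ``trivial'' bound $\|\rho\|_{\mathcal{W}_t(L^\infty,L^2)\mathcal{W}_x^{\infty,1}}\le\sum_j|\alpha_j|$. That bound is false on $\mathbb{R}$. Take a nonnegative window $g$ and $\gamma\ge0$. Then for every $t$,
$\|\rho_{\gamma(t)}\|_{\mathcal{W}(\mathcal{F}L^\infty,L^1)}\ge\int_{\mathbb{R}}|\widehat{\rho_{\gamma(t)}T_xg}(0)|\,dx=\|g\|_{L^1}\operatorname{Tr}\gamma$ by mass conservation. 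So the global $L^2$ norm in $t$ is infinite, and the statement fails at $p=1$ (a single normalized $f$ suffices). Interpolating with $L^\infty_t=\mathcal{W}(L^\infty,L^\infty)$ instead only gives global time exponents larger than $2$. The low range therefore needs a different argument.

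\textbf{The duality step.} The step you call standard is not automatic here. The Schatten bound controls $\int\rho\,\overline{W_1}W_2$ by $\|W_1\|\,\|W_2\|$. So you need every $V$ in the unit ball of $\mathcal{W}(\mathcal{F}L^{p},L^{p'})$ to be a product $\overline{W_1}W_2$ (or a sum of such), with $W_i\in\mathcal{W}(\mathcal{F}L^{(2p')'},L^{2p'})$ of bounded norm.
\begin{enumerate}
\item[(i)] Since $(2p')'<2$, Hausdorff--Young and H\"older put every such product in $L^{p'}$, with norm $\lesssim\|W_1\|\,\|W_2\|$.
\item[(ii)] For $p>2$, the $\mathcal{W}(\mathcal{F}L^{p},L^{p'})$ norm does not control the $L^{p'}$ norm. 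A fixed bump times $\sum_{|k|\le N}\pm e^{2\pi ikx}$, with suitable signs, has ratio $\gtrsim N^{1/2-1/p}$.
\end{enumerate}
So the required factorization fails. Identifying the dual $\mathcal{W}(\mathcal{F}L^{p'},L^p)'$ does not address this. The paper's proof of Theorem~\ref{Duality} has the same gap, in the step ``letting $V=|W|^2$''.
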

	
	\iffalse \begin{remark}
		Takizawa \cite{takizawa2025} generalized the Wiener amalgam Strichartz estimates of Cordero-Nicola \cite{NFC} to potentials of at most quadratic growth, but on a local-in-time interval. We note that the time-variable space appearing in his result is \(L^{q/2}\) (see Theorem 1.3 of \cite{takizawa2025}) rather than a Wiener amalgam space; this is a consequence of the finite time interval, which permits the Wiener amalgam norm in time to be replaced by the Lebesgue norm \(L^{q/2}([-T,T])\). Using an argument analogous to ours, one can obtain the corresponding ONS estimate for the inverted harmonic oscillator for a finite time interval $[-T,T]$.
	\end{remark}
	\begin{remark}
		Note also that the finite time interval gives rise to the Lebesgue space \(L^{q/2}\) in \eqref{CorNicostrichartz}, while our method improves the exponent from \(q/2\) to \(q\) (see Theorem \ref{strichartzH+improved}). The same reasoning shows that the exponent for the inverted harmonic oscillator on a finite interval can likewise be improved from \(q/2\) to \(q\).
	\end{remark}
	\fi

	Consider a system of \(M\) coupled nonlinear Hartree equations governing the dynamics of \(M\) fermions interacting through a potential \(w\), given by
	\[
	\begin{cases}
		i \partial_t u_1 = (L + w * \rho) u_1, & u_1|_{t=0} = f_1, \\
		\quad \vdots \\
		i \partial_t u_M = (L + w * \rho) u_M, & u_M|_{t=0} = f_M,
	\end{cases}
	\]
	where \((t,x) \in \mathcal{I} \times \mathbb{R}^{d}\) and $L$ is a self-adjoint operator acting on $L^2( \mathbb{R}^{d})$, \(\{f_j\}_{j=1}^M\) constitutes an orthonormal system in \(L^2( \mathbb{R}^{d})\), and \(\rho\) denotes the density function as $ 
	\rho(t,x) = \sum_{j=1}^M |u_j(t,x)|^2
	$. A fundamental inquiry pertains to the asymptotic behavior of solutions to  the above system in the limit \(M \to \infty\). In this regime, the formulation naturally translates into the following operator-valued evolutionary equation:
	\begin{align}\label{hartree}
		\begin{cases}
			i \partial_t \gamma = [L + w * \rho_\gamma, \gamma], \\
			\gamma|_{t=0} = \gamma_0.
		\end{cases}
	\end{align}
	Here, both \(\gamma_0\) and \(\gamma = \gamma(t)\) are bounded, self-adjoint operators acting on \(L^2( \mathbb{R}^{d})\).

	As an application of the orthonormal extension discussed above, we investigate the well-posedness of the Hartree equation \eqref{hartree} for an infinite number of particles, where the operator $L$ may be any one of $\mathcal{H}^+,\mathcal{H}^-$ or $\mathcal{H}_{\mathcal{A}}$.

	\subsection{Novelties and Difficulties}
	\begin{itemize}
		\item To prove the ONS estimates for operators $\mathcal{H}^{+},\mathcal{H}^{-}$ and $\mathcal{H}_{\mathcal{A}}$, we use two major results; the complex interpolation and the duality principle. The complex interpolation is established between the Schatten spaces and the mixed Wiener amalgam spaces. The statement of the complex interpolation is given in Lemma \ref{interpolationtheorem2}. However, we prove the duality principle in the mixed Lebesgue-Wiener amalgam setting in Theorem \ref{Duality}. Both of these results play an instrumental role in the whole theory of ONS estimates.\vspace{0.2cm}

		\item To prove ONS estimate for $\mathcal{H}_{\mathcal{A}}$ given in Theorem \ref{theoremONSHA}, a key feature of our method is the use of complex interpolation between two endpoint inequalities: the Schatten-\(2\) estimate (see \eqref{schatten2inequalityHA23}, where we use weak estimate of the kernel \eqref{kernelweakdecay}) and the Schatten-\(\infty\) estimate (see \eqref{infinitynormofWTW}). The Schatten-\(2\) endpoint is particularly delicate, as the time variable must be measured in a Wiener amalgam space \(\mathcal{W}(L^{q_1}, L^{q_2})\) whose exponents \(q_1\) and \(q_2\) are not independent. To align our interpolated estimate with the corresponding single-function Strichartz estimate, we must select \(q_1\) and \(q_2\) so that the Schatten-\(2\) inequality remains valid while simultaneously ensuring that after interpolation the resulting admissible condition coincides with the one obtained in the single-function setting. In our approach, we use embedding of Wiener amalgam spaces in the time variable to get desired \(q_1\) and \(q_2\). Then, we interpolate between the new Schatten-\(2\) inequality \eqref{Interpolation12} and the Schatten-\(\infty\) inequality \eqref{infinitynormofWTW} to obtain ONS for $\mathcal{H}_{\mathcal{A}}$. This requirement imposes a constraint on exponents that, after applying generalized Hardy-Littlewood-Sobolev inequality, Lemma \ref{youngtypeinequality}, can be satisfied only when \(d = 1\). The same follows for the case of the operator $\mathcal{H}^-$.\vspace{0.2cm}
		
		\item Since we use weak decay of the kernel in the proof of ONS estimate for $\mathcal{H}^-$ and $\mathcal{H}_{\mathcal{A}}$, we expect the range of the exponent $p$ in Theorem \ref{theoremONSHA} and Theorem \ref{theoremONSH-} can be improved.\vspace{0.2cm}
		\item  
		In Theorem \ref{strichartzH+improved}, we improved the single-function Strichartz estimate by replacing the exponent $\frac{q}{2}$ with $q$ in the time variable. Following a similar argument,  
		we can get the following version of the Strichartz estimates for  $L=\mathcal{H}_{\mathcal{A}}$ and $L=\mathcal{H}^{-}$
		\begin{equation}
			\|e^{itL} f\|_{L^q \mathcal{W}^{p',p}_x} \lesssim \|f\|_{L^2_x},\label{strichartzforL}
		\end{equation}
		where the time variable is measured in the Lebesgue space instead of the Wiener amalgam space for a certain range of $q$ and $p$. Since $L^q=\mathcal{W}(L^q,L^q)\hookrightarrow \mathcal{W}(L^{q/2},L^q)$ for every $2\leq q\leq \infty$, the estimate \eqref{strichartzforHA2} provides an improvement over Theorem \eqref{thm:strichartzz} for the operator $\mathcal{H}_{\mathcal{A}}$.  Consequently, we can also prove the corresponding orthonormal version of \eqref{strichartzforHA2}, that holds in every dimension $d\geq 1.$ \vspace{0.2cm}

		\item Takizawa \cite{takizawa2025} generalized the Wiener amalgam Strichartz estimates of Cordero-Nicola \cite{NFC} to potentials of at most quadratic growth, but on a local-in-time interval. We note that, the time-variable space appearing in his result is \(L^{q/2}\) (see Theorem 1.3 of \cite{takizawa2025}) rather than a Wiener amalgam space \(\mathcal{W}(L^{q_1}, L^{q_2})\); this is a consequence of the finite time interval, which permits the Wiener amalgam norm in time to be replaced by the Lebesgue norm \(L^{q/2}([-T,T])\). While our method improves the exponent from \(q/2\) to \(q\) (see Theorem \ref{strichartzH+improved}), the same reasoning shows that the exponent for the inverted harmonic oscillator on a finite interval can likewise be improved from \(q/2\) to \(q\) and subsequently can be extend for orthonormal system as well.

	\end{itemize}

	%We shall frequently encounter mixed space-time norms defined on the product domain \( \mathcal{I} \times \mathbb{R}^d \), where \( \mathcal{I} \subset \mathbb{R} \) denotes a non-empty open interval. In the case of the harmonic oscillator $\mathcal{H}^+$, the time interval is taken to be \( \mathcal{I} = (-\pi/4,\pi/4) \), whereas for the inverted harmonic oscillator \( \mathcal{H}^- \) and \(\mathcal{H}_{\mathcal{A}} \), we work with the full real line \( \mathcal{I} = \mathbb{R} \).We adopt the shorthand[\mathcal{W}^{p',p} := \mathcal{W}(\mathcal{F} L^{p'}, L^p),\] where the amalgam space is always understood to be defined on \( \mathbb{R}^d \). For instance, whenever we write \( L^q \mathcal{W}^{p',p} \) or \( \mathcal{W}(L^r, L^s) \mathcal{W}^{p',p} \), it is implicitly understood that the \( L^q \)-norm or the \( \mathcal{W}(L^r, L^s) \)-norm is taken with respect to the time variable \( t \), while the \( \mathcal{W}^{p',p} \)-norm is taken with respect to the spatial variable \( x \).sometimes, to avoid ambiguity in mixed norms, we shall write \( \| F \|_{\mathcal{W}_x} \) to indicate that the norm is taken with respect to the space variable \( x \), with \( t \) fixed. We shall employ the notation \( \lesssim \) to indicate that the inequality holds up to a multiplicative constant, i.e., \( A \lesssim B \) means \( A \le C B \) for some constant \( C > 0 \) independent of the relevant parameters.

	Apart from the introduction, the paper is organized as follows: In section \ref{prelim}, we review the definitions of Wiener amalgam spaces, the metaplectic representation, and the symplectic group, along with several important related results. We also recall the definition of Schatten class operators and complex interpolation in Schatten spaces. Section \ref{sectionwelldefined} contains the complex interpolation theorem and the duality principle in the setting of Wiener amalgam spaces. Section \ref{ONSH+} is devoted to proving improved Strichartz estimates and their orthonormal counterparts for the harmonic oscillator. We also establish the optimality and endpoint results. In Section \ref{sectionHA}, we prove the orthonormal Strichartz estimate for \(\mathcal{H}_{\mathcal{A}}\). Section \ref{sectionH-} contains the orthonormal Strichartz estimate for \(\mathcal{H}^-\). Finally, we investigate the well-posedness of the Hartree equation in Section \ref{application}.

	\section{Preliminaries}\label{prelim}
	In this section, we review some basic definitions and fundamental properties of Wiener amalgam spaces, the metaplectic representation, and the action of metaplectic operators on Wiener amalgam spaces. We also recall the Hermite polynomials and the Hermite semigroup, together with the basic properties of the kernel associated with the Hermite semigroup that will be required in the subsequent sections. For a detailed treatment of metaplectic operators and Wiener amalgam spaces, we refer the reader to \cite{HGF},\cite{HFGB},\cite{HGFG},\cite{FSA},\cite{HFK}. We begin by recalling the definition of the Wiener amalgam spaces $\mathcal{W}(\mathcal{F}L^p,L^q)$.

	\subsection{Wiener amalgam spaces} A Wiener amalgam space \(W(B, L^q)\), where typically \(B = L^p\) or \(B = \mathcal{F}L^p\), consists of functions that exhibit the local regularity of a function in \(B\) while displaying a global \(L^q\) decay. To define this, we fix a test function $g \in C_0^\infty$ and parameters $1 \le p, q \le \infty$. The Wiener amalgam space $\mathcal{W}(L^p, L^q)$, whose local component is $L^p$ and global component is $L^q$, is defined as the collection of all functions or tempered distributions $f$ for which
	\begin{align}
		\|f\|_{\mathcal{W}(L^p, L^q)} := \left\| \| f \, T_x g \|_{L^p} \right\|_{L^q_x} < \infty,
	\end{align}
	where the translation operator is given by $T_x g(t) := g(t-x)$. Similarly, we can define norm on the space $\mathcal{W}(\mathcal{F}L^p, L^q)$ as
	\begin{align*}
		\|f\|_{\mathcal{W}(\mathcal{F}L^p, L^q)} := \left\| \| f \, T_x g \|_{\mathcal{F}L^p} \right\|_{L^q_x} < \infty,
	\end{align*}
	where $\|h\|_{\mathcal{F}L^p}:=\|\mathcal{F}h\|_{L^p}$.
	Given a measurable function $f$ in variables $(t,x)$,  we shall use the following definition of mixed $L^q\mathcal{W}^{r,s}$
	$$\|f\|_{L^q(\mathcal{W}^{r,s})}=\bigg\|\|f\|_{\mathcal{W}^{r,s}}\bigg \|_{L^q}.$$
	
	The proof of the following properties of Wiener amalgam spaces can be found in \cite{HGF,HFGB,HGFG,H}.
	%[Cordero and Nicola, \cite{NFC}]
	\begin{lemma} \label{waspaceproperties}
		Let \(B_i, C_i\), \(i = 1, 2, 3\), be Banach spaces such that \(\mathcal{W}(B_i, C_i)\) are well defined. Then:
		
		\begin{enumerate}
			\item[(i)] (Convolution) If \(B_1 * B_2 \hookrightarrow B_3\) and \(C_1 * C_2 \hookrightarrow C_3\), we have
			\begin{equation}\nonumber
				\mathcal{W}(B_1, C_1) * \mathcal{W}(B_2, C_2) \hookrightarrow \mathcal{W}(B_3, C_3). 
			\end{equation}
			In particular, for every \(1 \le p, q \le \infty\), we have
			\begin{equation}\nonumber
				\|{ f * u }\|_{\mathcal{W}(\mathcal{F} L^p, L^q)}
				\lesssim \|{ f }\|_{\mathcal{W}(\mathcal{F} L^\infty, L^1)} \|{ u }\|_{\mathcal{W}(\mathcal{F} L^p, L^q)}. 
			\end{equation}
			
			\item[(ii)] (Inclusions) If \(B_1 \hookrightarrow B_2\) and \(C_1 \hookrightarrow C_2\), then
			\begin{equation}\nonumber
				\mathcal{W}(B_1, C_1) \hookrightarrow \mathcal{W}(B_2, C_2).
			\end{equation}
			Moreover, the inclusion of \(B_1\) into \(B_2\) need only hold ``locally'' and the inclusion of \(C_1\) into \(C_2\) ``globally.'' In particular, for \(1 \le p_i, q_i \le \infty\), \(i = 1, 2\), we have
			\[
			p_2 \le p_1 \quad \text{and} \quad q_1 \le q_2 \quad \Longrightarrow \quad \mathcal{W}(L^{p_1}, L^{q_1}) \hookrightarrow \mathcal{W}(L^{p_2}, L^{q_2}). 
			\]

			\item[(iii)] (Complex interpolation) For \(0 < \theta < 1\), we have
			\[
			\big[ \mathcal{W}(B_1, C_1), \mathcal{W}(B_2, C_2) \big]_{[\theta]}
			= \mathcal{W}\big( [B_1, B_2]_{[\theta]}, [C_1, C_2]_{[\theta]} \big),
			\]
			if \(C_1\) or \(C_2\) has absolutely continuous norm.
			
			\item[(iv)] (Duality) If \(B'\), \(C'\) are the topological dual spaces of the Banach spaces \(B\), \(C\), respectively, and the space of test functions \(C_0^\infty\) is dense in both \(B\) and \(C\), then
			\[
			\mathcal{W}(B, C)' = \mathcal{W}(B', C').\]

		\end{enumerate}
		
		\begin{proposition}\label{equivalenceofWL^pspaces}
			For any $1\leq p \leq \infty$, Banach spaces $\mathcal{W}(L^p,L^p)$ and $L^p$ are equivalent.
		\end{proposition}
	\end{lemma}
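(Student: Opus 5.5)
The plan is to work throughout with a bounded uniform partition of unity in place of the continuous window, and to reduce each of (i)--(iv) to the corresponding property of the local space $B$ and the global space $C$. Fix a lattice $\alpha\mathbb{Z}^d$ and a smooth partition $\{\psi_k\}_{k}$ with $\psi_k=T_{\alpha k}\psi$, $\sum_k\psi_k\equiv1$. The first step is the standard discretization
\[
\|f\|_{\mathcal{W}(B,C)}\asymp \Big\|\sum_k \|f\psi_k\|_B\,\chi_{\alpha k+[0,\alpha)^d}\Big\|_{C},
\]
valid whenever $B$ is a Banach module over $C_0^\infty$ (true for $L^p$ and $\mathcal{F}L^p$, since $\|\phi h\|_{\mathcal{F}L^p}\le\|\hat\phi\|_{L^1}\|h\|_{\mathcal{F}L^p}$) and $C$ is a translation-invariant solid space (true for $L^q$). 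I would prove it by covering $\operatorname{supp}T_xg$ with finitely many $\psi_k$ uniformly in $x$, and conversely covering $\operatorname{supp}\psi_k$ with finitely many translates on which a fixed smooth function dominates $g$. This also shows that the definition is independent of $g$. For $L^q$ the discrete norm is just the $\ell^q$ norm of the sequence $(\|f\psi_k\|_B)_k$.

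With the discrete norm in hand, the individual properties follow as below.

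\textbf{(ii) Inclusions.} The inequality $\|f\psi_k\|_{B_2}\lesssim\|f\psi_k\|_{B_1}$ is only needed on functions supported in a fixed compact set; this is the ``local'' inclusion. The resulting sequence is then controlled by solidity of $C_2$ together with $C_1\hookrightarrow C_2$. For $L^p$ the local inclusion is H\"older's inequality on a compact set when $p_2\le p_1$, and the global inclusion is $\ell^{q_1}\subset\ell^{q_2}$ when $q_1\le q_2$.

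\textbf{(i) Convolution.} Write $f=\sum_k f\psi_k$ and $u=\sum_l u\psi_l$. Each term $(f\psi_k)*(u\psi_l)$ is supported near $\alpha(k+l)$ and satisfies $\|(f\psi_k)*(u\psi_l)\|_{B_3}\lesssim\|f\psi_k\|_{B_1}\|u\psi_l\|_{B_2}$. Localizing the convolution to a cell $m$ meets only boundedly many pairs with $k+l$ near $m$. Hence the local-norm sequence of $f*u$ is dominated by a finite sum of shifted discrete convolutions $a*b$, where $a_k=\|f\psi_k\|_{B_1}$ and $b_l=\|u\psi_l\|_{B_2}$. The hypothesis $C_1*C_2\hookrightarrow C_3$ then gives the result. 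For the particular case, take $B_1=\mathcal{F}L^\infty$, $B_2=B_3=\mathcal{F}L^p$, where $\mathcal{F}L^\infty*\mathcal{F}L^p\subset\mathcal{F}L^p$ by H\"older applied to $\hat f\hat u$, and $C_1=L^1$, $C_2=C_3=L^q$ by Young's inequality.

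\textbf{(iv) Duality.} The discrete norm identifies $\mathcal{W}(B,C)$ with a closed subspace of the vector-valued sequence space $C_d(B)$, namely the $C$-norm of the sequence of $B$-norms. Density of $C_0^\infty$ gives $C_d(B)'=C_d'(B')$, as in the $\ell^q(X)'=\ell^{q'}(X')$ argument. To pass back to functions, an element $\Lambda\in\mathcal{W}(B,C)'$ is represented as $\sum_k\Lambda(\psi_k\,\cdot)$, and I would check that this gives a distribution in $\mathcal{W}(B',C')$ using the finite overlap of the partition.

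\textbf{(iii) Interpolation.} The same identification realizes $\mathcal{W}(B_i,C_i)$ as a retract of $C_{i,d}(B_i)$. The coretraction is $f\mapsto(f\psi_k)_k$; the retraction is $(h_k)_k\mapsto\sum_k\tilde\psi_k h_k$, where $\tilde\psi_k\equiv1$ on $\operatorname{supp}\psi_k$. Both maps are independent of $i$. Complex interpolation commutes with retracts, so it suffices to know
\[
[C_{1}(B_1),C_{2}(B_2)]_{[\theta]}=[C_1,C_2]_{[\theta]}\big([B_1,B_2]_{[\theta]}\big).
\]
This is the vector-valued Calder\'on product theorem. It is here that absolute continuity of the norm of $C_1$ or $C_2$ is required.

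\textbf{The proposition.} $\mathcal{W}(L^p,L^p)=L^p$ follows from the discrete norm and Fubini:
\[
\sum_k\|f\psi_k\|_p^p\asymp\int|f|^p\sum_k|\psi_k|^p\asymp\|f\|_p^p,
\]
because $\sum_k|\psi_k|^p$ is bounded above and below for a suitable choice of partition. For $p=\infty$ one takes suprema instead.

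The main obstacle I expect is (iii). Establishing the vector-valued Calder\'on identity requires care: when neither endpoint space has absolutely continuous norm, the interpolation space may be strictly smaller than the amalgam. The rest of the argument is bookkeeping with the partition of unity.
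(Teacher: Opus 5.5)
The paper does not prove this lemma; it only cites Feichtinger's papers and Heil's survey. The argument in those sources is the bounded-uniform-partition-of-unity/discrete-norm machinery you describe: interpolation goes through the retract onto mixed sequence spaces (where absolute continuity of the norm enters), and duality through the dual of those sequence spaces, so your outline matches the intended proof.

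Two wording slips need fixing. In (i), it is the set of offsets $k+l-m$, not the set of pairs $(k,l)$, that is finite for each cell $m$. In the discretization step, $\mathcal{F}L^p$ is not solid, so the reverse bound should come from a factorization $\psi_k=\sum_i \phi_i\,T_{x_i}g$ with $\phi_i\in C_0^\infty$ and the module property, rather than from pointwise domination of $g$.
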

	\begin{proposition}[\cite{CGT}, Proposition 2.4]  \label{algebraproperty}
		For every \(1 \le p, q \le \infty\), we have
		\[
		\|{ fu }\|_{\mathcal{W}(\mathcal{F} L^p, L^q)}
		\lesssim \|{ f }\|_{\mathcal{W}(\mathcal{F} L^1, L^\infty)} \|{ u }\|_{\mathcal{W}(\mathcal{F} L^p, L^q)}.
		\] 
		
	\end{proposition}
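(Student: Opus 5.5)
The statement to prove is Proposition \ref{algebraproperty}: for $1\le p,q\le\infty$,
\[
\|fu\|_{\mathcal{W}(\mathcal{F}L^p,L^q)}\lesssim \|f\|_{\mathcal{W}(\mathcal{F}L^1,L^\infty)}\,\|u\|_{\mathcal{W}(\mathcal{F}L^p,L^q)}.
\]
The plan is to localize with a window, turn the product into a convolution on the Fourier side, and then use Young's inequality.

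\textbf{Choice of windows.} Fix a window $g\in C_0^\infty$ for $u$ and a second window $h\in C_0^\infty$ with $h\equiv1$ on $\operatorname{supp} g$. Then $T_xh\cdot T_xg=T_xg$ for every $x$. Changing the window gives an equivalent norm on these amalgam spaces, which is a standard fact from Feichtinger's theory \cite{HGF}. So I may measure $f$ with $h$ and $u$ with $g$.

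\textbf{Pointwise step.} For each $x$,
\[
(fu)\,T_xg=(f\,T_xh)\,(u\,T_xg).
\]
Taking Fourier transforms turns this product into a convolution:
\[
\mathcal{F}\big((fu)T_xg\big)=\mathcal{F}(f\,T_xh)*\mathcal{F}(u\,T_xg).
\]
Young's inequality $L^1*L^p\hookrightarrow L^p$ then gives
\[
\|(fu)T_xg\|_{\mathcal{F}L^p}\le \|f\,T_xh\|_{\mathcal{F}L^1}\,\|u\,T_xg\|_{\mathcal{F}L^p}\le \|f\|_{\mathcal{W}(\mathcal{F}L^1,L^\infty)}\,\|u\,T_xg\|_{\mathcal{F}L^p}.
\]

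\textbf{Global step.} Taking the $L^q_x$ norm of both sides yields the claim, with the implicit constant coming only from the window change.

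\textbf{Main obstacle.} The only nontrivial input is the window-independence of the amalgam norm. If I want to avoid quoting it, I would instead keep the single window $g$ and cover $\operatorname{supp} g$ by finitely many translates where a fixed $h$ equals $1$. Write $g=\sum_k g\,T_{y_k}h$ with $y_k$ in a finite set. Applying the pointwise step to each piece and using $\|T_{y_k}\,\cdot\,\|_{L^\infty}$-invariance bounds everything by finitely many shifted $\mathcal{F}L^1$ norms of $f$. Each of these is controlled by $\|f\|_{\mathcal{W}(\mathcal{F}L^1,L^\infty)}$, since $\mathcal{F}L^1$ is an algebra under pointwise multiplication and $g\in\mathcal{F}L^1$.

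For distributions, I would argue by density or approximation, or simply note that the identities hold in $\mathcal{S}'$ whenever the right-hand side is finite.
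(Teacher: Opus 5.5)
Your main argument is correct and is the standard proof. The paper gives no proof of its own and simply cites \cite{CGT}. Your argument (two windows with $h\equiv1$ on $\operatorname{supp}g$, Young's inequality $L^1*L^p\hookrightarrow L^p$ on the Fourier side, then $L^\infty\cdot L^q\hookrightarrow L^q$ in $x$) is the usual route. Equivalently, it is Feichtinger's pointwise-product rule for amalgams with $\mathcal{F}L^1\cdot\mathcal{F}L^p\hookrightarrow\mathcal{F}L^p$ locally and $L^\infty\cdot L^q\hookrightarrow L^q$ globally. Its only quoted input is window-independence of the norm, which is standard.

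Your optional paragraph for avoiding that quote does not work as written, for two reasons:
\begin{itemize}
\item The identity $g=\sum_k g\,T_{y_k}h$ needs $\sum_k T_{y_k}h=1$ on $\operatorname{supp}g$. That is a partition of unity, not a cover by translates on which $h$ equals $1$.
\item Even then you are left with $\sup_z\|f\,T_zh\|_{\mathcal{F}L^1}$, which is again an $h$-window norm of $f$. The facts you cite ($\mathcal{F}L^1$ is an algebra and $g\in\mathcal{F}L^1$) only give the opposite inequality $\|f\,T_zg\|_{\mathcal{F}L^1}\le\|g\|_{\mathcal{F}L^1}\|f\,T_zh\|_{\mathcal{F}L^1}$.
\end{itemize}
To get the direction you need, you must divide by $g$. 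Take $\operatorname{supp}h$ inside a set where some translate $T_wg$ does not vanish, so that $h/T_wg\in C_0^\infty$. Then
\[
\|f\,T_zh\|_{\mathcal{F}L^1}\le\|h/T_wg\|_{\mathcal{F}L^1}\,\|f\,T_{z+w}g\|_{\mathcal{F}L^1},
\]
which is the actual content of window-independence.
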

	Of a central importance to our forthcoming developments is the result stated below, which generalizes the classical Hardy–Littlewood–Sobolev fractional integration inequality (see e.g. \cite[p. 119]{SSIN}). The latter is recovered as the special case wherein the convolution operator acts via the kernel \(K(x) = |x|^{-\alpha} \in L^{d/\alpha, \infty}\), where \(0 < \alpha < d\).

	\begin{proposition}[\cite{TINT}, Theorem 2]\label{generalyoungstheorem}
		
		Let \(1 \le p < q < \infty\), \(0 < \alpha < d\), with
		\[
		\frac{1}{p} = \frac{1}{q} + 1 - \frac{\alpha}{d}.
		\]
		Then,
		\[
		L^p(\mathbb{R}^d) * L^{d/\alpha, \infty}(\mathbb{R}^d) \to L^q(\mathbb{R}^d).
		\]
	\end{proposition}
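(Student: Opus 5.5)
The plan is to derive the statement from the Hunt–O'Neil convolution inequality in Lorentz spaces. Put $s = d/\alpha \in (1,\infty)$. The hypothesis $\frac1p = \frac1q + 1 - \frac{\alpha}{d}$ is exactly the exponent relation $\frac1p + \frac1s = 1 + \frac1q$ of Young's inequality with second exponent $s$. Since $q<\infty$ and $1-\frac{\alpha}{d} = 1-\frac1s > 0$, we get $p<q$ automatically, so the only issue is the weak-type factor $L^{s,\infty}$. I will first treat the interior range $1<p<q<\infty$ and then the endpoint $p=1$ allowed by the statement.

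\textbf{Step 1 (bilinear interpolation).} Fix $K\in L^{s,\infty}(\mathbb{R}^d)$ and consider the linear map $T_K f = f*K$. I would prove that $T_K$ is of restricted weak type $(p_i,q_i)$ for two pairs $(p_0,q_0)$ and $(p_1,q_1)$ lying on the line $\frac1p-\frac1q = 1-\frac1s$ on either side of the given $(p,q)$. For $f=\chi_E$, split $K = K\chi_{\{|K|>\lambda\}} + K\chi_{\{|K|\le\lambda\}}$ at a height $\lambda$ to be optimised. The piece with $|K|>\lambda$ lies in $L^1$, with norm $\lesssim \lambda^{1-s}\|K\|_{L^{s,\infty}}^s$. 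The piece with $|K|\le\lambda$ lies in $L^{\infty}$ (and in $L^r$ for every $r>s$). Young's inequality applied to each piece, followed by optimisation in $\lambda$, gives $|\{|\chi_E*K|>\tau\}| \lesssim (|E|^{1/p_i}\|K\|_{L^{s,\infty}}/\tau)^{q_i}$. The Marcinkiewicz (Stein–Weiss) interpolation theorem in its Lorentz form then yields $T_K : L^{p,r}\to L^{q,r}$ for every $r$. Taking $r=p$ and using $L^{q,p}\hookrightarrow L^{q,q}=L^q$ (valid because $p<q$) gives $\|f*K\|_{L^q}\lesssim\|f\|_{L^p}\|K\|_{L^{s,\infty}}$. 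Equivalently, one can quote O'Neil's inequality $\|f*K\|_{L^{q,r}}\lesssim\|f\|_{L^{p,r}}\|K\|_{L^{s,\infty}}$ directly.

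\textbf{Step 2 (the endpoint $p=1$).} When $p=1$ the relation forces $q=s=d/\alpha$, and the claim becomes $L^1 * L^{s,\infty}\to L^s$. Minkowski's integral inequality in the normable space $L^{s,\infty}$ (recall $s>1$) gives at once $\|f*K\|_{L^{s,\infty}}\lesssim\|f\|_{L^1}\|K\|_{L^{s,\infty}}$. This step is the main obstacle: Step 1 does not reach $p=1$, since the Lorentz embedding needs $r=p>1$ on the input side, and to obtain the stated strong $L^q$ bound one must upgrade this weak-type estimate.

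The first route I would try is to check whether the cited Triebel theorem is meant with $L^1$ read as $L^{1,1}$, or with a restriction $p>1$, and to write the endpoint in whatever form that source supports. If the strong bound is intended literally, I would test it on $K=|x|^{-\alpha}$ and $f$ an approximate identity. The convolution then converges to $|x|^{-\alpha}\notin L^{d/\alpha}$, which indicates that at $p=1$ only the weak-type conclusion $L^{q,\infty}$ survives. For the purposes of the paper I would therefore state $p=1$ in that weak form, and record that it suffices in the later applications of Lemma~\ref{youngtypeinequality}, where the exponents have $p>1$.
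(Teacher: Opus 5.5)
Your proposal is correct, including the diagnosis of the endpoint. The paper gives no proof of this proposition; it only quotes it from the literature, and as printed the range $1\le p$ is too generous.

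For $1<p<q<\infty$, your Step 1 is the standard O'Neil/Marcinkiewicz argument behind the cited result. The splitting you describe gives genuine weak type for arbitrary $f\in L^{p_i}$, not only for characteristic functions, since $\|f*K\chi_{\{|K|\le\lambda\}}\|_{L^\infty}\le\|f\|_{L^{p_i}}\|K\chi_{\{|K|\le\lambda\}}\|_{L^{p_i'}}$ with $p_i'>s$, and $\|f*K\chi_{\{|K|>\lambda\}}\|_{L^{p_i}}\le\|f\|_{L^{p_i}}\|K\chi_{\{|K|>\lambda\}}\|_{L^1}$.

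At $p=1$ (which forces $q=d/\alpha$) the strong bound is indeed false, and a single fixed function shows it. Take $f=\chi_{B(0,1)}$ and $K=|x|^{-\alpha}$. For $|x|\ge 2$ one has $f*K(x)\ge |B(0,1)|\,(3/2)^{-\alpha}|x|^{-\alpha}$, while $\int_{|x|\ge2}|x|^{-d}\,dx=\infty$. Your approximate-identity test also works, but only once you close it with Fatou's lemma; as written it is heuristic.

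So the correct statement is the strong bound for $1<p<q<\infty$, with only $L^1*L^{d/\alpha,\infty}\to L^{d/\alpha,\infty}$ at $p=1$, exactly as you propose. As you say, nothing downstream is affected. In Lemma~\ref{youngtypeinequality} (with $d=1$), the input-side Lebesgue exponents are $(1/\alpha)'=\frac{1}{1-\alpha}$ locally and $(2/\alpha)'=\frac{2}{2-\alpha}$ or $2$ globally, all strictly larger than $1$.
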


	For \(0 < \alpha < 1/2\), let $\psi_{\alpha}$ and $\varphi_{\alpha}$ be nonnegative real functions defined by
	
	\begin{equation}
		\psi_{\alpha}(t) = |t|^{-\alpha} + |t|^{-2\alpha}, \qquad t \in \mathbb{R}, \quad t \neq 0, \label{Phidefn.}
	\end{equation}
	and 
	\begin{align}\label{vaphidef.}
		\varphi_\alpha(t) = |\sinh t|^{-\alpha} + |\sinh t|^{-2\alpha}, \qquad t \in \mathbb{R}, \quad t \neq 0.
	\end{align}

	\begin{lemma}[\cite{coni}, Lemma 4.1]\label{phispispaces}
		We have
		\[
		\psi_\alpha \in \mathcal{W}\left( L^{1/(2\alpha), \infty}, L^{1/\alpha, \infty} \right) \ \ \  and \ \ \ \varphi_\alpha \in \mathcal{W}\left( L^{1/(2\alpha), \infty}, L^1 \right).
		\]
	\end{lemma}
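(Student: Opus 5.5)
We prove the two memberships separately, using the definition $\|F\|_{\mathcal{W}(B,C)}=\big\|\,\|F\,T_x g\|_{B}\big\|_{C_x}$ with a fixed cutoff $g\in C_0^\infty(\mathbb{R})$, say $\operatorname{supp} g\subset[-1,1]$. In both cases we split each function into a local part near its singularities and a global tail. For $\psi_\alpha$ the only singularity is at $t=0$. For $\varphi_\alpha$ the singularity sits at $t=0$, since $\sinh t$ vanishes only there, and for large $|t|$ we use $|\sinh t|\sim e^{|t|}/2$. The basic facts are $|t|^{-\beta}\in L^{1/\beta,\infty}(\mathbb{R})$ for $0<\beta<1$, and that $L^{r,\infty}$ quasi-norms are monotone under pointwise domination. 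Since $0<\alpha<1/2$, both exponents $\alpha$ and $2\alpha$ lie in $(0,1)$, so these facts apply.

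\emph{Step 1: local component of $\psi_\alpha$.} Fix $x$. If $|x|\le 2$, then $|t|^{-\alpha}\chi_{|t|\le 3}\lesssim |t|^{-2\alpha}$, so
\[
\|\psi_\alpha T_x g\|_{L^{1/(2\alpha),\infty}}\lesssim \||t|^{-2\alpha}\|_{L^{1/(2\alpha),\infty}}<\infty .
\]
If $|x|>2$, then on $\operatorname{supp} T_x g$ we have $|t|\sim|x|$. Hence $\psi_\alpha T_xg$ is bounded by $C(|x|^{-\alpha}+|x|^{-2\alpha})$ on an interval of length $2$, and its $L^{1/(2\alpha),\infty}$ norm is $\lesssim |x|^{-\alpha}$. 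So the local-norm function $G(x):=\|\psi_\alpha T_xg\|_{L^{1/(2\alpha),\infty}}$ satisfies $G\lesssim \chi_{|x|\le 2}+|x|^{-\alpha}\chi_{|x|>2}$.

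\emph{Step 2: global component of $\psi_\alpha$.} The bound from Step 1 lies in $L^{1/\alpha,\infty}$: the bounded compactly supported part is fine, and $|x|^{-\alpha}\in L^{1/\alpha,\infty}$. This gives $\psi_\alpha\in\mathcal{W}(L^{1/(2\alpha),\infty},L^{1/\alpha,\infty})$.

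A technical caveat is that $L^{r,\infty}$ is only quasi-normed. One should either use an equivalent norm, which exists since $r>1$, or note that the amalgam definition only needs measurability of $x\mapsto \|fT_xg\|$. The latter is routine, because the bound we use is an explicit dominating function.

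\emph{Step 3: $\varphi_\alpha$.} Near $0$ we have $|\sinh t|\ge |t|$, so $\varphi_\alpha\le\psi_\alpha$ pointwise. Therefore, for $|x|\le 2$ the local norms are bounded uniformly, exactly as in Step 1. For $|x|>2$ we use $|\sinh t|\gtrsim e^{|t|}$ for $|t|\ge 1$. This gives $\varphi_\alpha(t)\lesssim e^{-\alpha|t|}$ on $\operatorname{supp} T_xg$, so the local norm is $\lesssim e^{-\alpha(|x|-1)}$. The resulting local-norm function is bounded with exponential decay, hence lies in $L^1(\mathbb{R})$. This yields $\varphi_\alpha\in\mathcal{W}(L^{1/(2\alpha),\infty},L^1)$.

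The main (mild) obstacle is the uniform local estimate near the origin in weak-$L^{1/(2\alpha)}$. Translation and the cutoff must not destroy the weak-type bound, and the lower-order term $|t|^{-\alpha}$ must be absorbed by $|t|^{-2\alpha}$ on bounded sets. Both are handled by the pointwise domination $|\psi_\alpha T_x g|\le \|g\|_\infty\,\psi_\alpha\chi_{[x-1,x+1]}$.
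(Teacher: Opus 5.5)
Your proof is correct: the paper gives no argument of its own for this lemma and simply cites Cordero--Nicola, and your splitting into a piece near $t=0$, dominated locally by $|t|^{-2\alpha}\in L^{1/(2\alpha),\infty}$, and a tail whose local norms decay like $|x|^{-\alpha}\in L^{1/\alpha,\infty}$ (resp.\ like $e^{-\alpha|x|}\in L^1$ for $\varphi_\alpha$) is the standard argument behind that citation. The one point to tighten is measurability of $x\mapsto\|\psi_\alpha T_xg\|_{L^{1/(2\alpha),\infty}}$: it does not follow from having a measurable majorant, but it does follow once you use the equivalent norm $\sup_{0<|E|<\infty}|E|^{2\alpha-1}\int_E|h|$, because each $x\mapsto\int_E|\psi_\alpha T_xg|$ is continuous by dominated convergence, so the supremum is lower semicontinuous.
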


	\begin{lemma}\label{youngtypeinequality}
		For \(0 < \alpha < 1/2\), let \(\psi_{\alpha}\) and \(\varphi_\alpha\) be the function defined in 
		\eqref{Phidefn.} and \eqref{vaphidef.} respectively. Then,
		\[
		\| f * \psi_\alpha \|_{\mathcal{W}(L^{1/\alpha}, L^{2/\alpha})}
		\lesssim \| f \|_{\mathcal{W}(L^{(1/\alpha)'}, L^{(2/\alpha)'})} \quad \textit{and}\quad \| f * \varphi_\alpha \|_{\mathcal{W}(L^{1/\alpha},L^2)}\lesssim\|f\|_{\mathcal{W}(L^{(1/\alpha)'},L^2)}.
		\]
	\end{lemma}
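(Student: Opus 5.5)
The plan is to derive both inequalities from the convolution property of Wiener amalgam spaces, Lemma \ref{waspaceproperties}(i). The needed membership of the kernels is Lemma \ref{phispispaces}. The two scalar convolution inclusions, one for the local and one for the global components, come from Proposition \ref{generalyoungstheorem} in dimension $d=1$ and from Young's inequality.

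\emph{First inequality.} By Lemma \ref{phispispaces}, $\psi_\alpha \in \mathcal{W}(L^{1/(2\alpha),\infty}, L^{1/\alpha,\infty})$. By Lemma \ref{waspaceproperties}(i), it suffices to verify two inclusions.
\begin{align*}
&\text{(local)} && L^{(1/\alpha)'} * L^{1/(2\alpha),\infty} \hookrightarrow L^{1/\alpha},\\
&\text{(global)} && L^{(2/\alpha)'} * L^{1/\alpha,\infty} \hookrightarrow L^{2/\alpha}.
\end{align*}
For the local inclusion, apply Proposition \ref{generalyoungstheorem} with $d=1$, kernel exponent $2\alpha\in(0,1)$, $p=(1/\alpha)'=\frac{1}{1-\alpha}$ and $q=1/\alpha$. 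The scaling condition reads $1-\alpha=\alpha+1-2\alpha$, which holds. The requirement $1\le p<q<\infty$ holds precisely because $0<\alpha<1/2$. For the global inclusion, use kernel exponent $\alpha\in(0,1)$, $p=(2/\alpha)'=\frac{2}{2-\alpha}$ and $q=2/\alpha$. The scaling condition $1-\frac{\alpha}{2}=\frac{\alpha}{2}+1-\alpha$ holds, and again $1\le p<q<\infty$. Combining the two gives $\|f*\psi_\alpha\|_{\mathcal{W}(L^{1/\alpha},L^{2/\alpha})}\lesssim \|\psi_\alpha\|_{\mathcal{W}(L^{1/(2\alpha),\infty},L^{1/\alpha,\infty})}\|f\|_{\mathcal{W}(L^{(1/\alpha)'},L^{(2/\alpha)'})}$.

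\emph{Second inequality.} Here $\varphi_\alpha\in\mathcal{W}(L^{1/(2\alpha),\infty},L^1)$. The local component is the same as above, so the local inclusion $L^{(1/\alpha)'}*L^{1/(2\alpha),\infty}\hookrightarrow L^{1/\alpha}$ is reused. For the global component, Young's inequality gives $L^2*L^1\hookrightarrow L^2$. Lemma \ref{waspaceproperties}(i) then yields $\|f*\varphi_\alpha\|_{\mathcal{W}(L^{1/\alpha},L^2)}\lesssim\|f\|_{\mathcal{W}(L^{(1/\alpha)'},L^2)}$.

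\emph{Main obstacle.} Lemma \ref{waspaceproperties}(i) is stated for Banach spaces, while the kernels' components are weak Lebesgue spaces. I would note that $L^{r,\infty}$ is normable (equivalent to a Banach norm) for $r>1$. Here $1/(2\alpha)>1$ and $1/\alpha>2$, so the lemma applies with an equivalent norm.

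If one prefers to avoid this point, I would prove the convolution bound directly. Fix a bounded uniform partition of unity $\{T_k g\}_{k\in\mathbb{Z}}$ and write $f=\sum_k f_k$ and $\psi_\alpha=\sum_m \psi_m$ with localized pieces. Each $f_k*\psi_m$ is supported near $k+m$, and its $L^{1/\alpha}$ norm is bounded by the local HLS estimate, giving $\|f_k\|_{L^{(1/\alpha)'}}\|\psi_m\|_{L^{1/(2\alpha),\infty}}$. Summing over a bounded overlap, the sequence of local norms of $f*\psi_\alpha$ is dominated by a discrete convolution of the sequences $(\|f_k\|)_k$ and $(\|\psi_m\|)_m$. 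The latter sequence lies in $\ell^{1/\alpha,\infty}$, or in $\ell^1$ for $\varphi_\alpha$. The discrete Hardy–Littlewood–Sobolev inequality $\ell^{(2/\alpha)'}*\ell^{1/\alpha,\infty}\to\ell^{2/\alpha}$, or Young's inequality $\ell^2*\ell^1\to\ell^2$, then closes the argument.
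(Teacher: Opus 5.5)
Your proposal is correct and follows essentially the route the paper relies on. The paper only cites \cite[Lemma 4.2]{coni} (and \cite{NFC} for the $\varphi_\alpha$ case), and that source proves the bound by combining Lemma \ref{phispispaces} with the convolution relation of Lemma \ref{waspaceproperties}(i). The local and global inclusions come from the generalized Young (Hardy--Littlewood--Sobolev) inequality of Proposition \ref{generalyoungstheorem}, exactly as in your exponent checks, together with $L^2*L^1\hookrightarrow L^2$ for the global part of $\varphi_\alpha$. Your extra remarks, the normability of $L^{r,\infty}$ for $r>1$ and the discretized alternative, only make explicit a point the cited source uses tacitly.
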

	For the proof of the first inequality, the readers are referred to \cite[Lemma 4.2]{coni}. Using the similar argument, one can also prove the second inequality, see \cite{NFC}.

	\subsection{Metaplectic representation and symplectic group}
	
	The symplectic group is defined by
	\[
	\operatorname{Sp}(d, \mathbb{R}) = \left\{ g \in \operatorname{GL}(2d, \mathbb{R}) : {}^t g J g = J \right\},
	\]
	where the symplectic matrix \(J\) is  given by $$J=\left[\begin{array}{cc}0 & I_d \\ -I_d & 0\end{array}\right].$$
	
	The metaplectic or Shale--Weil representation \(\mu\) is a unitary representation of the (double cover of the) symplectic group \(\operatorname{Sp}(d, \mathbb{R})\) on \(L^2(\mathbb{R}^d)\). For elements of \(\operatorname{Sp}(d, \mathbb{R})\) in special form, the metaplectic representation can be computed explicitly. For \(f \in L^2(\mathbb{R}^d)\), we have
	\[
	\mu \begin{pmatrix} A & 0 \\ 0 & {}^t A^{-1} \end{pmatrix} f(x)
	= (\det A)^{-1/2} f(A^{-1}x),
	\]
	and
	\[
	\mu \begin{pmatrix} I_d & 0 \\ C & I_d \end{pmatrix} f(x)
	= \pm e^{i\pi \langle C x, x \rangle} f(x). 
	\]
	
	The symplectic algebra \(\mathfrak{sp}(d, \mathbb{R})\) is the set of all \(2d \times 2d\) real matrices \(A\) such that \(e^{tA} \in \operatorname{Sp}(d, \mathbb{R})\) for all \(t \in \mathbb{R}\).
	
	%{\color{red}The following formulae for the metaplectic representation can be found in [\cite{FHP},Theorems 4.51 and 4.53]. ??}

	The metaplectic representation $\mu: \mathrm{Sp}(d,\mathbb{R}) \to \mathcal{U}(L^2(\mathbb{R}^d))$ is a unitary representation of the symplectic group $\mathrm{Sp}(d,\mathbb{R})$ on $L^2(\mathbb{R}^d)$. It is defined as the unique (up to a phase) lift of the natural action of $\mathrm{Sp}(d,\mathbb{R})$ on the phase space $\mathbb{R}^{2d}$ to the Hilbert space of square-integrable functions.
	Suppose $1 \le p, q \le \infty$ and let $\mathcal{A} = \begin{pmatrix} A & B \\ C & D \end{pmatrix} \in \mathrm{Sp}(d,\mathbb{R})$ satisfy $\det B \neq 0$. Then the associated metaplectic operator $\mu(\mathcal{A})$ furnishes a continuous mapping from $\mathcal{W}(\mathcal{F} L^q, L^p)$ into $\mathcal{W}(\mathcal{F} L^p, L^q)$, meaning that
	\begin{align}
		\|\mu(\mathcal{A}) f\|_{\mathcal{W}(\mathcal{F} L^p, L^q)} \le \alpha(\mathcal{A},p,q) \|f\|_{\mathcal{W}(\mathcal{F} L^q, L^p)}.
	\end{align}
	The constant $\alpha = \alpha(\mathcal{A},p,q)$ appearing in the above estimate admits an explicit formulation in terms of the matrix entries of $A$ and the exponents $p$ and $q$.
	
	\begin{proposition}
		\label{propmetaplectic}
		Let $\mathcal{A} = \begin{pmatrix} A & B \\ C & D \end{pmatrix} \in \mathrm{Sp}(d,\mathbb{R})$.
		
		\begin{enumerate}
			\item[(i)] If $\det B \neq 0$, then the metaplectic operator acts on $f$ according to
			\begin{equation}
				\mu(\mathcal{A})f(x) = i^{d/2} (\det B)^{-1/2} \int e^{-\pi i x \cdot D B^{-1} x + 2\pi i y \cdot B^{-1} x - \pi i y \cdot B^{-1} A y} f(y) \, dy. 
			\end{equation}
			
			\item[(ii)] If $\det A \neq 0$, we have
			\begin{equation}
				\mu(\mathcal{A})f(x) = (\det A)^{-1/2} \int e^{-\pi i x \cdot C A^{-1} x + 2\pi i \xi \cdot A^{-1} x + \pi i \xi \cdot A^{-1} B \xi} \hat{f}(\xi) \, d\xi. 
			\end{equation}
		\end{enumerate}
	\end{proposition}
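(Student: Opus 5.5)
The plan is to factor $\mathcal{A}$ into elementary symplectic matrices whose metaplectic images are known explicitly, and then compose. Since $\mu$ is a representation (projective, up to a sign), the kernel formulas follow by composing the elementary operators. I would first prove the formulas for $f$ in the Schwartz class, where every integral converges absolutely, and then extend by density and the unitarity of $\mu(\mathcal{A})$, reading the integrals in the usual $L^2$/oscillatory sense.

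The elementary matrices are
\[
V_P=\begin{pmatrix} I_d & 0\\ P & I_d\end{pmatrix},\qquad \mathcal{D}_L=\begin{pmatrix} L & 0\\ 0 & {}^tL^{-1}\end{pmatrix},\qquad U_P=\begin{pmatrix} I_d & P\\ 0 & I_d\end{pmatrix},
\]
where $P={}^tP$, together with $J$. Their images are:
\begin{itemize}
\item $\mu(V_P)$ is multiplication by the chirp $e^{i\pi\langle Px,x\rangle}$;
\item $\mu(\mathcal{D}_L)$ is the dilation $f\mapsto(\det L)^{-1/2}f(L^{-1}\cdot)$;
\item $\mu(J)$ is the Fourier transform (or its inverse, depending on convention), up to the phase $i^{d/2}$;
\item $\mu(U_P)=\mu(J)^{-1}\mu(V_{-P})\mu(J)$ is the Fourier multiplier $e^{\pi i\xi\cdot P\xi}$.
\end{itemize}
The first two are recorded above; the last is obtained by conjugating a chirp with the Fourier transform. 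Throughout, the relations ${}^t\mathcal{A}J\mathcal{A}=J$ will be used in the form: ${}^tAC$ and ${}^tBD$ are symmetric, and ${}^tAD-{}^tCB=I$.

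For (i), with $\det B\neq0$, I would verify the factorization
\[
\mathcal{A}=V_{DB^{-1}}\,\mathcal{D}_{B}\,J^{\pm1}\,V_{B^{-1}A},
\]
adjusting signs to match the convention $\mu(V_P)=e^{i\pi\langle Px,x\rangle}$ and the minus signs in the target formula. The symplectic relations give that $DB^{-1}$ and $B^{-1}A$ are symmetric, and that the lower-left block works out because $C=DB^{-1}A-{}^tB^{-1}$, which follows from ${}^tAD-{}^tCB=I$.

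Applying the operators right to left:
\begin{itemize}
\item multiply $f(y)$ by $e^{-\pi i y\cdot B^{-1}Ay}$;
\item take the Fourier transform, picking up $i^{d/2}$;
\item dilate by $B$, which turns the phase into $e^{2\pi i y\cdot B^{-1}x}$ (using symmetry of transposes) and produces the factor $(\det B)^{-1/2}$;
\item multiply by $e^{-\pi i x\cdot DB^{-1}x}$.
\end{itemize}
This yields exactly the kernel in (i).

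For (ii), with $\det A\neq0$, I would use the analogous lower–diagonal–upper factorization
\[
\mathcal{A}=V_{CA^{-1}}\,\mathcal{D}_A\,U_{A^{-1}B},
\]
again up to sign conventions. Here $CA^{-1}$ and $A^{-1}B$ are symmetric by the symplectic relations, and the $D$ block is recovered as $D={}^tA^{-1}+CA^{-1}B$. Writing $f=\int\hat f(\xi)e^{2\pi i\xi\cdot y}\,d\xi$:
\begin{itemize}
\item $\mu(U_{A^{-1}B})$ inserts $e^{\pi i\xi\cdot A^{-1}B\xi}$ on the Fourier side;
\item the dilation replaces $y$ by $A^{-1}x$ and contributes $(\det A)^{-1/2}$;
\item the chirp contributes $e^{-\pi i x\cdot CA^{-1}x}$.
\end{itemize}
This gives (ii).

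The main obstacle is bookkeeping rather than analysis. One must fix consistent conventions (Fourier transform versus inverse for $\mu(J)$, the sign in $\mu(V_C)$, and the branch of $(\det B)^{-1/2}$ and $i^{d/2}$) so that the factorization signs produce exactly the stated phases $-\pi i\,x\cdot DB^{-1}x$ and $-\pi i\,y\cdot B^{-1}Ay$. The resulting ambiguity in the overall constant is absorbed by the fact that $\mu$ is defined only up to sign. I would check the block multiplications carefully first, and then fix the conventions against the case $\mathcal{A}=J$, where (i) must reduce to the Fourier transform.
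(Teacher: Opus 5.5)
The paper does not prove this proposition; it is quoted from the literature (Folland, \emph{Harmonic Analysis in Phase Space}, Theorems 4.51 and 4.53). Your factorization into elementary symplectic matrices is the standard way to prove it, and your block identities are right: $\mathcal{A}=V_{DB^{-1}}\mathcal{D}_B J V_{B^{-1}A}$ holds exactly with $J$, $\mathcal{A}=V_{CA^{-1}}\mathcal{D}_A U_{A^{-1}B}$, and $U_P=J^{-1}V_{-P}J$.

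\textbf{The gap is the signs, and it is not a question of an overall constant.} The factorization is rigid. Matching the blocks of $V_P\mathcal{D}_L J^{\pm1}V_Q$ with those of $\mathcal{A}$ forces $L=\pm B$, $Q=B^{-1}A$ and $P=DB^{-1}$. Using $J^{-1}$ instead of $J$ only replaces $\mathcal{D}_B$ by $\mathcal{D}_{-B}$, since $\mathcal{D}_{-B}J^{-1}=\mathcal{D}_BJ$. So there is nothing to ``adjust'', and the signs of the quadratic phases are fixed by the convention for $\mu(V_P)$ alone.

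With the convention you declare (the one recorded in the paper's preliminaries), $\mu(V_P)f=e^{i\pi\langle Px,x\rangle}f$, your right-to-left computation gives the following:
\begin{itemize}
\item in (i), the chirps $e^{+\pi i x\cdot DB^{-1}x}$ and $e^{+\pi i y\cdot B^{-1}Ay}$;
\item in (ii), the multiplier $\mu(J)^{-1}\mu(V_{-P})\mu(J)=e^{-\pi i\xi\cdot P\xi}$ (conjugating multiplication by an even function with $\mathcal{F}$ or with $\mathcal{F}^{-1}$ gives the same multiplier), and the outer chirp $e^{+\pi i x\cdot CA^{-1}x}$.
\end{itemize}
These phases depend on $x$, $y$, $\xi$, so the $\pm$ ambiguity of $\mu$ cannot absorb them. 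When you write ``multiply $f(y)$ by $e^{-\pi i y\cdot B^{-1}Ay}$'' and ``the multiplier $e^{\pi i\xi\cdot P\xi}$'', you are silently using the opposite convention.

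In fact the statement itself rules out your convention. Put $\mathcal{A}=V_C$ in (ii), so $A=D=I_d$ and $B=0$; this gives $\mu(V_C)f=e^{-\pi i x\cdot Cx}f$. Your proposed check at $\mathcal{A}=J$ cannot catch this, because $A=D=0$ there.

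\textbf{The repair is to fix, at the outset, the normalization in which (i)--(ii) hold, namely Folland's.} There $\mu(V_P)f(x)=e^{-\pi i x\cdot Px}f(x)$, $\mu(\mathcal{D}_L)f=(\det L)^{-1/2}f(L^{-1}\cdot)$, and $\mu(J)=i^{d/2}\mathcal{F}^{-1}$ with $\mathcal{F}^{-1}h(x)=\int h(y)e^{2\pi i x\cdot y}\,dy$. These are the operators intertwining $\rho(p,q)f(x)=e^{2\pi i q\cdot x+\pi i p\cdot q}f(x+p)$ with $\rho(\mathcal{A}(p,q))$.

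This is also the convention the rest of the paper actually uses. Its formula for $\mathcal{H}_{\mathcal{A}}$ gives $\mathcal{H}_{\mathcal{A}}=-\pi\,x\cdot Cx$ for $\mathcal{A}=\begin{pmatrix}0&0\\ C&0\end{pmatrix}$, hence $\mu(V_{tC})=e^{it\mathcal{H}_{\mathcal{A}}}=e^{-\pi i t\,x\cdot Cx}$. Its kernel for $e^{it\mathcal{H}^{-}}$ likewise carries $-\pi i\coth t\,(|x|^2+|y|^2)$. The plus sign in the preliminaries comes from the other common convention ($\pi(x,\xi)=M_\xi T_x$, with $\mu(J)$ a multiple of $\mathcal{F}$). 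In that convention the quadratic phases in (i)--(ii) change sign, and in (i) so does the cross term. Once Folland's formulas are in place, your computations for (i) and (ii) go through verbatim.

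One minor point: the symmetry of $B^{-1}A$ and $A^{-1}B$ needs $A\,{}^tB=B\,{}^tA$. This comes from $\mathcal{A}J\,{}^t\mathcal{A}=J$, not from the three relations you list. Alternatively, it follows because the last factor in each factorization is forced to be symplectic.
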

	\begin{proposition}
		\label{propmetaplecticalternative}
		Let $\mathcal{A} = \begin{pmatrix} A & B \\ C & D \end{pmatrix} \in \mathrm{Sp}(d,\mathbb{R})$ be such that both $\det B \neq 0$ and $\det A \neq 0$. Then the metaplectic operator admits the alternative representation
		\begin{align}\label{represetationofmuA}
			\mu(\mathcal{A})f(x) = (-i \det B)^{-1/2} e^{-\pi i x \cdot C A^{-1} x} \left( e^{-\pi i y\, \cdot B^{-1} Ay} * f \right)\!(A^{-1} x).
		\end{align}
		
	\end{proposition}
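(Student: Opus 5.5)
The plan is to derive \eqref{represetationofmuA} directly from part (ii) of Proposition \ref{propmetaplectic}, which is available because $\det A\neq 0$. The idea is to recognise the $\xi$-integral there as a Fourier multiplier with a Gaussian chirp symbol, evaluated at the point $A^{-1}x$. The assumption $\det B\neq0$ will be used only to invert that symbol's quadratic form. I will first work with $f\in\mathcal{S}(\mathbb{R}^d)$ and extend at the end.

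\emph{Step 1: rewrite (ii).} Pull the factor $e^{-\pi i x\cdot CA^{-1}x}$ out of the integral and write $\xi\cdot A^{-1}x=\xi\cdot(A^{-1}x)$. This gives
\[
\mu(\mathcal{A})f(x)=(\det A)^{-1/2}e^{-\pi i x\cdot CA^{-1}x}\,\mathcal{F}^{-1}\!\big(e^{\pi i \xi\cdot M\xi}\hat f\big)(A^{-1}x),\qquad M:=A^{-1}B.
\]
Next I check that $M$ is a real symmetric matrix. The relation ${}^t\mathcal{A}J\mathcal{A}=J$ is equivalent to $\mathcal{A}J\,{}^t\!\mathcal{A}=J$, and one of the block identities this yields is $A\,{}^tB=B\,{}^tA$. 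Hence
\[
A^{-1}B=A^{-1}(B\,{}^tA)\,{}^tA^{-1}=A^{-1}(A\,{}^tB)\,{}^tA^{-1}={}^tB\,{}^tA^{-1}={}^t(A^{-1}B),
\]
so $M$ is symmetric. Since $\det B\neq0$, $M$ is also invertible, with $M^{-1}=B^{-1}A$.

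\emph{Step 2: the chirp kernel.} For a real symmetric invertible $M$, I will show
\[
\mathcal{F}^{-1}\big(e^{\pi i\xi\cdot M\xi}\big)(y)=\det(-iM)^{-1/2}e^{-\pi i y\cdot M^{-1}y}
\]
in the sense of tempered distributions, using the principal branch. To prove it, regularise the symbol to $e^{-\pi \xi\cdot(\varepsilon I-iM)\xi}$ with $\varepsilon>0$. Its inverse Fourier transform is the classical complex Gaussian $\det(\varepsilon I-iM)^{-1/2}e^{-\pi y\cdot(\varepsilon I-iM)^{-1}y}$, which one can verify by diagonalising $M$ orthogonally and using analytic continuation of the one-dimensional Gaussian formula. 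Then let $\varepsilon\to0$ in $\mathcal{S}'$.

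Since $\hat f\in\mathcal{S}$, the multiplier identity $\mathcal{F}^{-1}(m\hat f)=\mathcal{F}^{-1}m*f$ then gives
\[
\mathcal{F}^{-1}\big(e^{\pi i \xi\cdot M\xi}\hat f\big)=\det(-iM)^{-1/2}\big(e^{-\pi i y\cdot B^{-1}Ay}*f\big).
\]
Substituting this into Step 1 yields \eqref{represetationofmuA}, once the constants are combined: $(\det A)^{-1/2}\det(-iA^{-1}B)^{-1/2}=\big((-i)^d\det B\big)^{-1/2}$, which matches the normalisation written as $(-i\det B)^{-1/2}$.

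\emph{Step 3: extension.} Both sides are bounded on $L^2$: the left side because $\mu(\mathcal{A})$ is unitary, and the right side because it is a unimodular multiplier composed with a dilation. Hence the identity extends from $\mathcal{S}$ to all $f\in L^2$ by density.

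\emph{Main obstacle.} The delicate point is Step 2. One must get the correct branch of $\det(\cdot)^{-1/2}$ for the complex symmetric matrix $\varepsilon I-iM$ and justify the limit $\varepsilon\to 0$. I would handle both through the diagonalisation $M=O\,\mathrm{diag}(m_j)\,{}^tO$: this reduces everything to one-dimensional factors $(\varepsilon-im_j)^{-1/2}\to e^{i\pi\,\mathrm{sgn}(m_j)/4}|m_j|^{-1/2}$. The resulting overall phase has to be matched with the sign ambiguity inherent in the metaplectic representation, since $\mu$ is only defined up to $\pm$.
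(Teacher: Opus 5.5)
The paper states this proposition without proof; it is quoted from the Cordero--Nicola metaplectic literature, so there is no argument of the authors to compare with. Your route is the standard one and is correct in substance. Reading the $\xi$-integral in part (ii) of Proposition~\ref{propmetaplectic} as the chirp multiplier $e^{\pi i\xi\cdot A^{-1}B\xi}$, evaluated at $A^{-1}x$, amounts to using the factorization
\[
\mathcal{A}=\begin{pmatrix} I_d & 0\\ CA^{-1} & I_d\end{pmatrix}\begin{pmatrix} A & 0\\ 0 & {}^tA^{-1}\end{pmatrix}\begin{pmatrix} I_d & A^{-1}B\\ 0 & I_d\end{pmatrix}.
\]
Your individual steps are sound. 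The symmetry of $A^{-1}B$ follows from $A\,{}^tB=B\,{}^tA$. The regularisation by $\varepsilon I-iM$ works, and the limit $\varepsilon\to0$ in $\mathcal{S}'$ is dominated because $|\det(\varepsilon I-iM)|^{-1/2}\le\prod_j|m_j|^{-1/2}$ and $\operatorname{Re}(\varepsilon I-iM)^{-1}\ge 0$. This is exactly where $\det B\neq0$ enters.

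The one claim that is false is the final identification of the constant in Step 2. Your computation produces $(\det A)^{-1/2}\det(-iA^{-1}B)^{-1/2}$, whose square is $\bigl((-i)^d\det B\bigr)^{-1}$. So the constant is $\det(-iB)^{-1/2}$ up to sign, whereas the square of $(-i\det B)^{-1/2}$ is $(-i\det B)^{-1}$. The two squares differ by the factor $(-i)^{d-1}$, so the constants agree up to sign only when $d\equiv 1 \pmod 4$.

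A concrete case: take $d=2$, $A=B=I_2$, $C=0$.
\begin{itemize}
\item Your Step 2 (and also part (i)) gives the kernel $i\,e^{-\pi i|x-y|^2}$.
\item The displayed formula gives $(-i)^{-1/2}e^{-\pi i|x-y|^2}=\pm e^{i\pi/4}e^{-\pi i|x-y|^2}$.
\end{itemize}
The ratio $\pm e^{i\pi/4}$ is not $\pm1$. So it cannot be absorbed into the sign ambiguity of the metaplectic representation on the double cover, as your last paragraph proposes.

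You should conclude with the constant $\det(-iB)^{-1/2}$. This is also what agrees with the constant $i^{d/2}(\det B)^{-1/2}$ in part (i) of Proposition~\ref{propmetaplectic}, and with the paper's own formula \eqref{IntergralrepH-} for $e^{it\mathcal{H}^-}$. Alternatively, say explicitly that \eqref{represetationofmuA} holds only up to a unimodular constant. Nothing downstream is affected, since the dispersive estimates use only the modulus $|\det B|^{-1/2}$.
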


	Next, we recall the Hermite polynomials along with their fundamental properties and those of the kernel associated with the Hermite semigroup that will be used in the subsequent sections. For a detailed treatment of the Hermite operator and its corresponding semigroup, we refer the reader to \cite{NR2005,T}.
	\subsection{Harmonic oscillator(Hermite operator)}
	For \( k \in \mathbb{N}_0 \), the Hermite polynomials \( H_k(x) \) on \( \mathbb{R} \) are
	\[
	H_k(x)e^{-x^2} = (-1)^k \frac{d^k}{dx^k} (e^{-x^2}).
	\]
	
	Then the functions
	\[
	h_k(x) = \frac{(-1)^k}{\sqrt{2^kk\sqrt{\pi}!}} \left( \frac{d^k}{dx^k} e^{-x^2} \right) e^{\frac{x^2}{2}},
	\]
	are in \( L^2(\mathbb{R}) \) and $  h_k \|_{L^2} = 1 \quad \text{for } k \in \mathbb{N}_0.$ The tensor product
	\[
	h_\alpha(x) := \prod_{l=1}^d h_{\alpha_l}(x); \quad \alpha = (\alpha_1, \alpha_2, \dots, \alpha_d),
	\]
	are the eigenfunctions for Hermite operators $\mathcal{H}^{+}=-\Delta+|x|^2$ with eigenvalue \( 2|\alpha|+d \) on $\mathbb{R}^d$. They form a complete orthonormal system in \( L^2(\mathbb{R}^d) \). For each \( f \in L^2(\mathbb{R}^d) \), using spectral property, we have the following representation
	\[
	f = \sum_{\alpha} \langle f, h_\alpha \rangle h_\alpha = \sum_{k=0}^{\infty} P_k f,
	\]
	where \( P_k \) denotes the orthogonal projection of $L^2(\mathbb{R}^d)$ onto the eigenspace spanned by $\{h_\alpha:|\alpha|=k \}$, given by 
	\[P_kf(x)=\sum_{|\alpha|=k}\langle f, h_\alpha \rangle h_\alpha(x).\]
	Setting $ 
	\Phi_k(x,y) = \sum_{|\alpha| = k} h_\alpha (x) h_\alpha (y), 
	$ the Hermite projectio $P_k$ can be written as
	\[
	P_k f(x) = \int_{\mathbb{R}^d} \Phi_k(x,y) f(y) \, dy.
	\]

	Moreover, the operator $\mathcal{H}^+$ defines a semigroup called the Hermite semigroup
	$e^{-it \mathcal{H}^+}, t>0,$   defined by
	$$
	e^{-it \mathcal{H}^+} f=\sum_{k=0}^{\infty} e^{-(2 k+d) it} P_{k} f
	$$
	for $f \in L^{2}(\mathbb{R}^{d}).$ On a dense subspace, say the space of all Schwartz functions, the above can be written as
	$$
	e^{-it \mathcal{H}^+} f(x)=\int_{\mathbb{R}^{n}} f(y) K_{it}(x, y) d y,
	$$ where the kernel $K_{t}(x, y)$ is given by the expansion
	$$
	K_{it}(x, y)=\sum_{\alpha \in \mathbb{N}^{d}_0} e^{-(2|\alpha|+d) it} \Phi_{\alpha}(x) \Phi_{\alpha}(y)= \sum_{k=0}^{\infty} e^{-it(2k+d)} \Phi_k(x,y).
	$$

	For $z=r+i t, r>0, t \in \mathbb{R},$ the kernel of the operator $e^{-z\mathcal{H}^+}$ is given by
	\begin{align*}
		K_z(x, y) &= \sum_{k=0}^{\infty} e^{-z(2k+d)} \Phi_k(x, y) = \sum_{k=0}^{\infty} e^{-(r+it)(2k+d)} \Phi_k(x, y)\\
		&= e^{-d(r+it)} \sum_{k=0}^{\infty} e^{-2k(r+it)} \Phi_k(x, y)\\
		&=:e^{-d(r+it)} \sum_{k=0}^{\infty} w^k \Phi_k(x, y),
	\end{align*}
	with \( w = e^{-2(r+it)} \). Since $|w|<1$, thus by Mehler's formula
	\[
	K_{r+it}(x, y) = \pi^{-1/2} e^{-d(r+it)} (1-w^2)^{-1/2} \exp\left( -\frac{1}{2} \left(\frac{1+w^2}{1-w^2}\right) (|x|^2 + |y|^2) + \frac{2w}{1-w^2} x.y \right).
	\]

	For $t\in \mathbb{R}\setminus(\frac{\pi}{2})\mathbb{Z}$, letting $r\to 0$, the kernel of the operator $e^{-it\mathcal{H}^+}$ can be written as
	\begin{align}\label{CH2k}
		K_{it}(x, y)&=\frac{e^{-\frac{i\pi d}{4}}}{(2\pi\sin 2t)^{\frac{d}{2}}} e^{\frac{i}{2}\left( \cot 2t(|x|^2+|y|^2)-\frac{2 x\cdot y}{\sin  2t}\right) }.
	\end{align}

	%Consider the initial value problem for Schr\"odinger equation, Associated to Hermite operator $\mathcal{H}^{+}=-\Delta+|x|^2$
	%\begin{align}
	%    \begin{cases}
		%        i\frac{\partial u}{\partial t}+\mathcal{H}^{+}u =0,\ \ (x,t)\in\mathbb{R}^d\times(0,\infty)\\
		%        u(0,x)=f(x),
		%    \end{cases}\label{schrodingerinitialvalueeqn}
	%\end{align}
	%Solution of the equation \eqref{schrodingerinitialvalueeqn}
	%\begin{align*}
	%    e^{it\mathcal{H}^{+}} f(x) &= \sum_{k=0}^{\infty} e^{it(2k+d)} P_k f(x)\\
	%    &= \sum_{k=0}^{\infty} e^{it(2k+d)} \int_{\mathbb{R}^d} \Phi_k(x,y) f(y) \, dy\\
	%    &= \int_{\mathbb{R}^d} \left( \sum_{k=0}^{\infty} e^{it(2k+d)} \Phi_k(x,y) \right) f(y) \, dy\\
	%    &=:\int_{\mathbb{R}^d}  K_{it}(x,y) f(y) \, dy.
	%\end{align*}
	%Where 
	%\[
	%K_{it}(x,y) = \sum_{k=0}^{\infty} e^{it(2k+d)} \Phi_k(x,y).
	%\]
	%\bigskip
	The following lemma tells about the uniform decay of the kernel $K.$
	\begin{lemma} \cite{R2008}\label{Lemmaratnakumar}
		Let \( K_z(x, y) \) be as before with \( z = r + it \), \( r > 0 \), \( 0 < |t| \leq \pi \). Then
		\[
		\left| K_z(x, y) \right| \leq \frac{e^{-dr}}{|\sin(2t)|^{d/2}}.
		\]
		
	\end{lemma}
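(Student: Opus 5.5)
The plan is to estimate each factor in Mehler's formula for $K_z(x,y)$ separately, working with the $d$-dimensional version in which $(1-w^2)^{-1/2}$ is raised to the power $d$ (the tensor product of the one-dimensional kernels). Throughout I write $w=e^{-2z}$ with $z=r+it$, $r>0$, and rewrite the coefficients hyperbolically:
\[
\frac{1+w^2}{1-w^2}=\coth(2z),\qquad \frac{2w}{1-w^2}=\frac{1}{\sinh(2z)} .
\]

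\textbf{Step 1: the Gaussian factor has modulus at most one.} The modulus of the exponential factor equals $\exp(\mathrm{Re}\,Q)$, where
\[
\mathrm{Re}\,Q=-\tfrac12\,\mathrm{Re}\coth(2z)\,(|x|^2+|y|^2)+\mathrm{Re}\big(\sinh(2z)^{-1}\big)\,x\cdot y .
\]
Put $a=2r$ and $b=2t$. Elementary computation gives
\[
|\sinh(a+ib)|^2=\sinh^2 a+\sin^2 b,\qquad
\mathrm{Re}\coth(a+ib)=\frac{\sinh a\cosh a}{|\sinh(a+ib)|^2},\qquad
\mathrm{Re}\,\frac{1}{\sinh(a+ib)}=\frac{\sinh a\cos b}{|\sinh(a+ib)|^2}.
\]
Since $a>0$, this yields $|\mathrm{Re}(1/\sinh 2z)|\le \mathrm{Re}\coth 2z$. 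Combining this with $|x\cdot y|\le \tfrac12(|x|^2+|y|^2)$ shows $\mathrm{Re}\,Q\le 0$, so the exponential factor has modulus at most $1$ uniformly in $x,y$.

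\textbf{Step 2: the prefactor.} Here I combine the prefactor terms rather than bounding them separately. One has
\[
e^{-dz}(1-w^2)^{-d/2}=\big(e^{2z}-e^{-2z}\big)^{-d/2}=\big(2\sinh 2z\big)^{-d/2}.
\]
Taking moduli gives $|K_z(x,y)|\lesssim(\sinh^2 2r+\sin^2 2t)^{-d/4}\le |\sin 2t|^{-d/2}$ for $0<|t|\le\pi$ with $\sin 2t\ne0$.

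\textbf{Step 3: the factor $e^{-dr}$ (the main obstacle).} The delicate point is recovering the extra decay $e^{-dr}$ in the stated bound without losing the $|\sin 2t|^{-d/2}$ singularity; the choice of branch of the square root is routine by comparison. Estimating $|e^{-dz}|=e^{-dr}$ and $|1-w^2|^{-d/2}$ separately gains $e^{-dr}$ from the first factor but loses $e^{dr}$ in the second, because $|1-w^2|=2e^{-2r}|\sinh 2z|$. So instead I will extract $e^{-dr}$ from the growth of $|\sinh 2z|$ in $r$. Specifically, I will use $|\sinh 2z|^2\ge \max(\sinh^2 2r,\sin^2 2t)$ together with $\sinh 2r\gtrsim e^{2r}$ for $r\gtrsim1$. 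For bounded $r$, the factor $e^{-dr}$ is comparable to $1$ and Step 2 already suffices. If a sharp constant $1$ is required, I would check the normalization of the kernel against \cite{R2008} and absorb the harmless factors of $2$ and $\pi$ there.
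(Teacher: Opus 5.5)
Your Steps 1 and 2 are correct, and you were right to use the $d$-dimensional Mehler kernel $\pi^{-d/2}e^{-dz}(1-w^2)^{-d/2}\exp(\cdots)$ rather than the one-dimensional formula displayed in the paper. Bounding the Gaussian factor by $1$ and then bounding the prefactor is the natural route; the paper itself gives no proof and only cites \cite{R2008}.

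The weak point is Step 3. Your case split in $r$ only gives $|K_z(x,y)|\le C_d\,e^{-dr}|\sin 2t|^{-d/2}$ with an untracked constant. For example, cutting at $r=1$ costs $(e/\sqrt{2\pi})^d>1$ on the bounded-$r$ piece. The statement asserts constant $1$, and your closing remark about absorbing ``harmless factors'' does not prove it. A constant would not matter for the paper's purposes, since the lemma is only used as $|K_z|\lesssim|t|^{-d/2}$ on $\mathcal{I}$. As a proof of the inequality as stated, though, this step is incomplete.

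The obstacle you identify is not real. The loss of $e^{dr}$ comes only from the crude bound $|\sinh 2z|\ge|\sin 2t|$. Using $\sin^2 2t\le 1$,
\[
|\sinh 2z|^2=\sinh^2 2r+\sin^2 2t\ \ge\ (1+\sinh^2 2r)\sin^2 2t=\cosh^2 2r\,\sin^2 2t ,
\]
so $|2\sinh 2z|\ge 2\cosh 2r\,|\sin 2t|\ge e^{2r}|\sin 2t|$. Equivalently, $|1-w^2|^2=(1-e^{-4r})^2+4e^{-4r}\sin^2 2t\ge(1+e^{-4r})^2\sin^2 2t$. So estimating $|e^{-dz}|$ and $|1-w^2|^{-d/2}$ separately does not lose $e^{dr}$ after all. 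Combined with Step 1 this gives
\[
|K_z(x,y)|\le \pi^{-d/2}\,|2\sinh 2z|^{-d/2}\le \pi^{-d/2}\,\frac{e^{-dr}}{|\sin 2t|^{d/2}}\le \frac{e^{-dr}}{|\sin 2t|^{d/2}},
\]
with no case distinction and the exact constant. Note that the factor $2$ in $2\sinh 2z$ is exactly what turns $\cosh 2r$ into a bound $\ge e^{2r}$. So the factors of $2$ and $\pi$ work in your favour rather than needing to be absorbed.
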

	If  we consider $t\in \mathcal{I}:=(-\frac{\pi}{4},\frac{\pi}{4})$, then  
	$
	\left| K_z(x, y) \right| \lesssim \frac{1}{|t|^{d/2}}
	$ for all $r>0$. This shows that for a smaller time interval, the decay of the Kernel of Hermite operator is dominated by polynomial decay.
	\subsection{Littlewood--Sobolev inequality}
	A fundamental tool for establishing Strichartz estimates is the Hardy--Littlewood--Sobolev inequality for Lebesgue spaces. However, when aiming for Strichartz estimates in Lorentz spaces, we require a more refined version of this inequality. According to [32], the following Littlewood--Sobolev inequality holds in the Lorentz space setting. Let $\sigma \in (0,1)$ and let $p_1, p_2, r_1, r_2 \in (1, \infty)$ satisfy
	
	\[
	\frac{1}{p_1} + \frac{1}{p_2} + \sigma = 2 \quad \text{and} \quad \frac{1}{r_1} + \frac{1}{r_2} \geq 1.
	\]
	
	Then there exists a constant $C > 0$ such that for all measurable functions $g$ and $h$ on $I$,
	
	\[
	\left| \int_I \int_I \frac{g(t) h(s)}{|t-s|^\sigma} \, ds \, dt \right| \leq C \, \|g\|_{L^{p_1, r_1}(I)} \|h\|_{L^{p_2, r_2}(I)}.
	\]

	\subsection{Schatten Class}
	
	We begin by briefly recalling the notion of Schatten spaces. For a thorough treatment, the reader may consult \cite{SIM}. Let \( H \) be a complex, separable Hilbert space, and let \( T : H \to H \) be a compact operator with adjoint \( T^* \). The singular values of \( T \), denoted by \( \{s_j(T)\}_{j \in \mathbb{N}} \), are defined as the nonzero eigenvalues of the positive operator \( |T| := \sqrt{T^*T} \). These values form an at most countable set. For any parameter \( \lambda \in [1, \infty) \), the Schatten space \( \mathfrak{S}^\lambda(H) \) consists of all compact operators \( T \) for which the sum of the \( \lambda \)-th powers of the singular values is finite; specifically,
	
	\[
	\sum_{j=1}^{\infty} s_j(T)^\lambda < \infty.
	\]
	
	Equipped with the norm
	
	\[
	\|T\|_{\mathfrak{S}^\lambda(H)} = \left( \sum_{j=1}^{\infty} s_j(T)^\lambda \right)^{\frac{1}{\lambda}},
	\]
	
	\( \mathfrak{S}^\lambda(H) \) becomes a Banach space. When \( \lambda = \infty \), the space \( \mathfrak{S}^\infty(H) \) is understood as the set of compact operators, though its norm coincides with the usual operator norm. An important duality relation holds: \( (\mathfrak{S}^\infty(H))^* \cong \mathfrak{S}^1(H) \) and \( (\mathfrak{S}(H))^* \cong B(H,H) \), where \( B(H,H) \) denotes the space of all bounded linear operators on \( H \). This behavior differs from that of standard \( L^p \) spaces. In the special cases, \( \mathfrak{S}^1(H) \) is called the \textbf{trace class}, and \( \mathfrak{S}^2(H) \) is called the \textbf{Hilbert-Schmidt class}. As we will see later, these Schatten spaces appear naturally when we prove Strichartz estimates for systems of orthonormal initial data, particularly through duality arguments.
	
	\subsection{Complex Interpolation in Schatten Spaces}
	
	Let \( a_0 < a_1 \) be real numbers. Consider a family of operators \( \{T_z\} \) defined on \( X \times I \) for complex numbers \( z \) lying in the closed strip \( a_0 \leq \operatorname{Re}(z) \leq a_1 \). Such a family is said to be \textbf{analytic in the sense of Stein} if it satisfies the following conditions:
	
	\begin{enumerate}
		\item For every \( z \) within the strip, \( T_z \) maps simple functions on \( X \times I \) to linear combinations of simple functions.
		\item For any pair of simple functions \( F \) and \( G \) on \( X \times I \), the complex function \( z \mapsto \langle G, T_z F \rangle \) is analytic in the interior of the strip (\( a_0 < \operatorname{Re}(z) < a_1 \)) and continuous on its closure.
		\item There exists a function \( C(s) \), growing at most doubly exponentially in \( s \), such that for all real \( s \),
		\[
		\sup_{\lambda \in [a_0, a_1]} |\langle G, T_{\lambda + i s} F \rangle| \leq C(s).
		\]
	\end{enumerate}
	
	This family of operators allows for a complex interpolation method adapted to Schatten spaces, which will be a key tool in proving our main results. For further details on complex interpolation theory, we refer the reader to
	\cite[Proposition 1]{FS},  \cite[Theorem 2.9]{SIM} , and  \cite[Theorem 2.26]{NG}.
	
	\section{Some Technical Lemmas}\label{sectionwelldefined}
	In this section, we establish several preliminary results that will be used throughout the subsequent analysis. %We first examine the well-definedness and boundedness of the multiplication operators on the appropriate Wiener amalgam spaces%. 
	We then establish a complex interpolation result for analytic families of operators acting on Wiener amalgam spaces. Finally, we formulate and prove a duality principle connecting Schatten class estimates with estimates for orthonormal systems. These technical results provide the necessary framework for the proofs of the main results in the subsequent sections. %We begin by establishing the well-definedness of the multiplication operators $W$ and $\overline{W}$.%

	\subsection{Complex interpolation in Wiener amalgam spaces}
	
	We state the following interpolation theorem on  Wiener amalgam spaces without proof. The interested readers are referred to proposition \ref{waspaceproperties} (property (iii)), Bergh and Löfström
	[Interpolation Spaces
	An Introduction, \cite{BER}] and Simon [Trace ideals and their applications, \cite{SIM}].
	\begin{lemma}\label{interpolationtheorem2}
		Let $\{T_z\}$ be an analytic family of operators on $ \mathbb{R}\times \mathbb{R}^d $ in the sense of Stein defined in the strip $a_0 \leq \operatorname{Re}(z) \leq a_1$. Let $1 \leq p_0, p_1,r_0,r_1,q_0, q_1,\tilde{q}_0,\tilde{q}_0 \leq \infty$ and for any $\theta \in [0,1]$, consider 
		$\frac{1-\theta}{p_0}+\frac{\theta}{p_1}=\frac{1}{p}$, $\frac{1-\theta}{q_0}+\frac{\theta}{q_1}=\frac{1}{q}$ and $\frac{1-\theta}{\tilde{q}_0}+\frac{\theta}{\tilde{q}_1}=\frac{1}{\tilde{q}}$. Suppose $W_j \in \mathcal{W}_t(L^{\tilde{q}},L^q)(\mathbb{R}, \mathcal{W}^{p',p}(\mathbb{R}^d)),~j=1,2$.  If there exists $C_0, C_1, b_0, b_1 > 0$
		%and $1 \leq p_0, p_1,r_0,r_1,q_0, q_1 \leq \infty$
		such that 
		\[
		\|W_1T_{a_0+is}W_2\|_{\mathfrak{S}^{r_0}(L^2(\mathbb{R}, L^2(\mathbb{R}^d)))} \leq C_0 e^{b_0 |s|} \|W_1\|_{\mathcal{W}_t(L^{\tilde{q_0}},L^{q_0})(\mathbb{R}, \mathcal{W}_x^{p'_0,p_0}(\mathbb{R}^d))} \|W_2\|_{\mathcal{W}(L^{\tilde{q_0}},L^{q_0})(\mathbb{R}, \mathcal{W}^{p'_0,p_0}(\mathbb{R}^d))}
		\]
		
		and
		
		\[
		\|W_1T_{a_1+is}W_2\|_{\mathfrak{S}^{r_1}(L^2(\mathbb{R}, L^2(\mathbb{R}^d)))} \leq C_1 e^{b_1 |s|} \|W_1\|_{\mathcal{W}_t(L^{\tilde{q_1}},L^{q_1})(\mathbb{R}, \mathcal{W}_x^{p'_1,p_1}(\mathbb{R}^d))} \|W_2\|_{\mathcal{W}_t(L^{\tilde{q_1}},L^{q_1})(\mathbb{R}, \mathcal{W}_x^{p'_1,p_1}(\mathbb{R}^d))} 
		\]

		Then, 
		%for any $\theta \in (0,1)$, 
		there exist constants $c$ such that 
		\[\|W_1T_{a+is}W_2\|_{\mathfrak{S}^{r}(L^2(\mathbb{R}, L^2(\mathbb{R}^d)))} \leq C_0^{1-\theta}C_1^{\theta} e^{c |s|} \|W_1\|_{\mathcal{W}_t(L^{\tilde{q}},L^{q})(\mathbb{R}, \mathcal{W}_x^{p',p}(\mathbb{R}^d))} \|W_2\|_{\mathcal{W}_t(L^{\tilde{q}},L^{q})(\mathbb{R}, \mathcal{W}_x^{p',p}(\mathbb{R}^d))}  
		,\]
		where $a=(1-\theta)a_0+\theta a_1$, and
		%$\frac{1-\theta}{p_0}+\frac{\theta}{p_1}=\frac{1}{p}$, 
		$\frac{1-\theta}{r_0}+\frac{\theta}{r_1}=\frac{1}{r}$. 
		%and $\frac{1-\theta}{q_0}+\frac{\theta}{q_1}=\frac{1}{q}$. 
	\end{lemma}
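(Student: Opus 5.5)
The plan is to reduce the statement to Stein's three lines lemma, applied to a scalar function built by pairing the operator $W_1T_zW_2$ against finite-rank test operators, as in the proof of \cite[Proposition 1]{FS}. Fix $s$ and $\theta\in(0,1)$ and write $z_\theta=a+is$. By duality of Schatten classes, $\|W_1T_{z_\theta}W_2\|_{\mathfrak{S}^r}$ is the supremum of $|\operatorname{Tr}(W_1T_{z_\theta}W_2\,G)|$ over finite-rank $G$ with $\|G\|_{\mathfrak{S}^{r'}}\le 1$. Any such $G=U|G|$ is analytically deformed by setting
\[
G_\zeta=U|G|^{r'\left(\frac{1-\zeta}{r_0'}+\frac{\zeta}{r_1'}\right)},\qquad 0\le \operatorname{Re}\zeta\le 1 .
\]
This family satisfies $\|G_{it}\|_{\mathfrak{S}^{r_0'}}\le 1$ and $\|G_{1+it}\|_{\mathfrak{S}^{r_1'}}\le 1$.

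The weights $W_1,W_2$ are deformed in the same way, and this step uses Lemma \ref{waspaceproperties}(iii). Applied twice, in the time variable and again in the spatial Wiener amalgam, it identifies the target space $\mathcal{W}_t(L^{\tilde q},L^q)\mathcal{W}_x^{p',p}$ as the complex interpolation space $[X_0,X_1]_\theta$, where $X_j=\mathcal{W}_t(L^{\tilde q_j},L^{q_j})\mathcal{W}^{p_j',p_j}_x$. The absolute continuity hypothesis holds whenever at least one of the global exponents is finite. By density it then suffices to treat $W_j$ in a dense class of simple functions. For each normalized $W_j$ with $\|W_j\|_{[X_0,X_1]_\theta}\le 1$, the definition of complex interpolation provides an analytic $X_0+X_1$-valued family $W_j(\zeta)$ on the strip with $W_j(\theta)=W_j$, $\sup_t\|W_j(it)\|_{X_0}\le 1+\varepsilon$ and $\sup_t\|W_j(1+it)\|_{X_1}\le1+\varepsilon$. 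Since $\mathcal{W}^{p',p}$ norms are not lattice norms, the explicit power-type factorizations used for $L^p$ are not available. For this reason one uses the abstract family from the definition of $[X_0,X_1]_\theta$ rather than explicit powers.

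Next one defines
\[
\Phi(\zeta)=e^{\varepsilon\zeta^2}\,\operatorname{Tr}\!\Big(W_1(\zeta)\,T_{(1-\zeta)a_0+\zeta a_1+is}\,W_2(\zeta)\,G_\zeta\Big).
\]
Stein analyticity of $\{T_z\}$ (properties (1)–(3)), together with the analyticity of $W_j(\zeta)$ and $G_\zeta$, shows that $\Phi$ is analytic in the open strip, continuous on its closure, and of admissible growth. On the line $\operatorname{Re}\zeta=0$, Hölder's inequality in Schatten classes and the first hypothesis give $|\Phi(it)|\lesssim C_0e^{b_0|s+t(a_1-a_0)|}$. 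On $\operatorname{Re}\zeta=1$ the second hypothesis gives $|\Phi(1+it)|\lesssim C_1e^{b_1|\cdot|}$. The Gaussian factor $e^{\varepsilon\zeta^2}$, or alternatively $e^{c'\zeta^2}$ with a fixed constant, absorbs the exponential growth in $t$ and leaves a factor $e^{c|s|}$. The three lines lemma at $\zeta=\theta$ then yields $|\operatorname{Tr}(W_1T_{a+is}W_2G)|\le C_0^{1-\theta}C_1^\theta e^{c|s|}$. Taking the supremum over $G$, undoing the normalization of $W_1,W_2$, and letting $\varepsilon\to0$ completes the argument.

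The main obstacle is the middle step. One must check that the amalgam-of-amalgam space really interpolates as claimed: the mixed space $\mathcal{W}_t(L^{\tilde q},L^q)$ with values in $\mathcal{W}^{p',p}_x$ requires a vector-valued version of Lemma \ref{waspaceproperties}(iii) together with the Bergh–Löfström result $[L^{q_0}(X_0),L^{q_1}(X_1)]_\theta=L^{q}([X_0,X_1]_\theta)$. One must also check that the resulting analytic families of weights remain admissible for Stein's conditions, in particular that $\zeta\mapsto\langle G,T_\zeta F\rangle$ stays analytic once weights are inserted. The first thing I would try is to approximate $W_j$ by finite sums of simple tensors $\chi_{I}(t)\,g(x)$, for which the analytic families can be written explicitly and the trace is a finite combination of analytic scalar functions.
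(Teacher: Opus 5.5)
The paper states this lemma without proof. It only points to Lemma~\ref{waspaceproperties}(iii), \cite{BER} and \cite{SIM}, and your outline (Schatten duality, the family $G_\zeta$, abstract interpolating families for the weights, three lines lemma) is the argument those references suggest. Identifying $X_\theta$ with $[X_0,X_1]_\theta$ is the comparatively routine part: discretize the time amalgam and use vector-valued $L^p$ interpolation with one exponent finite. Note that this gives the norms only up to equivalence.

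The genuine gap is the step you defer to ``one must also check'': nothing in the hypotheses controls $\Phi$.
\begin{itemize}
\item Stein analyticity, as defined in the paper, only concerns $z\mapsto\langle G,T_zF\rangle$ for \emph{fixed simple} $F,G$.
\item The families $W_j(\zeta)$ supplied by the definition of $[X_0,X_1]_\theta$ are merely analytic $X_0+X_1$-valued maps. So $W_2(\zeta)\phi$ is not a finite combination of fixed simple functions with analytic coefficients.
\item For $a_0<\operatorname{Re}z<a_1$ and $W_j\in X_0+X_1$, the operator $W_1T_zW_2$ is not even known to be defined, and there is no growth bound in the interior.
\end{itemize}
Hence neither the analyticity of $\Phi$ nor the admissible growth needed for the three lines lemma follows.

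Your reduction to simple $W_j$ does not help, because the interpolating family of a simple function is in general not simple. In the paper's applications it cannot be: $X_0=L^\infty_t\mathcal{W}(\mathcal{F}L^1,L^\infty)_x$ consists of functions continuous in $x$, so the boundary values $W_j(it)$ are never simple. Your fallback with tensors $\chi_I(t)g(x)$ makes only the time factor explicit. The spatial factor $g$ still needs an abstract family, for exactly the reason you give: the $\mathcal{F}L^{p'}$-local norm is not a lattice norm.

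Closing the gap takes two ingredients.
\begin{itemize}
\item Replace the abstract families by the dense subclass of $\mathcal{F}(X_0,X_1)$ of functions $e^{\delta\zeta^2}\sum_{k=1}^N e^{\lambda_k\zeta}h_k$ with $h_k\in X_0\cap X_1$ (the density lemma in Chapter 4 of \cite{BER}), for both weights. Combined with the finite-rank $G_\zeta=\sum_m s_m^{r'(\frac{1-\zeta}{r_0'}+\frac{\zeta}{r_1'})}|u_m\rangle\langle v_m|$, this makes $\Phi$ a finite sum of explicit scalar analytic coefficients times the pairings $\zeta\mapsto\langle \bar h_k v_m,T_{z(\zeta)}(\tilde h_l u_m)\rangle$.
\item Add a hypothesis on $T_z$ beyond Stein's axioms, guaranteeing that these pairings (whose arguments are not simple) are analytic with admissible growth. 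For example, require weak analyticity and local boundedness of $z\mapsto\langle g,T_zf\rangle$ for $f,g\in L^2$ with compact support in $t$. In the applications $X_0\subset L^\infty_{t,x}$, so $\tilde h_l u_m$ stays in this class.
\end{itemize}
This is the kind of condition found in abstract Stein-type theorems such as that of Cwikel and Janson. It has to be verified for the concrete truncated kernels $t^zK_\varepsilon$, not derived from the definition of Stein analyticity.

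Finally, the Gaussian must be kept fixed. With $e^{\varepsilon\zeta^2}$, the boundary supremum contains $e^{b_j^2(a_1-a_0)^2/(4\varepsilon)}$, which blows up as $\varepsilon\to0$. A fixed $e^{c'\zeta^2}$ (or Hirschman's lemma) works, but together with the norm equivalence from Lemma~\ref{waspaceproperties}(iii) it yields the conclusion only up to an extra constant depending on $\theta$, $b_j$, $a_1-a_0$ and the spaces. This is harmless for the applications.
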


	\begin{corollary}\label{interpolationtheorem}
		Let $\{T_z\}$ be an analytic family of operators on $ \mathcal{I}\times \mathbb{R}^d $ in the sense of Stein defined in the strip $a_0 \leq \operatorname{Re}(z) \leq a_1$.  Consider $\mathcal{I}$ to be some nonempty interval of $\mathbb{R}$. Let $1 \leq p_0, p_1,r_0,r_1,q_0, q_1 \leq \infty$ and for any $\theta \in [0,1]$, consider 
		$\frac{1-\theta}{p_0}+\frac{\theta}{p_1}=\frac{1}{p}$, and $\frac{1-\theta}{q_0}+\frac{\theta}{q_1}=\frac{1}{q}$. Suppose $W_j \in L^{q}(\mathcal{I}, \mathcal{W}^{p',p}(\mathbb{R}^d)),~j=1,2$.  If there exists $C_0, C_1, b_0, b_1 > 0$
		%and $1 \leq p_0, p_1,r_0,r_1,q_0, q_1 \leq \infty$
		such that 
		\[
		\|W_1T_{a_0+is}W_2\|_{\mathfrak{S}^{r_0}(L^2(\mathcal{I}, L^2(\mathbb{R}^d)))} \leq C_0 e^{b_0 |s|} \|W_1\|_{L^{q_0}(\mathcal{I}, \mathcal{W}^{p'_0,p_0}(\mathbb{R}^d))} \|W_2\|_{L^{q_0}(\mathcal{I}, \mathcal{W}^{p'_0,p_0}(\mathbb{R}^d))}
		\]
		
		and
		
		\[ 
		\|W_1T_{a_1+is}W_2\|_{\mathfrak{S}^{r_1}(L^2(\mathcal{I}, L^2(\mathbb{R}^d)))} \leq C_1 e^{b_1 |s|} \|W_1\|_{L^{q_1}(\mathcal{I}, \mathcal{W}^{p'_1,p_1}(\mathbb{R}^d))} \|W_2\|_{L^{q_1}(\mathcal{I}, \mathcal{W}^{p'_1,p_1}(\mathbb{R}^d))}.
		\]
		
		Then, 
		%for any $\theta \in (0,1)$, 
		there exist constants $c$ such that 
		\[\|W_1T_{a+is}W_2\|_{\mathfrak{S}^{r}(L^2(\mathcal{I}, L^2(\mathbb{R}^d)))} \leq C_0^{1-\theta}C_1^{\theta} e^{c |s|} \|W_1\|_{L^{q}(\mathcal{I}, \mathcal{W}^{p',p}(\mathbb{R}^d))} \|(W_2)\|_{L^{q}(\mathcal{I}, \mathcal{W}^{p',p}(\mathbb{R}^d))}
		,\]
		where $a=(1-\theta)a_0+\theta a_1$, and
		%$\frac{1-\theta}{p_0}+\frac{\theta}{p_1}=\frac{1}{p}$, 
		$\frac{1-\theta}{r_0}+\frac{\theta}{r_1}=\frac{1}{r}$. 
		%and $\frac{1-\theta}{q_0}+\frac{\theta}{q_1}=\frac{1}{q}$. 
	\end{corollary}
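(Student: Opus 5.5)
The plan is to read Corollary \ref{interpolationtheorem} as the special case of Lemma \ref{interpolationtheorem2} in which the time-variable Wiener amalgam space collapses to a Lebesgue space and the time line $\mathbb{R}$ is replaced by the interval $\mathcal{I}$. There are two ways to carry this out.

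\textbf{Route 1 (reduction to the lemma).} Choose $\tilde q_0=q_0$ and $\tilde q_1=q_1$. Then $\tilde q=q$, and Proposition \ref{equivalenceofWL^pspaces} gives $\mathcal{W}(L^{q_j},L^{q_j})=L^{q_j}$ with equivalent norms. To pass from $\mathcal{I}$ to $\mathbb{R}$:
\begin{itemize}
\item Extend each weight $W_j$ by zero outside $\mathcal{I}$; its norm in $L^{q}(\mathbb{R},\mathcal{W}^{p',p})$ equals its norm on $\mathcal{I}$.
\item Regard $L^2(\mathcal{I},L^2(\mathbb{R}^d))$ as a closed subspace of $L^2(\mathbb{R},L^2(\mathbb{R}^d))$.
\item Replace $T_z$ by $\chi_{\mathcal{I}}T_z\chi_{\mathcal{I}}$. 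This family is still analytic in the sense of Stein, since multiplying by indicators preserves simple functions and the analyticity and growth of $\langle G,T_zF\rangle$.
\item Note that Schatten norms are unchanged when an operator is compressed to a subspace and extended by zero.
\end{itemize}
The hypotheses of the corollary then become exactly those of Lemma \ref{interpolationtheorem2}, and its conclusion restricts back to $\mathcal{I}$. The only cost is the harmless equivalence constant from Proposition \ref{equivalenceofWL^pspaces}, which can be absorbed into $c$ and the implicit constants.

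\textbf{Route 2 (direct Stein interpolation).} This avoids any dependence on how the lemma is stated.
\begin{enumerate}
\item By duality $(\mathfrak{S}^{r})^*=\mathfrak{S}^{r'}$, it suffices to bound $|\operatorname{Tr}(W_1T_{a+is}W_2K)|$ for finite-rank $K$ with $\|K\|_{\mathfrak{S}^{r'}}\le1$.
\item Use the density of simple functions to restrict to nice $W_j$.
\item Factor each $W_j$ in the complex interpolation scale $[L^{q_0}\mathcal{W}^{p_0',p_0},L^{q_1}\mathcal{W}^{p_1',p_1}]_\theta$. This requires identifying that scale with $L^q\mathcal{W}^{p',p}$, using the vector-valued $L^q$ interpolation from \cite{BER} together with Lemma \ref{waspaceproperties}(iii), since $[\mathcal{F}L^{p_0'},\mathcal{F}L^{p_1'}]_\theta=\mathcal{F}L^{p'}$ and $[L^{p_0},L^{p_1}]_\theta=L^p$.
\item Take analytic families $W_j(\zeta)$ with $W_j(\theta)=W_j$ and boundary norms controlled by $\|W_j\|^{1/(1-\theta)\cdot(\cdot)}$ in the usual way.
\item Write $K(\zeta)=U|K|^{r'(1-\zeta)/r_0'+r'\zeta/r_1'}$.
\item Set $\Phi(\zeta)=\operatorname{Tr}\big(W_1(\zeta)T_{a_0+\zeta(a_1-a_0)+is}W_2(\zeta)K(\zeta)\big)$ on the strip $0\le\operatorname{Re}\zeta\le1$.
\item On the two boundary lines, the hypotheses give $C_je^{b_j|s+\cdot|}$ bounds. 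Applying the three lines lemma (its doubly exponential growth version) yields the conclusion with $C_0^{1-\theta}C_1^\theta e^{c|s|}$.
\end{enumerate}

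The main obstacle is the factorization step, that is, identifying the complex interpolation space of mixed $L^q(\mathcal{W}(\mathcal{F}L^{p'},L^p))$ and producing admissible analytic families. Lemma \ref{waspaceproperties}(iii) requires absolute continuity of the global norm, which fails when $p_j=\infty$ or $q_j=\infty$. I would handle those endpoint cases by the density/duality (lower complex method) variant.

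Route 1 is shorter and is what I would present, since the lemma is already assumed.
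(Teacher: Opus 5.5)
Your Route 1 is how this corollary is meant to follow from Lemma \ref{interpolationtheorem2}; the paper gives no separate argument, and Route 1 is correct. You take $\tilde q_j=q_j$, identify $\mathcal{W}(L^{q},L^{q})$ with $L^{q}$, and pass from $\mathcal{I}$ to $\mathbb{R}$ by replacing $T_z$ with $\chi_{\mathcal{I}}T_z\chi_{\mathcal{I}}$ and extending the weights by zero, which leaves the Schatten norms and the time-space norms unchanged. One small correction: the norm-equivalence constant appears as a fixed multiplicative factor in front of $C_0^{1-\theta}C_1^{\theta}$ rather than being absorbable into $e^{c|s|}$ (which equals $1$ at $s=0$), but this is harmless for every use of the corollary in the paper.
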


	In order to establish the Strichartz inequality for orthonormal systems, the following duality principle plays a crucial role.
	\begin{theorem}\label{Duality}
		Let $A:L^2(\mathbb{R}^{d})\to L^q\bigl(\mathcal{I}, \mathcal{W}^{p',p}(\mathbb{R}^d)\bigr)$ be a bounded operator for some $2 \leq q,p \leq \infty$. Consider $\mathcal{I}$ to be some nonempty interval of $\mathbb{R}$. Then for some $\beta \geq 1$, the following are equivalent.
		\begin{enumerate}
			\item There exists a constant $C>0$ such that 
			\[\|WAA^*\overline{W}\|_{\mathfrak{S}^{\beta}(L^2(\mathcal{I}\times \mathbb{R}^d))}\leq C \|{W}\|^2_{L^{\frac{2q}{q-2}}(\mathcal{I}, \mathcal{W}^{\frac{2p}{p+2}, \frac{2p}{p-2}}(\mathbb{R}^d))}.\]\label{WAAstarWinequality}
			\item There exists a constant $C'>0$ such that for any orthonormal system $(f_j)_{j\in J}$ and a sequence $(\alpha_j)_{j\in J}$ of complex numbers,
			\[\Bigg\|\sum_{j\in J}\alpha_j |Af_j|^2\Bigg\|_{L^{\frac{q}{2}}(\mathcal{I}, \mathcal{W}^{\frac{p}{p-2}, \frac{p}{2}}(\mathbb{R}^d))}\leq C'\|\gamma\|_{\mathfrak{S}^{\beta^{'}}(L^2(\mathbb{R}^d))}.\]\label{WAAstarWinequality2}
		\end{enumerate}
	\end{theorem}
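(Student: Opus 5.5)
The plan is to adapt the duality argument of Frank and Sabin \cite[Lemma 3]{FS}, with the Lebesgue space in $x$ replaced by the amalgam $\mathcal{W}^{p',p}$. Throughout, set $\gamma=\sum_j\alpha_j|f_j\rangle\langle f_j|$ and
\[
\rho_\gamma(t,x):=\sum_j\alpha_j|Af_j(t,x)|^2 .
\]
The key identity is that, for a function $V$ of $(t,x)$,
\[
\int_{\mathcal I}\int_{\mathbb{R}^d} V\,\rho_\gamma\,dx\,dt=\operatorname{Tr}\bigl(A^*VA\,\gamma\bigr),
\]
where $V$ acts as a multiplication operator. This is checked first for finite sums and simple $V$, then extended by density. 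We also use that the exponents are dual to each other: the dual of $L^{q/2}\mathcal{W}(\mathcal{F}L^{p/(p-2)},L^{p/2})$ is $L^{(q/2)'}\mathcal{W}(\mathcal{F}L^{(p/2)'},L^{(p/(p-2))'})$. Since $(q/2)'=q/(q-2)$, $(p/2)'=p/(p-2)$ and $(p/(p-2))'=p/2$, we have the factorisation
\[
V=\overline W W=|W|^2 \quad\text{with}\quad W\in L^{\frac{2q}{q-2}}\mathcal{W}^{\frac{2p}{p+2},\frac{2p}{p-2}},\qquad \|V\|\approx\|W\|^2 .
\]
Whether the claimed $\mathcal F L$-exponents match up is to be checked. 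The relevant estimates are:
\begin{enumerate}
\item Duality of mixed norms, from Lemma \ref{waspaceproperties}(iv) together with the usual Bochner $L^q$ duality.
\item A product (Hölder-type) estimate in Wiener amalgams,
\[
\|W_1W_2\|_{\mathcal{W}(\mathcal{F}L^{a},L^{b})}\lesssim\|W_1\|_{\mathcal{W}(\mathcal{F}L^{a_1},L^{b_1})}\|W_2\|_{\mathcal{W}(\mathcal{F}L^{a_2},L^{b_2})},
\]
with $\frac1{b}=\frac1{b_1}+\frac1{b_2}$ for the global part and Young's inequality on the Fourier side for the local part ($1+\frac1a=\frac1{a_1}+\frac1{a_2}$). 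This follows by localising with $g^2$ in place of $g$, in the spirit of Proposition \ref{algebraproperty}.
\end{enumerate}

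\textbf{(1)$\Rightarrow$(2).} By duality, $\|\rho_\gamma\|_{L^{q/2}\mathcal{W}}=\sup|\operatorname{Tr}(A^*VA\gamma)|$ over $V$ in the unit ball of the dual space. Write $V=\overline W W$ as above (the polar factorisation $W=|V|^{1/2}$, $\overline W=\operatorname{sgn}(V)|V|^{1/2}$ in the amalgam setting; this is the delicate point). Then
\[
|\operatorname{Tr}(A^*\overline W WA\gamma)|\le\|WA\gamma A^*\overline W\|\ldots
\]
More cleanly, by Hölder in Schatten classes,
\[
|\operatorname{Tr}(\gamma A^*VA)|\le\|\gamma\|_{\mathfrak S^{\beta'}}\,\|A^*VA\|_{\mathfrak S^\beta}.
\]
It remains to bound $\|A^*VA\|_{\mathfrak S^\beta}$. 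Writing $A^*VA=(WA)^*(WA)$ up to the sign factor, we have
\[
\|(WA)^*(WA)\|_{\mathfrak S^\beta}=\|WA\|^2_{\mathfrak S^{2\beta}}=\|WAA^*\overline W\|_{\mathfrak S^\beta},
\]
and this is at most $C\|W\|^2\lesssim C\|V\|$ by (1). This gives (2) with $C'=C$ up to constants.

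\textbf{(2)$\Rightarrow$(1).} We have $\|WAA^*\overline W\|_{\mathfrak S^\beta}=\|WA\|^2_{\mathfrak S^{2\beta}}=\|A^*|W|^2A\|_{\mathfrak S^\beta}$. By Schatten duality this equals
\[
\sup\bigl\{|\operatorname{Tr}(\gamma A^*|W|^2A)| : \|\gamma\|_{\mathfrak S^{\beta'}}\le1\bigr\},
\]
and we may restrict to finite-rank $\gamma$. For such $\gamma$, spectral decomposition gives an orthonormal system $(f_j)$ and scalars $(\alpha_j)$ with $\gamma=\sum_j\alpha_j|f_j\rangle\langle f_j|$. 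Then
\[
\operatorname{Tr}(\gamma A^*|W|^2A)=\int|W|^2\rho_\gamma\le\bigl\||W|^2\bigr\|_{\mathrm{dual}}\,\|\rho_\gamma\|_{L^{q/2}\mathcal{W}}\le C'\|W\|^2,
\]
using the product estimate for the first bound and (2) for the second.

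Boundedness of $A$ ensures that all the operators involved are well defined on finite-rank and simple objects. Density then removes these restrictions, via a monotone limit for $\rho_\gamma$ as in \cite{FS}.

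The main obstacle is the amalgam analogue of Hölder's inequality and polar factorisation. In the Lebesgue case $\||V|^{1/2}\|_{2r}^2=\|V\|_r$ exactly. In $\mathcal{W}(\mathcal{F}L^a,L^b)$, however, the map $V\mapsto|V|^{1/2}$ is not controlled, since the $\mathcal F L$ local norm is not a lattice norm. To get around this I would avoid factorising $V$ altogether in direction (1)$\Rightarrow$(2). Instead I would test only against $V$ already of the form $\overline{W_1}W_2$, and use a bilinear version of (1) (polarisation in $W$, valid since (1) is quadratic in the operator $W\cdot W^*$) together with density of such products in the dual space. The paper's subsequent uses only need the implication (1)$\Rightarrow$(2), so I would concentrate the effort there.
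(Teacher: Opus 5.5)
Your proof of (2)$\Rightarrow$(1) is correct. It also goes beyond the paper, which only proves (1)$\Rightarrow$(2).

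\textbf{The exponents match.} By Lemma \ref{waspaceproperties}(iv), the dual of $L^{q/2}\mathcal{W}(\mathcal{F}L^{p/(p-2)},L^{p/2})$ is $L^{q/(q-2)}\mathcal{W}(\mathcal{F}L^{p/2},L^{p/(p-2)})$. You put the primes on the wrong exponents, which just reproduces the original space. Your product estimate with $a_1=a_2=\frac{2p}{p+2}$ and $b_1=b_2=\frac{2p}{p-2}$ gives $a=\frac p2$, $b=\frac{p}{p-2}$, which is exactly the correct dual.

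\textbf{One small fix.} Restrict the Schatten duality to finite-rank $\gamma\ge 0$. This is allowed because $A^*|W|^2A\ge 0$, and it guarantees that $\gamma$ really has the form $\sum_j\alpha_j|f_j\rangle\langle f_j|$ with $(f_j)$ orthonormal.

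\textbf{The gap is in (1)$\Rightarrow$(2), the direction the paper actually uses, and your workaround does not close it.} From (1) and H\"older in Schatten classes you get, without any polarisation,
\[
\Bigl|\int\rho_\gamma\overline{W_1}W_2\Bigr|\le\|\gamma\|_{\mathfrak{S}^{\beta'}}\|W_1A\|_{\mathfrak{S}^{2\beta}}\|W_2A\|_{\mathfrak{S}^{2\beta}}\le C\|\gamma\|_{\mathfrak{S}^{\beta'}}\|W_1\|\|W_2\|.
\]
Density of the span of such products in the dual space gives nothing quantitative. To bound $\|\rho_\gamma\|_{L^{q/2}\mathcal{W}^{p/(p-2),p/2}}$, every $V$ in the dual unit ball must be approximable by combinations of products $\overline{W_1}W_2$ with $\sum\|W_1\|\|W_2\|\lesssim 1$. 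In other words, you need a norm-controlled factorisation.

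The paper's proof makes the same move without comment. After ``Letting $V=|W|^2$'' it replaces $\|W\|^2_{L^{2q/(q-2)}\mathcal{W}^{2p/(p+2),2p/(p-2)}}$ by $\|V\|_{L^{q/(q-2)}\mathcal{W}^{p/2,p/(p-2)}}$, which is the reverse of the product estimate. It also tests only against $V\ge 0$. So the paper does not supply the missing ingredient either.

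\textbf{Such products are in general not norming.} Take $p=4$, $d=1$, and $\rho$ supported in $[-1,1]$ and constant on $t\in[0,1]$. The dual space is then $\mathcal{W}(\mathcal{F}L^2,L^2)=L^2$. Cut off $W_j$ by $\phi\in C_c^\infty$ with $\phi=1$ on $[-1,1]$. Plancherel and O'Neil's inequality $L^{4/3}*L^{4/3}\subset L^{2,1}=(L^{2,\infty})'$ then give
\[
\Bigl|\int\rho\,\overline{W_1}W_2\,dx\Bigr|\lesssim\|\hat\rho\|_{L^{2,\infty}}\|W_1\|_{\mathcal{W}(\mathcal{F}L^{4/3},L^4)}\|W_2\|_{\mathcal{W}(\mathcal{F}L^{4/3},L^4)}.
\]
Now set $\rho_N=\phi\cdot\mathcal{F}^{-1}\bigl(|\xi|^{-1/2}\chi_{\{1\le|\xi|\le N\}}\bigr)$. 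It satisfies $\|\hat\rho_N\|_{L^{2,\infty}}=O(1)$ but $\|\rho_N\|_{L^2}\sim(\log N)^{1/2}$.

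Consequently, (1) is equivalent only to a bound on $\rho_\gamma$ in the strictly weaker norm $\sup\{|\int\rho\,\overline{W_1}W_2| : \|W_1\|,\|W_2\|\le 1\}$. Upgrading this to the $L^{q/2}\mathcal{W}^{p/(p-2),p/2}$ norm of (2) needs further input about $A$, or about the special structure of $\rho_\gamma$. Neither your proposal nor the paper provides it.
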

	\begin{proof}
		Notice that (\ref{WAAstarWinequality}) is equivalent to the following estimate
		\begin{align}
			\|A^*|W|^2A\|_{\mathfrak{S}^{\beta}(L^2(\mathbb{R}^d))}\leq C \|{W}\|^2_{L^{\frac{2q}{q-2}}(I, \mathcal{W}^{\frac{2p}{p+2}, \frac{2p}{p-2}}(\mathbb{R}^d))}, \label{AstarWAinequality}
		\end{align}
		where, $|W|^2:=\overline{W}W:L^q\bigl(\mathcal{I}, \mathcal{W}^{p',p}(\mathbb{R}^d)\bigr)\to L^{q'}\bigl(\mathcal{I}, \mathcal{W}^{p,p'}(\mathbb{R}^d)\bigr)$. We define an operator 
		$\gamma$ on $L^2(\mathbb{R}^d)$ with braket notation as
		\[\gamma=\sum_{j\in J}\alpha_j \ket{f_j}\bra{f_j}, \]
		where $ f_j\in  L^2(\mathbb{R}^d)$ are eigen functions  of $\gamma$ with corresponding eigen values $\alpha_j$. Using \eqref{AstarWAinequality} and H\"older's inequality in Schatten spaces, we have
		\begin{align}\nonumber
			\operatorname{Tr}_{L^2(\mathcal{I}\times \mathbb{R}^{d})}(WA\gamma (WA)^*)&=\operatorname{Tr}_{L^2(\mathbb{R}^{d})}(\gamma A^*|W|^2A)\\
			&\leq \|\gamma\|_{\mathcal{G}^{\beta '}(L^2(\mathbb{R}^d)} \|A^*|W|^2A\|_{\mathfrak{S}^{\beta}(L^2(\mathbb{R}^d))} \nonumber \\
			&\leq C\|\gamma\|_{\mathfrak{S}^{\beta '}(L^2(\mathbb{R}^d)}\|{W}\|^2_{L^{\frac{2q}{q-2}}(\mathcal{I}, \mathcal{W}^{\frac{2p}{p+2}, \frac{2p}{p-2}}(\mathbb{R}^d))}.\label{inequalityTRW}
		\end{align}
		For $f\in L_t^{q'}\bigl(\mathcal{I}, \mathcal{W}_x^{p,p'}(\mathbb{R}^d)\bigr) $, we have
		\begin{align*}
			|W|^2A\gamma A^*f(t,x)=\int_{\mathcal{I}}\int_{\mathbb{R}^d}\left(\sum_{j\in J}\alpha_jAf_j(t,x)\overline{Af_j(s,y)}|W(t,x)|^2\right)f(s,y)\,dy\,ds.
		\end{align*}
		Therefore, kernel of the operator $|W|^2A\gamma A^*$ is given by 
		\[Ker(|W|^2A\gamma A^*)\left((t,x);(s,y)\right)=\sum_{j\in J}\alpha_jAf_j(t,x)\overline{Af_j(s,y)}|W(t,x)|^2.\]
		Therefore  
		\begin{align}\nonumber
			\operatorname{Tr}_{L^{q'}\bigl(\mathcal{I}, \mathcal{W}^{p,p'}(\mathbb{R}^d)\bigr)}(|W|^2A\gamma A^*)&=\int_{\mathcal{I}}\int_{\mathbb{R}^d} Ker(|W|^2A\gamma A^*)\left((t,x);(t,x)\right)\,dx\,dt\\
			& =\int_{\mathcal{I}}\int_{\mathbb{R}^d}\left(\sum_{j\in J}\alpha_j|Af_j(t,x)|^2\right)|W(t,x)|^2\,dx\,dt.
			\label{equalityTRkern}
		\end{align}
		Using the fact that $\operatorname{Tr}_{L^{q'}\bigl(\mathcal{I}, \mathcal{W}^{p,p'}(\mathbb{R}^d)\bigr)}(|W|^2A\gamma A^*)=\operatorname{Tr}_{L^2(\mathcal{I}\times \mathbb{R}^{d})}(WA\gamma (WA)^*)$ together with \eqref{inequalityTRW} and \eqref{equalityTRkern}, we get
		\begin{align*}
			\int_{\mathcal{I}}\int_{\mathbb{R}^d}\left(\sum_{j\in J}\alpha_j|Af_j(t,x)|^2\right)|W(t,x)|^2\,dx\,dt\leq  C\|\gamma\|_{\mathfrak{S}^{\beta '}(L^2(\mathbb{R}^d))}\|{W}\|^2_{L^{\frac{2q}{q-2}}\Bigl(I, \mathcal{W}^{\frac{2p}{p+2}, \frac{2p}{p-2}}(\mathbb{R}^d)\Bigr)}. 
		\end{align*}
		Letting $V=|W|^2$, we obtain
		\begin{align*}
			\int_{\mathcal{I}}\int_{\mathbb{R}^d}\left(\sum_{j\in J}\alpha_j|Af_j(t,x)|^2\right)V(t,x)\,dx\,dt\leq  C\|\gamma\|_{\mathfrak{S}^{\beta '}(L^2(\mathbb{R}^d))}\|V\|_{L^{\frac{q}{q-2}}\Bigl(\mathcal{I}, \mathcal{W}^{\frac{p}{2}, \frac{p}{p-2}}(\mathbb{R}^d)\Bigr)}
		\end{align*}
		and  by  the duality principle, we have the estimate (\ref{WAAstarWinequality2}).  
	\end{proof}
	In a similar fashion, we also can prove the following  duality principle. 
	\begin{theorem}\label{DualityWAspace}
		Let $A:L^2(\mathbb{R}^{d})\to \mathcal{W}_t(L^{q_1},L^{q_2}) \bigl(\mathcal{I}, \mathcal{W}_x^{p',p}(\mathbb{R}^d)\bigr)$ to be a bounded operator for some $2 \leq q_1,q_2,p \leq \infty$. Consider $\mathcal{I}$ be some nonempty interval of $\mathbb{R}$. Then for some $\beta \geq 1,$ the following are equivalent.
		\begin{enumerate}
			\item There exists a constant $C>0$ such that 
			\[\|WAA^*\overline{W}\|_{\mathfrak{S}^{\beta}(L^2(\mathcal{I}\times \mathbb{R}^d))}\leq C \|{W}\|^2_{\mathcal{W}_t(L^{\frac{2q_1}{q_1-2}},L^{\frac{2q_2}{q_2-2}})(\mathcal{I}, \mathcal{W}^{\frac{2p}{p+2}, \frac{2p}{p-2}}(\mathbb{R}^d))}.\]%\label{WAAstarWinequality}
			\item There exists a constant $C'>0$ such that for any orthonormal system $(f_j)_{j\in J}$ and a sequence $(\alpha_j)_{j\in J}$ of complex numbers,
			\[\Bigg\|\sum_{j\in J}\alpha_j |Af_j|^2\Bigg\|_{\mathcal{W}_t(L^{\frac{q_1}{2}},L^{\frac{q_2}{2}})(\mathcal{I}, \mathcal{W}^{\frac{p}{p-2}, \frac{p}{2}}(\mathbb{R}^d))}\leq C'\|\gamma\|_{\mathfrak{S}^{\beta^{'}}(L^2(\mathbb{R}^d))}.\]
		\end{enumerate}
		
	\end{theorem}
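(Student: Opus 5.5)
The proof follows the template of Theorem \ref{Duality}, with the Lebesgue norm in time replaced by the Wiener amalgam norm $\mathcal{W}_t(L^{q_1},L^{q_2})$. The one new ingredient is a Hölder/duality pairing for mixed amalgam norms in time. Set
\[
\tilde q_i=\frac{2q_i}{q_i-2},\qquad \tilde p=\frac{2p}{p-2}.
\]
By Lemma \ref{waspaceproperties}(iv), together with the $L^r$/$\mathcal{F}L^r$ dualities in the local components, the dual of
\[
X:=\mathcal{W}_t\bigl(L^{q_1/2},L^{q_2/2}\bigr)\bigl(\mathcal{I},\mathcal{W}^{\frac{p}{p-2},\frac p2}\bigr)
\]
is (up to equivalent norms)
\[
X'=\mathcal{W}_t\bigl(L^{(q_1/2)'},L^{(q_2/2)'}\bigr)\bigl(\mathcal{I},\mathcal{W}^{\frac p2,\frac{p}{p-2}}\bigr).
\]
Here $(q_i/2)'=q_i/(q_i-2)=\tilde q_i/2$. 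The endpoint cases $q_i=2$ or $p=2$ give $\infty$-exponents. There the density hypothesis of (iv) fails, so I would argue with the pairing (a norming estimate) rather than full duality.

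Next, check that $V=|W|^2$ has $\|V\|_{X'}\lesssim \|W\|^2$ in the space of statement (1), and conversely that every nonnegative $V\in X'$ can be written as $|W|^2$ with $W=V^{1/2}$ and $\|W\|^2\lesssim\|V\|_{X'}$. For the Lebesgue parts (the time-local, time-global and space-global components) this is just $\||g|^2\|_{L^{r/2}}=\|g\|_{L^r}^2$, applied after localizing with $T_x g$. For the spatial $\mathcal{F}L$ local component, the bound for $|W|^2$ follows from Proposition \ref{algebraproperty}, or from Young's inequality on $\widehat{W}*\widehat{\overline W}$ localized. The reverse direction, passing from $V$ to $V^{1/2}$, is delicate in $\mathcal{F}L$ norms. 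I would sidestep it by proving only the implication (1)$\Rightarrow$(2) through the pairing with general $V$: write $V=W_1\overline{W_2}$ with a polar-type factorization, and use the bilinear version $\|W_1AA^*\overline{W_2}\|_{\mathfrak{S}^\beta}\le C\|W_1\|\|W_2\|$, which follows from (1) by Cauchy--Schwarz in Schatten classes. This mirrors the paper's proof of Theorem \ref{Duality}, where this issue is likewise treated formally.

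The main computation is then identical to Theorem \ref{Duality}. Define
\[
\gamma=\sum_{j\in J}\alpha_j\ket{f_j}\bra{f_j}.
\]
Note that $\|A^*|W|^2A\|_{\mathfrak{S}^\beta}=\|WAA^*\overline W\|_{\mathfrak{S}^\beta}$, since both are of the form $T^*T$ versus $TT^*$ with $T=WA$. Hölder's inequality in Schatten spaces then gives
\[
\Bigl|\operatorname{Tr}\bigl(\gamma A^*|W|^2A\bigr)\Bigr|\le \|\gamma\|_{\mathfrak{S}^{\beta'}}\,C\|W\|^2 .
\]
Computing the trace via the kernel gives
\[
\int_{\mathcal{I}}\int_{\mathbb{R}^d}\sum_j\alpha_j|Af_j|^2\,V\,dx\,dt .
\]
Taking the supremum over $V$ in the unit ball of $X'$ (splitting a general $V$ into real and imaginary, positive and negative parts, each bounded in $X'$ since the Lebesgue norms are lattice norms) yields statement (2).

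For the converse (2)$\Rightarrow$(1), reverse the chain. By the same trace identity, (2) gives $|\operatorname{Tr}(\gamma A^*VA)|\le C'\|\gamma\|_{\mathfrak{S}^{\beta'}}\|V\|_{X'}$ for finite-rank $\gamma$. Duality $(\mathfrak{S}^{\beta'})'=\mathfrak{S}^\beta$ (with $\beta'>1$; when $\beta=1$ use compact operators) then gives
\[
\|A^*|W|^2A\|_{\mathfrak{S}^\beta}\le C'\||W|^2\|_{X'}\lesssim C'\|W\|^2 ,
\]
which is (1).

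The main obstacle is the functional-analytic bookkeeping: identifying $X'$ correctly, and controlling $\||W|^2\|$ in the $\mathcal{F}L$-local space. I expect to handle the latter with Proposition \ref{algebraproperty} combined with Lemma \ref{waspaceproperties}(i)--(ii), accepting embedding constants.
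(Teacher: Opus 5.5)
Your route is the paper's. The paper proves this theorem only by appeal to the proof of Theorem \ref{Duality}: trace identity, H\"older in Schatten classes, then ``letting $V=|W|^2$'' and invoking duality. Your converse (2)$\Rightarrow$(1) is sound, and goes beyond what the paper writes out. It needs only $\||W|^2\|_{X'}\lesssim\|W\|_Y^2$, with $Y$ the space on the right of (1). That bound follows from Young's inequality on the Fourier side for the local $\mathcal{F}L$ parts ($\frac{2}{s}=1+\frac{2}{p}$, $s=\frac{2p}{p+2}$) plus H\"older elsewhere. It does not follow from Proposition \ref{algebraproperty}, which needs a factor in $\mathcal{W}(\mathcal{F}L^1,L^\infty)$. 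Positivity of $A^*|W|^2A$ then lets you restrict Schatten duality to nonnegative finite-rank $\gamma$, which (2) covers.

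The genuine gap is in (1)$\Rightarrow$(2), the direction the paper actually uses, and your sidestep does not close it. By the trace identity, (1) gives exactly $|\int\!\!\int\rho_\gamma W_1\overline{W_2}|\le C\|\gamma\|_{\mathfrak{S}^{\beta'}}\|W_1\|_Y\|W_2\|_Y$. So you need products $W_1\overline{W_2}$ with $\|W_i\|_Y\le 1$ to norm $X$.
\begin{itemize}
\item Your polar factorization $W_1=\operatorname{sgn}(V)|V|^{1/2}$, $W_2=|V|^{1/2}$ requires $\||V|^{1/2}\|_Y^2\lesssim\|V\|_{X'}$. That is the very square-root estimate you meant to avoid.
\item The splitting into $V_\pm$ is also unjustified. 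The spatial local components are Fourier--Lebesgue norms, which are not lattice norms.
\item Worse, for $2<p<\infty$ (the range used in the applications) the products do not norm $X$ at all. Take $\rho=\eta(t)\rho_0(x)$ with $\rho_0$ supported in the unit ball. Parseval in $x$ turns $\int\rho_0\,W_1\overline{W_2}\,dx$ into a constant times $\int\hat\rho_0\,\overline{a*b}\,d\xi$. Here $\|a\|_{L^s}$ and $\|b\|_{L^s}$ are controlled by the spatial $Y$-norms of $W_1(t)$ and $W_2(t)$.
\item Since $\frac{1}{r'}+\frac{2}{s}=2$ with $r'=\frac{p}{p-2}$, O'Neil's (HLS-type) inequality bounds the supremum over products by a multiple of $\|\hat\rho_0\|_{L^{r',\infty}}$. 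By contrast, $\|\rho\|_X\gtrsim\|\hat\rho_0\|_{L^{r'}}$.
\item Let $\rho_N=\chi\,(|x|^{-2d/p}*\phi_{1/N})$, with $\chi\in C_c^\infty$, $\chi(0)\neq 0$ and $\phi_{1/N}$ an approximate identity. Then $\|\hat\rho_N\|_{L^{r',\infty}}\lesssim 1$ but $\|\hat\rho_N\|_{L^{r'}}\to\infty$.
\end{itemize}
So the duality argument yields (2) only with $\|\cdot\|_X$ replaced by the weaker norm $\sup\{|\int\!\!\int\rho\,W_1\overline{W_2}|:\|W_i\|_Y\le 1\}$. Reaching the stated norm needs an ingredient beyond duality, or a reformulation of the spatial spaces. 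The paper's ``letting $V=|W|^2$'' makes the same unjustified identification of $\|W\|_Y^2$ with $\||W|^2\|_{X'}$. This is a gap you share with the paper, not one it resolves.
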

	The proof of Theorem \ref{DualityWAspace} proceeds analogously to that of Theorem \ref{Duality}. We note that Theorem \ref{DualityWAspace} is, in fact, more general than Theorem \ref{Duality}.
	
	\section{Orthonormal  Strichartz estimates  and end point case for harmonic oscillator \texorpdfstring{$\mathcal{H}^{+}$}{}}\label{ONSH+}
	In this section, we shall establish the improved single-function Strichartz estimate, together with its orthonormal counterpart, for the Harmonic oscillator \(\mathcal{H}^{+}\). We begin with Theorem \ref{Wttheorem1}, which will play a pivotal role in establishing both the improved Strichartz estimate and its orthonormal analogue.
	For a fixed $t\in \mathcal{I}:=(\frac{-\pi}{4},\frac{\pi}{4})$, we know that $\|e^{it\mathcal{H}^{+}}f\|_{L_x^2}=\|f\|_{L_x^2}.$ Therefore 
	\begin{align*}
		\|e^{it\mathcal{H}^{+}}f\|_{L^2_tL_x^2}=\frac{\pi}{2}\|f\|_{L_x^2}.
	\end{align*}
	Suppose $A=e^{it\mathcal{H}^{+}}$. Then, we define $T=AA^*:L^{2}\bigl(\mathcal{I}, \mathcal{W}^{2,2}(\mathbb{R}^d)\bigr)\to L^{2}\bigl(\mathcal{I}, \mathcal{W}^{2,2}(\mathbb{R}^d)\bigr)$ as follows
	\begin{align}\nonumber
		TF(t,x)&:=\int_{\mathcal{I}}e^{i(t-s)\mathcal{H}^+}F(s,x)\, ds\\
		&=\int_{\mathcal{I}}\int_{\mathbb{R}^d}K_{i(t-s)}(x,y)F(s,y)\,dy\,ds,\label{defTwrtH+}
	\end{align}
	where $K_{it}$ is defined by \eqref{CH2k}. Now we have the following result in a general framework with regard to the time decay of the kernel $K_{it}$.

	\begin{theorem}\label{Wttheorem1}
		Let $T$ be an operator as in \eqref{defTwrtH+}. The kernel has following uniform decay estimate \begin{align*}
			|K_{it}(x,y)|\lesssim \frac{1}{|t|^{\lambda}} \ \ \ \textit{for some}  \ \ \lambda> 0.
		\end{align*}
		Then for any $ 2\leq p,q \leq \infty $ such that 
		\[
		\frac{1}{q} + \frac{\lambda}{p} = \frac{1}{2} \quad \text{and} \quad 2\lambda+1 < p \leq 2(\lambda+1),
		\]
		we have
		\begin{align}\label{dual inequality}
			\| W_1 T W_2 \|_{\mathfrak{S}^p(L^2(\mathcal{I}, L^2(\mathbb{R}^d)))} \leq C \| W_1 \|_{L^{q}(\mathcal{I},  \mathcal{W}^{p',p}(\mathbb{R}^d))} \| W_2 \|_{L^{q}(\mathcal{I}, \mathcal{W}^{p',p}(\mathbb{R}^d))},
		\end{align}
		where $W_1$ and $W_2$ should be interpreted as multiplication operator corresponding to the function $W_1$ and $W_2$, respectively.
	\end{theorem}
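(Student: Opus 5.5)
The plan is the Frank–Sabin scheme: embed $T$ in an analytic family $T_z$, prove a Schatten-$2$ bound on one boundary line and a Schatten-$\infty$ bound on the other, and interpolate with Corollary \ref{interpolationtheorem}. Concretely, for $z$ in a strip $-1\le \operatorname{Re} z\le \lambda$ set
\[
T_zF(t,x)=\int_{\mathcal{I}}\int_{\mathbb{R}^d}\frac{(t-s)_{\pm}^{z}}{\Gamma(z+1)}\,K_{i(t-s)}(x,y)F(s,y)\,dy\,ds ,
\]
where $\pm$ is $+$ or $-$, understood as a distribution in $t-s$. Then $T$ is recovered (up to splitting into $t>s$ and $t<s$) at $z=0$. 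The normalisation by $\Gamma(z+1)$ makes the family analytic in Stein's sense, with at most exponential growth $e^{c|s|}$ in $\operatorname{Im} z$.

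\emph{The $\operatorname{Re} z=-1$ line (Schatten-$\infty$).} Here $(t-s)_+^{-1+is}/\Gamma(is)$ acts in $t$ as a bounded Fourier multiplier of modulus $\lesssim e^{c|s|}$. Combining this with unitarity of $e^{it\mathcal{H}^+}$ on $L^2_x$ and the group law, $T_{-1+is}$ is bounded on $L^2(\mathcal{I}\times\mathbb{R}^d)$. Writing $e^{i(t-s)\mathcal{H}^+}=e^{it\mathcal H^+}e^{-is\mathcal H^+}$ and using Plancherel in time gives this. Hence
\[
\|W_1T_{-1+is}W_2\|_{\mathfrak S^\infty}\lesssim e^{c|s|}\|W_1\|_{L^\infty_t\mathcal{W}^{1,\infty}}\|W_2\|_{L^\infty_t\mathcal{W}^{1,\infty}}.
\]
This uses that $\mathcal W(\mathcal F L^1,L^\infty)$ embeds in $L^\infty$, which follows from Proposition \ref{algebraproperty} or directly. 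This is the endpoint $p_1=\infty$, $q_1=\infty$.

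\emph{The $\operatorname{Re} z=\lambda_0$ line (Schatten-$2$).} The Hilbert–Schmidt norm is
\[
\|W_1T_zW_2\|_{\mathfrak S^2}^2=\iint\iint |W_1(t,x)|^2\,\frac{|t-s|^{2\operatorname{Re} z}\,|K_{i(t-s)}(x,y)|^2}{|\Gamma(z+1)|^2}\,|W_2(s,y)|^2 .
\]
By the hypothesis, the kernel is bounded by $|t-s|^{2\operatorname{Re} z-2\lambda}$. Integrating out $x,y$ gives $\|W_1(t)\|_{L^2_x}^2\|W_2(s)\|_{L^2_x}^2$. I then bound $\|W(t)\|_{L^2_x}\lesssim \|W(t)\|_{\mathcal W^{2,2}}$, using $\mathcal W(\mathcal FL^2,L^2)=L^2$ by Plancherel and Proposition \ref{equivalenceofWL^pspaces}. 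Choosing $\operatorname{Re} z=\lambda_0$ with $0<2\lambda-2\lambda_0<1$, the Hardy–Littlewood–Sobolev inequality in $t$ gives
\[
\|W_1T_zW_2\|_{\mathfrak S^2}\lesssim e^{c|s|}\|W_1\|_{L^{q_0}_t\mathcal W^{2,2}}\|W_2\|_{L^{q_0}_t\mathcal W^{2,2}},
\]
with $\tfrac{2}{q_0}=1-(\lambda-\lambda_0)$.

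\emph{Interpolation.} Next I choose $\theta$ so that $0=(1-\theta)(-1)+\theta\lambda_0$, i.e. $\theta=1/(1+\lambda_0)$. This places the point $z=0$ on the interpolated line. The resulting Schatten exponent $p$ satisfies $\tfrac1p=\tfrac{\theta}{2}$, so $p=2(1+\lambda_0)$. The spatial Wiener exponent interpolates between $\mathcal W^{1,\infty}$ and $\mathcal W^{2,2}$ to give $\mathcal W^{p',p}$, using Lemma \ref{waspaceproperties}(iii). In time, $\tfrac1q=\tfrac{\theta}{q_0}=\tfrac{1-\lambda+\lambda_0}{2(1+\lambda_0)}$. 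Substituting $\lambda_0=p/2-1$ yields $\tfrac1q+\tfrac{\lambda}{p}=\tfrac12$. The HLS constraint $0<\lambda-\lambda_0<\tfrac12$ translates exactly into $2\lambda+1<p<2\lambda+2$. The endpoint $p=2(\lambda+1)$ corresponds to $\lambda_0=\lambda$, where the time kernel is bounded on the finite interval $\mathcal I$ and Hölder suffices.

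\emph{Main obstacle.} The delicate step is the Schatten-$\infty$ bound and its compatibility with Corollary \ref{interpolationtheorem}. Two things need care there. First, one must justify that the distributional family is Stein-analytic. Second, one must handle the interval $\mathcal I$ rather than $\mathbb R$; I would do this by extending $W_j$ by zero and using that the multiplier is bounded on $L^2(\mathbb R)$. I would also check that interpolating $\mathcal W^{1,\infty}$ with $\mathcal W^{2,2}$ is legitimate despite $L^\infty$ lacking absolutely continuous norm. Lemma \ref{waspaceproperties}(iii) only needs one of the two global spaces to have it, and $L^2$ does.
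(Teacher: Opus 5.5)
Your proposal is correct and follows essentially the same route as the paper. Both embed $T$ in a Stein analytic family and prove a Schatten-$2$ bound on one line via Hardy--Littlewood--Sobolev in time. Both prove a Schatten-$\infty$ bound at $\operatorname{Re} z=-1$ by conjugating with $e^{-it\mathcal{H}^+}$, which reduces to an $L^2$-bounded, Hilbert-transform-type convolution in $t$, and then interpolate through Corollary \ref{interpolationtheorem} at $z=0$. The difference is technical: you use the normalized one-sided kernels $(t-s)_\pm^z/\Gamma(z+1)$, which makes the $\operatorname{Re} z=-1$ multiplier bound cleaner, whereas the paper uses the truncated kernel $\chi_{\varepsilon<|t|<\pi/4}(t)\,t^{z}K_{it}$ and lets $\varepsilon\to0^+$.
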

	
	\begin{proof}
		For $\epsilon>0$, we define 
		\[T_{\epsilon}F(t,x)=\int_{\mathcal{I}}\int_{\mathbb{R}^d}K_{\epsilon}(x,y,t-s)F(s,y)\,dy\,ds,\]
		where
		\[K_{\epsilon}(x,y,t)=\chi_{\epsilon<|t|< \frac{\pi}{4}}(t)K_{it}(x,y).\]
		To use Stein's analytic complex interpolation, for $z \in \mathbb{C}$ with $\operatorname{Re}(z) \in [-1, \lambda]$, we define
		\[
		K_{z,\varepsilon}(x, y, t) = t^{z} K_{\varepsilon}(x, y, t)
		\]
		and define the corresponding operator as
		\[
		T_{z,\varepsilon} F(t,x) = \int_{\mathcal{I}} \int_{\mathbb{R}^d}  K_{z,\varepsilon}(x, y, t - s) F(s,y)  \, dy \, ds.
		\]
		Then for any $\varepsilon > 0$, $\{T_{z,\varepsilon}\}_z$ forms an analytic family of operators in the sense of Stein. Moreover, we can deduce that
		\[
		|K_{z,\varepsilon}(x, y, t)| \lesssim \sup_{x, y \in \mathbb{R}^d}|t|^{\operatorname{Re}(z) - \lambda}, \quad \forall t \in \mathcal{I}. 
		\]
		Using the Hardy-Littlewood-Sobolev inequality in Lebesgue spaces, we obtain
		\begin{align}\nonumber
			\|W_1 T_{z,\varepsilon} W_2 \|_{\mathfrak{S}^2(L^2(\mathcal{I}, L^2(\mathbb{R}^d)))}^2 &= \int_{\mathcal{I} \times \mathcal{I}} \int_{\mathbb{R}^d \times \mathbb{R}^d} |W_1(t,x)|^2 |K_{z,\varepsilon}(x, y, t-s)|^2 |W_2(s,y)|^2 \, dx \, dy \, dt \, ds\\\nonumber
			& \lesssim \int_{\mathcal{I} \times \mathcal{I}} \int_{\mathbb{R}^d \times \mathbb{R}^d} \frac{|W_1(t,x)|^2 |W_2(s,y)|^2}{|t-s|^{2\lambda - 2\operatorname{Re}(z)}} \, dx \,  dy \, dt \, ds\\\nonumber
			&= \int_{\mathcal{I} \times \mathcal{I}} \frac{\| W_1(t,\cdot) \|_{L^2(\mathbb{R}^d)}^2 \| W_2(s,\cdot) \|_{L^2(\mathbb{R}^d)}^2}{|t-s|^{2 \lambda - 2\operatorname{Re}(z)}} \, dt \, ds\\
			&\lesssim \| W_1 \|_{L^{2\tilde{u}}(\mathcal{I}, L^2(\mathbb{R}^d))}^2 \| W_2 \|_{L^{2\tilde{u}}(\mathcal{I}, L^2(\mathbb{R}^d))}^2,\label{2norminqualityofWTW}
		\end{align}
		provided that $0\leq 2\lambda-2\operatorname{Re}(z)<1$ and $\frac{2}{\tilde{u}}+2\lambda-2\operatorname{Re}(z)=2$. If we write $u=2\tilde{u}$, then $\frac{1}{u}\in (\frac{1}{4},\frac{1}{2}]$ and
		and
		\begin{equation}\label{Interpolation1H+}
			\big\|W_1 T_{z,\varepsilon} W_2\big\|_{\mathfrak{S}^2(L^2(\mathcal{I}, L^2(\mathbb{R}^d)))}\lesssim \|W_1\|_{L^u\left(\mathcal{I},L^2(\mathbb{R}^d)\right)}\|W_2\|_{L^u\left(\mathcal{I},L^2(\mathbb{R}^d)\right)},
		\end{equation}
		provided that $\frac{1}{u}=\frac{1}{2}-\frac{1}{2}(\lambda-\operatorname{Re}(z))$ and $\operatorname{Re}(z)\in (\frac{2\lambda-1}{2},\lambda]$. Now, our aim is to show that for $\operatorname{Re}(z)=-1$, the operator 
		\begin{align*}
			T_{z,\varepsilon}: L^{2}\bigl(\mathcal{I}, \mathcal{W}^{2,2}(\mathbb{R}^d)\bigr) \to L^{2}\bigl(\mathcal{I},\mathcal{W}^{2,2}(\mathbb{R}^d)\bigr)
		\end{align*}
		is bounded with some constant that depends only on $d$ and $|\operatorname{Im}(z)|$ exponentially. Notice that 
		\begin{align*}
			T_{z,\varepsilon} F(t,x) &= \int_{\mathcal{I}}\int_{\mathbb{R}^d}  K_{z,\varepsilon}(x, y, t-s) F(s,y)  \, dy\, ds\\
			&= \int_{\varepsilon < |s| < \pi/4} s^{-1 + i \operatorname{Im}(z)}  \left(\int_{\mathbb{R}^d} K_{i s}(x, y) F(t-s,y) \, dy\right) \, ds\\
			&= \int_{\varepsilon < |s| < \pi/4 }s^{-1 + i \operatorname{Im}(z)} e^{i s \mathcal{H}^+} F(t-s,x) \, ds.
		\end{align*}
		For fixed $z$ and $t$, we can write 
		\begin{align*}
			\left(e^{-i t \mathcal{H}^+}T_{z,\varepsilon} F\right)(t,x)& =\int_{\mathbb{R}^d}K_{-it}(x,y)T_{z,\varepsilon} F(t,y)\, dy\\
			&= \int_{\mathbb{R}^d}K_{-it}(x,y)\left(\int_{\mathbb{R}^d} \int_{\mathcal{I}} K_{z,\varepsilon}(y, w, s) F(t-s,w) \, ds \, dw\right)\, dy \\
			&=\int_{\varepsilon < |s| < \pi/4} s^{-1 + i \operatorname{Im}(z)} \left(\int_{\mathbb{R}^d}K_{-it}(x,y) e^{i s \mathcal{H}^+} F(t-s,y)\, dy\right) \, ds \\
			&=\int_{\varepsilon < |s| < \pi/4} s^{-1 + i \operatorname{Im}(z)} e^{-i (t-s) \mathcal{H}^+} F(t-s,x) \, ds\\
			&=: \int_{\varepsilon < |s| < \pi/4} s^{-1 + i \operatorname{Im}(z)} G(t-s,x) \, ds,
		\end{align*}
		where $G(t,x) = e^{-i t \mathcal{H}^+} F(t,x)$. Therefore
		\begin{align}
			\|e^{-i t \mathcal{H}^+}T_{z,\varepsilon} F\|_{\mathcal{W}^{2,2}(\mathbb{R}^d)}
			= \left\| \int_{\varepsilon < |s| < \pi/4} s^{-1 + i \operatorname{Im}(z)} G(t-s,\cdot) \, ds \right\|_{\mathcal{W}^{2,2}(\mathbb{R}^d)},\label{normequalityeitHTF} 
		\end{align}
		and since the operator $e^{-i t \mathcal{H}^+}:L^2(\mathbb{R}^d)\to L^{2}(\mathbb{R}^d)$ is unitary for fixed $t$,
		it follows that 
		\begin{align}\label{GFequality23}
			\| G(t,\cdot) \|_{\mathcal{W}^{2,2}(\mathbb{R}^d)} = \| F(t, \cdot) \|_{L^2(\mathbb{R}^d)}.
		\end{align}
		If we define
		\[
		\mathcal{H}_{z,\varepsilon} : h(t) \mapsto \int_{\varepsilon < |s| <\pi/4} s^{-1 + i \operatorname{Im}(z)} h(t-s) \, ds,
		\]
		then using \eqref{normequalityeitHTF}, we get
		
		\[\|e^{-i t \mathcal{H}^+}T_{z,\varepsilon} F\|_{\mathcal{W}^{2,2}(\mathbb{R}^d)}=\|\mathcal{H}_{z,\varepsilon}G\|_{\mathcal{W}^{2,2}(\mathbb{R}^d)}.\]
		This eventually gives  
		\begin{align}
			\| {T}_{z,\varepsilon} F \|_{L^2(\mathcal{I}, \mathcal{W}^{2,2}(\mathbb{R}^d))} = \|\mathcal{H}_{z,\varepsilon}G\|_{L^2(\mathcal{I},{\mathcal{W}^{2,2}(\mathbb{R}^d))}}=\| \mathcal{H}_{z,\varepsilon} G \|_{L^2(\mathcal{I}, L^2(\mathbb{R}^d))} = \| \mathcal{H}_{z,\varepsilon} G \|_{L^2(\mathbb{R}^d, L^2(\mathcal{I}))}. \label{normequalityTH}
		\end{align}
		Since the operator $\mathcal{H}_{z,\varepsilon}$ is just Hilbert transform up to $i \operatorname{Im}(z)$, we have the bound $\mathcal{H}_{z,\varepsilon} : L^2(\mathcal{I}) \to L^2(\mathcal{I})$ with some constant depending only on $\operatorname{Im}(z)$ exponentially. Now combining \eqref{GFequality23} and \eqref{normequalityTH}, we obtain
		\begin{align*}
			\| {T}_{z,\varepsilon} F \|_{L^2(\mathcal{I}, \mathcal{W}^{2,2}(\mathbb{R}^d))} \leq C(\operatorname{Im}(z)) \| G \|_{L^2(\mathbb{R}^d, L^2(\mathcal{I}))}
			= C(\operatorname{Im}(z)) \| G \|_{L^2(\mathcal{I}, L^2(\mathbb{R}^d))}
			=C(\operatorname{Im}(z)) \| F \|_{L^2(\mathcal{I}, \mathcal{W}^{2,2}(\mathbb{R}^d))},
		\end{align*}
		which is our desired bound for ${T}_{z,\varepsilon} : L^2(\mathcal{I}, \mathcal{W}^{2,2}(\mathbb{R}^d)) \to L^2(\mathcal{I}, \mathcal{W}^{2,2}(\mathbb{R}^d))$ when $\operatorname{Re}(z) = -1$.
		%{\color{red}
			Noting the fact that $\mathfrak{S}^\infty$-norm is the usual operator norm, for $\operatorname{Re}(z) = -1$, we get
			\begin{align}\nonumber
				\| W_1 {T}_{z,\varepsilon} W_2 \|_{\mathfrak{S}^\infty(L^2(\mathcal{I}\times \mathbb{R}^d))}&=\| W_1 {T}_{z,\varepsilon} W_2 \|_{L^2(\mathcal{I}\times \mathbb{R}^d)\to L^2(\mathcal{I}\times \mathbb{R}^d)} \\ 
				%&\leq C(\operatorname{Im}(z)) \| W_1\| \| W_2\|\\ 
				&\leq C(\operatorname{Im}(z)) \| W_1\|_{L^{\infty}(\mathcal{I}, \mathcal{W}^{1, \infty}(\mathbb{R}^d))} \| W_2\|_{L^{\infty}(\mathcal{I}, \mathcal{W}^{1, \infty}(\mathbb{R}^d))}.\label{infinitynormofWTWH+}
			\end{align}
			In the last inequality, we have used the fact that $\|Wg\|_{L^2(\mathcal{W}^{2,2})}\leq \| W\|_{L^{\infty}(\mathcal{I}, \mathcal{W}^{1, \infty}(\mathbb{R}^d))}\|g\|_{L^2(\mathcal{W}^{2,2})}$ for every $g\in L^2(\mathcal{W}^{2,2})$ [see Proposition \ref{algebraproperty}].
			Now applying the complex interpolation result between \eqref{infinitynormofWTWH+} and \eqref{Interpolation1H+} on Wiener amalgam spaces, established in Corollary \ref{interpolationtheorem}, for $z=0$,  we get 
			\begin{equation}\label{Interpolation3}
				\| W_1 T_\varepsilon W_2 \|_{\mathfrak{S}^p(L^2(\mathcal{I}, L^2(\mathbb{R}^d)))} \leq C \| W_1 \|_{L^{q}(\mathcal{I},  \mathcal{W}^{p',p}(\mathbb{R}^d))} \| W_2 \|_{L^{q}(\mathcal{I}, \mathcal{W}^{p',p}(\mathbb{R}^d))},
			\end{equation} 
			for some $C>0$ independent of $\varepsilon$ as long as
			\begin{equation*}
				\frac{1}{q}+\frac{\lambda}{p}=\frac{1}{2},\; \text{with}\; 2\lambda+1<p\leq 2(\lambda+1).
			\end{equation*}
			Finally,     letting $\varepsilon\rightarrow 0^+$ in (\ref{Interpolation3}), we complete the proof of the theorem. 
		\end{proof}

		\subsection{Improvements of classical Strichartz estimates for \texorpdfstring{$\mathcal{H}^+$}{}}
		Note that the Strichartz estimate in \eqref{CorNicostrichartz} is somewhat weak, since the exponent in the time variable is only $\frac{q}{2}$. One of our main observations is that this exponent can be improved from $\frac{q}{2}$ to $q$. The following result gives the corresponding improved version of \eqref{CorNicostrichartz}.
		\begin{proof}[\bf{Proof of Theorem \ref{strichartzH+improved}}]
			Since 
			\begin{align*}
				\|WAA^*\overline{W}\|_{\mathfrak{G}^{\frac{2p}{p-2}}(L^2(\mathcal{I}\times \mathbb{R}^d)\to L^2(\mathcal{I}\times \mathbb{R}^d))}=\|A^*|W|^2A\|_{\mathfrak{G}^{\frac{2p}{p-2}}(L^2( \mathbb{R}^d)\to L^2(\mathbb{R}^d))},
			\end{align*}
			we use Theorem \ref{Wttheorem1} for $\lambda=d/2$ to obtain the following inequality \begin{align}\label{strichartzWAA*Winequality}
				\|A^*|W|^2A\|_{\mathfrak{G}^{\frac{2p}{p-2}}(L^2( \mathbb{R}^d)\to L^2(\mathbb{R}^d))}\lesssim \|W\|_{L_t^{\frac{2q}{q-2}}\mathcal{W}_x^{\frac{2p}{p+2},\frac{2p}{p-2}}}^2, 
			\end{align}
			for $\frac{2}{q}+\frac{d}{p}=\frac{d}{2}$ and $\frac{2(d+2)}{d}\leq \frac{p}{2} < \frac{d+1}{d-1}$. Let us  take $f\in L^2(\mathbb{R}^d)$ and make use of inequality \eqref{strichartzWAA*Winequality} together with the fact $\|G\|_{\operatorname{op}} \lesssim \|G\|_{\mathfrak{S}^{\alpha}}$ for any $\alpha \geq 1$. Here $\|G\|_{\operatorname{op}}$ denotes the operator norm of operator $G$. Then 
			\begin{align*}
				\|WAf\|^2_{L^2(\mathcal{I}\times \mathbb{R}^d)}&=\braket{WAf,WAf}\\
				&=\braket{A^*|W|^2Af,f}\\
				& \leq \|A^*|W|^2A\|_{\operatorname{op}} \|f\|^2_{L^2}\\
				& \leq \|A^*|W|^2A\|_{\mathfrak{G}^{\frac{2p}{p-2}}(L^2( \mathbb{R}^d)\to L^2(\mathbb{R}^d))} \|f\|^2_{L^2}\\
				& \leq \|W\|_{L_t^{\frac{2q}{q-2}}\mathcal{W}_x^{\frac{2p}{p+2},\frac{2p}{p-2}}}^2\|f\|^2_{L^2}.
			\end{align*}
			Now suppose $|W|^2=V$, then 
			\begin{align*}
				\int_{\mathcal{I}\times \mathbb{R}^d}|Af|^2(t,x)V(t,x)\, dt\, dx \lesssim \|V\|_{L_t^{\frac{q}{q-2}}\mathcal{W}_x^{(\frac{p}{p-2})',\frac{p}{p-2}}}\|f\|^2_{L^2}.
			\end{align*}
			Therefore $|Af|^2\in L_t^{(\frac{q}{2})}\mathcal{W}_x^{(\frac{p}{2})',\frac{p}{2}},$
			which in turn implies that 
			\begin{align}\label{strichartzH+halfrange}
				\|e^{it\mathcal{H}^{+}} f\|_{L^{q}_t W^{p',p}_x} \lesssim \|f\|_{L^2_x},
			\end{align}
			for $\frac{2}{q}+\frac{d}{p}=\frac{d}{2}$ and $\frac{4(d+2)}{d}\leq  p < \frac{2(d+1)}{d-1}$. We also have the energy estimate
			\begin{align}\label{strichartzH+trivial}
				\|e^{it\mathcal{H}^{+}} f\|_{L^{\infty}_t W^{2,2}_x} =\|f\|_{L^2_x},
			\end{align}
			Interpolating  between the estimates \eqref{strichartzH+halfrange} and \eqref{strichartzH+trivial},  we have our required estimate.
		\end{proof}

		% \begin{proof}
			%     To prove inequality \eqref{endpointsingstrichartz}, it is equivalent to show that 
			%     \begin{align}
				%         \Bigg|\int \int \langle e^{-is\mathcal{H}_{\mathcal{A}}}F(s),  e^{-it\mathcal{H}_{\mathcal{A}}}G(t)\rangle \, ds \, dt\Bigg|\lesssim\|F\|_{L_{t}^2(W_x^{r',r})}\|G\|_{L_{t}^2(W_x^{r',r})}
				%     \end{align}
			% \end{proof}

		\begin{remark}
			Theorem \ref{strichartzH+improved} improves the Strichartz estimates \eqref{CorNicostrichartz} for the harmonic oscillator by proving boundedness in the time-space norm $L^{q}_t \mathcal{W}^{p',p}_x$ instead of their $L^{q/2}_t \mathcal{W}^{p',p}_x$ while maintaining the same admissible condition. Since $L^{q}\subset L^{q/2}$ for $q>4$ on finite intervals, our estimate places the solution in a strictly smaller (more restrictive) time space, thereby providing stronger temporal regularity without sacrificing the refined spatial information offered by the Wiener amalgam space.   However, it is important to note that while our estimate is stronger, it is only valid for a restricted range of admissible exponents $p$.  Thus, our result offers a trade-off: a stronger time-space bound at the expense of a restricted range of spatial regularity parameters.

			%We also have from \cite[Theorem 5.2]{NFC}
			
			%\begin{equation} \label{E6.5}
			%	\|e^{it\mathcal{H}^{+}} f\|_{L^{q/2}_t W^{p',p}_x} \lesssim \|f\|_{L^2_x}.
			%\end{equation}
			%Interpolation between \eqref{strichartzforH^+} and \eqref{E6.5}, one can obtain improved Strichartz estimates, here we omit the details.
			
		\end{remark}

		\begin{proof}[Proof of the theorem \ref{ONSH+improved}]\label{proofONSH+}
			From the fact that the operator $e^{it\mathcal{H}^{+}}$ is unitary on $L^2(\mathbb{R}^d)$, it follows that
			\begin{equation*}
				\left\|\sum_{j\in J}\alpha_j|e^{it\mathcal{H}^{+}}f_j|^2\right\|_{L^\infty(\mathcal{I},\mathcal{W}^{\infty,1}(\mathbb{R}^d))} \leq \sum_{j\in J}|\alpha_j|,
			\end{equation*}
			for any (possibly infinite) orthonormal system $\{f_j\}_{j\in J}$ in $L^2(\mathbb{R}^d)$ and any sequence $\{\alpha_j\}_{j\in J}$ in $\mathbb{C}$. That means equation (\ref{ONShermite1}) holds for $(q, p)=(\infty, 1).$   
			On the other hand,  from Theorem \ref{Wttheorem1},    the duality argument in Theorem  \ref{Duality} yields the  orthonormal inequality  (\ref{ONShermite1}) that holds for  $1+\frac{2}{d} \leq p < \frac{d+1}{d-1}$.  Combining these two ranges for $p$, we have the required estimate  (\ref{ONShermite1}) for $1 \leq p < \frac{d+1}{d-1}$.
		\end{proof}
		From Lemma \ref{Lemmaratnakumar}, we know that the kernel of $AA^*$ exhibits uniform decay with  $\lambda=\frac{d}{2}$, i.e., 
		$|K_{it}(x,y)|\lesssim \frac{1}{|t|^{\frac{d}{2}}}.$
		However, for small time $t\in \mathcal{I}$,  we have
		$ |K_{it}(x,y)|\lesssim \frac{1}{|t|^{\frac{d}{2}}}\leq   \frac{1}{|t|^{d}}
		$ for all $t\in \mathcal{I}.$ Thus, by considering $\lambda=d$,  we obtain   Theorem \ref{ONSq/2}.

		\subsection{Optimality of Schatten exponent for \texorpdfstring{$\mathcal{H}^+$}{}} 		
		The inequality \eqref{ONShermite1} can be reformulated in a more convenient manner by introducing the operator
		\begin{align}\label{defgamma}
			\gamma := \sum_{j} {\alpha}_j \ket{f_j}\bra{f_j},
		\end{align}
		which acts on \(L^2(\mathbb{R}^d)\). The time-evolved operator is given by
		\[
		\gamma(t) := e^{it\mathcal{H}^+} \gamma e^{-it\mathcal{H}^+}
		= \sum_{j} {\alpha}_j \ket{e^{it\mathcal{H}^+} f_j} \bra{e^{it\mathcal{H}^+} f_j}.
		\]
		Introducing the density function
		\[
		\rho_{\gamma(t)} := \sum_{j} {\alpha}_j |e^{it\mathcal{H}^+} f_j|^2,
		\]
		we observe that \eqref{ONShermite1} can be equivalently expressed as
		\begin{equation}\label{equivalentONS}
			\|{ \rho_{\gamma(t)} }\|_{L^q_t(\mathbb{R}, \mathcal{W}_x^{p',p}(\mathbb{R}^d))}
			\le C_{d,q} \|{ \gamma }\|_{\mathfrak{S}^{\frac{2p}{p+1}}}, 
		\end{equation}
		where
		\[
		\|{ \gamma }\|_{\mathfrak{S}^{\frac{2p}{p+1}}}
		:= \left( \sum_{j} |{\alpha}_j|^{\frac{2p}{p+1}} \right)^{\frac{p+1}{2p}},
		\]
		denotes the Schatten norm of the operator \(\gamma\).
		A significant advantage of the formulation \eqref{equivalentONS} is that it obviates the need to explicitly specify the functions \(f_j\) and the coefficients \({\alpha}_j\); all the information is now encoded within the operator \(\gamma\) itself. This new formation will be now useful to prove the optimality of the exponent $\frac{2p}{p+1}$ in the right side of the inequality \eqref{ONShermite1}.  Regarding this we have the following result. 
		
		\begin{theorem}[Optimality of the Schatten norm]\label{optimalityschattenexponent}
			Assume that $d, p, q \geq 1$ satisfy
			\begin{equation}
				\frac{2}{p} + \frac{d}{q} = d \qquad \textit{and} \quad 1\leq p < \frac{d+1}{d-1}.
			\end{equation}
			Then, for every $r > \frac{2p}{p+1}$, we have
			\begin{equation}
				\sup_{\gamma \in \mathfrak{S}^r}
				\frac{\left\| \rho_{e^{it\mathcal{H}^+}\gamma e^{-it\mathcal{H}^+}} \right\|_{L^{q}(\mathcal{I},\mathcal{W}^{p',p}(\mathbb{R}^d))}}{\|\gamma\|_{\mathfrak{S}^r}}
				= +\infty. \label{optimalityeqution}
			\end{equation}
		\end{theorem}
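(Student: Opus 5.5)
We will follow the semiclassical test of Frank--Sabin: take $\gamma$ to be a spectral-type projection of large rank whose density stays essentially spatially uniform and time-independent. Then the left side of \eqref{optimalityeqution} grows like $N$ (the rank) raised to the power fixed by the space--time scaling, while $\|\gamma\|_{\mathfrak{S}^r}=N^{1/r}$. This forces $r\le \frac{2p}{p+1}$.

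\textbf{The test operator.} We take $\gamma_N=\mathbb{1}(\mathcal{H}^+\le N)$, the spectral projection onto Hermite functions $h_\alpha$ with $2|\alpha|+d\le N$. It commutes with $e^{it\mathcal{H}^+}$, so $\gamma_N(t)=\gamma_N$ and $\rho_{\gamma_N(t)}(x)=\sum_{2|\alpha|+d\le N}|h_\alpha(x)|^2$ does not depend on $t$. The rank is $\asymp N^d$, hence $\|\gamma_N\|_{\mathfrak{S}^r}\asymp N^{d/r}$. Since $\mathcal{I}$ is a fixed bounded interval, the time norm contributes only a constant.

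\textbf{Lower bound for the density.} We need $\|\rho_{\gamma_N}\|_{\mathcal{W}^{p',p}}\gtrsim N^{\,d/2+d/(2p)}$. The Weyl law / semiclassical asymptotics for the Hermite spectral function gives $\rho_{\gamma_N}(x)\approx c_d (N-|x|^2)_+^{d/2}$ (suitably scaled), with $\rho_{\gamma_N}\ge 0$. To pass from this to the amalgam norm we use a lower bound that needs no Fourier decay. For a nonnegative window $g\in C_0^\infty$ and $\rho\ge0$, take $\xi=0$: this gives $\|\rho T_xg\|_{\mathcal{F}L^{p'}}\ge \|\widehat{\rho T_x g}\|_{L^{p'}(|\xi|\le c)}\gtrsim \int \rho\, T_x g$. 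The last step holds because $\widehat{\rho T_xg}$ is a Fourier transform of a function supported in a fixed ball, hence Lipschitz near $0$ with constant $\lesssim\int\rho T_xg$, and its value at $0$ is $\int\rho T_x g$. Consequently $\|\rho\|_{\mathcal{W}^{p',p}}\gtrsim \|\rho*\check g\|_{L^p}$. Because $\rho_{\gamma_N}\gtrsim N^{d/2}$ on the ball $|x|\le \sqrt{N}/2$, this gives $\|\rho_{\gamma_N}\|_{\mathcal{W}^{p',p}}\gtrsim N^{d/2}\,N^{d/(2p)}$.

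\textbf{Conclusion.} The ratio in \eqref{optimalityeqution} is then $\gtrsim N^{\frac d2(1+\frac1p)-\frac dr}$. This tends to $\infty$ exactly when $r>\frac{2p}{p+1}$, which proves the claim. The relation between $p$ and $q$ plays no role here, since time is trivial.

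\textbf{Main obstacle.} The delicate step is the pointwise lower bound $\rho_{\gamma_N}(x)\gtrsim N^{d/2}$ on $|x|\lesssim\sqrt N$. We would obtain it by summing $|h_\alpha|^2$ and using the known averaged asymptotics of Hermite functions (Thangavelu, or the Mehler kernel with a Tauberian argument). If pointwise bounds are awkward, an averaged version suffices: the local integrals $\int \rho_{\gamma_N}T_xg\gtrsim N^{d/2}$ for $|x|\le\sqrt N/2$ are all that the argument above uses, and these follow from the Weyl-type bound $\operatorname{Tr}(\gamma_N T_xg)\approx (2\pi)^{-d}\iint_{|\xi|^2+|y|^2\lesssim N} g(y-x)\,dy\,d\xi$.
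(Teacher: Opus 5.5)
Your argument is correct, and it takes a genuinely different route from the paper.

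\textbf{How the two routes differ.} The paper follows the Frank--Lewin--Lieb--Seiringer construction. It takes a Gaussian-weighted superposition of coherent states $F_{x,\xi}$ and bounds $\operatorname{Tr}\gamma^r\le r^{-d}N$ by Berezin--Lieb. It then propagates the coherent states explicitly through the Mehler kernel to get a Gaussian density $A(t)^{d/2}e^{-B(t)|z|^2}$, and evaluates its amalgam norm via Lemma 3.2 of Cordero--Nicola. Finally it uses the admissibility relation to integrate $B(t)^{\frac{dq}{2}\frac{p-1}{p}}$ over $\mathcal{I}$, and defers the optimization in $\beta,L,\mu$ to Frank--Lewin--Lieb--Seiringer.

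You instead exploit that $\mathcal{H}^+$ has discrete spectrum and that $\mathcal{I}$ is bounded. The spectral projection $\gamma_N$ commutes with the flow, so the time norm is a harmless constant. Its Schatten norm is exactly $(\operatorname{rank}\gamma_N)^{1/r}$, so Berezin--Lieb is not needed. Everything then reduces to a fixed-time lower bound for one density, and, as you note, the $p$--$q$ relation plays no role.

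The paper's route is more portable. For $\mathcal{H}_{\mathcal{A}}$ or $\mathcal{H}^-$ on $\mathbb{R}_t$ there are no stationary finite-rank states. So an optimality claim like the one asserted for $\mathcal{H}_{\mathcal{A}}$ at the end of Section~\ref{sectionHA} needs the coherent-state construction, not yours.

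\textbf{Local lower bound.} $\rho T_xg$ is supported in $x+\operatorname{supp}g$, not in a fixed ball. Hence $\widehat{\rho T_xg}$ itself has a Lipschitz constant growing like $|x|$. The fix is to apply your argument to the translate $h_0=(\rho T_xg)(\cdot+x)$, which has the same modulus $|\hat h_0|$. Since $h_0\ge 0$ is supported in $B(0,R)$, it satisfies $\operatorname{Re}\hat h_0(\xi)=\int h_0(y)\cos(2\pi y\cdot\xi)\,dy\ge\frac12\int h_0$ for $|\xi|\le 1/(6R)$. This gives $\|\rho\|_{\mathcal{W}^{p',p}}\gtrsim\|\rho*\tilde g\|_{L^p}$ with $\tilde g=g(-\,\cdot)$, which is what you wrote as $\check g$.

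\textbf{Your main obstacle can be bypassed.} No Hermite asymptotics or local Weyl law are needed. Since $|x|^2\lesssim\mathcal{H}^+$ as quadratic forms, $\int|x|^2\rho_{\gamma_N}\lesssim\operatorname{Tr}(\gamma_N\mathcal{H}^+)\le N\operatorname{rank}\gamma_N$. By Chebyshev, at least three quarters of the mass $\int\rho_{\gamma_N}=\operatorname{rank}\gamma_N\asymp N^d$ lies in a ball of radius $C\sqrt N$. Because $g\ge0$ has compact support, this gives $\|\rho_{\gamma_N}*\tilde g\|_{L^1(B')}\gtrsim N^d$ on a slightly larger ball $B'$. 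H\"older then gives $\|\rho_{\gamma_N}*\tilde g\|_{L^p}\ge|B'|^{-1/p'}\|\rho_{\gamma_N}*\tilde g\|_{L^1(B')}\gtrsim N^{d-\frac{d}{2p'}}=N^{\frac d2+\frac d{2p}}$. This is exactly the bound your conclusion uses.

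The pointwise bulk bound you propose is also true, but it is more than the argument requires. One way to see it: restrict to $\alpha_j\lesssim N/d$ for every $j$. Then use the one-dimensional Christoffel--Darboux asymptotics for $\sum_{k\le M}h_k(s)^2$ to get $\rho_{\gamma_N}\gtrsim N^{d/2}$ on a cube of side $\sim\sqrt N$.
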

		\begin{proof} 
			Our proof strategy is very much similar to the proof of \cite[Proposition 1]{frank}. It involves constructing a family of operators $\gamma$ that depends on three positive parameters $\beta$, $L$, and $\mu$; these will be fixed suitably toward the end of the argument. To build $\gamma$, we make use of coherent states $F_{x,\xi}$, indexed by $x, \xi \in \mathbb{R}^d$, which are defined as
			\begin{equation}
				F_{x,\xi}(z) = (2\pi\beta)^{-d/4} e^{-|z-x|^2/(4\beta)} e^{i\xi \cdot z}.
			\end{equation}
			These functions are normalized in $L^2(\mathbb{R}^d)$ and satisfy the resolution of the identity
			\begin{equation}
				\int_{\mathbb{R}^d \times \mathbb{R}^d} \frac{dx \, d\xi}{(2\pi)^d} |F_{x,\xi}\rangle \langle F_{x,\xi}| = I.
			\end{equation}
			With these ingredients, we set
			\begin{equation}
				\gamma = \int_{\mathbb{R}^d \times \mathbb{R}^d} \frac{dx \, d\xi}{(2\pi)^d} e^{-|x|^2/L^2 - |\xi|^2/\mu} |F_{x,\xi}\rangle \langle F_{x,\xi}|.
			\end{equation}
			A key quantity in our subsequent analysis is
			\begin{equation}
				N = \int_{\mathbb{R}^d} \gamma(z,z) \, dz
				= \int_{\mathbb{R}^d \times \mathbb{R}^d} \frac{dx \, d\xi}{(2\pi)^d} e^{-|x|^2/L^2 - |\xi|^2/\mu}
				= A_d L^d \mu^{d/2},
			\end{equation}
			where the constant $A_d$ is explicit and depends only on the dimension $d$.
			
			Clearly, $\gamma$ is a nonnegative operator. Furthermore, an application of the Berezin-Lieb inequality \cite{berezin,lieb} yields
			\begin{equation}
				\operatorname{Tr} \gamma^r \leq \int_{\mathbb{R}^d \times \mathbb{R}^d} \frac{dx \, d\xi}{(2\pi)^d} e^{-r |x|^2/L^2 - r |\xi|^2/\mu}
				= r^{-d} N
			\end{equation}
			for all $r \geq 1$.
			Consequently, the denominator appearing in \eqref{optimalityeqution} is bounded above by $r^{-d/r} N^{1/r}$, uniformly with respect to $\beta$.
			
			To complete the proof, we now demonstrate that an appropriate choice of the parameters $\beta$, $L$, and $\mu$ allows us to control the numerator from below by a constant multiple of $N^{(p+1)/(2p)}$. Subsequently, we take $\mu$ and $L$ to be large, which yields the desired result in the limiting regime $N \to \infty$.
			
			In order to implement this plan, we compute the left-hand side explicitly. Below we outline the essential steps of the calculation. To begin with, we have
			\begin{equation}
				e^{it\mathcal{H}^{+}} F_{x,\xi}(z)
				= (-2\pi i \sin 2t)^{-d/2} (2\pi \beta)^{-n/4}
				\int_{\mathbb{R}^d}
				e^{\frac{i}{2} \cot 2t (|z|^2 + |y|^2) + \frac{i}{\sin(2t)} z \cdot y}
				e^{-\frac{|y-x|^2}{4\beta}} e^{i \xi \cdot y} \, dy.
			\end{equation}
			This implies that \[
			\left| e^{it\mathcal{H}^{+}} F_{x,\xi}(z) \right|
			=
			\left( \frac{2\beta}{\pi (4\beta^2 \cos^2 2t + \sin^2 2t)} \right)^{d/4}
			e^{-
				\frac{\beta |z - x \cos 2t + \xi \sin 2t|^2}{4\beta^2 \cos^2 2t + \sin^2 2t}
			}.
			\]

			Finally, the density function $\rho_{\gamma(t)}(z) := \rho_{e^{it\mathcal{H}_{\mathcal{A}}}\gamma e^{-it\mathcal{H}_{\mathcal{A}}}}(z)$ is given by
			\begin{align}\nonumber
				\rho_{\gamma(t)}(z)
				&= \int_{\mathbb{R}^d \times \mathbb{R}^d} \frac{dx \, d\xi}{(2\pi)^d} e^{-|x|^2/L^2 - |\xi|^2/\mu} |e^{it\mathcal{H}^{+}} F_{x,\xi}(z)|^2\\
				&= \left( \frac{2\pi \beta\mu L^2}{(4\beta^2+2\beta L^2) \cos^2 2t + (1 + 2\mu \beta) \sin^2 2t} \right)^{d/2}
				e^{-
					\frac{2\beta |z|^2}{(4\beta^2+2\beta L^2) \cos^2 2t + (1 + 2\mu \beta) \sin^2 2t}
				}\\
				&=:A(t)^{d/2}e^{-B(t)|z|^2},
			\end{align}
			where $A(t)=\frac{2\pi \beta\mu L^2}{(4\beta^2+2\beta L^2) \cos^2 2t + (1 + 2\mu \beta) \sin^2 2t} $ and $B(t)=\frac{2\beta}{(4\beta^2+2\beta L^2) \cos^2 2t + (1 + 2\mu \beta) \sin^2 2t}$, and consequently $\frac{A(t)}{B(t)}=\pi \mu L^2$.

            Our next aim is to calculate $L^{q}\mathcal{W}^{p',p}$ norm of $\rho_{\gamma(t)}(z)$. We note that 
			\begin{align}\nonumber
				\check{\rho}_{\gamma(t)}(\xi)=\left(\frac{\mu L^2}{4}\right)^{\frac{d}{2}}e^{-\pi |\Lambda\xi|^2},
			\end{align}
			where $\Lambda$ is a scalar matrix with entries to be equal to $(\frac{1}{4B(t)\pi})^{1/2}$. Using \cite[Lemma 3.2]{NFC},  we have
			\begin{align*}
				\|\rho_{\gamma(t)}\|_{\mathcal{W}^{p',p}}\|=\|\check{\rho}_{\gamma(t)}\|_{M^{p',p}}\sim (\mu L^2)^{\frac{d}{2}}B(t)^{\frac{d}{2}\left(\frac{p-1}{p}\right)}.
			\end{align*}
			Now taking $L_{t}^q$ norm of $ \|\rho_{\gamma(t)}\|_{\mathcal{W}^{p',p}}$ with respect to time variable, we obtain
			\begin{align*}
				\int_{\mathcal{I}} \|\rho_{\gamma(t)}\|^{q}_{\mathcal{W}^{p',p}} \,dt &=(\mu L^2)^{\frac{dq}{2}}\int_{\mathcal{I}} B(t)^{\frac{dq}{2}\left(\frac{p-1}{p}\right)}\, dt.
			\end{align*}
			Using  the admissible condition  $\frac{dq}{2}\left(\frac{p-1}{p}\right)=1$ and the fact that  $$ \int_{\mathcal{I}}B(t)\, dt=\frac{\beta}{\sqrt{\beta L^2+2 \beta^2}\sqrt{2+4\beta \mu}}, $$ we have
			\begin{align*}
				\|\rho_{\gamma(t)}\|_{L^q(\mathcal{I},\mathcal{W}^{p',p})}=(\mu L^2)^{\frac{d}{2}}\left(\frac{\beta}{\sqrt{\beta L^2+2 \beta^2}\sqrt{2+4\beta \mu}}\right)^{\frac{1}{q}}.
			\end{align*}
			The remaining calculation can be carried out by following the same arguments as those used in the proof  \cite[Proposition 1]{frank}.

		\end{proof}

		\subsection{Endpoint estimates for \texorpdfstring{$\mathcal{H}^+$}{\mathscr{G}(n,p)}}
		We claim that  at the end point $p=\frac{d+1}{d-1}$, the ONS estimate in Theorem  \ref{ONSH+improved} does not hold. To prove this end point case we need to form a dual version of the estimate \eqref{ONShermite1}. We recall that for any (locally) trace-class operator \(\gamma\) and any bounded function \(V\) of compact support,
		\[
		\operatorname{Tr}\big( V(x) \gamma \big) = \int_{\mathbb{R}^d} V(x) \rho_\gamma(x) \, dx,
		\]
		where \(V(x)\) on the left is identified with the corresponding multiplication operator on \(L^2(\mathbb{R}^d)\). For a time-dependent potential \(V(t,x) \in L^\infty_c(\mathcal{I} \times \mathbb{R}^d)\), we therefore obtain
		
		\begin{align*}
			\Bigg|\operatorname{Tr}\left( \int_{\mathcal{I}} e^{-it\mathcal{H}^+} V(t,x) e^{it\mathcal{H}^+} \, dt \,  \right)\gamma\Bigg|
			&= \Bigg|\int_{\mathcal{I}} \operatorname{Tr}\big( V e^{it\mathcal{H}^+} \gamma e^{-it\mathcal{H}^+} \big) \, dt \Bigg|\\
			&= \Bigg|\int_{\mathcal{I}} \int_{\mathbb{R}^d} V(t,x) \rho_{\gamma(t)}(x) \, dx \, dt\Bigg| \\
			&\le \| V \|_{L^{q'}_t(\mathcal{I}, \mathcal{W}^{p,p'}_x(\mathbb{R}^d))} \| \rho_{\gamma(t)} \|_{L^{q}_t(\mathcal{I}, \mathcal{W}^{p',p}_x(\mathbb{R}^d))}\\
			&\leq \| V \|_{L^{q'}_t(\mathcal{I}, \mathcal{W}^{p,p'}_x(\mathbb{R}^d))}\|\gamma\|_{\mathfrak{S}^{\frac{2p}{p+1}}}.
		\end{align*}
		Using H\"older's inequality and inequality \eqref{equivalentONS}. Therefore,
		\begin{align}\label{dualversionofH+ONS}
			\Bigg\|\int_{\mathcal{I}} e^{-it\mathcal{H}^+} V(t,x) e^{it\mathcal{H}^+} \, dt\Bigg\|_{\mathfrak{G}^{2p'}}\lesssim\| V \|_{L^{q'}_t(\mathcal{I}, \mathcal{W}^{p,p'}_x(\mathbb{R}^d))}.
		\end{align}
		The above inequality is called the dual inequality of \eqref{ONShermite1}. This will be instrumental in proving the end point case $p=\frac{d+1}{d-1}$ of Theorem \ref{ONSH+improved}. The next Proposition will show that the orthonormal inequality \eqref{ONShermite1} does not hold for $p=\frac{d+1}{d-1}$. This is equivalent to show that we can always choose a potential $V\in L_c^{\infty}(\mathcal{I}\times \mathbb{R}^d)$ such that  $\Bigg\|\int_{\mathcal{I}} e^{-it\mathcal{H}^+} V(t,x) e^{it\mathcal{H}^+} \, dt\Bigg\|_{\mathfrak{G}^{d+1}}=\infty$.
		\begin{proposition}[The endpoint]\label{endpointproposition}
			
			\[
			\operatorname{Tr} \left( \int_{\mathcal{I}} e^{-it\mathcal{H}^{+}} V(t,x) e^{it\mathcal{H}^{+}} \, dt \right)^{d+1} = +\infty,
			\]
			for some $V\in L_c^{\infty}(\mathcal{I}\times \mathbb{R}^d)$.
		\end{proposition}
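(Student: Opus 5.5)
The plan follows the strategy of Frank--Sabin for the endpoint of the free Schr\"odinger orthonormal estimate. The key observation is that for $|t|<\pi/4$ the harmonic oscillator propagator is, up to a unitary change of variables, the free propagator. By Mehler's formula \eqref{CH2k}, or equivalently the metaplectic (lens) transform, there is a unitary operator $\mathcal{L}$ on $L^2(\mathbb{R}^d)$. It is built from a chirp multiplication, a dilation by $(\cos 2t)^{-1}$ and the time change $s=\tfrac12\tan 2t$, and it satisfies
\[
e^{it\mathcal{H}^+}=M_t\, D_t\, e^{is\Delta'}\,\mathcal{L}.
\]
Here $M_t$ is multiplication by a unimodular chirp, $D_t$ is an $L^2$-unitary dilation, and $\Delta'$ denotes the Laplacian suitably normalized.

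Since $M_t$ is unimodular, it commutes with multiplication by $V(t,\cdot)$. Hence
\[
e^{-it\mathcal{H}^+}V(t)e^{it\mathcal{H}^+}=\mathcal{L}^*e^{-is\Delta'}\,\widetilde V(s)\,e^{is\Delta'}\mathcal{L},
\]
where $\widetilde V(s,x)=V(t,x\cos 2t)$ after accounting for the dilation. Integrating in $t$ with $dt=\cos^2(2t)\,ds$ gives a unitary conjugate of the free operator $\int e^{-is\Delta'}W(s)e^{is\Delta'}\,ds$, where $W(s,x)=\cos^2(2t)\widetilde V(s,x)$. Schatten norms are unitarily invariant. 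Moreover, the map $V\mapsto W$ sends $L^\infty_c(\mathcal{I}\times\mathbb{R}^d)$ bijectively onto $L^\infty_c(\mathbb{R}\times\mathbb{R}^d)$, because $t\in\mathcal{I}$ corresponds to $s\in\mathbb{R}$ and compact sets go to compact sets. The proposition is therefore reduced to the free case.

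For the free case I would invoke the endpoint counterexample of Frank--Sabin (\cite{FS}, Theorem on the failure at $q=\frac{d+1}{d-1}$). They exhibit $W\in L^\infty_c$, essentially a characteristic function of a small space--time box, for which $\operatorname{Tr}\bigl(\int e^{-is\Delta}W e^{is\Delta}ds\bigr)^{d+1}=+\infty$. Their argument expands the trace as a $(d+1)$-fold integral over times and positions with the explicit free kernels. It then restricts to a region where the phases are nearly stationary, and finds a logarithmic divergence because the integrand behaves like $\prod|s_j-s_{j+1}|^{-d/2}$ paired against a Gaussian-type structure. The paper's statement explicitly allows a choice of $V$, so I would take $V$ to be the pullback of the Frank--Sabin $W$ under the change of variables above. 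This pullback stays compactly supported and bounded.

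An alternative, if the conjugation bookkeeping becomes delicate, is to redo the Frank--Sabin computation directly with the kernel \eqref{CH2k}. One takes $V=\mathbf 1_{[0,\delta]}(t)\mathbf 1_{|x|\le 1}$ with small $\delta$. Since $\sin 2t\sim 2t$ and $\cot 2t\sim 1/(2t)$ for small $t$, the kernel agrees with the free one up to smooth bounded phase corrections, so the same divergence mechanism should apply.

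The main obstacle is verifying that the lens transform correctly intertwines the two evolutions, including the time reparametrization and the Jacobian factors, so that $V\in L^\infty_c$ indeed corresponds to an admissible free potential for which the divergence persists. Positivity is also essential here: the counterexample uses $W\ge 0$, and this is preserved because $\cos^2(2t)>0$ on $\mathcal{I}$. If the exact conjugation fails, I would fall back on the direct perturbative comparison of kernels for small $t$. In that route the hard step is showing that the lower-order phase corrections do not destroy the logarithmic divergence. I would handle this by localizing to $|t|\le\delta$ and $|x|\le 1$, where the corrections are $O(\delta)$ uniformly.
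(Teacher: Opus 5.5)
Your proposal is correct and is essentially the paper's argument. The paper performs the same lens-type change of variables ($u=\sec(2t)\,z$, $s=\tan 2t$, producing the weight $\cos^2(2t)$ and the rescaled potential $V(t,u\cos 2t)$), but at the level of the Fourier transform of the kernel of $B_V$, obtaining $\widehat{\mathcal{K}_{B_V}}(p,q)=\hat V_2(|p|^2-|q|^2,p-q)$ and then appealing to the free case; your operator-level conjugation reaches the same reduction more cleanly. Two small corrections: the free endpoint result you need is Proposition~2 of \cite{frank} (Frank--Lewin--Lieb--Seiringer), not \cite{FS}, and there the divergence is read off from this momentum-space formula with $V_2\ge 0$, $\hat V_2>0$ rather than from the time-domain mechanism you sketch, so your perturbative fallback is not needed.
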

		\begin{proof}
			Let us define the operator $B_{V}$ for any $V\in L_c^{\infty}(\mathcal{I}\times \mathbb{R}^d)$ by
			\begin{align}\label{Kernel BV}
				B_{V}:=\int_{\mathcal{I}}e^{-it\mathcal{H}^{+}} V(t,x) e^{it\mathcal{H}^{+}} \, dt.
			\end{align}
			Since $\mathcal{H}^{+}=\mathcal{H}_{\mathcal{A}}$ for $\mathcal{A}=\begin{pmatrix} 0 & I_d \\ -I_d & 0 \end{pmatrix}$, we have 
			\begin{align*}
				e^{t\mathcal{A}}=\begin{pmatrix} \cos t & \sin t \\ -\sin t & \cos t \end{pmatrix}.
			\end{align*}
			From Proposition \ref{propmetaplectic}, we have 
			\begin{align*}
				e^{it\mathcal{H}^{+}}f(x)=\mu(e^{t\mathcal{A}})=\left(\frac{1}{2 \pi i \sin 2t}\right)^{d/2}\int_{\mathbb{R}^d}e^{\frac{i}{\sin 2t}(|x|^2\cos 2t-2x.y+|y|^2\cos 2t)}f(y)\, dy.
			\end{align*}
			Using the above expression,  the kernel $\mathcal{K}_{B_V}$ of the operator $B_V$ can be written as 
			\begin{align*}
				\mathcal{K}_{B_V}(x,z)&=\int_{\mathbb{R}^{d}}\int_{\mathcal{I}}K_{it}(x,z,t)V(t,x)\overline{K_{it}(y,z,t)} dy\, dt,
			\end{align*}
			where $K_{it}$ denoted the kernel of the Harmonic oscillator $\mathcal{H}^{+}$. The Fourier transform of $\mathcal{K}_{B_V}$  with respect to the both variables  yields
			\begin{align*}
				\widehat{\mathcal{K}_{B_V}}(p, q)&=\int_{\mathbb{R}^d}\int_{\mathbb{R}^d}e^{-ip.x}\mathcal{K}_{B_V}(x,y)e^{iq.y}dy\, dx\\
				&=\int_{\mathcal{I}}dt\, \int_{\mathbb{R}^d}dz \, V(t,z)\Bigg(\int_{\mathbb{R}^d}e^{-ip.x}K_{it}(x,z,t)\, dx\Bigg)\Bigg(\int_{\mathbb{R}^d}e^{iq.y}K_{-it}(y,z,t)\, dy\Bigg).
			\end{align*}
			Denote
			\begin{align*}
				I_x(p,z):=\int_{\mathbb{R}^d}e^{-ip.x}K_{it}(x,z,t)\, dx \quad I_y(q,z):=\int_{\mathbb{R}^d}e^{iq.y}K_{-it}(y,z,t)\, dy.
			\end{align*}
			Now we want to evaluate $ I_x(p,z)$ and $I_y(q,z)$.
			\begin{align*}
				I_x(p,z)&=\int_{\mathbb{R}^d}\left(\frac{1}{2\pi i \sin 2t}\right)^{d/2}e^{-ip.x}e^{\frac{i}{\sin 2t}(|x|^2\cos 2t-2x.z+|z|^2\cos 2t)}\, dx\\
				&=\frac{e^{\frac{i}{2}|z|^2 \cot2t}}{(2 \pi i \sin 2t)^{d/2}}\int_{\mathbb{R}^d}e^{-ip.x}e^{\frac{i}{2}|x|^2\cot 2t-\frac{ix.z}{\sin2t}}\, dx\\
				&=\frac{1}{(\cos 2t)^{d/2}}e^{\frac{i}{2}|z|^2\cot 2t-\frac{i}{2}(\tan{2t})|p+\frac{z}{\sin 2t}|^2}.
			\end{align*}
			Similarly we can evaluate 
			\begin{align*}
				I_y(q,z)=\frac{1}{(\cos 2t)^{d/2}}e^{\frac{-i}{2}|z|^2\cot 2t+\frac{i}{2}(\tan{2t})|q+\frac{z}{\sin 2t}|^2}.
			\end{align*}
			Putting all together, we get
			\begin{align*}
				\widehat{\mathcal{K}_{B_V}}(p, q)&=\int_{\mathcal{I}}\int_{\mathbb{R}^d}\frac{e^{-i(\tan 2t)(|p|^2-|q|^2)}}{|\cos 2t|^{d}}V(t,z)e^{-i(\sec 2t) z.(p-q)}\, dz \, dt\\
				&=:\int_{\mathcal{I}}\frac{e^{-i(\tan 2t)(|p|^2-|q|^2)}}{|\cos 2t|^{d}} \Bigg(\int_{\mathbb{R}^d}V_1(\tan2t,z)e^{-i(\sec 2t)(p-q).z}\, dz\Bigg) \, dt,
			\end{align*}

			Now we can do change of variable $(\sec 2t)z=u$ and subsequently $\tan2t=s$ to get 
			\begin{align*}
				\widehat{\mathcal{K}_{B_V}}(p, q)&=\int_{\mathbb{R}}\int_{\mathbb{R}^d}\frac{e^{-is(|p|^2-|q|^2)}}{2(1+s^2)} V_1\left(s,\frac{u}{\sqrt{1+s^2}}\right)e^{-i(p-q).u}\, du \, ds \\
				&=:\int_{\mathbb{R}}\int_{\mathbb{R}^d}e^{-is(|p|^2-|q|^2)} V_2(s,u) e^{-i(p-q).u}\, du \, ds \\
				&=\hat{V_2}(|p|^2-|q|^2,p-q).
			\end{align*}
			We choose $V_2\in L_c^{\infty}(\mathbb{R}\times \mathbb{R}^d)$ such that $V_2>0$ and $\hat{V_2}>0$. Clearly for such choice of $V_2$, we also have $V\in L_c^{\infty}(\mathcal{I}\times \mathbb{R}^d)$. Following an approach similar to that used in the proof of Proposition 2 in \cite{frank}, we conclude that
			\[
			\operatorname{Tr} \left( \int_{\mathcal{I}} e^{-it\mathcal{H}^{+}} V(t,x) e^{it\mathcal{H}^{+}} \, dt \right)^{d+1} = +\infty.
			\]
		\end{proof}

		\section{Orthonormal Strichartz estimates for \texorpdfstring{$\mathcal{H}_{\mathcal{A}}$}{}}\label{sectionHA}
		Let \(\mathcal{A} = \begin{pmatrix} 0 & B \\ 0 & 0 \end{pmatrix} \in \mathrm{Sp}(d,\mathbb{R})\), where \(B = B^*\) is a real symmetric matrix. The corresponding Hamiltonian operator is given by
		\[
		\mathcal{H}_{\mathcal{A}} = -\frac{1}{4\pi} B \nabla \cdot \nabla,
		\]
		and its exponential yields
		\[
		e^{t\mathcal{A}} = \begin{pmatrix} I_d & tB \\ 0 & I_d \end{pmatrix} \in \mathrm{Sp}(d,\mathbb{R}).
		\]
		
		Fix \(t \neq 0\). Assuming \(\det B \neq 0\) and denoting the eigenvalues of \(B\) by \(\lambda_1, \ldots, \lambda_d\), the fixed-time estimate reads as follows.
		
		\begin{align}
			\| e^{it\mathcal{H}_{\mathcal{A}}} f \|_{\mathcal{W}^{1,\infty}}
			\le \prod_{j=1}^d \left( \frac{1 + t^2 \lambda_j^2}{t^4 \lambda_j^4} \right)^{1/4} \| f \|_{\mathcal{W}^{\infty,1}}.\label{disestgenlap}
		\end{align}
		
		Using the above estimate, we now turn to the proof of the single-function Strichartz estimate for the operator \(\mathcal{H}_{\mathcal{A}}\) in the Wiener amalgam setting.
		
		\begin{theorem}[Strichartz estimate for single function]
			\label{thm:strichartzz}
			Let $4 < q \leq \infty$, $2 \leq p\leq \infty$, such that
			\begin{equation}\nonumber
				\frac{2}{q} + \frac{d}{p} = \frac{d}{2}.
			\end{equation}
			Then we have the Strichartz estimates
			\begin{equation}
				\|e^{it\mathcal{H}_{\mathcal{A}}} f\|_{\mathcal{W}_t{(L^{q/2},L^q)} \mathcal{W}^{r'r}_x} \lesssim \|f\|_{L^2_x}.\label{strichartzforHA}
			\end{equation}
			
		\end{theorem}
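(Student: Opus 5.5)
We follow the $TT^*$ scheme of Cordero and Nicola \cite{coni} for the free Schr\"odinger equation, with the isotropic dispersive bound replaced by the anisotropic fixed-time estimate \eqref{disestgenlap}. Here $r=p$ in \eqref{strichartzforHA}. Write $U(t)=e^{it\mathcal{H}_{\mathcal{A}}}$, which is unitary on $L^2$. By the standard $TT^*$ argument, \eqref{strichartzforHA} is equivalent to boundedness of
\[
F\mapsto \int_{\mathbb{R}} U(t-s)F(s)\,ds
\]
from the dual space $\mathcal{W}_t\big(L^{(q/2)'},L^{q'}\big)\mathcal{W}_x^{p,p'}$ into $\mathcal{W}_t(L^{q/2},L^q)\mathcal{W}_x^{p',p}$. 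This uses Lemma \ref{waspaceproperties}(iv) for the time and space amalgams.

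\textbf{Fixed-time estimate.} The first step is to interpolate (Lemma \ref{waspaceproperties}(iii)) between the energy identity $\|U(t)f\|_{\mathcal{W}^{2,2}}=\|f\|_{L^2}$ and \eqref{disestgenlap}. Set $\theta=1-2/p$. This gives
\[
\|U(t)f\|_{\mathcal{W}^{p',p}}\lesssim \Phi(t)^{\theta}\,\|f\|_{\mathcal{W}^{p,p'}},\qquad \Phi(t)=\prod_{j=1}^d\Big(\frac{1+t^2\lambda_j^2}{t^4\lambda_j^4}\Big)^{1/4}.
\]
Next I bound $\Phi$ by a single power profile. For $|t|\le1$ each factor is $\lesssim |t|^{-1}$, so $\Phi(t)\lesssim |t|^{-d}$. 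For $|t|\ge 1$ each factor is $\lesssim |t|^{-1/2}$, so $\Phi(t)\lesssim |t|^{-d/2}$. Set $\alpha=\frac{d}{2}\theta=\frac{d}{2}-\frac{d}{p}$, which equals $\frac{2}{q}$ by admissibility. Then
\[
\Phi(t)^{\theta}\lesssim |t|^{-2\alpha}\mathbf 1_{|t|\le1}+|t|^{-\alpha}\mathbf 1_{|t|>1}\le \psi_\alpha(t),
\]
with $\psi_\alpha$ as in \eqref{Phidefn.}. The condition $q>4$ is exactly $0<\alpha<1/2$, so $\psi_\alpha$ is admissible. The implicit constants depend only on the $\lambda_j$.

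\textbf{Time integration.} Using Minkowski's inequality in the Wiener norm in $x$, we get pointwise in $t$
\[
\Big\|\int U(t-s)F(s)\,ds\Big\|_{\mathcal{W}_x^{p',p}}\lesssim \big(\psi_\alpha * \|F(\cdot)\|_{\mathcal{W}_x^{p,p'}}\big)(t).
\]
Lemma \ref{youngtypeinequality} then maps $\mathcal{W}(L^{(1/\alpha)'},L^{(2/\alpha)'})$ into $\mathcal{W}(L^{1/\alpha},L^{2/\alpha})$. With $1/\alpha=q/2$ and $2/\alpha=q$, these are exactly the target space and its dual. This yields the $TT^*$ bound, and hence \eqref{strichartzforHA}.

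\textbf{Main obstacle and remaining cases.} The main care is the reduction of the anisotropic, non-homogeneous decay $\Phi(t)^\theta$ to $\psi_\alpha$. This works because $\det B\neq0$ makes every $\lambda_j\neq0$, so the constants are uniform. A second technical point is justifying the $TT^*$ duality in the amalgam setting. I will do this on a dense class, using separability and absolute continuity of the norms for finite exponents. The endpoint $q=\infty$, $p=2$ is the energy estimate and needs no argument.
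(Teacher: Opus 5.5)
Your proposal is correct and follows essentially the same route as the paper. You interpolate the dispersive bound \eqref{disestgenlap} with the energy identity to obtain the decay $\Phi(t)^{1-2/p}\lesssim\psi_{\alpha}(t)$ with $\alpha=\frac{d}{2}-\frac{d}{p}=\frac{2}{q}$, then apply the amalgam Hardy--Littlewood--Sobolev bound of Lemma \ref{youngtypeinequality} (built on Lemma \ref{phispispaces}) in time, and conclude by $TT^*$, exactly as the paper does, with the added details (reading $r=p$, $q>4\iff 0<\alpha<1/2$, the case $q=\infty$ via the energy estimate) that the paper leaves implicit.
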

		\begin{proof} Let us consider a function $\phi$ by 
			
			$$ \phi(t)=\prod_{j=1}^d \left( \frac{1 + t^2 \lambda_j^2}{t^4 \lambda_j^4} \right)^{1/4} \lesssim\begin{cases}
				\frac{1}{t^d}, & \text{for}~~ 0<|t|<1,\\
				\frac{1}{t^{d/2}} &  \text{for}~~ t\geq 1.
			\end{cases}$$
			Then 
			\begin{align*}
				\phi(t)\lesssim \frac{1}{t^d}.\chi_{\{0<|t|<1\}}(t)+\frac{1}{t^{d/2}}. \chi_{\{1\leq |t|\}}(t)\leq \frac{1}{t^d}+\frac{1}{t^{d/2}}.
			\end{align*}
			Applying interpolation between \eqref{disestgenlap} and the energy estimate $\|e^{it\mathcal{H}_{\mathcal{A}}} f\|_{W^{2,2}}=\|f\|_{W^{2,2}}$, we have
			\begin{align}
				\|e^{it\mathcal{H}_{\mathcal{A}}} f\|_{\mathcal{W}^{p',p}}\lesssim \phi(t)^{(1-\frac{2}{p})}\|f\|_{\mathcal{W}^{p,p'}}\lesssim \psi_{\frac{d}{2}(1-\frac{2}{p})}(t)\|f\|_{\mathcal{W}^{p,p'}},\label{disestgenrallaplacian}
			\end{align}
			where $\psi_{\sigma}$ is defined by \eqref{Phidefn.}. Set $\alpha = d\left(\frac{1}{2} - \frac{1}{p}\right) = \frac{2}{q}.$ Now we use Inequality \eqref{disestgenrallaplacian} and Lemma \ref{phispispaces} to obtain the following: 
			\begin{align}\nonumber
				\left\| \int e^{i(t-s)\mathcal{H}_{\mathcal{A}}} F(s) \, ds \right\|_{\mathcal{W}_t{(L^{q/2},L^q)} \mathcal{W}^{p',p}_x} \lesssim \|F\|_{\mathcal{W}(L^{({q}/2)'},L^{{q}'}) \mathcal{W}^{p,p'}_x},
			\end{align}
			which is enough to show the required result by $TT^*$ method. This completes the proof of the Theorem.
		\end{proof}
		The proof of the following strichartz estimate for the endpoint $P := (4, 2d/(d - 1))$ follows from the arguments in \cite[Theorem 5.2]{NFC} and \cite{KT}.

		\begin{theorem}
			Let $d>1$, then
			\begin{align}\label{endpointsingstrichartz}
				\|e^{it\mathcal{H}_{\mathcal{A}}}f\|_{\mathcal{W}_t(L^2,L^4)(\mathcal{W}^{\frac{2d}{d+1},\frac{2d}{d-1}})}\lesssim \|f\|_{L^2(\mathbb{R}^d)}.
			\end{align}
		\end{theorem}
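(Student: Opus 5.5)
The plan is the Keel--Tao endpoint argument run in the Wiener amalgam framework, as in \cite[Theorem 5.2]{NFC}. Write $A=e^{it\mathcal{H}_{\mathcal{A}}}$ and set $r=\frac{2d}{d-1}$. By the $TT^*$ principle, estimate \eqref{endpointsingstrichartz} is equivalent to the bilinear bound
\begin{align*}
\Bigg|\iint \langle e^{-is\mathcal{H}_{\mathcal{A}}}F(s), e^{-it\mathcal{H}_{\mathcal{A}}}G(t)\rangle \, ds\, dt\Bigg|\lesssim \|F\|_{\mathcal{W}_t(L^2,L^{4/3})\mathcal{W}^{r,r'}_x}\|G\|_{\mathcal{W}_t(L^2,L^{4/3})\mathcal{W}^{r,r'}_x}.
\end{align*}
Here the dual of $\mathcal{W}(L^2,L^4)$ is $\mathcal{W}(L^2,L^{4/3})$ by Lemma \ref{waspaceproperties}(iv). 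By symmetry it suffices to treat the region $s<t$.

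\textbf{Step 1 (dyadic splitting).} Decompose the region as $\sum_{j\in\mathbb{Z}} T_j(F,G)$, where $T_j$ is the form restricted to $2^j\le t-s<2^{j+1}$. The fixed-time input is \eqref{disestgenrallaplacian}: interpolating \eqref{disestgenlap} with the energy estimate gives, for $2\le a\le\infty$,
\begin{align*}
\|e^{i(t-s)\mathcal{H}_{\mathcal{A}}}\|_{\mathcal{W}^{a,a'}\to\mathcal{W}^{a',a}}\lesssim \phi(t-s)^{1-2/a}.
\end{align*}
For large $|t-s|$ this decays like $|t-s|^{-\frac d2(1-\frac2a)}$. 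For $|t-s|\lesssim 1$ it blows up like $|t-s|^{-d(1-\frac2a)}$.

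\textbf{Step 2 (off-diagonal bounds).} Following Keel--Tao, prove
\begin{align*}
|T_j(F,G)|\lesssim 2^{-j\beta(a,b)}\|F\|_{L^{2}_t\mathcal{W}^{a,a'}}\|G\|_{L^{2}_t\mathcal{W}^{b,b'}}
\end{align*}
for $(1/a,1/b)$ in a neighbourhood of $(1/r,1/r)$, with $\beta(r,r)=0$. There are two ingredients:
\begin{enumerate}
\item[(i)] The case $a=b$ near $r$, including $a=b=\infty$ at the dispersive end, comes from the decay in Step 1 combined with H\"older/Young in time on intervals of length $2^j$.
\item[(ii)] The cases $a=2$ or $b=2$ come from the energy estimate and Cauchy--Schwarz on the Strichartz pieces.
\end{enumerate}
Then interpolate bilinearly, using the complex interpolation of Lemma \ref{waspaceproperties}(iii).

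For the global-in-time structure, the time norm must be localized. Cover $\mathbb{R}$ by unit intervals $Q_k$. The local norm $L^2$ on each $Q_k$ should be handled by the small-time piece ($2^j\lesssim1$), where the kernel decays like $t^{-d}$. The global norm $\ell^4$ in $k$ should be handled by the large-time piece, where the decay is $t^{-d/2}$. For $2^j\ge1$, the $\ell^{4/3}\times\ell^{4/3}$ pairing over blocks then reduces to a discrete fractional integration (Hardy--Littlewood--Sobolev) in $k$ with decay $|k-k'|^{-1/2}$. This is exactly the endpoint $\frac1{4/3}-\frac14=\frac12$ where discrete HLS holds. So summing $j\ge0$ can be done directly by the discrete version of Proposition \ref{generalyoungstheorem}, with no Keel--Tao machinery needed.

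\textbf{Step 3 (summation).} The short-time part $2^j<1$ is the genuine Keel--Tao endpoint, for the decay $t^{-d\cdot\frac{1}{d}}=t^{-1}$ at $a=r$ applied to $L^2$ in time. Sum $j<0$ via the atomic decomposition and real-interpolation trick of \cite{KT}. This upgrades the family of off-diagonal estimates to a bound in Lorentz spaces $\ell^{2}$-valued $L^2_t$, localized to each $Q_k$. Finally, sum over $k$ using the block-diagonal structure.

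The main obstacle is Step 3: adapting the Keel--Tao bilinear real interpolation to spatial Wiener amalgam spaces $\mathcal{W}^{a,a'}$. This requires real interpolation $(\mathcal{W}^{a_0,a_0'},\mathcal{W}^{a_1,a_1'})_{\theta,2}$, which is not literally listed in Lemma \ref{waspaceproperties}. I would first try to avoid it by working with the modulation-space side ($\mathcal{W}(\mathcal{F}L^{a'},L^a)=\mathcal{F}M^{a',a}$). There the real interpolation identifies with mixed Lorentz sequence norms, and $\ell^{r',2}\hookrightarrow\ell^{r'}$-type embeddings close the argument as in \cite[Theorem 5.2]{NFC}.
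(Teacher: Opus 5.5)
Your route is the one the paper itself points to. The paper gives no argument beyond invoking Cordero--Nicola and Keel--Tao. Several pieces of your plan are sound:
\begin{itemize}
\item the $TT^*$ reduction;
\item the treatment of $|t-s|\ge 1$ by the $|t-s|^{-1/2}$ decay plus discrete Hardy--Littlewood--Sobolev on $\ell^{4/3}\times\ell^{4/3}$;
\item the off-diagonal bounds for $j<0$ with $\sigma=d$, using the local-in-time parts of Theorem~\ref{thm:strichartzz} for case (ii);
\item the final block summation via $\ell^{4/3}\subset\ell^2$.
\end{itemize}

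\textbf{The gap is Step 3.} It is not a missing reference: the Keel--Tao closing step fails for these spaces.
\begin{itemize}
\item Keel--Tao conclude with the embedding $L^2_tL^{r'}_x\hookrightarrow (L^2_tL^{a_0'}_x,L^2_tL^{a_1'}_x)_{\theta,2}=L^2_tL^{r',2}_x$, which uses $r'\le 2$.
\item Your data space is $\mathcal{W}^{r,r'}=\mathcal{W}(\mathcal{F}L^{r},L^{r'})$. Its local component is $\mathcal{F}L^{r}$ with $r=\frac{2d}{d-1}>2$.
\item Take $f$ supported in a fixed unit cube. Then $\|f\|_{\mathcal{W}^{a,a'}}\asymp\|\hat f\|_{L^a}$, and a smooth cutoff localizes the $K$-functional. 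Sampling $\hat f$ on a lattice turns the couple $(\mathcal{W}^{a_0,a_0'},\mathcal{W}^{a_1,a_1'})$ into $(\ell^{a_0},\ell^{a_1})$.
\item So on such $f$ the space $(\mathcal{W}^{a_0,a_0'},\mathcal{W}^{a_1,a_1'})_{\theta,2}$ carries the $\ell^{r,2}$ norm. Since $r>2$, this is strictly smaller than $\ell^{r}$; a lacunary sum $\psi\sum_j c_j e^{2\pi i k_j\cdot x}$ with $c\in\ell^r\setminus\ell^{r,2}$ is a counterexample.
\item Hence $L^2_t\mathcal{W}^{r,r'}\not\hookrightarrow L^2_t(\mathcal{W}^{a_0,a_0'},\mathcal{W}^{a_1,a_1'})_{\theta,2}$, and the summed bilinear bound cannot be restricted to your data.
\item Changing the secondary indices does not help. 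Bilinear real interpolation needs $\tfrac1p+\tfrac1q\ge 1$, while the local factor forces $p,q\ge r>2$.
\end{itemize}

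\textbf{Your proposed remedy does not touch this.}
\begin{itemize}
\item The correct embedding is $\ell^{r'}\hookrightarrow\ell^{r',2}$, not the reverse, and it only concerns the global factor.
\item Real interpolation of $\ell^{a_0'}(\ell^{a_0})$ and $\ell^{a_1'}(\ell^{a_1})$ with second index $2$ is not a mixed Lorentz space. The Lions--Peetre formula covers only second index $r'$, and there the local part is $\mathcal{F}L^{r,r'}\subsetneq\mathcal{F}L^{r}$.
\item The atomic-decomposition variant of Keel--Tao relies on level sets of $|F(t,x)|$ and is equally blind to the $\mathcal{F}L^{r}$ structure.
\end{itemize}

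\textbf{This is a structural obstruction rather than a technicality.} In Gabor coordinates, $\mathcal{W}(\mathcal{F}L^{r'},L^{r})$ is the $L^r_xL^{r'}_\xi$ norm of $V_g f$. Moreover, $|V_g(e^{it\mathcal{H}_{\mathcal{A}}}f)|$ is $|V_{g_t}f|$ transported by the shear $e^{t\mathcal{A}}$, with windows $g_t$ uniformly nice for $|t|\le 1$. So the short-time problem is the quantum analogue of the endpoint kinetic transport estimate $\|f_0(x-tv,v)\|_{L^2_tL^{2d/(d-1)}_xL^{2d/(d+1)}_v}\lesssim\|f_0\|_{L^2_{x,v}}$.

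That kinetic estimate has the same energy, $|t|^{-d}$ dispersive and non-endpoint inputs in the corresponding mixed norms. To my knowledge it fails in every dimension (Bennett--Bez--Guti\'errez--Lee); the kinetic endpoints are mutually equivalent under $f\mapsto |f|^s$. Your scheme, and the one the paper alludes to, would apply equally to that model. To prove the statement you need an ingredient that genuinely uses the Schr\"odinger structure, such as the unit-scale uncertainty built into Gabor coefficients. Neither your sketch nor the paper's one-line reference supplies it.
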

		
		\subsection{Orthonormal  estimate}
		For $A=e^{it\mathcal{H}_{\mathcal{A}}}$,  the operator $T$ 
		\begin{align*}
			T=AA^*:L^{(\frac{q}{2})'}\bigl(\mathbb{R}, \mathcal{W}^{p,p'}(\mathbb{R}^d)\bigr)\to L^{\frac{q}{2}}\bigl(\mathbb{R}, \mathcal{W}^{p',p}(\mathbb{R}^d)\bigr)
		\end{align*}
		defined as 
		\begin{align}\nonumber
			TF(t,x)&:=\int_{\mathbb{R}}e^{i(t-s)\mathcal{H}_{\mathcal{A}}}F(s,x)ds\\
			&=\int_{\mathbb{R}}\int_{\mathbb{R}^d}K_{i(t-s)}(x,y)F(s,x)\,dy\,ds,\label{DualequalityforeitHA}
		\end{align}
		where,
		\begin{align*}
			K_{it}(x,y)=(-i \det (tB))^{-1/2}e^{i \pi (x-y)(tB)^{-1}(x-y)}.
		\end{align*}
		Then  the above kernel $K_{it}$ satisfies the following  uniform estimate:
		\begin{align}\nonumber
			\left| K_{it}(x, y) \right| \lesssim \frac{1}{|t|^{d/2}}.
		\end{align}
		Moreover, we can further reduce this estimate to one of its weak forms, 
		\begin{align}\label{kernelweakdecay}
			\left| K_{it}(x, y) \right| \lesssim \frac{1}{|t|^{d/2}}\chi_{\{|t|> 1\}}(t)+\frac{1}{|t|^{d}}\chi_{\{|t|\leq 1\}}(t)\lesssim \psi_{d/2}(t),
		\end{align}
		and using the above weak form of the kernel, we have the following result in Schatten spaces, which is crucial to prove orthonormal Strichartz estimates for $\mathcal{H}_A.$

		\begin{theorem}\label{theoremWTWHA}
			Let   $ 2\leq p,q,\tilde{q} \leq \infty $ such that 
			\[\frac{1}{\tilde{q}}=\frac{2}{q}-\frac{1}{2}, \quad  \frac{2}{q} + \frac{1}{p} = 1 \quad \text{and} \quad \frac{5}{2} < p < 3,
			\]
			then  
			\begin{align}
				\| W_1 T W_2 \|_{\mathfrak{S}^p(L^2(\mathbb{R}, L^2(\mathbb{R})))} \leq C \| W_1 \|_{\mathcal{W}_t(L^{\tilde{q}},L^{q})(\mathbb{R},  \mathcal{W}_x^{p',p}(\mathbb{R}^d))} \| W_2 \|_{\mathcal{W}_t(L^{\tilde{q}},L^{q})(\mathbb{R},  \mathcal{W}_x^{p',p}(\mathbb{R}^d))},
			\end{align}
			where $W_1$ and $W_2$ should be interpreted as multiplication operator corresponding to the function $W_1$ and $W_2$, respectively.
		\end{theorem}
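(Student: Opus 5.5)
We mimic the proof of Theorem \ref{Wttheorem1}, with the Lebesgue norm in time replaced by Wiener amalgam norms, and take $d=1$ throughout. For $\varepsilon>0$ we truncate the kernel, $K_\varepsilon(x,y,t)=\chi_{\{\varepsilon<|t|<1/\varepsilon\}}(t)K_{it}(x,y)$. We then embed it in the analytic family $K_{z,\varepsilon}(x,y,t)=t^{z}K_\varepsilon(x,y,t)$ for $\operatorname{Re}(z)\in[-1,\lambda_0]$, with $\lambda_0$ chosen below. The associated operators $T_{z,\varepsilon}$ form a Stein-analytic family. We prove two endpoint bounds for $W_1T_{z,\varepsilon}W_2$ and interpolate at $z=0$ using Lemma \ref{interpolationtheorem2}. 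The resulting constants must be independent of $\varepsilon$, so that we can let $\varepsilon\to0^+$ at the end.

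\emph{Schatten-$\infty$ endpoint, $\operatorname{Re}(z)=-1$.} Here we argue exactly as in \eqref{infinitynormofWTWH+}. Conjugating by the unitary group $e^{-it\mathcal{H}_{\mathcal{A}}}$ reduces $T_{z,\varepsilon}$ to a truncated Hilbert transform in $t$ with kernel $s^{-1+i\operatorname{Im}z}$. This transform is bounded on $L^2(\mathbb{R})$ with a constant growing at most exponentially in $|\operatorname{Im} z|$. Proposition \ref{algebraproperty} then gives
\[
\|W_1T_{z,\varepsilon}W_2\|_{\mathfrak{S}^\infty}\lesssim C(\operatorname{Im}z)\,\|W_1\|_{L^\infty_t\mathcal{W}^{1,\infty}_x}\|W_2\|_{L^\infty_t\mathcal{W}^{1,\infty}_x}.
\]
We regard $L^\infty_t$ as $\mathcal{W}_t(L^\infty,L^\infty)$, so that the pair $(\tilde q_1,q_1,p_1)=(\infty,\infty,1)$.

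\emph{Schatten-$2$ endpoint.} We compute the Hilbert--Schmidt norm from the kernel and use the weak decay \eqref{kernelweakdecay}. This gives $|K_{z,\varepsilon}|\lesssim \psi_{\alpha}(t-s)$ with $\alpha=\tfrac12(\tfrac12-\operatorname{Re}z)$, up to the $|t|^{\operatorname{Re}z}$ factor absorbed into the two regimes. It follows that
\[
\|W_1T_{z,\varepsilon}W_2\|_{\mathfrak{S}^2}^2\lesssim\int\!\!\int \|W_1(t)\|_{L^2_x}^2\,\psi_{2\alpha'}(t-s)\,\|W_2(s)\|_{L^2_x}^2\,ds\,dt,
\]
with $\alpha'$ tied to $\operatorname{Re}z$ and $2\alpha'<1/2$. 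We bound this by H\"older in Wiener amalgam spaces together with Lemma \ref{youngtypeinequality}. The result is controlled by $\|W_j\|^2_{\mathcal{W}_t(L^{a},L^{b})L^2_x}$ with $\tfrac1a=\tfrac12(1-\alpha')$ and $\tfrac1b=\tfrac12(1-\alpha'/2)$, which are the dual exponents of $\mathcal{W}(L^{1/\alpha'},L^{2/\alpha'})$. We also use $L^2_x=\mathcal{W}^{2,2}_x$, so $p_0=2$.

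\emph{Interpolation and exponent bookkeeping.} With $\theta$ determined by $\tfrac1p=\tfrac{1-\theta}{2}$ and $z=0=(1-\theta)\operatorname{Re}z_0+\theta(-1)$, we get $\operatorname{Re}z_0=\theta/(1-\theta)$. The interpolated exponents are then $q=a/(1-\theta)$ and $\tilde q=b/(1-\theta)$. We must check that these reproduce $\tfrac2q+\tfrac1p=1$ and $\tfrac1{\tilde q}=\tfrac2q-\tfrac12$. The latter relation may hold only after an embedding $\mathcal{W}(L^{\tilde q_0},L^{q_0})\hookrightarrow\mathcal{W}(L^{\tilde q_0'},L^{q_0})$ in time (Lemma \ref{waspaceproperties}(ii)), which decreases the local exponent and so weakens the right-hand side, as the introduction indicates. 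The admissibility constraints at the Schatten-$2$ endpoint are $0<\alpha'<\tfrac12$ together with the HLS/Young condition of Lemma \ref{youngtypeinequality}. Tracing these through $\theta$ should yield exactly $\tfrac52<p<3$.

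\emph{Main obstacle.} This bookkeeping is the step we expect to be hardest. The weak decay in \eqref{kernelweakdecay} has two regimes, $|t|^{-1}$ locally and $|t|^{-1/2}$ globally. These must be matched simultaneously to the local and global time exponents, with $\psi_\alpha$ requiring both $\alpha$ and $2\alpha$ in the admissible range. We would first try to set the local exponent from the $|t|^{-2\alpha}$ part and the global one from the $|t|^{-\alpha}$ part, then verify the relation between $\tilde q$ and $q$ a posteriori, inserting the time-variable embedding wherever the exponents come out too strong.
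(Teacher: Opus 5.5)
\textbf{Verdict: there is a genuine gap.} Your architecture is the paper's: the family $t^{z}K_\varepsilon$, the $\mathfrak{S}^\infty$ endpoint at $\operatorname{Re}z=-1$ by conjugating to a truncated Hilbert transform, the $\mathfrak{S}^2$ endpoint via H\"older and Lemma \ref{youngtypeinequality}, a time embedding, and Lemma \ref{interpolationtheorem2} at $z=0$. The gap is in the step you defer. That step is where the exponent relations and the range $\frac52<p<3$ come from, and what you do write for it fails.

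\textbf{The kernel majorant is built in the wrong order.} Suppose you start from the two-regime bound \eqref{kernelweakdecay} and then multiply by $|t|^{\lambda}$, with $\lambda=\operatorname{Re}z$. The majorant is then $|t|^{\lambda-1}$ near $t=0$ and $|t|^{\lambda-\frac12}$ for $|t|>1$. Its square is locally integrable only if $\lambda>\frac12$, and it decays only if $\lambda<\frac12$. So for no $\lambda$ does it admit a bound by $\psi_\beta$ with $0<\beta<\frac12$. In particular, your claimed bound by $\psi_{\frac12(\frac12-\lambda)}$ does not follow from \eqref{kernelweakdecay}.

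The order has to be reversed. In $d=1$ one has $|K_{it}(x,y)|\lesssim|t|^{-1/2}$ for all $t\neq0$, hence $|K_{z,\varepsilon}|\lesssim|t|^{-(\frac12-\lambda)}$. Only then does one weaken locally: $|t|^{-(\frac12-\lambda)}\le\psi_{\frac12-\lambda}(t)$ for $\lambda\le\frac12$. This keeps the global decay $\frac12-\lambda$ exact and sacrifices only local integrability.

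Your index $\frac12(\frac12-\lambda)$, even if justified via the sharp bound, does the opposite: it throws away global decay. No embedding can restore it, because Lemma \ref{waspaceproperties}(ii) only lets you enlarge the global exponent of the right-hand norm. Carrying your choice through Lemma \ref{youngtypeinequality} and the interpolation gives $\frac2q+\frac1p=\frac{5+2\lambda}{4(1+\lambda)}>1$, which is not the admissibility relation.

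\textbf{Index slips.}
\begin{itemize}
\item Your exponents $\frac1a=\frac12(1-\alpha')$ and $\frac1b=\frac12(1-\frac{\alpha'}2)$ are those produced by a kernel $\psi_{\alpha'}$, not the $\psi_{2\alpha'}$ you display.
\item Interpolating against $(\infty,\infty)$ gives $\tilde q=a/(1-\theta)$ for the local exponent and $q=b/(1-\theta)$ for the global one, not the reverse.
\end{itemize}

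\textbf{The correct bookkeeping} is short.
\begin{itemize}
\item From $|K_{z,\varepsilon}|^2\lesssim\psi_{1-2\lambda}$, Lemma \ref{youngtypeinequality} needs $0<1-2\lambda<\frac12$, i.e. $\frac14<\lambda<\frac12$. It gives the $\mathfrak{S}^2$ bound with $\mathcal{W}_t(L^{1/\lambda},L^{4/(1+2\lambda)})L^2_x$ on each $W_j$.
\item Next, enlarge the local exponent in that norm from $1/\lambda$ to $2/\lambda$; this is the time embedding. It makes the right-hand side larger, so it is legitimate. It is needed only to land on the stated relation; without it you would get the stronger $\frac1{\tilde q}=\frac4q-1$.
\item Interpolating with the $\mathfrak{S}^\infty$ bound at $\theta=\frac{\lambda}{1+\lambda}$ gives $p=2(1+\lambda)$, $\frac1q=\frac{1+2\lambda}{4(1+\lambda)}$ and $\frac1{\tilde q}=\frac{\lambda}{2(1+\lambda)}$.
\item These satisfy $\frac2q+\frac1p=1$ and $\frac1{\tilde q}=\frac2q-\frac12$, and the range $\frac14<\lambda<\frac12$ is exactly $\frac52<p<3$.
\end{itemize}
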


		\begin{proof} 
			For $\epsilon>0$, we define 
			\[T_{\epsilon}F(t,x)=\int_{\mathbb{R}}\int_{\mathbb{R}}K_{\epsilon}(x,y,t-s)F(s,y)\,dy\,ds,\]
			where,
			\[K_{\epsilon}(x,y,t)=\chi_{\epsilon<|t|< \infty}(t)K_{it}(x,y).\]
			To use Stein's analytic complex interpolation, for $z \in \mathbb{C}$, we define
			\[
			K_{z,\varepsilon}(x, y, t) = t^{z} K_{\varepsilon}(x, y, t)
			\]
			and define the corresponding operator as
			\[
			T_{z,\varepsilon} F(t,x) = \int_{\mathbb{R}}\int_{\mathbb{R}}  K_{z,\varepsilon}(x, y, t - s) F(s,y)  \, dy \, ds.
			\]
			Then for any $\varepsilon > 0$, $\{T_{z,\varepsilon}\}_z$ forms an analytic family of operators in the sense of Stein. Moreover, we can deduce that for $\operatorname{Re}(z)\leq \frac{1}{2}$, we have
			
			\begin{align*}
				|K_{z,\varepsilon}(x, y, t)| & \lesssim \frac{1}{|t|^{\frac{1}{2}-\operatorname{Re}(z)}}=\frac{1}{|t|^{\frac{1}{2}-\operatorname{Re}(z)}}\chi_{\{|t|\leq 1\}}+\frac{1}{|t|^{\frac{1}{2}-\operatorname{Re}(z)}}\chi_{\{|t|> 1\}}\\
				& \leq \frac{1}{|t|^{1-2\operatorname{Re}(z)}}\chi_{\{|t|\leq 1\}}+\frac{1}{|t|^{\frac{1}{2}-\operatorname{Re}(z)}}\chi_{\{|t|> 1\}}\lesssim \psi_{\frac{1}{2}-\operatorname{Re}(z)}(t),
			\end{align*}
			where $\psi_{{\frac{1}{2}-\operatorname{Re}(z)}}$ is defined by \eqref{Phidefn.}. Let $\operatorname{Re}(z)=\lambda$ and denote $\frac{1}{2}-\lambda$ by $\sigma$. Then, 
			\begin{align}\nonumber
				\|W_1 T_{\lambda+ia,\varepsilon} W_2 \|^2_{\mathfrak{S}^2(L^2(\mathbb{R}^{1+1}))} &= \int_{\mathbb{R} \times \mathbb{R}} \int_{\mathbb{R} \times \mathbb{R}} |W_1(t,x)|^2 |K_{\lambda+ia,\varepsilon}(x, y, t-s)|^2 |W_2(s,y)|^2 \, dx \, dy \, dt \, ds\\\nonumber
				& \lesssim \int_{\mathbb{R} \times \mathbb{R}} \int_{\mathbb{R}\times \mathbb{R}} {|W_1(t,x)|^2 |W_2(s,y)|^2} \psi_{2\sigma}(t-s) \, dx \,  dy \, dt \, ds\\\nonumber
				& \lesssim \int_{\mathbb{R}}\|W_1(t,.)\|^2_{\mathcal{W}_x^{2,2}}(\|W_2\|^2_{\mathcal{W}_x^{2,2}}*\psi_{2\sigma})(t)\,dt \\ 
				& \lesssim\|W_1\|^2_{\mathcal{W}_t(L^{\frac{2u}{2-u}},L^{2u})(\mathbb{R},\mathcal{W}_x^{2,2}(\mathbb{R}))} \|W_2\|^2_{\mathcal{W}_t(L^{\frac{2u}{2-u}},L^{2u})(\mathbb{R},\mathcal{W}_x^{2,2}(\mathbb{R}))},\label{schatten2inequalityHA23}
			\end{align}
			provided that $0< 2\sigma <\frac{1}{2}$, $\frac{2}{{u}}+2\sigma=2$. The last inequality is obtained by applying H\"{o}lder's inequality followed by the application of the Lemma \ref{youngtypeinequality} for $d=1$. Since \(0 < 2\sigma < \frac{1}{2}\), the relation \(\frac{2}{u} + 2\sigma = 2\) implies \(1 < u < \frac{4}{3}\). Consequently, we have the inequality
			\[
			\frac{2u}{2-u} \le \frac{2u}{2 - (\lambda + 1)u}=\frac{2}{\lambda}.
			\]
			Now, we use the inclusion property from the Proposition \ref{waspaceproperties} to have the following inequality 
			\begin{equation}\label{Interpolation12}
				\|W_1 T_{\lambda+ia,\varepsilon} W_2 \|_{\mathfrak{S}^2(L^2(\mathbb{R}^{1+1}))}\lesssim\|W_1\|_{\mathcal{W}_t(L^{\frac{2}{\lambda}},L^{\frac{4}{2\lambda+1}})(\mathbb{R},\mathcal{W}_x^{2,2}(\mathbb{R}))} \|W_2\|_{\mathcal{W}_t(L^{\frac{2}{\lambda}},L^{\frac{4}{2\lambda+1}})(\mathbb{R},\mathcal{W}_x^{2,2}(\mathbb{R}))} .
			\end{equation}
			We aim to show that for $\operatorname{Re}(z) = -1$, the operator $T_{z,\varepsilon} : L^{2}\bigl(\mathbb{R}, \mathcal{W}^{2,2}(\mathbb{R})\bigr) \to L^{2}\bigl(\mathbb{R}, \mathcal{W}^{2,2}(\mathbb{R})\bigr)$ is bounded with some constant depending only on $d$ and $|\operatorname{Im}(z)|$ exponentially. Notice that 
			\begin{align*}
				T_{z,\varepsilon} F(t,x) &= \int_{\mathbb{R}} \int_{\mathbb{R}} K_{z,\varepsilon}(x, y, t-s) F(s,y) \, ds \, dy\\
				&= \int_{\varepsilon < |s| <\infty} s^{-1 + i \operatorname{Im}(z)}  \left(\int_{\mathbb{R}} K_{i s}(x, y) F(t-s,y) \, dy\right) \, ds\\
				&= \int_{\varepsilon < |s| <\infty}s^{-1 + i \operatorname{Im}(z)} e^{i s \mathcal{H}_{\mathcal{A}}} F(t-s,x) \, ds.
			\end{align*}
			
			For fixed $z$ and $t$, we can write 
			\begin{align*}
				\left(e^{-i t \mathcal{H}_{\mathcal{A}}}T_{z,\varepsilon} F\right)(t,x)& =\int_{\mathbb{R}}K_{-it}(x,y)T_{z,\varepsilon} F(t,y)\, dy\\
				&= \int_{\mathbb{R}}K_{-it}(x,y)\left(\int_{\mathbb{R}} \int_{\mathbb{R}} K_{z,\varepsilon}(y, w, s) F(t-s,w) \, ds \, dw\right)\, dy \\
				&=\int_{\varepsilon < |s| <\infty} s^{-1 + i \operatorname{Im}(z)} \left(\int_{\mathbb{R}}K_{-it}(x,y) e^{i s \mathcal{H}_{\mathcal{A}}} F(t-s,y)\, dy\right) \, ds \\
				&=\int_{\varepsilon < |s| <\infty}s^{-1 + i \operatorname{Im}(z)} e^{-i (t-s) \mathcal{H}_{\mathcal{A}}} F(t-s,x) \, ds\\
				&=: \int_{\varepsilon < |s| <\infty} s^{-1 + i \operatorname{Im}(z)} G(t-s,x) \, ds,
			\end{align*}
			where $G(t,x) = e^{-i t \mathcal{H}_{\mathcal{A}}} F(t,x)$. Therefore
			\begin{align}\label{normequalityeitHTF12}
				\|e^{-i t \mathcal{H}_{\mathcal{A}}}T_{z,\varepsilon} F\|_{\mathcal{W}^{2,2}(\mathbb{R})}
				= \left\| \int_{\varepsilon < |s| <\infty} s^{-1 + i \operatorname{Im}(z)} G(t-s,\cdot) \, ds \right\|_{\mathcal{W}^{2,2}(\mathbb{R})},
			\end{align}
			
			and since for fixed $t$ the operator $e^{-i t \mathcal{H}_{\mathcal{A}}}:L^2(\mathbb{R})\to  L^2(\mathbb{R})$ is unitary,
			it follows that 
			\begin{align}
				\| G(t,\cdot) \|_{\mathcal{W}^{2,2}(\mathbb{R})} = \| F(t, \cdot) \|_{L^2(\mathbb{R})}.\label{normequalityGF} 
			\end{align}
			If we define
			\[
			\mathcal{H}_{z,\varepsilon} : h(t) \mapsto \int_{\varepsilon < |s| <\infty} s^{-1 + i \operatorname{Im}(z)} h(t-s) \, ds,
			\]
			then using \eqref{normequalityeitHTF12}, we get
			
			\[\|e^{-i t \mathcal{H}_{\mathcal{A}}}T_{z,\varepsilon} F\|_{\mathcal{W}^{2,2}(\mathbb{R})}=\|\mathcal{H}_{z,\varepsilon}G\|_{\mathcal{W}^{2,2}(\mathbb{R})}.\]
			This eventually gives  
			\begin{align}
				\| {T}_{z,\varepsilon} F \|_{L^2(\mathbb{R}, \mathcal{W}^{2,2}(\mathbb{R}))} = \|\mathcal{H}_{z,\varepsilon}G\|_{L^2(\mathbb{R},{\mathcal{W}^{2,2}(\mathbb{R}))}}=\| \mathcal{H}_{z,\varepsilon} G \|_{L^2(\mathbb{R}, L^2(\mathbb{R}))}.\label{normequalityTH1}
			\end{align}
			Since the operator $\mathcal{H}_{z,\varepsilon}$ is just Hilbert transform up to $i \operatorname{Im}(z)$, we have the bound $\mathcal{H}_{z,\varepsilon} : L^2(\mathbb{R}) \to L^2(\mathbb{R})$ with some constant depending only on $\operatorname{Im}(z)$ exponentially. Now combining \eqref{normequalityGF} and \eqref{normequalityTH1}, we obtain
			\[
			\| {T}_{z,\varepsilon} F \|_{L^2(\mathbb{R}, \mathcal{W}^{2,2}(\mathbb{R}))} \leq C(\operatorname{Im}(z)) \| F \|_{L^2(\mathbb{R}, \mathcal{W}^{2,2}(\mathbb{R}))},
			\]
			which is our desired bound for ${T}_{z,\varepsilon} : L^2(\mathbb{R}, \mathcal{W}^{2,2}(\mathbb{R})) \to L^2(\mathbb{R}, \mathcal{W}^{2,2}(\mathbb{R}))$ when $\operatorname{Re}(z) = -1$.
			Noting the fact that $\mathfrak{S}^\infty$-norm is the usual operator norm, for $\operatorname{Re}(z) = -1$, we get
			\begin{align}\nonumber
				\| W_1 {T}_{z,\varepsilon} W_2 \|_{\mathfrak{S}^\infty(L^2(\mathbb{R}\times \mathbb{R}))}&=\| W_1 {T}_{z,\varepsilon} W_2 \|_{L^2(\mathbb{R}\times \mathbb{R})\to L^2(\mathbb{R}\times \mathbb{R})} \\ 
				&\leq C(\operatorname{Im}(z))\| W_1\|_{\operatorname{op}} \| W_2\|_{\operatorname{op}} \\
				%&\leq C(\operatorname{Im}(z)) \| W_1\| \| W_2\|\\ 
				&\leq C(\operatorname{Im}(z)) \| W_1\|_{\mathcal{W}_t({L^{\infty},L^{\infty}})(\mathbb{R},\mathcal{W}^{1, \infty}(\mathbb{R}))} \| W_2\|_{\mathcal{W}_t({L^{\infty},L^{\infty}})(\mathbb{R},\mathcal{W}^{1, \infty}(\mathbb{R}))}.\label{infinitynormofWTW}
			\end{align}
			We use complex interpolation between \eqref{Interpolation12} and \eqref{infinitynormofWTW} for $z=0$, we obtain the following inequality
			\begin{align}
				\| W_1 {T}_{\varepsilon} W_2 \|_{\mathfrak{S}^p(L^2(\mathbb{R}\times \mathbb{R}))}\lesssim \|W_1\|_{\mathcal{W}_t(L^{\tilde{q}},L^q)\mathcal{W}^{p',p}}\|W_2\|_{\mathcal{W}_t(L^{\tilde{q}},L^q)\mathcal{W}^{p',p}},\label{HAWTWinequlityfinal}
			\end{align}
			where $\tilde{q},q$ and $p$ satisfy the relations  $ 2\leq p,q,\tilde{q} \leq \infty $ such that 
			\[\frac{1}{\tilde{q}}=\frac{2}{q}-\frac{1}{2}, \quad  \frac{2}{q} + \frac{1}{p} = 1 \quad \text{and} \quad \frac{5}{2} < p < 3.
			\] 
			The above inequality is independent of $\varepsilon$. Finally, letting $\varepsilon\rightarrow 0^+$ in (\ref{HAWTWinequlityfinal}), we complete the proof of the theorem.
		\end{proof}
		\begin{proof}[Proof of the theorem \ref{theoremONSHA}]\label{proofONSHA}
			
			The proof proceeds analogously to that of Theorem \ref{ONSH+improved}. Invoking Theorem \ref{theoremWTWHA} together with the duality principle \ref{DualityWAspace}, and using the trivial estimate
			\begin{equation*}
				\left\|\sum_{j\in J}\alpha_j|e^{it\mathcal{H}_{\mathcal{A}}}f_j|^2\right\|_{\mathcal{W}_t(L^\infty,L^\infty)\mathcal{W}_x^{\infty,1}} \leq \sum_{j\in J}|\alpha_j|,
			\end{equation*}
			valid for any (possibly infinite) orthonormal system \(\{f_j\}_{j\in J}\) in \(L^2(\mathbb{R})\) and any sequence \(\{\alpha_j\}_{j\in J}\) in \(\mathbb{C}\), we obtain the desired result.
		\end{proof}

		\begin{remark}
			
			Unlike the Strichartz estimate \eqref{CorNicostrichartz}, which is measured in $L^{q/2}$ in time, Theorem \ref{strichartzH+improved} provides a refined single-function estimate with time exponent $q$. By pursuing a similar strategy, we deduce the following version of the Strichartz estimate for the operator $\mathcal{H}_{\mathcal{A}}$
			\begin{equation}
				\|e^{it\mathcal{H}_{\mathcal{A}}} f\|_{L^q \mathcal{W}^{p',p}_x} \lesssim \|f\|_{L^2_x},\label{strichartzforHA2}
			\end{equation}
			where the time variable belongs to a Lebesgue space instead of a Wiener amalgam space, following the same admissible condition in $q$ and $p$ as in \eqref{thm:strichartzz} but with the restriction $2\leq p <\frac{2(d+1)}{d-1}.$
			Since $L^q=\mathcal{W}(L^q,L^q)\hookrightarrow \mathcal{W}(L^{q/2},L^q)$ for every $2\leq q\leq \infty$, the above estimate \eqref{strichartzforHA2}  is an improvement  of Theorem \eqref{thm:strichartzz} for $\mathcal{H}_{\mathcal{A}}$. Consequently, the same argument for orthonormal estimates yields the corresponding orthonormal Strichartz estimate for \eqref{strichartzforHA2} in every dimension $d\geq1.$ Furthermore, by arguments analogous to those used in Propositions \ref{optimalityschattenexponent} and \ref{endpointproposition}, one can establish the corresponding optimality and endpoint results for the operator $\mathcal{H}_{\mathcal{A}}$.
		\end{remark}

		\section{Orthonormal Strichartz estimates for  \texorpdfstring{$\mathcal{H}^-$}{}}\label{sectionH-}
		Consider the case of the inverted harmonic oscillator  $ 
		\mathcal{H}^{-} = -\frac{1}{4\pi}\Delta - \pi |x|^2.$ 
		This corresponds to the matrix
		\[
		\mathcal{A} = \begin{pmatrix} 0 & I_d \\ I_d & 0 \end{pmatrix} \in \mathfrak{sp}(d, \mathbb{R}),
		\]
		and the corresponding symplectic flow is given by
		\[
		e^{t\mathcal{A}} =
		\begin{pmatrix}
			\cosh t  & \sinh t  \\
			\sinh t  & \cosh t 
		\end{pmatrix} \in \operatorname{Sp}(d, \mathbb{R}).
		\]
		Then we can use equality \eqref{represetationofmuA} to get the following 
		
		\begin{align}\label{IntergralrepH-}
			e^{it\mathcal{H}^{-}} f(x) = i^{d/2} (\sinh t)^{-d/2}
			\int_{\mathbb{R}^d}
			e^{-\pi i (\coth t)(|x|^2 + |y|^2) + 2\pi i (\operatorname{cosech} t) y \cdot x}
			f(y) \, dy.  
		\end{align}
		Suppose $A=e^{it\mathcal{H}^{-}}$, we can define the operator 
		\begin{align*}
			T=AA^*:\mathcal{W}_t(L^{(q/2)'}, L^2) \mathcal{W}_x^{p,p'}\to \mathcal{W}_t(L^{q/2}, L^2) \mathcal{W}_x^{p',p}
		\end{align*}
		as
		\begin{align}\nonumber
			TF(t,x)&:=\int_{\mathbb{R}}e^{i(t-s)\mathcal{H}^{-}}F(s,x)ds\\
			&=\int_{\mathbb{R}}\int_{\mathbb{R}^d}K_{i(t-s)}(x,y)F(s,x)\,dy\,ds,\label{TFinvertedhermitedr}
		\end{align}
		where $K_{it}$ is the kernel of the operator $e^{it\mathcal{H}^{-}}$, whose expression can be obtained from the representation \eqref{IntergralrepH-}. Consequently, we have the following uniform decay estimates of the kernel
		\begin{align}
			|K_{it}(x,y)|\lesssim \frac{1}{|\sinh{t}|^{d/2}}.
		\end{align}

		\begin{theorem}\label{WTWtheoremstatH-}
			For  $ 2\leq p,q, \leq \infty $ such that 
			\[ \frac{1}{q} + \frac{1}{p} = \frac{1}{2} \quad \text{and} \quad \frac{5}{2} < p < 3,
			\]
			we have
			
			\begin{align}\label{WTWinqualityforHA}
				\| W_1 T W_2 \|_{\mathfrak{S}^p(L^2(\mathbb{R}, L^2(\mathbb{R})))} \leq C \| W_1 \|_{\mathcal{W}_t(L^{{q}},L^{4})(\mathbb{R},  \mathcal{W}_x^{p',p}(\mathbb{R}))} \| W_2 \|_{\mathcal{W}_t(L^{{q}},L^{4})(\mathbb{R},  \mathcal{W}_x^{p',p}(\mathbb{R}))},
			\end{align}
			where $W_1$ and $W_2$ should be interpreted as multiplication operator corresponding to the function $W_1$ and $W_2$, respectively.
		\end{theorem}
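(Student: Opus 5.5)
We follow the scheme of Theorem \ref{theoremWTWHA}: regularize the kernel of $T$ in time and embed it in an analytic family, prove a Schatten-$2$ bound on one vertical line and a Schatten-$\infty$ bound on another, then interpolate with Lemma \ref{interpolationtheorem2}. For $\varepsilon>0$ set $K_\varepsilon(x,y,t)=\chi_{\{|t|>\varepsilon\}}(t)K_{it}(x,y)$ and $K_{z,\varepsilon}=t^{z}K_\varepsilon$, with operators $T_{z,\varepsilon}$ as before. This family is analytic in the sense of Stein. In dimension $d=1$ the kernel bound $|K_{it}(x,y)|\lesssim|\sinh t|^{-1/2}$ gives, for $\operatorname{Re} z=\lambda$,
\[
|K_{z,\varepsilon}(x,y,t)|\lesssim |t|^{\lambda}|\sinh t|^{-1/2}.
\]
Near $t=0$ this behaves like $|t|^{-(1/2-\lambda)}$. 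For $|t|\ge 1$ it decays exponentially, since $|t|^{\lambda}e^{-|t|/2}$ is bounded. Hence, with $\sigma=\frac12-\lambda\in(0,\frac12)$, we can dominate $|K_{z,\varepsilon}|^2\lesssim \varphi_{\sigma}(t)$, where $\varphi$ is as in \eqref{vaphidef.}; if convenient, we use the weaker majorant $|\sinh t|^{-2\sigma}$ near $0$.

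\textbf{Schatten-$2$ line.} Using the Hilbert--Schmidt formula exactly as in \eqref{schatten2inequalityHA23}, we get
\[
\|W_1T_{z,\varepsilon}W_2\|_{\mathfrak S^2}^2\lesssim \int_{\mathbb R}\|W_1(t)\|_{L^2_x}^2\,\big(\|W_2\|_{L^2_x}^2*\varphi_{\sigma'}\big)(t)\,dt .
\]
We then apply H\"older's inequality in the amalgam $\mathcal W(L^{a},L^{2})$ against its dual, together with the second inequality of Lemma \ref{youngtypeinequality}, namely $\|f*\varphi_\alpha\|_{\mathcal W(L^{1/\alpha},L^2)}\lesssim\|f\|_{\mathcal W(L^{(1/\alpha)'},L^2)}$. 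With $f=\|W_2\|_{L^2_x}^2$ this yields
\[
\|W_1T_{z,\varepsilon}W_2\|_{\mathfrak S^2}\lesssim \|W_1\|_{\mathcal W_t(L^{u},L^{4})\mathcal W^{2,2}_x}\,\|W_2\|_{\mathcal W_t(L^{u},L^{4})\mathcal W^{2,2}_x},\qquad \tfrac{1}{u}=\tfrac12-\tfrac{\alpha}{2}.
\]
The global exponent is $4$ because squaring $L^4$ gives $L^2$. The local exponent $u$ is determined by $\alpha$, and $\alpha$ is tied to $\lambda$ through the admissible range $0<\alpha<\frac12$ of Lemma \ref{phispispaces}. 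The main bookkeeping is to check that this range of $\lambda$ is compatible with $z=0$ sitting at interpolation parameter $\theta$ so that $p\in(\frac52,3)$. In Theorem \ref{theoremWTWHA} the analogous condition was $0<2\sigma<\frac12$. If the $\varphi$-lemma only accepts the rescaled exponent, we first use the local inclusion of Lemma \ref{waspaceproperties}(ii) to adjust the local exponent, as was done for \eqref{Interpolation12}.

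\textbf{Schatten-$\infty$ line, $\operatorname{Re} z=-1$.} The argument is verbatim that of Theorem \ref{theoremWTWHA}. Conjugating by the unitary $e^{-it\mathcal H^-}$ reduces $T_{z,\varepsilon}$ to a truncated Hilbert-type transform in time, which is bounded on $L^2$ with constant growing at most exponentially in $|\operatorname{Im} z|$. Proposition \ref{algebraproperty} then gives $\|W_1T_{z,\varepsilon}W_2\|_{\mathfrak S^\infty}\lesssim \|W_1\|_{\mathcal W_t(L^\infty,L^\infty)\mathcal W^{1,\infty}_x}\|W_2\|_{\mathcal W_t(L^\infty,L^\infty)\mathcal W^{1,\infty}_x}$.

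\textbf{Interpolation and conclusion.} We interpolate at $z=0$ with Lemma \ref{interpolationtheorem2}, using the parameter $\theta$ with $\frac1p=\frac{1-\theta}{2}$. This gives Schatten exponent $p$ and spatial space $\mathcal W^{p',p}$, and the local time exponent $q$ satisfies $\frac1q+\frac1p=\frac12$ once the line $\lambda$ is chosen appropriately, namely $\lambda=\theta/(1-\theta)$ so that $0=(1-\theta)\lambda-\theta$. The global time exponent comes out as $\frac{1-\theta}{4}+\frac{\theta}{\infty}$. This does not equal $\frac14$ unless we first embed $L^4$ globally, i.e.\ first weaken the Schatten-$2$ estimate via the global inclusion of Lemma \ref{waspaceproperties}(ii) so that the interpolated global exponent is $4$, or else accept the stated global $L^4$ by monotonicity. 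I expect this exponent matching (local $q$, global $4$, and $\frac52<p<3$ simultaneously) to be the main obstacle. The constraints $\sigma\in(0,\frac12)$ of the $\varphi$-lemma are what cut $p$ down to $(\frac52,3)$. Finally, the bounds are uniform in $\varepsilon$, so we let $\varepsilon\to0^+$.
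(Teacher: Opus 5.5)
Your proposal is correct and follows the same scheme as the paper: Stein family, Hilbert--Schmidt bound via the $\varphi$-inequality of Lemma \ref{youngtypeinequality}, Hilbert-transform bound on $\operatorname{Re} z=-1$, interpolation at $z=0$ with $\lambda=\theta/(1-\theta)$, then inclusions. However, two of your bookkeeping claims are inaccurate, and correcting them shows that the ``main obstacle'' you anticipate is not one.

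\textbf{How the paper matches the exponents.} The paper uses the family $(\sinh t)^z K_\varepsilon$ and a deliberately crude majorant $|K_{z,\varepsilon}|\lesssim\varphi_\sigma$. Hence $|K_{z,\varepsilon}|^2\lesssim\varphi_{2\sigma}$, and the lemma is applied with $\alpha=2\sigma$. The requirement $0<2\sigma<\frac12$, i.e.\ $\frac14<\lambda<\frac12$, combined with $p=2/(1-\theta)=2(1+\lambda)$, is exactly what produces $\frac52<p<3$. It is not $\sigma\in(0,\frac12)$, as you state. The paper then makes two weakenings before interpolating:
\begin{itemize}
\item the $\mathfrak{S}^2$ bound is weakened locally from $\mathcal{W}_t(L^{1/\lambda},L^4)$ to $\mathcal{W}_t(L^{2/\lambda},L^4)$;
\item the $\mathfrak{S}^\infty$ bound is weakened globally to $\mathcal{W}_t(L^\infty,L^4)$.
\end{itemize}
With these, the interpolation lands exactly on $\mathcal{W}_t(L^q,L^4)\mathcal{W}^{p',p}_x$ with $\frac1q=\frac\theta2=\frac12-\frac1p$.

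\textbf{What your sharper majorant gives.} Your majorant $|K_{z,\varepsilon}|^2\lesssim\varphi_\sigma$ is valid, and $\sigma\in(0,\frac12)$ then allows every $2<p<3$. But the interpolated local exponent is not $q$. It is $\tilde q$ with $\frac1{\tilde q}=\frac{(1-\theta)(1+2\lambda)}{4}=\frac{p-1}{2p}>\frac1q$, and the global exponent is $2p>4$. The stated bound then follows from Lemma \ref{waspaceproperties}(ii), since $\mathcal{W}(L^q,L^4)\hookrightarrow\mathcal{W}(L^{\tilde q},L^{2p})$. This is your ``monotonicity'' option; no pre-interpolation weakening is needed.

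\textbf{The choice of $t^z$.} Using $t^z$ instead of $(\sinh t)^z$ is harmless, because the $\sinh$ decay absorbs $|t|^\lambda$. It even makes the $\operatorname{Re}z=-1$ step literally the truncated Hilbert-type transform. The paper's kernel $(\sinh s)^{-1+i\operatorname{Im}z}$, by contrast, differs from that transform by an integrable error that the paper does not mention.
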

		
		\begin{proof}
			For $\epsilon>0$, we define 
			\[T_{\epsilon}F(t,x)=\int_{\mathbb{R}}\int_{\mathbb{R}}K_{\epsilon}(x,y,t-s)F(s,y)\,dy\,ds,\]
			where,
			\[K_{\epsilon}(x,y,t)=\chi_{\epsilon<|t|< \infty}(t)K_{it}(x,y).\]
			
			To use Stein's analytic complex interpolation, for $z \in \mathbb{C}$, we define
			\[
			K_{z,\varepsilon}(x, y, t) = (\sinh{t})^z K_{\varepsilon}(x, y, t)
			\]
			and define the corresponding operator as
			\[
			T_{z,\varepsilon} F(t,x) = \int_{\mathbb{R}}\int_{\mathbb{R}}  K_{z,\varepsilon}(x, y, t - s) F(s,y)  \, dy \, ds.
			\]
			Then for any $\varepsilon > 0$, $\{T_{z,\varepsilon}\}_z$ forms an analytic family of operators in the sense of Stein. Moreover, we can deduce that for $\operatorname{Re}(z)\leq \frac{1}{2}$, we have
			
			\begin{align*}
				|K_{z,\varepsilon}(x, y, t)| & \lesssim \frac{1}{|\sinh{t}|^{\frac{1}{2}-\operatorname{Re}(z)}}=\frac{1}{|\sinh{t}|^{\frac{1}{2}-\operatorname{Re}(z)}}\chi_{\{|t|\leq \sinh^{-1}{1}\}}+\frac{1}{|\sinh{t}|^{\frac{1}{2}-\operatorname{Re}(z)}}\chi_{\{|t|> \sinh^{-1}{1}\}}\\
				& \leq \frac{1}{|\sinh{t}|^{1-2\operatorname{Re}(z)}}\chi_{\{|{t}|\leq \sinh^{-1}{1}\}}+\frac{1}{|\sinh{t}|^{\frac{1}{2}-\operatorname{Re}(z)}}\chi_{\{|t|> \sinh^{-1}{1}\}}\lesssim \varphi_{\frac{1}{2}-\operatorname{Re}(z)}(t),
			\end{align*}
			where $\varphi_{\alpha}$ is defined in \eqref{vaphidef.}. Let $\operatorname{Re}(z)=\lambda$ and denote $\frac{1}{2}-\lambda$ by $\sigma$. Successive application of the H\"older's inequality and the Lemma \ref{youngtypeinequality} gives
			\begin{align}\nonumber
				\|W_1 T_{\lambda+ia,\varepsilon} W_2 \|^2_{\mathfrak{S}^2(L^2(\mathbb{R}^{1+1}))} &= \int_{\mathbb{R} \times \mathbb{R}} \int_{\mathbb{R} \times \mathbb{R}} |W_1(t,x)|^2 |K_{\lambda+ia,\varepsilon}(x, y, t-s)|^2 |W_2(s,y)|^2 \, dx \, dy \, dt \, ds\\\nonumber
				& \lesssim \int_{\mathbb{R} \times \mathbb{R}} \int_{\mathbb{R} \times \mathbb{R}} {|W_1(t,x)|^2 |W_2(s,y)|^2}\varphi_{2\sigma}(t-s) \, dx \,  dy \, dt \, ds\\\nonumber
				& \lesssim \int_{\mathbb{R}}\|W_1(t,.)\|^2_{\mathcal{W}_x^{2,2}}(\|W_2\|^2_{\mathcal{W}_x^{2,2}}*\varphi_{2\sigma})(t)\,dt \\ \nonumber
				& \lesssim \bigg\|\|W_1(t,.)\|^2_{\mathcal{W}_x^{2,2}}\Bigg\|_{\mathcal{W}_t(L^{u},L^{2})}   \bigg\|(\|W_2\|^2_{\mathcal{W}_x^{2,2}}*\varphi_{2\sigma})\Bigg\|_{\mathcal{W}_t(L^{u'},L^{2})} \\
				&\lesssim \|W_1\|^2_{\mathcal{W}_t(L^{2u},L^{4})\mathcal{W}_x^{2,2}}\|W_2\|^2_{\mathcal{W}_t(L^{2u},L^{4})\mathcal{W}_x^{2,2}}.
			\end{align}	
			The last inequality is due to Lemma \ref{youngtypeinequality}, where $u=(\frac{1}{2\sigma})'$ and $0<2\sigma <\frac{1}{2}$. In view of the relations \(\sigma = \frac{d}{2} - \lambda\) and \(\frac{1}{u} = 1 - 2\sigma\), we obtain \(u = \frac{1}{2\lambda}\). From the inclusion property (ii) of \eqref{waspaceproperties}, we have 
			\begin{align}
				\|W_1 T_{\lambda+ia,\varepsilon} W_2 \|_{\mathfrak{S}^2(L^2(\mathbb{R}^{1+1}))}\lesssim\|W_1\|_{\mathcal{W}_t(L^{\frac{2}{\lambda}},L^{4})(\mathbb{R},\mathcal{W}_x^{2,2}(\mathbb{R}))}\|W_2\|_{\mathcal{W}_t(L^{\frac{2}{\lambda}},L^{4})(\mathbb{R},\mathcal{W}_x^{2,2}(\mathbb{R}))}.\label{H-schattern2norm}
			\end{align}
			For  $\operatorname{Re}(z)=-1,$  following the same steps as in the proof of Theorem \ref{theoremWTWHA}, we have the   following inequality
			\begin{align}\nonumber
				\|W_1T_{z,\epsilon}W_2\|_{\mathfrak{S}^{\infty}(L^2(\mathbb{R}^{1+1}))}&\lesssim  \| W_1\|_{\mathcal{W}_t({L^{\infty},L^{\infty}})(\mathbb{R},\mathcal{W}^{1, \infty}(\mathbb{R})} \| W_2\|_{\mathcal{W}_t({L^{\infty},L^{\infty}})(\mathbb{R},\mathcal{W}^{1, \infty}(\mathbb{R})}\\
				&\lesssim  \| W_1\|_{\mathcal{W}_t({L^{\infty},L^{4}})(\mathbb{R},\mathcal{W}^{1, \infty}\mathbb{R})} \| W_2\|_{\mathcal{W}_t({L^{\infty},L^{4}})(\mathbb{R},\mathcal{W}^{1, \infty}\mathbb{R})} .\label{H-infinityshcatternnorm}
			\end{align}
			The last inequality is obvious from the inclusion property of the Proposition \ref{waspaceproperties}. Using the complex interpolation between \eqref{H-schattern2norm} and \eqref{H-infinityshcatternnorm} followed by letting $\epsilon \to 0^+$, we complete the proof.  
		\end{proof}
		\begin{proof}[Proof of the theorem \ref{theoremONSH-}]
			To prove this Theorem, we shall follow the same theme as Theorem \ref{theoremONSHA} and \ref{ONSH+improved}. From Theorem \ref{WTWtheoremstatH-} and the duality principle (Theorem \ref{DualityWAspace}), we get the estimate 
			\begin{align*}
				\Bigg\|\sum_{j\in J}\alpha_j |e^{it\mathcal{H}^{-}}f_j|^2\Bigg\|_{\mathcal{W}_t(L^{\frac{q}{2}},L^2)(\mathbb{R},  \mathcal{W}_x^{p',p}(\mathbb{R}))} \leq C_{d,p}\|\gamma\|_{\mathfrak{S}^{\frac{2p}{p+1}}(L^2(\mathbb{R}))}.
			\end{align*}
			for $3<p<5$ and $\frac{2}{q}+\frac{1}{p}=1$. Since we have the trivial estimate 
			\begin{align*}
				\Bigg\|\sum_{j\in J}\alpha_j |e^{it\mathcal{H}^{-}}f_j|^2\Bigg\|_{\mathcal{W}_t(L^{{\infty}},L^2)(\mathbb{R},  \mathcal{W}_x^{\infty,1}(\mathbb{R}))} \leq \sum_{j\in J}|\alpha_j|=\|\gamma\|_{\mathfrak{S}^{1}(L^2(\mathbb{R}))},
			\end{align*}
			Interpolation between above two estimate will give us the desired result.

		\end{proof}
		
		%\begin{remark}
		%In Theorem \ref{strichartzH+improved}, we sharpen the Strichartz estimate by raising the time exponent $q/2$ to $q$. We can follow a similar method for operators $\mathcal{H}_{\mathcal{A}}$ and $\mathcal{H}^-$ to obtain an estimate like \eqref{strichartzforH^+} (also \eqref{ONShermite1}), where the time variable is measured in the Lebesgue space instead of Wiener amalgam space. This would be an improved result in the case of $\mathcal{H}_{\mathcal{A}}$, since $L^q=\mathcal{W}(L^q,L^q)\hookrightarrow \mathcal{W}(L^{q/2},L^q)$ for every $2\leq q\leq \infty$. Also, the corresponding orthonormal Strichartz estimate will hold for all dimension $d\geq 1$.  With the similar arguments as in the Proposition \ref{optimalityschattenexponent} and \ref{endpointproposition}, one can also prove the optimality and end point results for the operator $\mathcal{H}_{\mathcal{A}}$.
		%\end{remark}%

		\begin{remark}
			Following a similar argument as in Theorem \ref{strichartzH+improved}, we can get the following version of the Strichartz estimates for the operator $\mathcal{H}^-$
			\begin{equation}
				\|e^{it\mathcal{H}^-} f\|_{L^q \mathcal{W}^{p',p}_x} \lesssim \|f\|_{L^2_x},\label{strichartzforHA1}
			\end{equation}
			with $\frac{2}{q}+\frac{d}{p}=\frac{d}{2}$ and $2\leq p <\frac{2(d+1)}{d-1}.$ Note that in the estimate \eqref{strichartzforHA1}, the time is measured in the Lebesgue space instead of the Wiener amalgam space. Consequently,  we can also obtain the orthonormal estimate corresponding to  \eqref{strichartzforHA1} in every dimension $d\geq 1.$  
		\end{remark}

		\section{Applications to Well-posedness}\label{application}
		As an application of the orthonormal extension discussed in the subsequent sections discussed above, in this section  we investigate the well-posedness of the following operator-valued evolutionary equation of the form:
		\[
		\begin{cases}
			i \partial_t \gamma = [L + w * \rho_\gamma, \gamma], \\
			\gamma|_{t=0} = \gamma_0,
		\end{cases}
		\]
		where the operator $L$ may be any one of $\mathcal{H}^+,\mathcal{H}^-$ or $\mathcal{H}_{\mathcal{A}}$. Note that, both \(\gamma_0\) and \(\gamma = \gamma(t)\) are bounded, self-adjoint operators acting on \(L^2( \mathbb{R}^{d})\). Since the arguments for $\mathcal{H}^+,\mathcal{H}^-$ or $\mathcal{H}_{\mathcal{A}}$, are essentially the same, we provide the proof in detail only for  $ \mathcal{H}^+$. The corresponding results for   
		$ \mathcal{H}^-$ and $\mathcal{H}_{\mathcal{A}}$ are stated without proof.

		Now consider the following equation
		\begin{align}\label{operatorschrodinger}
			\begin{cases}
				i\partial_t{\gamma}(t) = [-\mathcal{H}^+, \gamma(t)] + iR(t), \\
				\gamma(t_0) = 0
			\end{cases}
		\end{align}

		where \(R(t)\) is a self-adjoint operator on \(L^2(\mathbb{R}^d)\)  and   is bounded for almost every \(t\). Then the solution of the above system can be written as
		\begin{align}\label{solutionschrodinger}
			\gamma(t) = \int_{t_0}^{t} e^{i(t-s)\mathcal{H}^+} R(s) e^{i(s-t)\mathcal{H}^+} \, ds.
		\end{align}
		
		The following two inhomogeneous Strichartz estimate plays a crucial role in order to study the well-posedness of the system \eqref{operatorschrodinger}.
		\begin{theorem}[Linear inhomogeneous Strichartz estimate]\label{linearhomostri}
			Assume that \(p, q, d \ge 1\) satisfy
			\[
			1 \leq p <  \frac{d+1}{d-1}, \qquad \frac{2}{q} + \frac{d}{p} = d,
			\]
			and let \(\gamma(t)\) be given by \eqref{solutionschrodinger}. Then
			\[
			\|{ \rho_{\gamma(t)} }\|_{L^q_t(\mathcal{I}, \mathcal{W}^{p',p}_x(\mathbb{R}^d))}
			\le C \left\| \int_{\mathbb{R}} e^{-is\mathcal{H}^+} |R(s)| e^{is\mathcal{H}^+} \, ds \right\|_{\mathfrak{S}^{\frac{2p}{p+1}}},
			\]
			for a constant \(C\) which is independent of \(t_0\).
		\end{theorem}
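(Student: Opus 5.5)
The plan follows the Frank--Sabin inhomogeneous argument (Proposition~3 of \cite{FS}), transplanted to the Wiener amalgam setting. The main tool is the dual form \eqref{dualversionofH+ONS} of Theorem~\ref{ONSH+improved}. First I reduce to $R(s)\ge 0$. Write $R=R_+-R_-$, note that $|R|=R_++R_-$, and use linearity of $\gamma\mapsto\rho_\gamma$. Each piece $R_\pm$ is dominated by $|R|$ as an operator. Hence it suffices to prove the estimate for nonnegative $R$ with $|R|$ replaced by $R$.

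Next I test against a potential $V\in L^\infty_c(\mathcal{I}\times\mathbb{R}^d)$. By the trace identity $\operatorname{Tr}(V\gamma)=\int V\rho_\gamma$ and the formula \eqref{solutionschrodinger},
\[
\int_{\mathcal{I}}\int_{\mathbb{R}^d} V(t,x)\rho_{\gamma(t)}(x)\,dx\,dt=\int_{\mathcal{I}}\int_{t_0}^{t}\operatorname{Tr}\Big(e^{-it\mathcal{H}^+}V(t)e^{it\mathcal{H}^+}\,e^{-is\mathcal{H}^+}R(s)e^{is\mathcal{H}^+}\Big)\,ds\,dt .
\]
I set $\widetilde R(s):=e^{-is\mathcal{H}^+}R(s)e^{is\mathcal{H}^+}\ge0$ and change the order of integration. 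The right-hand side then becomes
\[
\int \operatorname{Tr}\big(\widetilde R(s)\,B_{V,s}\big)\,ds,\qquad B_{V,s}:=\int_{\mathcal{I}\cap\{t\gtrless s\}}e^{-it\mathcal{H}^+}V(t)e^{it\mathcal{H}^+}\,dt .
\]
Here the sign convention depends on whether $s$ lies between $t_0$ and $t$.

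The key step is to bound $\sup_s\|B_{V,s}\|_{\mathfrak{S}^{2p'}}$ by $\|V\|_{L^{q'}_t\mathcal{W}^{p,p'}_x}$ uniformly in $s$. For this I apply \eqref{dualversionofH+ONS} to the truncated potential $V\chi_{\{t>s\}}$ (or $V\chi_{\{t<s\}}$). Multiplying by a time-indicator does not increase the $L^{q'}_t\mathcal{W}^{p,p'}_x$ norm, since the Wiener norm is taken in $x$ only. This is exactly why the time exponent being Lebesgue ($L^{q}$, not amalgam) matters, and it is also why the constant comes out independent of $t_0$. Then Hölder in Schatten classes together with positivity of $\widetilde R(s)$ gives
\[
\Big|\int \operatorname{Tr}(\widetilde R(s)B_{V,s})\,ds\Big|\le \sup_s\|B_{V,s}\|_{\mathfrak{S}^{2p'}}\,\Big\|\int \widetilde R(s)\,ds\Big\|_{\mathfrak{S}^{\frac{2p}{p+1}}} .
\]
The only delicate point in this Hölder step is that $\int\|\widetilde R(s)\|\,ds$ does not appear. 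To avoid it I use positivity: $\operatorname{Tr}(\widetilde R B)\le \operatorname{Tr}(\widetilde R|B|)$ is not directly available, so instead I write $|\operatorname{Tr}(\widetilde R(s)B_{V,s})|\le \operatorname{Tr}(\widetilde R(s)^{1/2}|B_{V,s}|\widetilde R(s)^{1/2})$. I then bound $|B_{V,s}|$ in a way uniform in $s$, as in \cite{FS}. Alternatively, I split $V=V_+-V_-$ so that each $B_{V_\pm,s}\ge0$ is monotone in the truncation and dominated by the full-interval operator $B_{|V|}$. With that domination,
\[
\int\operatorname{Tr}(\widetilde R(s)B_{V_\pm,s})\,ds\le \operatorname{Tr}\Big(\int\widetilde R\,ds\;B_{|V|}\Big)\le \|B_{|V|}\|_{\mathfrak{S}^{2p'}}\Big\|\int\widetilde R\Big\|_{\mathfrak{S}^{\frac{2p}{p+1}}}.
\]
This monotonicity trick is cleaner, and it is the one I will commit to.

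Finally, I combine the above with \eqref{dualversionofH+ONS} for $|V|$, noting that $\||V|\|=\|V\|$ fails in $\mathcal{W}^{p,p'}$ in general. This is the main obstacle. The $\mathcal{F}L^p$ local component is not a lattice norm, so $\||V|\|_{\mathcal{W}^{p,p'}}\lesssim\|V\|_{\mathcal{W}^{p,p'}}$ is not automatic. I would try first to avoid taking absolute values altogether, handling the truncation directly via the sign-free Hölder bound with the uniform $\sup_s$ estimate above. If that bound is needed in trace norm form, I would approximate $\widetilde R$ by step functions in $s$ and sum. Duality of $L^{q'}_t\mathcal{W}^{p,p'}_x$ with $L^q_t\mathcal{W}^{p',p}_x$ (Lemma~\ref{waspaceproperties}(iv)) then concludes the estimate for $\rho_{\gamma(t)}$.
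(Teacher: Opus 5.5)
Your preliminary steps are sound. Splitting $R=R_+-R_-$ works, using $0\le\int e^{-is\mathcal{H}^+}R_\pm e^{is\mathcal{H}^+}\,ds\le\int e^{-is\mathcal{H}^+}|R|e^{is\mathcal{H}^+}\,ds$. So do the trace pairing and the uniform bound $\sup_s\|B_{V,s}\|_{\mathfrak{S}^{2p'}}\lesssim\|V\|_{L^{q'}_t\mathcal{W}^{p,p'}_x}$ from \eqref{dualversionofH+ONS}, since a sharp cutoff in $t$ does not increase that norm. But the argument stops exactly where the work is: bounding the truncated pairing $\int\operatorname{Tr}(\widetilde R(s)B_{V,s})\,ds$ by $\|\int\widetilde R\,ds\|_{\mathfrak{S}^{2p/(p+1)}}$.

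The monotonicity route you commit to needs $V\ge 0$. It therefore needs $\|V_\pm\|_{L^{q'}_t\mathcal{W}^{p,p'}_x}\lesssim\|V\|_{L^{q'}_t\mathcal{W}^{p,p'}_x}$, which is unavailable because $\mathcal{F}L^p$ is not a lattice, as you note yourself. Your fallback, a sign-free H\"older bound in terms of $\sup_s\|B_s\|_{\mathfrak{S}^{r'}}$, is false for every $r=\frac{2p}{p+1}>1$. Take $(e_k)_{k=1}^N$ orthonormal and $\widetilde R(s)=B_s=|e_k\rangle\langle e_k|$ for $s\in[k,k+1)$. Then $\sup_s\|B_s\|_{\mathfrak{S}^{r'}}=1$ and $\|\int\widetilde R\,ds\|_{\mathfrak{S}^r}=N^{1/r}$, while $\int\operatorname{Tr}(\widetilde R(s)B_s)\,ds=N$. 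Approximating $\widetilde R$ by step functions cannot repair this, since the counterexample already is one. Any valid bound must exploit the specific structure of the truncations $B_{V,s}$. The only structure you use, monotonicity for $V\ge0$, leads straight back to the lattice problem. So, as written, the proposal does not prove the statement.

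The paper gives no separate argument; it defers to Corollary~1 of \cite{frank}. That proof puts positivity on the density side rather than on $V$, with no duality or truncation. Since each $e^{i(t-s)\mathcal{H}^+}|R(s)|e^{i(s-t)\mathcal{H}^+}$ is nonnegative, one has pointwise
\[
|\rho_{\gamma(t)}(x)|\le\int_{\mathbb{R}}\rho\big[e^{i(t-s)\mathcal{H}^+}|R(s)|e^{i(s-t)\mathcal{H}^+}\big](x)\,ds=\rho_{e^{it\mathcal{H}^+}\widetilde\gamma_0e^{-it\mathcal{H}^+}}(x),\qquad \widetilde\gamma_0=\int_{\mathbb{R}}e^{-is\mathcal{H}^+}|R(s)|e^{is\mathcal{H}^+}\,ds .
\]
Theorem~\ref{ONSH+improved} is then applied to $\widetilde\gamma_0$, and independence of $t_0$ is automatic.

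Be aware, though, that passing from this pointwise bound to the norm bound requires $0\le f\le g\Rightarrow\|f\|_{\mathcal{W}^{p',p}}\lesssim\|g\|_{\mathcal{W}^{p',p}}$. That is the same lattice property you were missing, now in the target space. It holds for $p=1$ (with a nonnegative window) and for $p=2$. It fails for $p>2$: take $f=\frac12\varphi(1+h)\le\varphi$ with $\varphi\ge0$ a bump and $h$ bounded and real with $\varphi h\notin\mathcal{F}L^{p'}$, which exists when $p'<2$. Such $p$ lie in the stated range when $d=1,2$. The paper does not address this point; in \cite{frank} it is harmless because $L^p_x$ is a lattice.

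So the ingredient missing from your proposal is not supplied by the paper either. A complete proof needs either this domination for the particular densities involved, or a genuinely different way to handle the truncation.
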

		
		The proof of the above theorem  follows the same steps as in the proof of the Corollary 1 in \cite{frank}. 
		
		\begin{corollary}[Non-linear inhomogeneous Strichartz estimate]\label{inhomostriestimatenonlinear}
			Let \(d \ge 1\), \(1 \leq p <  \frac{d+1}{d-1}\), and \(q \ge 1\) such that
			\[
			\frac{2}{q} + \frac{d}{p} = d,
			\]
			and let \(\gamma_0 \in \mathfrak{S}^{\frac{2p}{p+1}}\). Let \(\gamma = \gamma(t)\) be the solution to the equation
			
			\begin{align*}
				\begin{cases}
					i \partial_t \gamma = [-\mathcal{H}^+, \gamma] +i R(t), \\
					\gamma|_{t=0} = \gamma_0.
				\end{cases}
			\end{align*}
			Then, the inequality
			\[
			\|{ \rho_{\gamma(t)} }\|_{L^q_t(\mathcal{I}, \mathcal{W}^{p',p}_x(\mathbb{R}^d)))}
			\le C_{\mathrm{Stri}} \left( \| \gamma_0 \|_{\mathfrak{S}^{\frac{2p}{p+1}}}
			+ \left\| \int_{\mathbb{R}} e^{-is\mathcal{H}^+} |R(s)| e^{is\mathcal{H}^+} \, ds \right\|_{\mathfrak{S}^{\frac{2p}{p+1}}} \right)
			\]
			holds for some constant \(C_{\mathrm{Stri}} > 0\) independent of \(\gamma_0\) and \(R\).
		\end{corollary}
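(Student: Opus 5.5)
The proof splits the solution into a free part and a Duhamel part, then applies two earlier results. By uniqueness for the linear equation, the solution is
\[
\gamma(t)=e^{it\mathcal{H}^+}\gamma_0e^{-it\mathcal{H}^+}+\int_0^t e^{i(t-s)\mathcal{H}^+}R(s)e^{i(s-t)\mathcal{H}^+}\,ds=:\gamma_{\mathrm{free}}(t)+\gamma_{\mathrm{inh}}(t).
\]
One should check that the sign conventions in the equation $i\partial_t\gamma=[-\mathcal{H}^+,\gamma]+iR$ match the propagator used in \eqref{solutionschrodinger}. Since the density $\rho_\gamma$ depends linearly on $\gamma$, we have $\rho_{\gamma(t)}=\rho_{\gamma_{\mathrm{free}}(t)}+\rho_{\gamma_{\mathrm{inh}}(t)}$. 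The triangle inequality in $L^q_t(\mathcal{I},\mathcal{W}^{p',p}_x)$ then reduces the claim to bounding the two terms separately.

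For the free term, write the self-adjoint operator $\gamma_0$ in its spectral form $\gamma_0=\sum_j\alpha_j\ket{f_j}\bra{f_j}$, with $(f_j)$ orthonormal and real eigenvalues $\alpha_j$. This needs $\gamma_0$ compact, which holds because $\gamma_0\in\mathfrak{S}^{\frac{2p}{p+1}}$. Then
\[
\rho_{\gamma_{\mathrm{free}}(t)}=\sum_j\alpha_j|e^{it\mathcal{H}^+}f_j|^2,
\]
and Theorem \ref{ONSH+improved}, in the equivalent form \eqref{equivalentONS}, gives
\[
\|\rho_{\gamma_{\mathrm{free}}}\|_{L^q_t\mathcal{W}^{p',p}_x}\le C\|\gamma_0\|_{\mathfrak{S}^{\frac{2p}{p+1}}}
\]
in the stated range $1\le p<\frac{d+1}{d-1}$.

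For the inhomogeneous term, we apply Theorem \ref{linearhomostri} with $t_0=0$. It gives directly
\[
\|\rho_{\gamma_{\mathrm{inh}}}\|_{L^q_t\mathcal{W}^{p',p}_x}\le C\Big\|\int_{\mathbb{R}}e^{-is\mathcal{H}^+}|R(s)|e^{is\mathcal{H}^+}\,ds\Big\|_{\mathfrak{S}^{\frac{2p}{p+1}}},
\]
with $C$ independent of $t_0$. Adding the two bounds and taking $C_{\mathrm{Stri}}$ to be the larger constant yields the corollary.

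The main point to verify is the inhomogeneous step, namely that Theorem \ref{linearhomostri} really controls the Duhamel term in the $L^q_t\mathcal{W}^{p',p}_x$ norm. Following \cite[Corollary 1]{frank}, we split $R=R_+-R_-$ and use $\pm R_\pm\le|R|$ to reduce to the case $R\ge0$. Then, for $t\ge0$,
\[
0\le\gamma_{\mathrm{inh}}(t)\le e^{it\mathcal{H}^+}\Big(\int_{\mathbb{R}}e^{-is\mathcal{H}^+}|R(s)|e^{is\mathcal{H}^+}\,ds\Big)e^{-it\mathcal{H}^+},
\]
and density is monotone in the operator. So $\rho_{\gamma_{\mathrm{inh}}(t)}$ is pointwise dominated by the density of a free evolution of the fixed operator $\int e^{-is\mathcal{H}^+}|R(s)|e^{is\mathcal{H}^+}\,ds$, and Theorem \ref{ONSH+improved} applies to that free evolution.

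The obstacle is that the $\mathcal{W}(\mathcal{F}L^{p'},L^p)$ norm is not a lattice norm, so pointwise domination of nonnegative densities does not transfer immediately. I would handle this through the dual formulation \eqref{dualversionofH+ONS} instead. Pair $\rho_{\gamma_{\mathrm{inh}}}$ against a test function $V$ and write the pairing as a trace. Then use $|\mathrm{Tr}(XR)|\le\mathrm{Tr}(|X|\,|R|)$-type bounds, with $X=e^{-is\mathcal{H}^+}Ve^{is\mathcal{H}^+}$, so that the estimate needs only Schatten-norm bounds on $\int e^{-it\mathcal{H}^+}V(t)e^{it\mathcal{H}^+}\,dt$ and never a lattice property of the spatial norm.
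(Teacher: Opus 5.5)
Your first three paragraphs are the derivation the paper has in mind; the paper gives no separate proof. The ingredients are Duhamel's formula \eqref{solutionschrodinger} with $t_0=0$, linearity of $\gamma\mapsto\rho_\gamma$ with the triangle inequality, Theorem~\ref{ONSH+improved} for $e^{it\mathcal{H}^+}\gamma_0e^{-it\mathcal{H}^+}$, and Theorem~\ref{linearhomostri} for the Duhamel term. Taking Theorem~\ref{linearhomostri} as given, that settles the corollary.

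\textbf{The gap is in your last two paragraphs}, where you try to justify Theorem~\ref{linearhomostri} yourself. Your diagnosis is correct. The Frank--Lewin--Lieb--Seiringer argument, which the paper simply invokes, ends with the domination $0\le\gamma_{\mathrm{inh}}(t)\le e^{it\mathcal{H}^+}\Gamma e^{-it\mathcal{H}^+}$, where $\Gamma=\int_{\mathbb{R}}e^{-is\mathcal{H}^+}|R(s)|e^{is\mathcal{H}^+}\,ds$. It then needs the lattice property of $L^p$, which $\mathcal{W}(\mathcal{F}L^{p'},L^p)$ lacks.

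Your dual repair, however, only moves this obstruction to the dual side. Set $X(t)=e^{-it\mathcal{H}^+}V(t)e^{it\mathcal{H}^+}$ and $Y(s)=e^{-is\mathcal{H}^+}R(s)e^{is\mathcal{H}^+}$. The pairing is then $\int_{\mathcal{I}}\int_0^t\operatorname{Tr}\bigl(X(t)Y(s)\bigr)\,ds\,dt$. To produce $\Gamma$ you must pass from $Y(s)$ to $|Y(s)|$ and enlarge the time-ordered region $\{0<s<t\}$ to all $(s,t)$. Both steps need a nonnegative operator on the other side, namely $|X(t)|=e^{-it\mathcal{H}^+}|V(t)|e^{it\mathcal{H}^+}$.

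Applying \eqref{dualversionofH+ONS} to $|V|$ then bounds the pairing by $\||V|\|_{L^{q'}_t\mathcal{W}^{p,p'}_x}$, not by $\|V\|_{L^{q'}_t\mathcal{W}^{p,p'}_x}$. An inequality $\||V|\|\lesssim\|V\|$ in $\mathcal{W}(\mathcal{F}L^{p},L^{p'})$ is a solidity property of exactly the kind you wanted to avoid. It fails in general: for the local component $\mathcal{F}L^1$, the modulus does not even map the space into itself. So this step does not establish the stated right-hand side.

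\textbf{What does go through}, without any positivity, is the weaker bound by $\int\|R(s)\|_{\mathfrak{S}^{\frac{2p}{p+1}}}\,ds$. Since $\rho_{\gamma_{\mathrm{inh}}(t)}=\int_0^t\rho_{e^{it\mathcal{H}^+}Y(s)e^{-it\mathcal{H}^+}}\,ds$, this follows from Minkowski's inequality in $L^q_t\mathcal{W}^{p',p}_x$ together with Theorem~\ref{ONSH+improved} applied to each self-adjoint $Y(s)$. The cutoff $s\in[0,t]$ is harmless because it acts only in $t$, where the norm is a lattice.

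This is weaker than the corollary as stated. It is, however, all that the fixed-point argument of Section~\ref{application} uses, since there the $\Gamma$-term is immediately estimated by $\int_0^T\|[w*\rho(s),\gamma(s)]\|_{\mathfrak{S}^{\frac{2p}{p+1}}}\,ds$.
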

		Now using the above inhomogeneous Strichartz estimates, we have the following well-posedness result for the operator valued Hartree equation \eqref{operatorschrodinger}.
		\begin{theorem}[Well-posedness corresponding to $\mathcal{H}^+$]
			Let \(w \in \mathcal{W}^{p',p}(\mathbb{R}^d)\) and \(p, q, d \ge 1\) satisfy
			\[
			1 \le p < \frac{d+1}{d-1}, \qquad \frac{2}{q} + \frac{d}{p} = d.
			\]
			\begin{enumerate}
				\item[(1)](Local well-posedness) For any \(\gamma_0 \in \mathfrak{S}^{\frac{2p}{p+1}}\) with $ R = \| \gamma_0 \|_{\mathfrak{S}^{\frac{2p}{p+1}}} < \infty,$
				there exists $T = T(R, \|w\|_{\mathcal{W}^{p',p}(\mathbb{R}^d)}$\()\) (without loss of generality, we can assume \(T < T_0\)) and a unique
				\[
				\gamma \in C^0([0, T], \mathfrak{S}^{\frac{2p}{p+1}})
				\]
				satisfying \(\rho_\gamma \in L^q([0, T], \mathcal{W}^{p',p}(\mathbb{R}^d)))\) and
				
				\begin{align}\label{schrodingerHoperatorvalued}
					\begin{cases}
						i \partial_t \gamma = [-\mathcal{H}^+ + w * \rho_\gamma, \gamma], \\
						\gamma|_{t=0} = \gamma_0.
					\end{cases} 
				\end{align}
				
				\item[(2)](Almost global well-posedness) For each \(T > 0\) and for any \(\gamma_0 \in \mathfrak{S}^{\frac{2p}{p+1}}\) with
				\[
				\| \gamma_0 \|_{\mathfrak{S}^{\frac{2p}{p+1}}} \le R_T,
				\]
				where \(R_T = R_T(\|w\|_{\mathcal{W}^{p',p}_x(\mathbb{R}^d)})\), then there exists a solution
				\[
				\gamma \in C^0([0, T], \mathfrak{S}^{\frac{2p}{p+1}})
				\]
				satisfying \eqref{schrodingerHoperatorvalued} and \(\rho_\gamma \in L^q([0, T], \mathcal{W}^{p',p}_x(\mathbb{R}^d))\).
			\end{enumerate}
			
		\end{theorem}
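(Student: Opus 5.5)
The plan follows the Frank--Sabin scheme for the Hartree equation: a Banach fixed point argument in $X_T = C^0([0,T],\mathfrak{S}^{\frac{2p}{p+1}})$ coupled with the density space $L^q([0,T],\mathcal{W}^{p',p})$. Given $\rho \in L^q([0,T],\mathcal{W}^{p',p}_x)$, we set $V = w * \rho$. Let $U_V(t,s)$ denote the unitary propagator of $i\partial_t u = (-\mathcal{H}^+ + V)u$; it exists because $V(t)$ is bounded for a.e.\ $t$. We put $\gamma_\rho(t)=U_V(t,0)\gamma_0U_V(t,0)^*$, which solves $i\partial_t\gamma = [-\mathcal{H}^+ + V,\gamma]$, and define the map $\Phi(\rho) = \rho_{\gamma_\rho}$. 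A fixed point of $\Phi$ in a ball $B_M = \{\|\rho\|_{L^q_T\mathcal{W}^{p',p}}\le M\}$ gives the solution. Unitarity of the propagator gives $\|\gamma_\rho(t)\|_{\mathfrak{S}^{\frac{2p}{p+1}}} = \|\gamma_0\|_{\mathfrak{S}^{\frac{2p}{p+1}}}$ and continuity in $t$.

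To estimate $\Phi(\rho)$, we write $\gamma_\rho$ through Duhamel relative to the free flow,
\[
\gamma_\rho(t) = e^{it\mathcal{H}^+}\gamma_0e^{-it\mathcal{H}^+} - i\int_0^t e^{i(t-s)\mathcal{H}^+}[V(s),\gamma_\rho(s)]e^{-i(t-s)\mathcal{H}^+}\,ds,
\]
and apply Corollary \ref{inhomostriestimatenonlinear} with $R(s) = -[V(s),\gamma_\rho(s)]$. The inhomogeneous term is controlled by
\[
\Big\|\int e^{-is\mathcal{H}^+}|R(s)|e^{is\mathcal{H}^+}ds\Big\|_{\mathfrak{S}^{\frac{2p}{p+1}}} \le 2\int_0^T \|V(s)\|_{L^\infty}\|\gamma_0\|_{\mathfrak{S}^{\frac{2p}{p+1}}}\,ds .
\]
The key pointwise bound is $\|w*\rho(s)\|_{L^\infty}\lesssim \|w\|_{\mathcal{W}^{p',p}}\|\rho(s)\|_{\mathcal{W}^{p,p'}}$. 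This needs care: the duality in Lemma \ref{waspaceproperties}(iv) pairs $\mathcal{W}^{p',p}$ with $\mathcal{W}^{p,p'}$, while $\rho$ lives in $\mathcal{W}^{p',p}$. I would therefore instead use the convolution property of Lemma \ref{waspaceproperties}(i) together with the embedding $\mathcal{W}(\mathcal{F}L^1,L^\infty)\hookrightarrow L^\infty$, choosing the hypothesis on $w$ so that $w*\rho\in\mathcal{W}(\mathcal{F}L^1,L^\infty)$. If necessary I would reinterpret the assumption on $w$ as the dual amalgam condition. Hölder in time then gives $\int_0^T\|V\|_\infty \lesssim T^{1/q'}\|w\|\,\|\rho\|_{L^q_T\mathcal{W}^{p',p}}$.

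This yields $\|\Phi(\rho)\|\le C_{\rm Stri}R(1+2CT^{1/q'}\|w\|M)$. We choose $M=2C_{\rm Stri}R$ and $T$ small so that $\Phi(B_M)\subset B_M$. For the contraction we compare $\gamma_{\rho_1}-\gamma_{\rho_2}$ through the same Duhamel formula, with source $-[V_1-V_2,\gamma_{\rho_1}]-[V_2,\gamma_{\rho_1}-\gamma_{\rho_2}]$. The same bounds, together with a Gronwall or iteration step for the second term, give a Lipschitz constant of order $T^{1/q'}\|w\|M$, so shrinking $T$ gives a contraction. Uniqueness in the stated class follows from the same difference estimate.

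For part (2), we fix $T$ and instead make $R$ small: the condition $C T^{1/q'}\|w\|\cdot 2C_{\rm Stri}R_T<1/4$ defines $R_T$, and the argument is otherwise unchanged. Here the absence of a time restriction beyond $\mathcal{I}$ relies on Corollary \ref{inhomostriestimatenonlinear} being uniform in $t_0$; for the harmonic oscillator we iterate over intervals of length $\pi/2$ using periodicity of $e^{it\mathcal{H}^+}$ up to phase. I expect the main obstacle to be the $L^\infty$ control of $w*\rho$ from amalgam norms, i.e.\ matching exponents in Lemma \ref{waspaceproperties}(i) and (iv). The second obstacle is verifying that the commutator source fits the $|R(s)|$ formulation, where $R$ is not sign-definite; I would handle this by splitting $R$ into positive and negative parts.
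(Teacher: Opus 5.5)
Your argument is correct once the hypothesis on $w$ is adjusted (see below), but it organizes the fixed point differently from the paper.

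The paper follows Frank--Lewin--Lieb--Seiringer and iterates on pairs $(\gamma,\rho)$. There $\Phi_1$ is the Duhamel formula relative to the free flow $e^{it\mathcal{H}^+}$ with source $[w*\rho,\gamma]$. The ball must therefore control both $\sup_t\|\gamma(t)\|_{\mathfrak{S}^{\frac{2p}{p+1}}}$ and $\|\rho\|_{L^q_T\mathcal{W}^{p',p}}$, and the Schatten bound \eqref{Phi1inequality} comes from estimating the Duhamel integral. You iterate on $\rho$ alone and transport $\gamma_0$ by the exact propagator of $-\mathcal{H}^++w*\rho$. This gives $\|\gamma_\rho(t)\|_{\mathfrak{S}^{\frac{2p}{p+1}}}=\|\gamma_0\|_{\mathfrak{S}^{\frac{2p}{p+1}}}$ and continuity in $t$ for free. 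The price is constructing $U_V$ (a Dyson series in the interaction picture, convergent because $w*\rho\in L^1_TL^\infty_x$) and a Gronwall step in the contraction.

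Both routes rest on the same two inputs. The first is Corollary \ref{inhomostriestimatenonlinear}. The second is the bound $\|w*\rho\|_{L^\infty}\le\|w*\rho\|_{\mathcal{W}^{1,\infty}}\lesssim\|w\|_{\mathcal{W}^{p,p'}}\|\rho\|_{\mathcal{W}^{p',p}}$, which follows from Lemma \ref{waspaceproperties}(i) via $\mathcal{F}L^{p}*\mathcal{F}L^{p'}\hookrightarrow\mathcal{F}L^1$ and $L^{p'}*L^{p}\hookrightarrow L^\infty$. Your suspicion about the exponents is justified: the paper's own estimate \eqref{Phi1inequality} uses $\|w\|_{\mathcal{W}^{p,p'}}$. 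So in either argument the hypothesis $w\in\mathcal{W}^{p',p}$ in the statement has to be read as $w\in\mathcal{W}^{p,p'}$.

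Two minor points:
\begin{enumerate}
\item The self-adjoint source to feed into the corollary is $R(s)=-i[V(s),\gamma_\rho(s)]$, not $-[V(s),\gamma_\rho(s)]$.
\item No splitting into positive and negative parts is needed at this stage. The corollary already accepts an arbitrary self-adjoint $R$ through $|R(s)|$, and $\|\,|R(s)|\,\|_{\mathfrak{S}^{\frac{2p}{p+1}}}=\|R(s)\|_{\mathfrak{S}^{\frac{2p}{p+1}}}$.
\end{enumerate}

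For part (2), your covering of $[0,T]$ by translates of $\mathcal{I}$ is genuinely needed. The paper passes over it, since its density estimate \eqref{rhophi1inequality} is stated only on $\mathcal{I}$. Uniformity of Theorem \ref{linearhomostri} in $t_0$, together with unitarity of $e^{ia\mathcal{H}^+}$, suffices, so periodicity is not required.
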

		
		\begin{proof}
			
			We start with the proof of the Local well-posedness. Let \(R > 0\) such that
			\[
			\| \gamma_0 \|_{\mathfrak{S}^{\frac{2p}{p+1}}} \le R,
			\]
			and let \(T = T(R) > 0\) be chosen later on. We define a map
			\[
			\Phi(\gamma, \rho) = \left( \Phi_1(\gamma, \rho), \, \rho[\Phi_1(\gamma, \rho)] \right),
			\]
			and for a suitable value of \(T\), we shall show that the map \(\Phi\) is a contraction on the space
			\[
			X := \left\{ (\gamma, \rho) \in C^0_t([0, T], \mathfrak{S}^{\frac{2q}{q+1}}) \times L^q_t([0, T], L^p_x(\mathbb{R}^d)) : \sup_{t\in[0,T]}\| \gamma (t)\|_{\mathfrak{S}^{\frac{2p}{p+1}}} + \| \rho \|_{L^q_t \mathcal{W}^{p',p}_x} \le C R \right\},
			\]
			where $C$ is some positive constant depending on $C_{\mathrm{Stri}}$. The map \(\Phi_1\) is defined as
			
			\begin{align}\label{Phi1definition}
				\Phi_1(\gamma, \rho)(t) = e^{it\mathcal{H}^+} \gamma_0 e^{-it\mathcal{H}^+}
				- i \int_0^t e^{i(t-s)\mathcal{H}^+} [w * \rho(s), \gamma(s)] e^{i(s-t)\mathcal{H}^+} \, ds,
			\end{align}
			for all $(\gamma,\rho)\in X$. Our first aim is to show that the map $\Phi$ is a well defined map. For this, we shall show that 
			$\sup_{t\in [0,T]}\|\Phi_1(\gamma, \rho)(t)\|_{\mathfrak{S}^{\frac{2p}{p+1}}}+\|\rho[{\Phi_1}(\gamma, \rho)]\|_{L^q_t \mathcal{W}^{p',p}_x}\leq C R.$ We start with applying $\mathfrak{S}^{2p/(p+1)}$ norm both sides of equality \eqref{Phi1definition},
			\begin{align}\nonumber
				\|\Phi_1(\gamma, \rho)(t)\|_{\mathfrak{S}^{\frac{2p}{p+1}}}& \leq R + 2 \int_0^T \| w * \rho(s) \|_{\mathcal{W}^{1,\infty}_x} \| \gamma(s) \|_{\mathfrak{S}^{\frac{2p}{p+1}}} \, ds \\ \nonumber
				& \leq R+ 2 \sup_{s\in [0,T]} \|\gamma(s)\|_{\mathfrak{S}^{\frac{2p}{p+1}}} \int_0^T \| w * \rho(s) \|_{\mathcal{W}^{1,\infty}_x} \,ds \\ 
				& \leq R+ 2 T^{1/q'}\|w\|_{\mathcal{W}_x^{p,p'}}(C R)^2.\label{Phi1inequality}
			\end{align}
			Using corollary \ref{inhomostriestimatenonlinear}, we have the following inequality
			\begin{align}\label{rhophi1inequality}
				\|\rho_{\Phi_1(\gamma,\rho)}\|_{L_t^q(\mathcal{I},\mathcal{W}^{p',p}(\mathbb{R}^d))}\lesssim C_{\mathrm{Stri}} (R+ 2 T^{1/q'}\|w\|_{\mathcal{W}_x^{p,p'}}(C R)^2).
			\end{align}
			Eventually we want to choose $T$ small enough such that 
			\begin{align*}
				(1+C_{\mathrm{Stri}}) (R+ 2 T^{1/q'}\|w\|_{\mathcal{W}_x^{p,p'}}(C R)^2)\leq C R.
			\end{align*}
			Therefore, $\Phi$ is a well defined map on $X$. A similar argument
			shows that $\Phi$ is a contraction on $X$, and thus has a unique fixed point on $X$ which
			is a solution to the Hartree equation on $[0,T]$. To prove the almost global well-posedness, we first fix an arbitrary \(T > 0\). Then from estimates \eqref{Phi1inequality} and \eqref{rhophi1inequality}, we have
			\begin{align*}
				\| (\gamma, \rho) \|_{X_T} \le (1 + C_{\mathrm{Stri}}) \left( \| \gamma_0 \|_{\mathfrak{S}^{\frac{2p}{p+1}}}
				+ T^{\frac{1}{q'}} \| w \|_{\mathcal{W}^{p,p'}(\mathbb{R}^d)} \| (\gamma, \rho) \|_{X_T}^2 \right).
			\end{align*}
			With this consideration in mind, we select \(R_T = R_T(\| w \|_{\mathcal{W}^{p,p'}(\mathbb{R}^d)})\) small enough so that there exists \(M > 0\) satisfying, for every \(y \in [0, M]\),
			\[
			(1 + C_{\mathrm{Stri}}) \left( \| \gamma_0 \|_{\mathfrak{S}^{\frac{2p}{p+1}}}
			+ T^{\frac{1}{q'}} \| w \|_{\mathcal{W}^{p,p'}(\mathbb{R}^d)} y^2 \right) \le M,
			\]
			provided that \(\| \gamma_0 \|_{\mathfrak{S}^{\frac{2p}{p+1}}} \le R_T\). Consequently, upon defining the subspace
			\[
			X_{T, M} := \left\{ (\gamma, \rho) \in X_T : \| (\gamma, \rho) \|_{X_T} \le M \right\},
			\]
			we observe that \(\Phi\) invariantly maps \(X_{T, M}\) into itself. Moreover, by further diminishing \(R_T\), we can demonstrate that \(\Phi\) acts as a contraction on \(X_{T, M}\), following arguments analogous to those presented earlier.
			
			Hence, an application of the Banach fixed point theorem guarantees the existence of a solution
			\[
			\gamma \in C^0([0, T], \mathfrak{S}^{\frac{2p}{p+1}})
			\]
			with \(\rho_\gamma \in L^q([0, T], \mathcal{W}^{p',p}(\mathbb{R}^d))\), for any prescribed \(T > 0\).

		\end{proof}
		In a similar manner we can prove the following results that we state without proof.
		\begin{proposition}[Well-posedness corresponding to $\mathcal{H}_{\mathcal{A}}$]
			Let \(w \in \mathcal{W}^{p',p}(\mathbb{R})\) and \(p, q \ge 1\) satisfy
			\[
			1 \le p < 5, \qquad \frac{2}{q} + \frac{1}{p} = 1.
			\]
			\begin{enumerate}
				\item[(1)] (Local well-posedness) For any \(\gamma_0 \in \mathfrak{S}^{\frac{2p}{p+1}}\) with $ R = \| \gamma_0 \|_{\mathfrak{S}^{\frac{2p}{p+1}}} < \infty,$
				there exists $T = T(R, \|w\|_{\mathcal{W}^{p',p}(\mathbb{R})}$\()\) (without loss of generality, we can assume \(T < T_0\)) and a unique
				\[
				\gamma \in C^0([0, T], \mathfrak{S}^{\frac{2p}{p+1}})
				\]
				satisfying \(\rho_\gamma \in \mathcal{W}(L^{q/2},L^q)([0, T], \mathcal{W}^{p',p}(\mathbb{R})))\) and
				
				\begin{align}\label{schrodingerHoperatorvalued1}
					\begin{cases}
						i \partial_t \gamma = [-\mathcal{H}_{\mathcal{A}} + w * \rho_\gamma, \gamma], \\
						\gamma|_{t=0} = \gamma_0.
					\end{cases} 
				\end{align}
				
				\item[(2)] (Almost global well-posedness) For each \(T > 0\) and for any \(\gamma_0 \in \mathfrak{S}^{\frac{2p}{p+1}}\) with
				\[
				\| \gamma_0 \|_{\mathfrak{S}^{\frac{2p}{p+1}}} \le R_T,
				\]
				where \(R_T = R_T(\|w\|_{\mathcal{W}^{p',p}_x(\mathbb{R}^d)})\), then there exists a solution
				\[
				\gamma \in C^0([0, T], \mathfrak{S}^{\frac{2p}{p+1}})
				\]
				satisfying \eqref{schrodingerHoperatorvalued1} and \(\rho_\gamma \in \mathcal{W}(L^{q/2},L^q)([0, T], \mathcal{W}^{p',p}(\mathbb{R})))\).
			\end{enumerate}
			
		\end{proposition}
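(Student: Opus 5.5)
The plan is to run the same contraction argument used for the well-posedness result for $\mathcal{H}^+$, replacing the Lebesgue time norm $L^q_t$ by the Wiener amalgam norm $\mathcal{W}_t(L^{q/2},L^q)$ and the orthonormal estimate of Theorem \ref{ONSH+improved} by that of Theorem \ref{theoremONSHA}.

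\textbf{Step 1: inhomogeneous estimates.} First derive the analogues of Theorem \ref{linearhomostri} and Corollary \ref{inhomostriestimatenonlinear} for $\mathcal{H}_{\mathcal{A}}$ in $d=1$. Write $\gamma(t)=\int_{t_0}^t e^{i(t-s)\mathcal{H}_{\mathcal{A}}}R(s)e^{i(s-t)\mathcal{H}_{\mathcal{A}}}\,ds$. Split $R(s)$ into positive and negative parts and use the spectral decomposition of $\int e^{-is\mathcal{H}_{\mathcal{A}}}|R(s)|e^{is\mathcal{H}_{\mathcal{A}}}ds$, exactly as in Corollary 1 of \cite{frank}. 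Applying Theorem \ref{theoremONSHA} (equivalently its dual form through Theorem \ref{DualityWAspace}) then gives
\[
\|\rho_{\gamma(t)}\|_{\mathcal{W}_t(L^{q/2},L^q)([0,T],\mathcal{W}^{p',p}_x)}\le C_{\mathrm{Stri}}\Big(\|\gamma_0\|_{\mathfrak{S}^{\frac{2p}{p+1}}}+\Big\|\int e^{-is\mathcal{H}_{\mathcal{A}}}|R(s)|e^{is\mathcal{H}_{\mathcal{A}}}ds\Big\|_{\mathfrak{S}^{\frac{2p}{p+1}}}\Big).
\]
Since the norm is taken on $[0,T]$, the time cutoff is harmless; the constant must simply be independent of $t_0$.

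\textbf{Step 2: the contraction.} Work on
\[
X_T=C^0([0,T],\mathfrak{S}^{\frac{2p}{p+1}})\times \mathcal{W}(L^{q/2},L^q)([0,T],\mathcal{W}^{p',p}),
\]
with the Duhamel map $\Phi_1$ as in \eqref{Phi1definition}. The Schatten bound needs
\[
\int_0^T\|w*\rho(s)\|_{\mathcal{W}^{1,\infty}}\,ds\lesssim \|w\|\,\|\rho\|_{L^1_t([0,T])\mathcal{W}^{p',p}}
\]
via the convolution relation of Lemma \ref{waspaceproperties}(i). The time norm then has to be passed from $L^1([0,T])$ to $\mathcal{W}(L^{q/2},L^q)$.

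\textbf{Main obstacle.} The key step is this time-factor estimate. On a bounded interval the amalgam norm is comparable to $L^{q/2}([0,T])$, because only $O(1+T)$ local pieces occur. Hence Hölder's inequality gives
\[
\|\rho\|_{L^1([0,T])}\lesssim T^{1-2/q}\,\|\rho\|_{L^{q/2}([0,T])}\lesssim T^{1-2/q}(1+T)^{c}\,\|\rho\|_{\mathcal{W}(L^{q/2},L^q)}.
\]
The power of $T$ is positive because $q>2$, and the growth factor in $T$ is bounded for $T<T_0$. This provides the small factor needed to choose $T$, as in \eqref{Phi1inequality}.

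For the commutator difference estimates one argues in the same way, using linearity in $(\gamma,\rho)$. Almost global well-posedness then follows exactly as before: fix $T$, set $X_{T,M}$, and shrink $R_T$ until the quadratic term closes.
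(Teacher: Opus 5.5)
Your plan is the one the paper intends: it states this proposition without proof, by analogy with the $\mathcal{H}^+$ case. Your treatment of the time factor is also correct. On $[0,T]$ one gets $\int_0^T\|\rho(s)\|_{\mathcal{W}^{p',p}}\,ds\lesssim T^{1/p}(1+T)^{1/q}\|\rho\|_{\mathcal{W}_t(L^{q/2},L^q)\mathcal{W}^{p',p}_x}$, since $1-\frac{2}{q}=\frac{1}{p}>0$.

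\textbf{The gap is in Step 1.} The argument of Corollary 1 in \cite{frank} removes the truncation $\int_{t_0}^t$ only through the operator inequalities $0\le\gamma_\pm(t)\le e^{it\mathcal{H}_{\mathcal{A}}}A_\pm e^{-it\mathcal{H}_{\mathcal{A}}}$. Here $\gamma_\pm$ are the Duhamel integrals of $R_\pm$ and $A_\pm=\int_{\mathbb{R}}e^{-is\mathcal{H}_{\mathcal{A}}}R_\pm(s)e^{is\mathcal{H}_{\mathcal{A}}}\,ds$. All this yields is the pointwise bound $0\le\rho_{\gamma_\pm(t)}\le\rho_{e^{it\mathcal{H}_{\mathcal{A}}}A_\pm e^{-it\mathcal{H}_{\mathcal{A}}}}$.

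That is enough in $L^q_tL^p_x$, because Lebesgue norms are monotone. It is not enough here, because $\mathcal{W}(\mathcal{F}L^{p'},L^p)$ is not a solid space: its local component is a Fourier-side norm, which a pointwise majorant does not control. For example, take $p>2$, a fixed smooth bump $g\ge0$, and $f=g\,(1+\operatorname{Re}P)/2$, where $P$ is a normalized Rudin--Shapiro polynomial with $|P|\le 1$ and $N$ widely spaced frequencies. Then $0\le f\le g$, while $\|f\|_{\mathcal{W}^{p',p}}\gtrsim N^{1/p'-1/2}\to\infty$.

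So that argument does not establish the inhomogeneous estimate with $\bigl\|\int e^{-is\mathcal{H}_{\mathcal{A}}}|R(s)|e^{is\mathcal{H}_{\mathcal{A}}}ds\bigr\|_{\mathfrak{S}^{2p/(p+1)}}$ on the right. The same objection applies to the paper's Theorem \ref{linearhomostri}.

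\textbf{The fix.} The contraction only ever uses the weaker bound with $\int_0^T\|R(s)\|_{\mathfrak{S}^{2p/(p+1)}}\,ds$ on the right; this is how the quadratic term is closed in \eqref{Phi1inequality}--\eqref{rhophi1inequality}. That bound needs no positivity:
\begin{enumerate}
\item Put $\tilde R(s)=e^{-is\mathcal{H}_{\mathcal{A}}}R(s)e^{is\mathcal{H}_{\mathcal{A}}}$. By linearity of $\gamma\mapsto\rho_\gamma$, $\rho_{\gamma(t)}=\int_0^T\chi_{\{s<t\}}\,\rho_{e^{it\mathcal{H}_{\mathcal{A}}}\tilde R(s)e^{-it\mathcal{H}_{\mathcal{A}}}}\,ds$.
\item Apply Minkowski's integral inequality in $\mathcal{W}_t(L^{q/2},L^q)\mathcal{W}^{p',p}_x$. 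This norm depends monotonically on $t\mapsto\|F(t)\|_{\mathcal{W}^{p',p}}$, since the time amalgam is solid, so the cutoff $\chi_{\{s<t\}}$ costs nothing.
\item Apply Theorem \ref{theoremONSHA} to the self-adjoint operator $\tilde R(s)$, using $\|\tilde R(s)\|_{\mathfrak{S}^{2p/(p+1)}}=\|R(s)\|_{\mathfrak{S}^{2p/(p+1)}}$.
\end{enumerate}
This gives $\|\rho_\gamma\|\le C\int_0^T\|R(s)\|_{\mathfrak{S}^{2p/(p+1)}}\,ds$. With this version of Step 1, your Step 2 goes through unchanged.

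\textbf{The norm of $w$.} You should also make the norm of $w$ explicit. The bound $\|w*\rho(s)\|_{\mathcal{W}^{1,\infty}}\lesssim\|w\|\,\|\rho(s)\|_{\mathcal{W}^{p',p}}$ from Lemma \ref{waspaceproperties}(i) requires $w\in\mathcal{W}^{p,p'}$, which is what the paper's $\mathcal{H}^+$ proof actually uses. The hypothesis $w\in\mathcal{W}^{p',p}$, as written in the statement, does not suffice: then $w*\rho(s)$ need not even be bounded.
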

		
		\begin{proposition}[Well-posedness corresponding to $\mathcal{H}^-$]
			Let \(w \in \mathcal{W}^{p',p}(\mathbb{R})\) and \(p, q \ge 1\) satisfy
			\[
			1 \le p < 5, \qquad \frac{2}{q} + \frac{1}{p} = 1.
			\]
			\begin{enumerate}
				\item[(1)] (Local well-posedness) For any \(\gamma_0 \in \mathfrak{S}^{\frac{2p}{p+1}}\) with $ R = \| \gamma_0 \|_{\mathfrak{S}^{\frac{2p}{p+1}}} < \infty,$
				there exists $T = T(R, \|w\|_{\mathcal{W}^{p',p}(\mathbb{R})}$\()\) (without loss of generality, we can assume \(T < T_0\)) and a unique
				\[
				\gamma \in C^0([0, T], \mathfrak{S}^{\frac{2p}{p+1}})
				\]
				satisfying \(\rho_\gamma \in \mathcal{W}(L^{q/2},L^2)([0, T], \mathcal{W}^{p',p}(\mathbb{R})))\) and
				
				\begin{align}\label{schrodingerHoperatorvalued2}
					\begin{cases}
						i \partial_t \gamma = [-\mathcal{H}^{-} + w * \rho_\gamma, \gamma], \\
						\gamma|_{t=0} = \gamma_0.
					\end{cases} 
				\end{align}
				
				\item[(2)] (Almost global well-posedness) For each \(T > 0\) and for any \(\gamma_0 \in \mathfrak{S}^{\frac{2p}{p+1}}\) with
				\[
				\| \gamma_0 \|_{\mathfrak{S}^{\frac{2p}{p+1}}} \le R_T,
				\]
				where \(R_T = R_T(\|w\|_{\mathcal{W}^{p',p}_x(\mathbb{R}^d)})\), then there exists a solution
				\[
				\gamma \in C^0([0, T], \mathfrak{S}^{\frac{2p}{p+1}})
				\]
				satisfying \eqref{schrodingerHoperatorvalued2} and \(\rho_\gamma \in \mathcal{W}(L^{q/2},L^2)([0, T], \mathcal{W}^{p',p}(\mathbb{R})))\).
			\end{enumerate}
			
		\end{proposition}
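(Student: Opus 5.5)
The plan is to reproduce, for $L=\mathcal{H}^-$ in $d=1$, the fixed-point scheme used for $\mathcal{H}^+$. The nonlinear estimate is Theorem \ref{theoremONSH-}, and the time norm is the amalgam $X_T^{\rho}:=\mathcal{W}(L^{q/2},L^2)([0,T],\mathcal{W}^{p',p}(\mathbb{R}))$.

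\emph{Linear estimates.} I first derive the analogues of Theorem \ref{linearhomostri} and Corollary \ref{inhomostriestimatenonlinear} with $\mathcal{H}^+$ replaced by $\mathcal{H}^-$ and $L^q_t$ by $X^\rho_T$. The homogeneous part is Theorem \ref{theoremONSH-} restated through \eqref{defgamma}. For the Duhamel term $\gamma(t)=\int_{0}^t e^{i(t-s)\mathcal{H}^-}R(s)e^{i(s-t)\mathcal{H}^-}ds$, I follow \cite[Corollary 1]{frank}:
\begin{itemize}
\item write $R(s)=R_+(s)-R_-(s)$ and bound $\rho$ of each piece pointwise by $\rho$ of $\int_{\mathbb{R}} e^{it\mathcal{H}^-}e^{-is\mathcal{H}^-}|R(s)|e^{is\mathcal{H}^-}e^{-it\mathcal{H}^-}ds$;
\item apply the homogeneous estimate to the fixed operator $\int e^{-is\mathcal{H}^-}|R(s)|e^{is\mathcal{H}^-}ds$.
\end{itemize}
The truncation $\chi_{s<t}$ is harmless because positivity gives a pointwise bound on densities. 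Restricting to $[0,T]$ uses the fact that the amalgam norm is monotone under restriction of the support.

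\emph{Contraction.} On
\[X=\Big\{(\gamma,\rho):\sup_{[0,T]}\|\gamma(t)\|_{\mathfrak{S}^{\frac{2p}{p+1}}}+\|\rho\|_{X^\rho_T}\le CR\Big\}\]
I define $\Phi_1$ as in \eqref{Phi1definition} with $\mathcal{H}^-$. Unitarity and Schatten H\"older give
\[\|\Phi_1(t)\|_{\mathfrak{S}^{\frac{2p}{p+1}}}\le R+2\sup_s\|\gamma(s)\|\int_0^T\|w*\rho(s)\|_{L^\infty}ds.\]
The key inequality is $\|w*\rho(s)\|_{L^\infty}\le\|w*\rho(s)\|_{\mathcal{W}^{1,\infty}}\lesssim\|w\|\,\|\rho(s)\|_{\mathcal{W}^{p',p}}$. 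This uses the convolution rule Lemma \ref{waspaceproperties}(i) and duality (iv), pairing $\mathcal{W}^{p',p}$ against $\mathcal{W}^{p,p'}$; if needed I assume $w$ lies in the appropriate dual amalgam, as was done implicitly in \eqref{Phi1inequality}.

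\emph{Main obstacle.} The hardest step is turning $\int_0^T\|\rho(s)\|_{\mathcal{W}^{p',p}}ds$ into $T^{\theta}\|\rho\|_{X^\rho_T}$. On a bounded interval, $\mathcal{W}(L^{q/2},L^2)([0,T])$ embeds into $L^{\min(q/2,2)}([0,T])$ up to constants depending on the number of unit cells covering $[0,T]$. I first reduce to $T\le1$, where a single cell gives the Lebesgue norm $L^{q/2}$. H\"older then yields $T^{1-2/q}$, and $q>2$ because $\frac2q=1-\frac1p$ with $p>1$. For the almost global part ($T$ large), I instead accept a constant $C_T$ that grows polynomially in the number of cells; this only affects $R_T$.

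The commutator term is then handled by the inhomogeneous estimate, taking $|R(s)|\le|[w*\rho,\gamma]|$ and bounding its Schatten norm as above. Choosing $T$ small (respectively $R_T$ small) makes $\Phi$ a self-map and a contraction; differences are estimated by the same bilinear bounds. Banach's fixed point theorem then gives $\gamma\in C^0([0,T],\mathfrak{S}^{\frac{2p}{p+1}})$. Continuity in time follows from strong continuity of the unitary group in $\mathfrak{S}^{r}$, $r<\infty$.
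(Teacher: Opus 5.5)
Your scheme is the one the paper intends: it proves only the $\mathcal{H}^+$ case and says the $\mathcal{H}^-$ case is similar. Two of your additions are correct and actually needed.
\begin{itemize}
\item The bound $\|w*\rho(s)\|_{L^\infty}\lesssim\|w\|\,\|\rho(s)\|_{\mathcal{W}^{p',p}}$ via Lemma~\ref{waspaceproperties}(i) requires $w\in\mathcal{W}^{p,p'}$, not the stated $w\in\mathcal{W}^{p',p}$. This is what \eqref{Phi1inequality} really uses.
\item Your conversion of $\int_0^T\|\rho(s)\|_{\mathcal{W}^{p',p}}\,ds$ into $T^{1/p}\|\rho\|_{X^\rho_T}$ for $T\le 1$ (with a constant of order $T^{1/2}$ for large $T$) is the right substitute for the $L^q_t$ H\"older step, which the paper writes only for $\mathcal{H}^+$.
\end{itemize}

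The step that fails as you justify it is the inhomogeneous estimate. The Frank--Lewin--Lieb--Seiringer argument you import (the paper invokes the same one for Theorem~\ref{linearhomostri}) has two stages. First it gets a pointwise bound $0\le\rho_{\gamma_\pm(t)}\le\rho$ of the corresponding untruncated operators. Then it passes that inequality to the space-time norms. The second stage needs the $x$-norm to be solid, i.e.\ $|f|\le|g|$ implies $\|f\|\lesssim\|g\|$. But $\mathcal{W}(\mathcal{F}L^{p'},L^p)$ is not solid for $p\neq 2$. For instance, for $2<p<5$ one has $p'<2$, and $\mathcal{F}L^{p'}$ does not contain all bounded compactly supported functions. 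So $0\le f\le g$ with $g$ a smooth bump does not even give $f\in\mathcal{W}^{p',p}$, and positivity of $R_\pm$ buys nothing here.

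The fix is to drop positivity. With $\beta(s)=e^{-is\mathcal{H}^-}R(s)e^{is\mathcal{H}^-}$, write
\[
\rho_{\gamma(t)}=\int_0^T\chi_{\{s<t\}}\,\rho\bigl[e^{it\mathcal{H}^-}\beta(s)e^{-it\mathcal{H}^-}\bigr]\,ds
\]
and apply Minkowski's inequality in $s$. The time amalgam is solid in the scalar function $t\mapsto\|F(t,\cdot)\|_{\mathcal{W}^{p',p}}$, so the cutoff $\chi_{\{t>s\}}$ and the restriction to $[0,T]$ cost nothing. Then apply Theorem~\ref{theoremONSH-} to each $\beta(s)$. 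This gives
\[
\|\rho_\gamma\|_{X^\rho_T}\lesssim\int_0^T\|R(s)\|_{\mathfrak{S}^{\frac{2p}{p+1}}}\,ds .
\]
This is weaker than the Frank--Lewin--Lieb--Seiringer bound, but it is exactly what your contraction uses. You bound the commutator term by $2\sup_s\|\gamma(s)\|_{\mathfrak{S}^{\frac{2p}{p+1}}}\int_0^T\|w*\rho(s)\|_{L^\infty}\,ds$ in any case. With this substitution the rest of your argument goes through.
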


		\section{Remarks on some open questions}
		We close by drawing attention to a few directions that, from our perspective, merit further investigation.
		
		\textbf{Restriction on dimension and range:}     For the orthonormal Strichartz inequality for $\mathcal{H}_A$ and $\mathcal{H}^{-}$ proved in  Theorem \ref{theoremONSHA} and in Theorem  \ref{theoremONSH-}, respectively, holds  for dimension $d=1$ only.  A natural  question is whether one can prove these orthonormal ineqality for higher dimension case.  Also, since  we use weak decay of the kernel in the proof of ONS estimate for $\mathcal{H}^-$ and $\mathcal{H}_{\mathcal{A}}$, we expect the range of exponent $p$ in Theorem \ref{theoremONSHA} and Theorem \ref{theoremONSH-} can be improved.

		\textbf{Restriction problem:} The restriction problem has a delicate connection to many other conjectures, notably the Kakeyaand Bochner-Riesz conjectures.   			
		Furthermore, it also closely related to that of estimating solutions to linear PDE such as the wave and Schr\"odinger equations;   ands this connection was first observed by  Strichartz in \cite{SSIN}.  More specically, Strichartz inequalities for the Schr\"odinger and wave equations correspond to Fourier restriction estimates on the paraboloid and the cone surfaces, respectively. So one of the open direction is to investigate restrictiuon problem on Winer alamgam spaces.

		Let $S \subset \mathbb{R}^d, d\geq 2$ be a hyper-surface endowed with Lebesgue measure $d\mu$.  We define the restriction operator 
		$ 
		\mathcal{R}_{S}: \mathcal{W}^{p_1,p_2}(\mathbb{R}^d)\to S 
		$
		defined as 
		\begin{align} \label{restricitonpro}
			\mathcal{R}_{S}(F)=\hat{F}|_{S}.
		\end{align}
		Here    $\hat{F}$ denotes the Fourier transform for a   Schwartz class function $F$ on $\mathbb{R}^d$ and  is defined as
		$$\hat{F}(\xi)=\frac{1}{(2\pi)^{d/2}}\int_{\mathbb{R}^d}F(x)e^{-ix\cdot\xi}dx.$$
		
		Since $ \mathcal{W}^{1,1}(\mathbb{R}^d)$ is embedded inside $L^1(\mathbb{R}^{d})$, hence $\hat{F}$ is continuous if function
		$F\in  \mathcal{W}^{1,1}(\mathbb{R}^d)$. Therefore, the operator $\mathcal{R}_{S}$ is well defined for $ p_1=p_2=1$. However, $ \mathcal{W}^{2,2}(\mathbb{R}^d)=L^2(\mathbb{R}^{d})$ implies $\mathcal{R}_{S}$ cannot be meaningfully defined for $p_1=p_2=2$. Since,   the literature on $ \mathcal{R}_{S}$ is extensive, to the best our knowledge, we are not aware of any results addressing: for what values of $1\leq  p_1,p_2 \leq 2$ is the operator $\mathcal{R}_{S}$, well defined?  We therefore pose  the following restriction  problem:

		\begin{problem}
			For what values of $1\leq  p_1,p_2 < 2$, does the Fourier transform of a function $f \in \mathcal{W}^{p_1,p_2}  (\mathbb{R}^d)$ belong to $L^{2}(S)$, where $S$ is endowed with its $(d-1)$-dimensional Lebesgue measure $d \sigma$? 
		\end{problem}

		The operator dual to \(\mathcal{R}_S\) is called the extension operator, which we denote by \(\mathcal{E}_S\), and satisfies the identity
		\begin{equation}
			\mathcal{E}_S f(x) = \frac{1}{(2\pi)^{d/2}} \int_S f(\xi) e^{i\xi \cdot x} \, d\sigma(\xi), \qquad \forall x \in \mathbb{R}^d, 
		\end{equation}
		for all \(f \in L^1(S)\). The restriction problem is equivalent to knowing when \(\mathcal{E}_S\) is bounded from  \(L^{2}(S)\) to \(\mathcal{W}^{p_1',p_2'}(\mathbb{R}^d) \). 
		Moreover,  \(\mathcal{E}_S\) is bounded from \(L^2(S)\) to \( \mathcal{W}^{p_1',p_2'}(\mathbb{R}^d)\) if and only if the operator \(T_S := \mathcal{E}_S (\mathcal{E}_S)^*\) is bounded from \( \mathcal{W}^{p_1',p_2'}(\mathbb{R}^d)\) to 
		\( \mathcal{W}^{p_1,p_2}(\mathbb{R}^d)\).

		\section*{Acknowledgments} 
		This work is a part of Sarthak Tiwari's Ph. D. thesis. The first and third authors acknowledge the support of the National Institute of Science Education and Research (NISER) and the Homi Bhabha National Institute (HBNI) for providing excellent research facilities. RM acknowledges the partial support provided by the research grants  (ANRF/ARGM/2025/000750/MTR). SSM is supported by the DST-INSPIRE Faculty Fellowship DST/INSPIRE/04/2023/002038. 
		
		\par
		
		\noindent{\bf Data Availability.}
		Data sharing not applicable to this article as no datasets were generated or analysed during the current study.
		
		\par
		
		\noindent{\bf Conflict of interest.}
		The authors declare that there is no conflict of interest.

\end{document}